\documentclass[12pt]{article}
\usepackage[utf8]{inputenc}

%%%%%%%%%%%%%%%%%% Basic Formating %%%%%%%%%%%%%%%%%%%%%%%%

\usepackage[left=1in,right=1in,top=1in,bottom=1in]{geometry} %margin
\usepackage{parskip} %space after paragraph
\usepackage{enumitem} %enumerate
\usepackage{graphicx, wrapfig, float, subcaption} %figures
\usepackage{bookmark} %hyperref
\usepackage{comment}
\usepackage{xcolor}
\usepackage{titlesec}

\setlength{\parindent}{\parskip}
\usepackage{etoolbox}% http://ctan.org/pkg/etoolbox
% \AtBeginEnvironment{proof}{\setlength{\parindent}{0pt}}
% \setlength{\parskip}{0.5\baselineskip+2pt}
\setlength{\itemsep}{50pt}
\setlist[itemize]{itemsep=0pt}
\setlist[enumerate]{itemsep=0pt}
\setlist[enumerate]{label=(\roman*)}
\hypersetup{colorlinks,urlcolor=blue}

\renewcommand{\b}{\textbf}
\renewcommand{\i}{\textit}
\renewcommand{\t}{\text}

% \titleformat{\section}
%   {\newpage\normalfont\fontsize{30}{15}\bfseries\color{blue}}{\thesection}{1em}{}
% \titleformat{\section}
%   {\normalfont\fontsize{20}{15}\bfseries\color{blue}}{\thesection}{1em}{}
% \titleformat{\subsection}
%   {\normalfont\fontsize{15}{15}\bfseries}{\thesubsection}{1em}{}

%%%%%%%%%%%%%%%%% Math %%%%%%%%%%%%%%%%%%%%%%%%%%%%%%%%%%%
\usepackage{amsmath,amsfonts,amssymb,amsthm}
\usepackage[most]{tcolorbox} % for coloured theorem environments

\newtheorem{thm}{Theorem}[section]
\newtheorem{lem}[thm]{Lemma}
\newtheorem{cor}[thm]{Corollary}
\newtheorem{defn}[thm]{Definition}
\newtheorem{prop}[thm]{Proposition}
\newtheorem{conj}[thm]{Conjecture}

\newtheorem{stat}{}[thm]

%restate theorem
\newenvironment{restate}[1]
    {\begin{thm}}
    {\end{thm}\addtocounter{thm}{-1}}
    
%Coloured theorem

%Coloured Corollary

%Coloured Conj

%Named theorem

%Named proposition

%Conjecture

%Coloured algorithm

% \makeatletter
% \algnewcommand{\LineComment}[1]{\Statex \hskip\ALG@thistlm \(\triangleright\) #1}
% \makeatother

%Disjoint union
\makeatletter
\def\moverlay{\mathpalette\mov@rlay}
\def\mov@rlay#1#2{\leavevmode\vtop{%
   \baselineskip\z@skip \lineskiplimit-\maxdimen
   \ialign{\hfil$\m@th#1##$\hfil\cr#2\crcr}}}
\newcommand{\charfusion}[3][\mathord]{
    #1{\ifx#1\mathop\vphantom{#2}\fi
        \mathpalette\mov@rlay{#2\cr#3}
      }
    \ifx#1\mathop\expandafter\displaylimits\fi}
\makeatother

\newcommand{\m}{\mathbb}
\renewcommand{\c}{\mathcal}

% \newcommand{\bar}{\overline}

%%%%%%%%%%%%%%%%% Tikz %%%%%%%%%%%%%%%%%%%%%%%%%%%%%%%%%%%
\usepackage{tikz}
\usetikzlibrary{calc}

\def\blueedge{\pgfkeysalso{blue}}
\def\cyanedge{\pgfkeysalso{cyan}}
\def\greenedge{\pgfkeysalso{green}}
\def\yellowedge{\pgfkeysalso{yellow!80!black}}

\newenvironment{drawing}[1]{\begin{tikzpicture}[thick, scale=#1,nodelabel/.style={rounded corners,fill=none,inner sep=5pt,draw=none},
every edge/.append code = {%
    \global\let\currenttarget\tikztotarget % save \tikztotarget in a global variable
    \pgfkeysalso{append after command={(\currenttarget)}}%⩾ automatically repeat it
    \edgecolor
}
]\let\edgecolor\rededge}{\end{tikzpicture}}

\tikzstyle{bnode}=[circle, draw, fill=black, inner sep=0pt, minimum width=5pt]
\tikzstyle{wnode}=[circle, draw, fill=white, inner sep=0pt, minimum width=5pt]

%%%%%%%%%%%%%%%%%% Macros %%%%%%%%%%%%%%%%%%%%%%%%%%%%%%%%

\newcommand\ps{-8pt}

\newcommand{\mcg}[0]{matching covered graph}
\newcommand{\bmcg}[0]{bipartite matching covered graph}
\newcommand{\mbmcg}[0]{minimal bipartite matching covered graph}
\newcommand{\embmcg}[0]{extremal minimal bipartite matching covered graph}
\newcommand{\ex}[1]{\mbox{$#1$-extendable}}
\newcommand{\kex}[1]{\mbox{$#1$-extendable} graph}
\newcommand{\bkex}[1]{\mbox{$#1$-extendable} bipartite graph}
\newcommand{\mbkex}[1]{minimal \mbox{$#1$-extendable} bipartite graph}

\newcommand{\mtc}[0]{minimal \mbox{\kC{2}} graph}
\newcommand{\emtc}[0]{extremal minimal \mbox{\kC{2}} graph}
\newcommand{\cmcs}[0]{conformal matching covered subgraph}
\newcommand{\mc}[0]{matching covered}
\newcommand{\pema}[0]{perfect matching}
\newcommand{\kC}[1]{\mbox{$#1$-connected}}

% \newcommand{\pema{}s}{perfect matchings}

% \excludecomment{proof}
% \usepackage{authblk}

\title{\vspace{-2.25cm}\Huge Extremal minimal \\bipartite matching covered graphs \footnote{Supported by IC\&SR IIT Madras}}
\author{Amit Kumar Mallik \\
\small Indian Institute of Technology Bombay\\[-0.8ex]
\small\tt amit.km287@gmail.com\\
\and
Ajit A. Diwan\\
\small Indian Institute of Technology Bombay\\[-0.8ex]
\small\tt aad@cse.iitb.ac.in\\
% \vspace{-10pt}
\and
Nishad Kothari\\
\small Indian Institute of Technology Madras\\[-0.8ex]
\small\tt nishadkothari@gmail.com\\
%\and
% Charles H. C. Little\\
% \small Massey University, New Zealand\\[-0.8ex]
% \small\tt c.little@massey.ac.nz
}
% \setlength{\affilsep}{2pt}
% \date{Sep 2023}
% \date{Supervisor : Prof. Nishad Kothari, Prof. Ajit A. Diwan}

\begin{document}

\newcommand{\eem}[0]{$2$-edge extremal}
\newcommand{\een}[0]{$2$-edge $n$-extremal}
\newcommand{\evm}[0]{$2$-vertex extremal}
\newcommand{\evn}[0]{$2$-vertex $n$-extremal}
\newcommand{\ee}[0]{edge extremal}
\newcommand{\htree}[0]{Halin tree}
\newcommand{\ktree}[1]{$#1$-tree}
\newcommand{\kregular}[1]{$#1$-regular}
\newcommand{\isojoin}[0]{isomorphic leaf matching}
\newcommand{\kisojoin}[1]{isomorphic \mbox{$#1$-leaf} matching}
\newcommand{\retract}[0]{partial retract}
\newcommand{\bicontract}[0]{restricted bicontraction}
\newcommand{\bisplit}[0]{restricted bisplitting}
\newcommand{\tedge}[0]{\mbox{$3$-edge}}
\newcommand{\balanced}[1]{special}
\newcommand{\removable}[0]{superfluous}

\maketitle

\centerline{The first and last authors dedicate this work to their coauthor,}\centerline{the late Ajit A. Diwan}

\begin{abstract}

A connected graph, on four or more vertices, is \i{\mc{}} (aka \i{\ex{1}}) if every edge is present in some \pema{}. An ear decomposition theorem (similar to the one for \kC{2} graphs) exists for \bmcg{}s due to Hetyei. From the results and proofs of Lov\'asz and Plummer [{\em Matching Theory}, Annals of Discrete Math. 29, 1986], that rely on Hetyei's Theorem, one may deduce that any \mbmcg{} has at least $2(m-n+2)$ vertices of degree two (where \i{minimal} means that deleting any edge results in a graph that is not \mc{}); such a graph is \emph{extremal} if it attains the stated bound. 

In this paper, we provide a complete characterization of the class of \embmcg{}s. In particular, we prove that every such graph~$G$ is obtained from two copies of a tree devoid of degree two vertices, say $T$ and $T'$, by adding edges --- each of which joins a leaf of $T$ with the corresponding leaf of $T'$.% as per some fixed isomorphism between them.

Apart from the aforementioned bound, there are four other bounds that appear in, or may be deduced from, the work of Lov\'asz and Plummer. Each of these bounds leads to a notion of extremality. In this paper, we obtain a complete characterization of all of these extremal classes and also establish relationships between them. Two of our characterizations are in the same spirit as the one stated above. For the remaining two extremal classes, we reduce each of them to one of the already characterized extremal classes using standard matching theoretic operations.  

A connected graph is \i{\ex{k}} if it has a matching of cardinality $k$ and each such matching extends to a perfect matching. We also discuss bounds proved by Lou [{\em On the structure of minimally \ex{n} bipartite graphs}, Discrete Math. 202 (1), 1999] for \mbkex{k}s (where \i{minimal} means that deleting any edge results in a graph that is not \ex{k}). We conjecture stronger bounds and provide evidence for our conjectures by constructing tight examples that are straightforward generalizations of the ones that appear
in the \ex{1} case.

% Lov\'asz and Plummer also proved other bounds on the number of edges and the number of $2$-edges, edges whose each end has degree two, in a \mbmcg{}. In this paper, we give a complete characterization of all the extremal classes and also establish the relationship between them.

%Ear decomposition theorem for \kC{2} graphs is well-known. A similar ear decomposition theorem exists for \bmcg{}s due to Hetyei. Using this, Lov\'asz and Plummer proved that any \mbmcg{} has at least $\frac{n}{2}+2$ vertices of degree two (where \i{minimal} means that deleting any edge results in a graph that is not \mc{}); such a graph is said to be \emph{extremal} if it attains their lower bound. In the first section, we give a complete characterization for the class of \embmcg{}s by establishing a bijection between this class and the family of trees where each non-leaf is cubic. 

%by establishing a bijection between this class and the family of trees where each non-leaf is cubic. 

\end{abstract}

% \newpage
% \tableofcontents

% \newpage
\section{Introduction and summary}
%notations
All graphs considered here are loopless; however, we allow parallel/multiple edges. For notation and terminology, we largely follow Bondy and Murty \cite{bomu08}. For a graph $G:=(V,E)$, its \i{order} is the number of vertices denoted by $n$, and its \i{size} is the number of edges denoted by $m$. We use the notation $G[A,B]$ to denote a bipartite graph with specified color classes $A$ and $B$. 

%For any $W\subseteq V$, the cut comprising the edges joining $W$ and its complement $\overline{W}:=V- W$ is denoted by $\partial (W)$. A cut $\partial(W)$ is \i{even} if $|W|$ and $|\overline{W}|$ are both even. Also, a cut of cardinality $k$ is a \i{$k$-cut}. We use abbreviated notations $\partial(v):=\partial(\{v\})$ for a vertex $v$, and $\partial(H):=\partial(V(H))$ for a subgraph $H$.

A graph is \i{matchable} if it has a \pema{} and an edge is \i{matchable} if it belongs to some \pema{}. A subgraph $H$ of a graph $G$ is \i{conformal} if $G-H$ is matchable. A connected graph of order four or more is \i{matching covered} if every edge is matchable; these graphs are also referred to as \i{\ex{1}} in the literature since each edge extends to a \pema{}; see Lov\'asz and Plummer \cite{lopl86}. There is an elegant ear decomposition theory for the class of \bmcg{}s, due to Hetyei \cite{het64}, that may be viewed as a refinement of the more well-known ear decomposition theory for the larger class of \kC{2} graphs due to Whitney \cite{whit33}. We describe this below.

An \i{ear of $H$ in $G$} is an odd path of $G$ whose ends are in $H$ but is otherwise disjoint from~$H$. For a bipartite graph $G$, a sequence of subgraphs $(G_0,G_1,\dots ,G_r)$ is an \i{ear decomposition of $G$} if: (i) $C:=G_0$ is an (even) cycle, (ii) $G_{i+1}=G_i\cup P_{i+1}$ where $P_{i+1}$ is an ear of $G_i$ for each $i\in\{0,1,\dots, r-1\}$, and (iii) $G_r=G$. It is easy to observe that each subgraph $G_i$ is a \cmcs{} of $G$, and that $r=m-n$. We refer to $(C,P_1,\dots ,P_r)$ as the \i{associated ear sequence}, or simply an \i{ear sequence of $G$}. Figure~\ref{fig: ex ear decomp} shows a \bmcg{} and an ear sequence for the same.

\begin{figure}[htb]
    \centering
    \begin{subfigure}{0.49\linewidth}
        \centering
        \begin{drawing}{1}
            \draw(-1,1)--(2,1)--(1,0)--(0,0)--(-1,1);
            \draw(-1,1)--(-2,0)--(-1,-1)--(0,0);
            \draw(2,1)--(3,1)--(3,0)--(1,0);
            \draw(-1,-1)--(2,-1)--(3,-1)--(3,0);
            \draw(1,0)--(2,-1);
            
            \draw (0,0)node[wnode]{}(-1,1)node[bnode]{}(-2,0)node[wnode]{}(-1,-1)node[bnode]{};
            \draw (1,0)node[bnode]{}(2,1)node[wnode]{}(3,1)node[bnode]{}(3,0)node[wnode]{}(3,-1)node[bnode]{}(2,-1)node[wnode]{};
        \end{drawing}
    \end{subfigure}
    \begin{subfigure}{0.49\linewidth}
        \centering
        \begin{drawing}{1}
            
            \draw[color=red] (-1,1)--(2,1)--(1,0)--(0,0)--(-1,1);
            \draw[color=red] (0.5,0.5)node[nodelabel]{$C$};

            \draw[color=blue] (-1,1)--(-2,0)--(-1,-1)--(0,0);
            \draw[color=blue] (-1.5,-0.5)node[above right,nodelabel]{$P_1$};
            
            \draw[color=green!70!black] (2,1)--(3,1)--(3,0)--(1,0);
            \draw[color=green!70!black] (3,0.5)node[left,nodelabel]{$P_2$};
            
            \draw[color=orange!80!black] (-1,-1)--(2,-1)--(3,-1)--(3,0);
            \draw[color=orange!80!black] (0.5,-1)node[above left,nodelabel]{$P_3$};
            
            \draw[color=cyan] (1,0)--(2,-1);  
            \draw[color=cyan] (1.5,-0.5)node[right,nodelabel]{$P_4$};  
        
            \draw (0,0)node[wnode]{}(-1,1)node[bnode]{}(-2,0)node[wnode]{}(-1,-1)node[bnode]{};
            \draw (1,0)node[bnode]{}(2,1)node[wnode]{}(3,1)node[bnode]{}(3,0)node[wnode]{}(3,-1)node[bnode]{}(2,-1)node[wnode]{};
        \end{drawing}
    \end{subfigure}
    
    \caption{a \bmcg{} and its ear decomposition}
    \label{fig: ex ear decomp}
\end{figure}

The following is the aforementioned result by Hetyei that we will find useful in order to establish that certain bipartite graphs are indeed \mc{}.

\begin{thm}{\sc[Ear Decomposition Theorem]}\\
    A bipartite graph is matching covered if and only if it admits an ear decomposition.
    \label{thm : odd ear decomp}
\end{thm}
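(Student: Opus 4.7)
My plan is to prove both implications by induction, relying on the strict Hall property of \bmcg{}s as the key preliminary.

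The forward implication I would handle by induction on the number of ears $r$. The base case $r = 0$ is an even cycle, trivially a \bmcg{}. For the inductive step, suppose $G_i$ is a \bmcg{} and let $P = u v_1 v_2 \cdots v_{2k} v$ be the ear added to form $G_{i+1}$. Since $P$ has odd length and $G_i$ is bipartite, the ends $u, v$ lie in distinct color classes of $G_i$. To show every edge $e$ of $G_{i+1}$ lies in a \pema{} of $G_{i+1}$, I would use two templates on $P$: the internal template $\{v_1v_2, v_3v_4, \dots, v_{2k-1}v_{2k}\}$ covers the interior of $P$ leaving $u, v$ exposed, while the external template $\{uv_1, v_2v_3, \dots, v_{2k}v\}$ covers all of $V(P)$. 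If $e \in E(G_i)$ or $e = v_{2j-1}v_{2j}$, I take a \pema{} of $G_i$ (containing $e$ if $e \in E(G_i)$) and glue on the internal template; if $e = v_{2j}v_{2j+1}$ with $v_0 := u$ and $v_{2k+1} := v$, I combine the external template with a \pema{} of $G_i - u - v$. Existence of the latter is my preliminary lemma: in a connected \bmcg{}, the strict Hall condition $|N(S)| \ge |S| + 1$ holds for every nonempty proper subset $S$ of a color class, since otherwise some \pema{} bijects $S$ with $N(S)$, forcing every edge out of $A \setminus S$ to avoid $N(S)$ and disconnecting the graph.

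For the backward implication I would construct the decomposition iteratively, starting from a seed even cycle and growing by ears. Since $G$ is a \bmcg{} of order at least four, it admits two distinct \pema{}s $M, M'$, and any alternating cycle $C$ in $M \triangle M'$ is a conformal even cycle, with $M \setminus E(C)$ serving as a \pema{} of $G - V(C)$. The key extension lemma is: every conformal \bmcg{} subgraph $H \subsetneq G$ can be extended by an odd ear $P$ of $H$ in $G$ so that $H \cup P$ is again a \cmcs{}. When $V(H) = V(G)$, any edge in $E(G) \setminus E(H)$ serves as a length-one ear: conformality is automatic and matching coveredness follows from the strict Hall lemma applied to $H$. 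When $V(H) \ne V(G)$, connectivity supplies an edge $e = uv \in E(G)$ with $u \in V(H)$ and $v \notin V(H)$; writing $M_0 := M_H \cup N$ where $M_H$ is a \pema{} of $H$ and $N$ a \pema{} of $G - V(H)$, and choosing $M_1 \ni e$ a \pema{} of $G$, the alternating cycle of $M_0 \triangle M_1$ through $e$ contains an excursion from $V(H)$ into $V(G) \setminus V(H)$ and back. This excursion will be the desired ear $P$: the forward implication already proved gives matching coveredness of $H \cup P$, and conformality follows because the $N$-edges inside $P$ pair up its internal vertices, so $N$ restricted to $V(G) \setminus V(H \cup P)$ remains a \pema{}.

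The main obstacle will be verifying that the excursion is genuinely an odd ear with its internal vertices paired by $N$. I plan to handle this by tracing the alternating cycle: an $M_0$-edge at a vertex of $V(G) \setminus V(H)$ must belong to $N$ (as $M_H$ lives inside $V(H)$), and such an $N$-edge stays inside $V(G) \setminus V(H)$; hence the boundary edges of the excursion belong to $M_1$ and the interior alternates between $N$ and $M_1$. Counting then forces odd length, and the $N$-edges of the excursion's interior supply precisely the pairing of internal vertices that conformality demands.
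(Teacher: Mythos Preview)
Your proof is correct and follows essentially the same strategy that the paper outlines: the paper does not prove Theorem~\ref{thm : odd ear decomp} in full but extracts from Lov\'asz--Plummer the two ingredients you supply in detail, namely that adding an odd ear preserves the matching covered property (the paper's Lemma~\ref{lem : H cup P is mc}, your ``forward'' direction) and that any conformal matching covered proper subgraph can be extended by an ear through a specified protruding edge (the paper's Lemma~\ref{lem : P_e}, your ``backward'' direction via the $M_0 \triangle M_1$ alternating-cycle excursion). One cosmetic point: you have the labels reversed --- the theorem reads ``matching covered if and only if admits an ear decomposition'', so the forward implication is the one that \emph{constructs} the decomposition, not the one that starts from it.
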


There is also a generalization of the above theorem for nonbipartite \mcg{}s due to Lov\'asz and Plummer \cite{lopl86}. However, we do not describe this here since our work focuses on \bmcg{}s.

\subsection{Minimality, bounds and corresponding notions of extremality}
\label{subsec : bounds}

A \mcg{} is \i{minimal} if deletion of any edge results in a graph that is not matching covered. We use $\c{H}$ to denote the class of \mbmcg{}s. Using the ear decomposition theory, Lov\'asz and Plummer \cite{lopl86} proved that each member of $\c{H}$ has at least $m-n+2$ pairwise nonadjacent $2$-edges --- where a \i{$2$-edge} is an edge whose each end has degree two. Thus, such a graph has at least $2(m-n+2)$ degree two vertices. Furthermore, for each of these invariants, one may also deduce lower bounds solely in terms of $n$; in particular, that each member of $\c{H}$ has at least $\frac{n+10}{6}$ $2$-edges and at least $\frac{n}{2}+2$ vertices of degree two; see Corollaries~\ref{cor : bound een} and~\ref{cor : bound evn}. They also proved an upper bound --- namely, that each member of $\c{H}$, distinct from $C_4$, has at most $\frac{3n-6}{2}$ edges. 

Each of the bounds stated in the above paragraph leads to a notion of extremality for \mbmcg{}s. As we have five different bounds, we have five notions of extremality as defined in Table~\ref{tab: extremality notions}.  For instance, $\c{H}_2$ denotes the class of \i{\evm{}} \mbmcg{}s --- that is, those members of~$\c{H}$ that satisfy $|V_2|=2(m-n+2)$ where $V_2$ is the set of degree two vertices. Likewise, we use $E_2$ to denote the set of $2$-edges. 

\begin{table}[htb]
    \centering
    \begin{tabular}{|c|c|c|}
        \hline
        Class & Property & Notation  \\\hline
        \i{\eem} & $|E_2|=m-n+2$ & $\c{H}_0$\\\hline
        \i{\een} & $|E_2|=\frac{n+10}{6}$ & $\c{H}_1$\\\hline
        \i{\evm} & $|V_2|=2(m-n+2)$ & $\c{H}_2$\\\hline
        \i{\evn} & $|V_2|=\frac{n}{2}+2$ & $\c{H}_3$\\\hline
        \i{\ee} & $|E|=\frac{3n-6}{2}$ & $\c{H}_4$\\\hline
    \end{tabular}
    \caption{Definitions of different notions of extremality within $\c{H}$}
    \label{tab: extremality notions}
\end{table}

In this paper, we completely characterize all of the five classes of ``extremal" \mbmcg{}s in terms of special trees. Our characterizations are reminiscent of similar characterization(s) of  ``extremal" \mtc{}s --- where minimality is defined with respect to edge deletion; we briefly describe some of these results below before stating our characterizations.  

Dirac \cite{gad67} proved that a \mtc{} has at least $\frac{n+4}{3}$ vertices of degree two. A \mtc{} is \i{extremal} if it satisfies this lower bound with equality. Oxley~\cite{jgo82} gave a generation theorem for \emtc{}s. Karpov \cite{dvk16} then characterized this class of graphs in terms of special trees. In particular, they proved that a graph is extremal minimal \kC{2} if and only if it can be obtained from two copies of a tree, each of whose non-leaves is of degree three, by identifying the corresponding leaves as per some fixed isomorphism; see Figure~\ref{fig: karpov} for an example.

\begin{figure}[htb]
    \begin{subfigure}{0.49\linewidth}
        \begin{drawing}{1}
            % \foreach \x in {-3,-2,...,3}{
            %     \draw[color=cyan] (\x,-0.5)--(\x,0.5);
            %     % \draw (\x,-0.5)--(\x,0.5);
            % }

            \foreach \y in {1}{
                \foreach \x in {-3,-2,...,3}{
                    \draw[color=green] (\x,0.5*\y) circle (4pt);
                }
                \foreach \x in {-2,-0.5,1,2}{
                    \draw[color=red] (\x,1.5*\y) circle (4pt);
                }
                    \draw[color=red] (-0.5,2.5*\y) circle (4pt);

                \draw (-3,0.5*\y)--(-2,1.5*\y)--(-2,0.5*\y);
                \draw (-1,0.5*\y)--(-0.5,1.5*\y)--(0,0.5*\y);
                \draw (1,0.5*\y)--(1,1.5*\y);
                \draw (2,0.5*\y)--(2,1.5*\y)--(3,0.5*\y);

                \draw (-2,1.5*\y)--(-0.5,2.5*\y)--(-0.5,1.5*\y);
                \draw (-0.5,2.5*\y)--(1,1.5*\y)--(2,1.5*\y);
            }
            
            \draw (-3,0.5)node[wnode]{}(-2,1.5)(-2,0.5)node[wnode]{};
            \draw (-2,1.5)node[bnode]{}(-0.5,2.5)(-0.5,1.5)(-1,0.5)node[wnode]{};
            \draw (0,0.5)node[wnode]{}(-0.5,1.5)node[bnode]{}++(0,1)node[wnode]{}(1,1.5)(1,0.5)node[wnode]{};
            \draw (1,1.5)node[bnode]{}(2,0.5)node[bnode]{};
            \draw (2,1.5)node[wnode]{}(3,0.5)node[bnode]{};

            % \draw (-3,-0.5)node[bnode]{}(-2,-1.5)(-2,-0.5)node[bnode]{};
            % \draw (-2,-1.5)node[wnode]{}(-0.5,-2.5)(-0.5,-1.5)(-1,-0.5)node[bnode]{};
            % \draw (0,-0.5)node[bnode]{}(-0.5,-1.5)node[wnode]{}++(0,-1)node[bnode]{}(1,-1.5)(1,-0.5)node[bnode]{};
            % \draw (1,-1.5)node[bnode]{}(2,-0.5)node[wnode]{};
            % \draw (2,-1.5)node[bnode]{}(3,-0.5)node[wnode]{};
 
        \end{drawing}
    \end{subfigure}
    \begin{subfigure}{0.49\linewidth}
        \begin{drawing}{1}
            % \foreach \x in {-3,-2,...,3}{
            %     \draw[color=cyan] (\x,-0.5)--(\x,0.5);
            %     % \draw (\x,-0.5)--(\x,0.5);
            % }

            \foreach \y in {-1,1}{
                \foreach \x in {-3,-2,...,3}{
                    \draw[color=green] (\x,0*\y) circle (4pt);
                }
                \foreach \x in {-2,-0.5,1,2}{
                    \draw[color=red] (\x,1*\y) circle (4pt);
                }
                    \draw[color=red] (-0.5,2*\y) circle (4pt);

                \draw (-3,0*\y)--(-2,1*\y)--(-2,0*\y);
                \draw (-1,0*\y)--(-0.5,1*\y)--(0,0*\y);
                \draw (1,0*\y)--(1,1*\y);
                \draw (2,0*\y)--(2,1*\y)--(3,0*\y);

                \draw (-2,1*\y)--(-0.5,2*\y)--(-0.5,1*\y);
                \draw (-0.5,2*\y)--(1,1*\y)--(2,1*\y);

                \draw (-3,0*\y)node[wnode]{}(-2,1*\y)(-2,0)node[wnode]{};
                \draw (-2,1*\y)node[bnode]{}(-0.5,2*\y)(-0,1*\y)(-1,0*\y)node[wnode]{};
                \draw (0,0)node[wnode]{}(-0.5,1*\y)node[bnode]{}++(0,1*\y)node[wnode]{}(1,1*\y)(1,0)node[wnode]{};
                \draw (1,1*\y)node[bnode]{}(2,0)node[bnode]{};
                \draw (2,1*\y)node[wnode]{}(3,0)node[bnode]{};
            }

            % \draw (-3,-0.5)node[bnode]{}(-2,-1.5)(-2,-0.5)node[bnode]{};
            % \draw (-2,-1.5)node[wnode]{}(-0.5,-2.5)(-0.5,-1.5)(-1,-0.5)node[bnode]{};
            % \draw (0,-0.5)node[bnode]{}(-0.5,-1.5)node[wnode]{}++(0,-1)node[bnode]{}(1,-1.5)(1,-0.5)node[bnode]{};
            % \draw (1,-1.5)node[bnode]{}(2,-0.5)node[wnode]{};
            % \draw (2,-1.5)node[bnode]{}(3,-0.5)node[wnode]{};
 
        \end{drawing}
    \end{subfigure}
    \caption{illustration of Karpov's characterization of \emtc{}s}
    \label{fig: karpov}
\end{figure}

A tree is said to be a \i{\htree{}} if all of its non-leaves have degree at least three, or equivalently, if it has no vertices of degree two; see Figure~\ref{fig: ex evm} for an example. We call them halin trees because a \i{Halin graph} is a (planar) graph that is obtained from a planar embedding of such a tree by adding a cycle each of whose edges joins two leaves that appear consecutively in the cyclic order (as per the planar embedding). Halin graphs, first introduced by Halin \cite{rh71}, as examples of minimal \kC{3} graphs, have been studied extensively in the literature. Halin trees are precisely the ``homeomorphically irreducible" trees; finding such  spanning trees in a cubic graph is a well-studied problem; see \cite{ref-hist}. A \htree{} is \i{cubic} if the corresponding Halin graph is cubic. To put it differently, a tree is a cubic \htree{} if all of its non-leaves have degree exactly three; see Figure~\ref{fig: karpov} for an example. Thus, the aforementioned Karpov's characterization of \emtc{}s draws a bijection between this graph class and cubic halin trees.       

\subsection{Characterizations of $\c{H}_2,\c{H}_3$ and $\c{H}_4$ using halin trees}
\label{subsec : h2 characterization}
We are now ready to state our characterizations of ``extremal" \mbmcg{}s --- that are similar to Karpov's characterization. However, in our case, identifying the corresponding leaves is not the correct operation. 

For two disjoint copies of a \htree{}, say $T$ and $T'$, let $H$ be the (bipartite) graph obtained from $T\cup T'$ by adding a matching each of whose edges joins a leaf of $T$ with the corresponding leaf of $T'$ as per some fixed isomorphism between $T$ and $T'$. We say that $H$ is obtained from $T$ and $T'$ by \i{\isojoin{}}, or simply, that $H$ is obtained from $T$ by \isojoin{}. 

Note that, since $K_2$ is the smallest \htree{}, $C_4$ is the smallest graph that may be obtained using the operation defined above. The following observation is easily proved and it implies that, for every other graph obtained using this operation, one may recover the two copies of the \htree{} by simply deleting the set of $2$-edges. 

\begin{prop}
    If a graph $H$, distinct from $C_4$, is obtained from a \htree{} $T$ by \isojoin{}, then $H-E_2$ has precisely two components, each of which is isomorphic to $T$. \qed
    \label{prop : t cup t'}
\end{prop}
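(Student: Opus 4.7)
The plan is to show that, under the hypothesis $H \neq C_4$, the edge set $E_2$ of $2$-edges coincides exactly with the matching $M$ joining corresponding leaves of the two copies of the \htree{}; deleting $E_2$ then leaves precisely the disjoint union $T \cup T'$, giving the two components.

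First I would verify that $M \subseteq E_2$ by a direct degree computation. A matching edge of $M$ has both ends at leaves of $T$ or $T'$, which have degree one in their respective copy and receive exactly one additional incidence from $M$; thus both ends have degree two in $H$.

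The reverse containment $E_2 \subseteq M$ is the only real content, and I would argue by contradiction. Suppose $e \in E_2 \setminus M$; then $e$ is a tree edge, say $e = uv$ lying in the copy $T$ (the argument for $T'$ is symmetric). For any vertex $w$ of $T$, the degree of $w$ in $H$ equals $\deg_T(w)$ if $w$ is not a leaf of $T$, and equals $\deg_T(w) + 1 = 2$ if $w$ is a leaf of $T$ (because leaves are precisely the vertices saturated by $M$). Since $T$ is a \htree{}, every non-leaf has degree at least three, so $\deg_H(w) \geq 3$ for such $w$. Consequently, for both $u$ and $v$ to lie in $V_2$, each of them must be a leaf of $T$. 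But then $e = uv$ is a tree edge whose two endpoints are leaves of $T$, which forces $T \cong K_2$. In that case, both $T$ and $T'$ are single edges and the \isojoin{} operation produces $C_4$, contradicting our assumption $H \neq C_4$.

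Combining the two inclusions yields $E_2 = M$, and therefore $H - E_2 = (T \cup T') \setminus M = T \cup T'$, the disjoint union of the two copies of the \htree{}. This is precisely two components, each isomorphic to $T$. The only step requiring any care is the degree accounting that identifies leaves of $T$ as the unique vertices of $T$ that can end up in $V_2$, and I do not anticipate any further obstacle.
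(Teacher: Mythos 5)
Your proof is correct: the degree count showing that the leaf-matching edges are exactly the $2$-edges (with the $T\cong K_2$ case excluded by the hypothesis $H\neq C_4$) is precisely the ``easily proved'' observation the paper leaves to the reader, and the conclusion $H-E_2=T\cup T'$ follows immediately. Since the paper states this proposition without proof, your write-up simply supplies the intended argument in full detail.
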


We first focus on the \evm{} class $\c{H}_2$ (see Table~\ref{tab: extremality notions}) and obtain the following characterization establishing a bijection between \evm{} \bmcg{}s and \htree{}s. 

\begin{thm}{\sc[Main Theorem: Characterization of $\c{H}_2$]}\\
    A graph is a \evm{} \mbmcg{} if and only if it is obtained from a \htree{} by \isojoin{}.
    \label{thm : evm}
\end{thm}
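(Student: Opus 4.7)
The plan is to prove the two directions separately: $(\Leftarrow)$ by direct construction and $(\Rightarrow)$ by strong induction on $n = |V(H)|$.

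For $(\Leftarrow)$, suppose $H$ arises from a Halin tree $T$ by isomorphic leaf matching. I would first verify bipartiteness by giving $T$ a proper $2$-coloring and $T'$ the opposite coloring, so every matching edge crosses color classes. Writing $\ell$ and $p$ for the numbers of leaves and non-leaves of $T$, a direct count gives $n = 2(\ell + p)$, $m = 3\ell + 2p - 2$, and $|V_2(H)| = 2\ell = 2(m - n + 2)$, since only the leaves of $T$ and $T'$ attain degree $2$ in $H$. For matching coverage I would build an explicit ear decomposition and invoke Theorem~\ref{thm : odd ear decomp}: picking a non-leaf $u$ with at least two leaf neighbors $l_1, l_2$ (guaranteed as the penultimate vertex of a longest path in $T$), I would start with the $6$-cycle $l_1\, u\, l_2\, l_2'\, u'\, l_1'$ and then attach one ear per remaining piece of $T \cup T'$: a pendant leaf $l$ with parent $p$ already present contributes the length-$3$ ear $p, l, l', p'$, while an unvisited non-leaf $w$ with parent $p$ contributes a longer ear that descends from $p$ into the subtree at $w$, crosses to $T'$ via the matching edge at some leaf of that subtree, and returns to $p'$. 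Minimality is then verified edge by edge: deleting a matching edge $l l'$ or a tree edge incident to a leaf forces that leaf into its unique surviving edge, and the propagating constraints leave some other edge uncovered by any perfect matching.

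For $(\Rightarrow)$, the base case is $n = 4$, which forces $H = C_4$ corresponding to $T = K_2$. For the inductive step with $n \geq 6$, the heart of the argument is the structural claim that there exist $l_1, l_2 \in V_2(H)$ with a common neighbor $u \in V(H) \setminus V_2$ such that the respective other neighbors $l_1', l_2'$ of $l_1, l_2$ also lie in $V_2$ and share a common neighbor $u' \in V(H) \setminus V_2$. Granting this, if $\deg_H(u) \geq 4$, I would delete $l_1, l_1'$ along with their three incident edges to form $H^*$ with $n^* = n - 2$, $m^* = m - 3$, and $|V_2(H^*)| = |V_2(H)| - 2 = 2(m^* - n^* + 2)$; after verifying that $H^*$ is still minimal bipartite matching covered, the inductive hypothesis supplies a Halin tree $T^*$ realizing $H^*$, and appending $l_1$ as a pendant leaf at $u$ in $T^*$ (with mirror $l_1'$ at $u'$) recovers the desired $T$. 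If instead $\deg_H(u) = 3$, simple deletion would drop $u$ itself into $V_2(H^*)$ and destroy extremality; the reduction must therefore be supplemented by a bipartite-preserving bicontraction at the pair $(u, u')$ before invoking the inductive hypothesis and reversing the bicontraction.

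The main obstacle is the structural claim. Its proof must leverage the rigid extremality $|V_2| = 2(m - n + 2)$ together with the fact that every ear in any ear decomposition of a minimal bipartite matching covered graph has length at least $3$ (otherwise a single-edge ear could be detached from the decomposition without disturbing matching coverage, contradicting minimality). I would proceed in two stages: first, show that $E_2$ is a perfect matching on $V_2$, ruling out any $V_2$-vertex with two $V_2$-neighbors by a counting argument combining extremality with the $E_2$-degree partition of $V_2$; second, show that each non-$V_2$ anchor of a pendant $V_2$-pair supports at least two such pairs, so the required pair with common anchor $u$ exists. A secondary obstacle is verifying that the reduction stays within $\mathcal{H}_2$, especially in the $\deg_H(u) = 3$ subcase where the bicontraction must be shown to preserve minimality, matching coverage, and the extremality count.
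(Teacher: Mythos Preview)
Your $(\Leftarrow)$ direction is essentially the paper's approach, though your minimality argument has a small hole: you only treat edges incident to a leaf, but a Halin tree can have edges joining two non-leaves, and in $H$ such an edge has both ends of degree at least three. The paper handles these via Lemma~\ref{lem : nonremovable edge}: the edge $e$ in $T$ together with its copy $e'$ in $T'$ form a $2$-cut of $H$, so $e$ cannot be removable. Your ``propagating constraints'' does not cover this case.

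The serious gap is in $(\Rightarrow)$. Your structural claim asks for $l_1,l_2\in V_2$ with a common $V_3$-neighbour $u$ \emph{and} whose $E_2$-partners $l_1',l_2'$ share a common $V_3$-neighbour $u'$. Your two-stage outline does not establish the second half. Stage~1 (that $E_2$ is a perfect matching of $G[V_2]$) is correct and matches the paper's Lemma~\ref{lem : basic prop of embmcg}(i). But Stage~2, as you describe it, only produces $l_1,l_2$ sharing an anchor $u$; it says nothing about why the mirror vertices $l_1',l_2'$ should share an anchor $u'$. That mirror property is precisely the content of the word ``isomorphic'' in the theorem, and it is where all the difficulty lies. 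A priori the $V_3$-neighbours of $l_1'$ and $l_2'$ could be distinct vertices of $V_3$, and nothing in your counting or ear-length arguments rules this out.

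The paper resolves this by a completely different induction scheme. Instead of peeling off a leaf pair, it proves a \emph{Balanced $2$-cut Property} (Theorem~\ref{thm : 2-cut property}): every $3$-edge lies in a balanced $2$-cut with another $3$-edge. The proof is substantial, using a maximal conformal matching covered subgraph avoiding a given $3$-edge, together with the rigidity that every ear contains exactly one $2$-edge (Lemma~\ref{lem : basic prop of embmcg}(ii)). The induction then performs a $2$-cut decomposition across such a cut (Theorem~\ref{thm : 2-cut induction tool}), splitting $G$ into two strictly smaller members of $\c{H}_2$, applies the hypothesis to each piece, and glues the resulting Halin trees along the cut. This sidesteps entirely the need to locate your mirrored leaf configuration, and it makes the verification that the smaller pieces remain in $\c{H}_2$ a clean bookkeeping exercise rather than the delicate case analysis your bicontraction step would require.
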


\begin{figure}[htb]
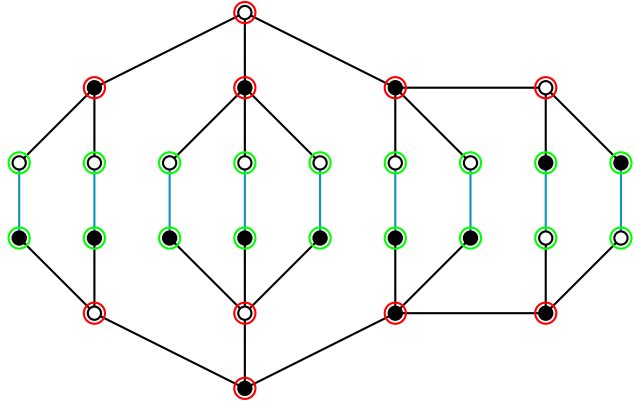

        
    \centering
    % \begin{subfigure}{0.49\linewidth}
    %     \centering
    %     \begin{drawing}{1}            
    %         \draw (-3,0.5)node[wnode]{}--(-2,1.5)--(-2,0.5)node[wnode]{};
    %         \draw (-2,1.5)node[bnode]{}--(0,2.5)--(0,1.5)--(-1,0.5)node[wnode]{};
    %         \draw (0,0.5)node[wnode]{}--(0,1.5);
    %         \draw (1,0.5)node[wnode]{}--(0,1.5)node[bnode]{}++(0,1)node[wnode]{}--(2,1.5)--(2,0.5)node[wnode]{};
    %         \draw (3,0.5)node[wnode]{}--(2,1.5)node[bnode]{}--(4,1.5)--(4,0.5)node[bnode]{};
    %         \draw (4,1.5)node[wnode]{}--(5,0.5)node[bnode]{};

    %         \foreach \x in {-3,-2,...,5}{
    %             \draw[color=green] (\x,0.5) circle (4pt);
    %         }

    %         \foreach \x in {-2,0,2,4}{
    %             \draw[color=red] (\x,1.5) circle (4pt);
    %         }
    %         \draw[color=red] (0,2.5) circle (4pt);
    %     \end{drawing}
    %     \label{fig: halin tree}
    %     \caption{a halin tree}
    % \end{subfigure}
    % \begin{subfigure}{0.49\linewidth}
        \centering
        \begin{drawing}{1}
            \foreach \x in {-3,-2,...,5}{
                \draw[color=cyan!70!black] (\x,-0.5)--(\x,0.5);
            }
            
            \draw (-3,0.5)node[wnode]{}--(-2,1.5)--(-2,0.5)node[wnode]{};
            \draw (-2,1.5)node[bnode]{}--(0,2.5)--(0,1.5)--(-1,0.5)node[wnode]{};
            \draw (0,0.5)node[wnode]{}--(0,1.5);
            \draw (1,0.5)node[wnode]{}--(0,1.5)node[bnode]{}++(0,1)node[wnode]{}--(2,1.5)--(2,0.5)node[wnode]{};
            \draw (3,0.5)node[wnode]{}--(2,1.5)node[bnode]{}--(4,1.5)--(4,0.5)node[bnode]{};
            \draw (4,1.5)node[wnode]{}--(5,0.5)node[bnode]{};

            \draw (-3,-0.5)node[bnode]{}--(-2,-1.5)--(-2,-0.5)node[bnode]{};
            \draw (-2,-1.5)node[wnode]{}--(-0,-2.5)--(-0,-1.5)--(-1,-0.5)node[bnode]{};
            \draw (0,-0.5)node[bnode]{}--(0,-1.5);
            \draw (1,-0.5)node[bnode]{}--(-0,-1.5)node[wnode]{}++(0,-1)node[bnode]{}--(2,-1.5)--(2,-0.5)node[bnode]{};
            \draw (3,-0.5)node[bnode]{}--(2,-1.5)node[wnode]{}--(4,-1.5)--(4,-0.5)node[wnode]{};
            \draw (4,-1.5)node[bnode]{}--(5,-0.5)node[wnode]{};

            \foreach \y in {-1,1}{
                \foreach \x in {-3,-2,...,5}{
                    \draw[color=green] (\x,0.5*\y) circle (4pt);
                }
                \foreach \x in {-2,0,2,4}{
                    \draw[color=red] (\x,1.5*\y) circle (4pt);
                }
                \draw[color=red] (0,2.5*\y) circle (4pt);
            }

        \end{drawing}
        
        \caption{a member of $\c{H}_2$ --- deleting the $2$-edges results in two copies of a \htree{}}
        \label{fig: ex evm}
    % \end{subfigure}
    % \caption{}
    % \label{fig:abc}
\end{figure}

Figure~\ref{fig: ex evm} shows an illustration of the above theorem. In this figure, and relevant figures henceforth, we adopt the following color conventions: the red vertices have degree three or more, the green vertices are of degree two, and the cyan edges are precisely the $2$-edges.

%The tree $T$ in Figure~\ref{fig: halin tree} is \htree{}. The graph $G$ obtained from $T$ by \isojoin{} is shown in Figure~\ref{fig: ex evm}. It may be verified that $G\in \c{H}_2$.  

Thereafter, we will use our Main Theorem (\ref{thm : evm}) to derive characterizations of the extremal classes $\c{H}_3$ and $\c{H}_4$ as stated below; see Figure~\ref{fig: ex evn and ee} for an example of each.

\begin{thm}{\sc[Characterization of $\c{H}_3$]}\\
    A graph is a \evn{} \mbmcg{} if and only if it is obtained from a cubic \htree{} by \isojoin{}.
    \label{thm : evn}
\end{thm}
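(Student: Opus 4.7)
The plan is to deduce Theorem~\ref{thm : evn} from the Main Theorem (Theorem~\ref{thm : evm}) by pinning down the source of the lower bound $|V_2| \geq \tfrac{n}{2}+2$. I expect this bound to arise as the pointwise minimum (over $m$) of two lower bounds on $|V_2|$: the ear-decomposition bound $|V_2| \geq 2(m-n+2)$ already available from Lov\'asz and Plummer, and the elementary degree-sum estimate $|V_2| \geq 3n - 2m$, which holds because every vertex of a \mcg{} has degree at least two and every non-degree-two vertex has degree at least three. Solving $2(m-n+2) = 3n-2m$ yields $m = \tfrac{5n-4}{4}$ and a common value of $\tfrac{n+4}{2}$, so the equality $|V_2| = \tfrac{n}{2}+2$ forces \emph{both} inequalities to be tight.

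For the forward direction I would start from $G \in \c{H}_3$ and extract two facts. First, $|V_2| = 2(m-n+2)$ places $G$ in $\c{H}_2$, so by the Main Theorem $G$ is obtained from some \htree{} $T$ by \isojoin{}. Second, $|V_2| = 3n - 2m$ forces every non-degree-two vertex of $G$ to have degree exactly three. Since each leaf of $T$ picks up exactly one matching edge (hence becomes a degree-two vertex of $G$) while each internal vertex of $T$ retains its tree-degree, the vertices of $G$ of degree at least three are precisely the internal vertices of $T$. Hence every internal vertex of $T$ has tree-degree exactly three; that is, $T$ is a cubic \htree{}.

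For the backward direction I would take $G$ obtained from a cubic \htree{} $T$ by \isojoin{}; the Main Theorem immediately yields $G \in \c{H}_2 \subseteq \c{H}$. It then suffices to verify the degree-two count. Writing $L$ and $I$ for the number of leaves and internal vertices of $T$, the degree-sum identity $L + 3I = 2(L+I-1)$ gives $I = L-2$, so $n = 2(L+I) = 4L-4$ and $|V_2| = 2L = \tfrac{n}{2}+2$, placing $G$ in $\c{H}_3$.

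I do not anticipate a serious obstacle: the Main Theorem carries all the structural weight, and the only conceptual step is to recognise $\c{H}_3$ as exactly the sub-class of $\c{H}_2$ in which no vertex has degree larger than three. Once that observation is unlocked via the two-bound analysis above, the rest is a short tree-count.
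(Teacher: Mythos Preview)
Your proposal is correct and is essentially the paper's own argument. The paper likewise derives $|V_2|\geq\frac{n}{2}+2$ by combining $|V_2|\geq 2(m-n+2)$ with the degree-sum inequality (phrased as $m-n\geq\frac{1}{2}|V_3|$, equivalent to your $|V_2|\geq 3n-2m$), then enforces equality in both to place $G$ in $\c{H}_2$ and to force every vertex of $V_3$ to have degree exactly three, invoking the Main Theorem to recover the cubic \htree{}; the backward direction is handled by the same leaf/internal-vertex count you give (packaged in the paper as Theorem~\ref{thm : embmcg TcupT'}\,\i{(i)}).
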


By a \i{star}, we mean $K_{1,p}$ where $p\geqslant 2$. Observe that these, except $K_{1,2}$, comprise a restricted subclass of \htree{}s; they play a crucial role in our characterization of $\c{H}_4$ stated below.

\begin{thm}{\sc[Characterization of $\c{H}_4$]}\\
    A graph is an \ee{} \mbmcg{} if and only if it is obtained from a star by \isojoin{}.
    \label{thm : ee}
\end{thm}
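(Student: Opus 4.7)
The plan is to exploit our characterization of $\c{H}_2$ (Theorem~\ref{thm : evm}) after first showing that, up to one sporadic exception, the edge-extremal class $\c{H}_4$ is contained in the vertex-extremal class $\c{H}_2$. The exception is $G = C_6$, which lies in $\c{H}_4$ but not in $\c{H}_2$; however, $C_6$ is precisely the graph obtained from the star $K_{1,2}$ by \isojoin{} (and $K_{1,2}$ is not a \htree{}, so Theorem~\ref{thm : evm} does not account for it).

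For the backward direction, suppose $G$ is obtained from $K_{1,p}$ ($p \geq 2$) by \isojoin{}. A direct count gives $n = 2p+2$ and $m = 3p = \frac{3n-6}{2}$, so it is enough to check that $G \in \c{H}$. For $p \geq 3$, $K_{1,p}$ is a \htree{} and Theorem~\ref{thm : evm} immediately gives $G \in \c{H}_2 \subseteq \c{H}$; for $p = 2$, $G = C_6$, which is routinely verified to lie in $\c{H}$.

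For the forward direction, let $G \in \c{H}_4$, so $m = \frac{3n-6}{2}$ (hence $n$ is even) and $2(m-n+2) = n - 2$. If $G$ is $2$-regular, then $G = C_n$, and $n = m = \frac{3n-6}{2}$ forces $n = 6$, giving $G = C_6$, which is the \isojoin{} of $K_{1,2}$. Otherwise we show that $G \in \c{H}_2$. The general lower bound gives $|V_2| \geq n - 2$, so it suffices to rule out $|V_2| = n - 1$. In that case a degree-sum count forces the unique non-degree-two vertex $v$ to have degree $n - 4$; since $v$ lies in a colour class of size $n/2$ we obtain $n \leq 8$, and the cases $n \in \{4,6,8\}$ each fail (for $n \in \{4,6\}$ the forced value $\deg(v) \in \{0,2\}$ contradicts $\deg(v) \geq 3$, which holds because $v \notin V_2$ and $\delta(G) \geq 2$ for any \mcg{} of order at least four; for $n = 8$, the three remaining vertices of $v$'s colour class have degree $2$, yielding a colour-class degree sum of $4 + 6 = 10 \neq 9 = m$). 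Hence $|V_2| = n - 2$ and $G \in \c{H}_2$. By Theorem~\ref{thm : evm}, $G$ arises from some \htree{} $T$ by \isojoin{}. Writing $|T| = n/2$ and letting $\ell$ denote the number of leaves of $T$, we have $m = 2(|T|-1) + \ell$; equating this with $3|T|-3$ forces $\ell = |T| - 1$, so $T$ has a unique non-leaf. Since $T$ is a \htree{}, this non-leaf has degree at least $3$, hence $T = K_{1,p}$ with $p \geq 3$, as required.

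The principal obstacle is excluding the possibility $|V_2| = n - 1$ en route to $G \in \c{H}_2$: this is the only step that genuinely uses the bipartite structure of $G$, via the degree-sum identity in each colour class, and it requires a brief case analysis for the small orders $n \leq 8$ that survive the inequality $n - 4 \leq n/2$. Every other step is either a one-line edge/leaf count or an immediate appeal to Theorem~\ref{thm : evm}.
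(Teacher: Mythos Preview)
Your proof is correct and follows essentially the same strategy as the paper: handle the cycle case separately, show the remaining graphs lie in $\c{H}_2$, and then identify the underlying tree as a star.

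Two minor points of comparison. First, your exclusion of $|V_2|=n-1$ via the case split on $n\in\{4,6,8\}$ works, but the paper dispatches this in one line: in a bipartite graph $G[A,B]$ the degree sums over $A$ and over $B$ are both equal to $m$, so if exactly one vertex had degree at least three (say in $A$) then $\sum_{a\in A}d(a)>2|A|=2|B|=\sum_{b\in B}d(b)$, a contradiction; hence $|V_3|\geq 2$ immediately. Second, once $G\in\c{H}_2$ with $|V_3|=2$, the paper invokes the base-case Lemma~\ref{lem : base case - main theorem} directly rather than the full Main Theorem followed by your leaf-count $m=2(|T|-1)+\ell$; your route is slightly heavier but equally valid. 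Neither difference is substantive.
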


\begin{figure}[htb]
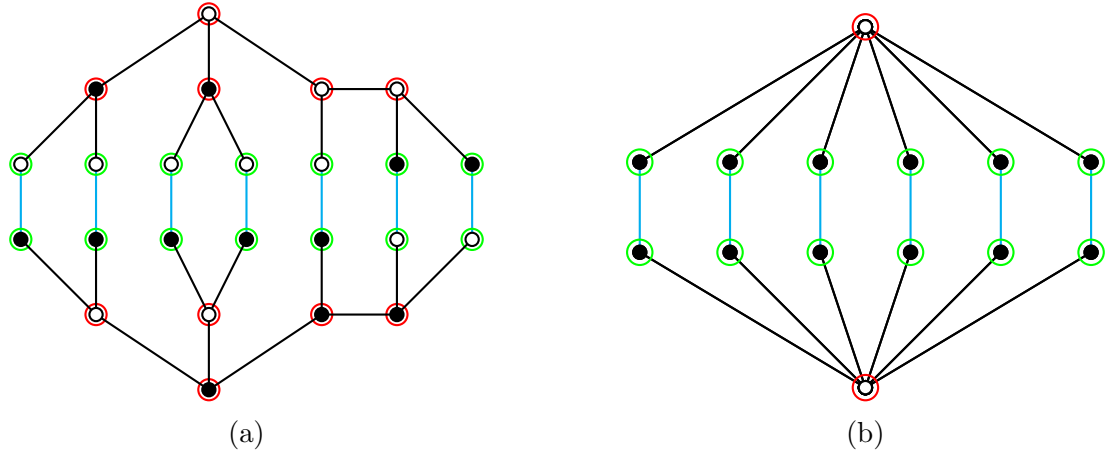

    \centering
    \begin{subfigure}{0.49\linewidth}
        \centering
        \begin{drawing}{1}
            \foreach \x in {-3,-2,...,3}{
                \draw[color=cyan] (\x,-0.5)--(\x,0.5);
                % \draw (\x,-0.5)--(\x,0.5);
            }

            \foreach \y in {-1,1}{
                \foreach \x in {-3,-2,...,3}{
                    \draw[color=green] (\x,0.5*\y) circle (4pt);
                }
                \foreach \x in {-2,-0.5,1,2}{
                    \draw[color=red] (\x,1.5*\y) circle (4pt);
                }
                    \draw[color=red] (-0.5,2.5*\y) circle (4pt);

                \draw (-3,0.5*\y)--(-2,1.5*\y)--(-2,0.5*\y);
                \draw (-1,0.5*\y)--(-0.5,1.5*\y)--(0,0.5*\y);
                \draw (1,0.5*\y)--(1,1.5*\y);
                \draw (2,0.5*\y)--(2,1.5*\y)--(3,0.5*\y);

                \draw (-2,1.5*\y)--(-0.5,2.5*\y)--(-0.5,1.5*\y);
                \draw (-0.5,2.5*\y)--(1,1.5*\y)--(2,1.5*\y);
            }
            
            \draw (-3,0.5)node[wnode]{}(-2,1.5)(-2,0.5)node[wnode]{};
            \draw (-2,1.5)node[bnode]{}(-0.5,2.5)(-0.5,1.5)(-1,0.5)node[wnode]{};
            \draw (0,0.5)node[wnode]{}(-0.5,1.5)node[bnode]{}++(0,1)node[wnode]{}(1,1.5)(1,0.5)node[wnode]{};
            \draw (1,1.5)node[bnode]{}(2,0.5)node[bnode]{};
            \draw (2,1.5)node[wnode]{}(3,0.5)node[bnode]{};

            \draw (-3,-0.5)node[bnode]{}(-2,-1.5)(-2,-0.5)node[bnode]{};
            \draw (-2,-1.5)node[wnode]{}(-0.5,-2.5)(-0.5,-1.5)(-1,-0.5)node[bnode]{};
            \draw (0,-0.5)node[bnode]{}(-0.5,-1.5)node[wnode]{}++(0,-1)node[bnode]{}(1,-1.5)(1,-0.5)node[bnode]{};
            \draw (1,-1.5)node[wnode]{}(2,-0.5)node[wnode]{};
            \draw (2,-1.5)node[bnode]{}(3,-0.5)node[wnode]{};
 
        \end{drawing}
        \caption{}
        \label{fig:ex-ec}
    \end{subfigure}
    \begin{subfigure}{0.49\linewidth}
        \centering
        \begin{drawing}{1.2}
            \foreach \x in {-2,...,3}{
                \draw[color=cyan] (\x,-0.5)--(\x,0.5);
                % \draw (\x,-0.5)--(\x,0.5);
            }

            \foreach \x in {-2,...,3}{
                \draw (\x,0.5)node[bnode]{}--(0.5,2)node[wnode]{};
                \draw (\x,-0.5)node[bnode]{}--(0.5,-2)node[bnode]{};
            }
            \foreach \y in {-1,1}{
                \foreach \x in {-2,...,3}{
                    \draw (\x,0.5*\y)node[bnode]{}--(0.5,2*\y)node[wnode]{};
                    \draw[color=green] (\x,0.5*\y) circle (4pt);
                }
                \draw[color=red] (0.5,2*\y) circle (4pt);
            }
        \end{drawing}
        \caption{}
        \label{fig:ex-ed}
    \end{subfigure}
    \caption{(a) a member of $\c{H}_3$: deleting the $2$-edges results in two copies of a cubic \mbox{\htree{}}
    \strut \hspace{1.75cm}(b) a member of $\c{H}_4$: deleting the $2$-edges results in two copies of a star}
    \label{fig: ex evn and ee}
\end{figure}

In the next subsection, we prove the easier implications of the characterizations stated above (that is, Theorems~\ref{thm : evm},~\ref{thm : evn} and~\ref{thm : ee}). 

\subsection{Proofs of easier implications}

We begin by proving that any graph obtained from a nontrivial tree by \isojoin{} is a \mbmcg{}. We shall find ear decompositions useful to establish the \mc{} property; however, for minimality, we need an easy observation pertaining to cuts.

Given a graph $G:=(V,E)$, for any $W\subseteq V$, the \i{cut} comprising the edges joining $W$ and its complement $\overline{W}:=V- W$ is denoted by $\partial (W)$. A cut $\partial(W)$ is \i{trivial} if either $W$ or $\overline{W}$ comprises at most one vertex, and \i{nontrivial} otherwise. A \i{$k$-cut} refers to a cut of cardinality~$k$. We use abbreviated notations $\partial(v):=\partial(\{v\})$ for a vertex $v$, and $\partial(H):=\partial(V(H))$ for a subgraph $H$.

An edge $e$ of a \mcg{} $G$ is \i{removable} if $G-e$ is also \mc{}. Thus, a \mcg{} is minimal if and only if it has no removable edges. Since \mcg{}s are $2$-connected, we immediately observe the following. 
\begin{lem}
    In a \mcg{}, any edge that participates in a $2$-cut is not removable. \hfill{$\qed$}
    \label{lem : nonremovable edge}
\end{lem}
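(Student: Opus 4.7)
The plan is to argue by contraposition: I will show that if $e$ participates in a $2$-cut of $G$, then $G - e$ is not $2$-connected, and hence (by the standard fact, recalled in the paper, that every matching covered graph on at least four vertices is $2$-connected) $G - e$ is not matching covered. This directly means $e$ is not removable.

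Concretely, I would let $\partial(W) = \{e, f\}$ be a $2$-cut containing $e$, with $e \neq f$ and both $W$ and $\overline{W}$ nonempty. Then in $G - e$ the vertex partition $(W, \overline{W})$ has associated cut $\{f\}$, so every path in $G - e$ between $W$ and $\overline{W}$ must use $f$; in particular $f$ is a bridge of $G - e$. Because $|V(G)| \geq 4$, at least one side of the cut has two or more vertices; without loss of generality take $|W| \geq 2$, and let $v$ be the endpoint of $f$ inside $W$. Any vertex of $W \setminus \{v\}$ can reach $\overline{W}$ in $G - e$ only via $f$, and therefore only via $v$; deleting $v$ from $G - e$ thus disconnects the nonempty sets $W \setminus \{v\}$ and $\overline{W}$. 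Hence $v$ is a cut vertex of $G - e$, so $G - e$ is not $2$-connected, completing the argument.

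I do not foresee any substantive obstacle. The case $|W| = 1$ is handled symmetrically using $\overline{W}$ (which then has at least three vertices), and the reasoning is insensitive to whether $e$ and $f$ are parallel edges or merely share an endpoint, since only the edge-cut structure is used. The content of the lemma is essentially just the observation that deleting one of the two edges of a $2$-cut creates a bridge in the remaining graph, and a matching covered graph on four or more vertices cannot harbor one.
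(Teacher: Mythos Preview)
Your proposal is correct and is precisely the argument the paper has in mind: the paper states the lemma with a \qed immediately after the sentence ``Since matching covered graphs are $2$-connected, we immediately observe the following,'' and your write-up simply makes explicit that deleting one edge of a $2$-cut leaves a bridge (hence a cut vertex), so $G-e$ fails to be $2$-connected and thus cannot be matching covered.
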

% We will now prove the easy direction of the above theorems. We first show that the graphs obtained by \isojoin{} of two copies of a nontrivial tree is \mbmcg{}. Then, the extremality part is just a counting argument.
The above lemma is the aforementioned observation that helps us in establishing minimality in the proof of the following result; we shall find it useful later as well.

\begin{prop}
    Any graph obtained from a nontrivial tree by \isojoin{} is a \mbmcg{}.
    \label{prop : mbmcg TcupT'}
\end{prop}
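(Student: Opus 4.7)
The plan is to verify that the graph $H$, obtained from a nontrivial tree $T$ by \isojoin{}, is simultaneously bipartite, matching covered, and edge-minimal. Bipartiteness is immediate: starting from a bipartition $(A, B)$ of $T$, I would equip the copy $T'$ with the \emph{dual} bipartition in which the copy of each $v \in A$ lies on the $B$-side and vice versa, so that every matching edge, joining a leaf of $T$ to its $T'$-copy, is properly bicolored; this extends to a bipartition of $H$.

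For matching-coveredness, I would invoke Hetyei's Ear Decomposition Theorem (Theorem~\ref{thm : odd ear decomp}) by exhibiting an explicit ear decomposition of $H$. Picking any two leaves $\ell_1, \ell_2$ of $T$ (two exist since $T$ is nontrivial), the $T$-path between them, together with its $T'$-mirror and the two matching edges $\ell_1\ell_1', \ell_2\ell_2'$, forms an even cycle $C_0$ which I take as the initial subgraph $H_0$. At each subsequent stage I maintain the invariant that the trace $T_k := H_k \cap T$ is a subtree of $T$. If $T_k \neq T$, any component $B$ of the forest $T - V(T_k)$ is itself a subtree of $T$ attached to $T_k$ by a unique tree edge $wv$ (with $w \in T_k$, $v \in B$), and $B$ necessarily contains a leaf $\ell$ of $T$; then the $T$-path from $w$ through $v$ to $\ell$, together with the matching edge $\ell\ell'$ and the mirror $T'$-path from $\ell'$ back to $w'$, forms a path of odd length from $w$ to $w'$ whose interior is disjoint from $H_k$ --- a valid ear. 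Iterating exhausts $T$ and thus all of $H$, yielding the desired ear decomposition.

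For minimality, consider any edge $e$ of $H$. If $e$ is a matching edge or a tree edge incident to a leaf $\ell$ of $T$ (with $T$-neighbor $w$), then $e$ lies in the $2$-cut $\partial(\ell) = \{\ell w, \ell\ell'\}$ and is non-removable by Lemma~\ref{lem : nonremovable edge}; the analogous argument handles leaf-incident edges of $T'$. The remaining case, and the crux of the argument, is a tree edge $e = uv$ of $T$ (or symmetrically of $T'$) where neither endpoint is a leaf. Here the key observation is that any \pema{} $P$ of $H$ is completely determined by the set $L_P := \{\ell : \ell\ell' \in P,\ \ell \text{ a leaf of } T\}$, because the remaining $T$-vertices must be matched within $T$ (the only edges between $T$ and $T'$ are the matching edges at leaves), and a tree admits at most one \pema{}. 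By the mirror symmetry of the isomorphism, the $T$-edges and $T'$-edges of $P$ correspond, so $e \in P$ if and only if $e' \in P$, where $e' = u'v'$ is the $T'$-copy of $e$. Hence $H - e$ admits no \pema{} containing $e'$; since $e'$ is still an edge of $H - e$, the latter is not matching covered, and $e$ is non-removable. The main obstacle is establishing this mirror decomposition of perfect matchings of $H$ cleanly, which rests on uniqueness of the perfect matching in a tree (when it exists); once in place, the conclusion is immediate.
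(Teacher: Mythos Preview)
Your proof is correct. The ear-decomposition half is essentially the paper's argument recast: the paper inducts on $|V(T)|$, peeling off a maximal leaf-path and recognising the remainder as a smaller \isojoin{}, whereas you build the decomposition directly by growing a subtree $T_k$ one leaf-path at a time; the two are interchangeable unrollings of the same construction.

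Where you genuinely diverge is in the minimality argument for a tree edge $e=uv$ with $u,v$ both non-leaves. Your mirror-symmetry argument --- that every perfect matching of $H$ restricts to the unique perfect matching of the forest $T-L_P$, hence $e\in P\iff e'\in P$ --- is correct and self-contained, but it is doing more work than necessary. The paper simply observes that $\{e,e'\}$ is a $2$-cut of $H$ (deleting $e$ separates $T$ into subtrees $T_u,T_v$; deleting $e'$ separates $T'$ correspondingly; and the leaf matching respects this split), so Lemma~\ref{lem : nonremovable edge} applies directly. In effect your argument re-derives, for this particular graph, the standard fact that both edges of a $2$-cut lie in the same perfect matchings --- which is exactly what Lemma~\ref{lem : nonremovable edge} already packages. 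Your route has the minor virtue of not needing to verify that $\{e,e'\}$ is a cut, but the paper's is shorter and reuses an available tool.
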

\begin{proof} 
    %Change the proof
    Let $G$ be a graph obtained from two copies of a nontrivial tree, say $T$ and $T'$, by \isojoin{}. We will first prove, by induction on the order of $T$, that $G$ is a \bmcg{}. If $T$ is a path then $G$ is a even cycle, and we are done. 

    Now, let $T$ be a tree that is not a path and let $v$ denote a leaf. Let $P$ be the maximal path starting at $v$, each of whose internal vertices has degree two in $T$. Let $u$ be the end of $P$ distinct from $v$. Observe that $d_T(u)\geqslant3$. Let $v'P'u'$ be the path corresponding to $vPu$ in~$T'$. Let $H:=G-V(P-u)- V(P'-u')$. Observe that $H$ is obtained from the tree $T-V(P-u)$ by \isojoin{}. By the induction hypothesis, $H$ is a \bmcg{}. Note that $G$ may be obtained from $H$ by adding the ear $uPvv'P'u'$. By the ear decomposition theorem (\ref{thm : odd ear decomp}), $G$ is a \bmcg{}.

    Now, we prove minimality. Observe that any edge $e$ of $G$, whose each end has degree at least three, either belongs to $T$ or belongs to $T'$. Adjust notation so that $e\in T$. Let $e'$ be the copy of $e$ in $T'$. Note that $\{e,e'\}$ is a $2$-cut of $G$. Thus, by Lemma~\ref{lem : nonremovable edge},  $e$ is not removable. We thus infer that $G$ is minimal.
\end{proof}

\begin{figure}[htb]
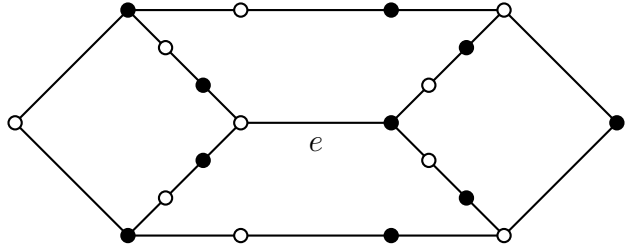

    \centering
    \begin{drawing}{1}
        \draw (-2.5,1.5)--(2.5,1.5)--(1,0)--(2.5,-1.5)--(-2.5,-1.5)--(-1,0)--(-2.5,1.5);
        
        \draw (-1.5,0.5)node[bnode]{}(-2,1)node[wnode]{};
        \draw (-1,1.5)node[wnode]{}(1,1.5)node[bnode]{};
        \draw (1.5,0.5)node[wnode]{}(2,1)node[bnode]{};
        \draw (-1.5,-0.5)node[bnode]{}(-2,-1)node[wnode]{};
        \draw (-1,-1.5)node[wnode]{}(1,-1.5)node[bnode]{};
        \draw (1.5,-0.5)node[wnode]{}(2,-1)node[bnode]{};
        
        \draw (-1,0)--(1,0);
        
        \draw (0,0)node[below,nodelabel]{$e$};
    
        \draw (-1,0)node[wnode]{}(-2.5,1.5)node[bnode]{}--(-4,0)node[wnode]{}--(-2.5,-1.5)node[bnode]{};

        \draw (1,0)node[bnode]{}(2.5,1.5)node[wnode]{}--(4,0)node[bnode]{}--(2.5,-1.5)node[wnode]{};
    \end{drawing}
    \caption{a counterexample to the converse of Proposition~\ref{prop : mbmcg TcupT'}}
    \label{fig: counterexample mbmcg}
\end{figure}

We note that the converse of the above proposition does not hold. The graph $G$, shown in Figure~\ref{fig: counterexample mbmcg}, is a \mbmcg{} that can not be obtained from a tree  by \isojoin{}. To see this, suppose to the contrary that $G$ is obtained from two copies of a tree, say $T$ and $T'$, by \isojoin{}. Then, as per the proof of Proposition~\ref{prop : mbmcg TcupT'}, the labeled edge $e$ belongs either to $T$ or to $T'$ as both ends of $e$ have degree greater than two; furthermore, $e$ belongs to a $2$-cut of $G$. However, $G-e$ is \kC{2}. This is a contradiction. We shall revisit this example in Section~\ref{lem : basic prop of embmcg}.  

In light of Proposition~\ref{prop : mbmcg TcupT'}, in order to prove the easier implications of Theorems~\ref{thm : evm},~\ref{thm : evn} and~\ref{thm : ee}, we simply need to argue that if we choose a tree $T$ from the corresponding class (as in that theorem's statement), then the resulting \mbmcg{} $G$ (obtained by \isojoin{}) satisfies the desired extremality notion. We first note that if $T$ is the star $K_{1,2}$ then $G=C_6$ is an \ee{} \mbmcg{}. All of the remaining cases are handled by the following theorem --- which we prove using straightforward counting arguments.

\begin{thm}
    Any graph $G$ obtained from a \htree{} $T$ by \isojoin{} is a \mbmcg{} that is \evm{}. Furthermore:
    \begin{enumerate}
        % \item if $T$ is a \htree{} then $G$ is \evm{}.
        \item if $T$ is a cubic \htree{} then $G$ is also \evn{}, whereas
        \item if $T$ is a star then $G$ is also \ee{}. 
    \end{enumerate}
    \label{thm : embmcg TcupT'}
\end{thm}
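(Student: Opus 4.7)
The plan is to invoke Proposition~\ref{prop : mbmcg TcupT'} to conclude immediately that $G$ is a \mbmcg{}, and then verify each of the three extremality conditions by direct counting based on the structure of $T$. Fix notation: let $T$ have $n_T$ vertices and $\ell$ leaves, let $T'$ denote its isomorphic copy, and recall that since $T$ is a \htree{} every non-leaf vertex of $T$ has degree at least three. Because the \isojoin{} adds exactly $\ell$ edges forming a matching between the leaves of $T$ and those of $T'$, we immediately have $n = 2n_T$ and $m = 2(n_T - 1) + \ell$.

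Next I would pin down $V_2(G)$ exactly. For $v \in V(T)$, the degree $d_G(v)$ equals $d_T(v)+1$ when $v$ is a leaf of $T$ (hence $d_G(v)=2$) and equals $d_T(v) \geqslant 3$ otherwise; the symmetric statement holds in $T'$. Therefore $V_2(G)$ is precisely the union of the leaf sets of $T$ and $T'$, so $|V_2| = 2\ell$. Combining this with the identity $m - n + 2 = 2(n_T-1)+\ell - 2n_T + 2 = \ell$ yields $|V_2| = 2(m - n + 2)$, establishing that $G \in \c{H}_2$.

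For part (i), if $T$ is a cubic \htree{} then every non-leaf has degree exactly three, so the handshake lemma applied to $T$ gives $\ell + 3(n_T - \ell) = 2(n_T - 1)$, whence $\ell = (n_T + 2)/2$. Therefore $|V_2| = 2\ell = n_T + 2 = n/2 + 2$, confirming $G \in \c{H}_3$. For part (ii), if $T = K_{1,p}$ with $p \geqslant 3$ (the halin requirement excludes $p=2$, already handled separately in the text), then $n_T = p + 1$ and $\ell = p$, which gives $n = 2p + 2$ and $m = 2p + p = 3p = (3n - 6)/2$, confirming $G \in \c{H}_4$.

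The whole argument reduces to handshake and degree counts, so I do not anticipate any conceptual obstacle. The only point requiring care is the observation that the \isojoin{} edges raise exactly the leaf degrees by one, so that $V_2(G)$ can be read off directly as the union of the leaf sets of $T$ and $T'$; every bound then falls out of the formulas $m = 2(n_T - 1)+\ell$ and $n = 2n_T$ by substituting the appropriate structural identity on $T$.
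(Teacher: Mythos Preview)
Your proposal is correct and follows essentially the same approach as the paper: invoke Proposition~\ref{prop : mbmcg TcupT'} for minimality, identify $V_2(G)$ with the leaves of $T\cup T'$, and then verify each extremality bound by direct handshake counting. The only cosmetic difference is that the paper first computes $|E_2|=m-n+2$ and then deduces $|V_2|=2|E_2|$, whereas you compute $|V_2|=2\ell$ directly and relate $\ell$ to $m-n+2$; the arithmetic is identical.
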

\begin{proof}
    Let $G:=(V,E)$ be a graph obtained from two copies of a \htree{}, say $T$ and $T'$, by \isojoin{}. By Proposition~\ref{prop : mbmcg TcupT'}, $G$ is a \mbmcg{}. It remains to prove ``extremality". We use $V_2$ and $E_2$ to denote $V_2(G)$ and $E_2(G)$, respectively. 

    Since $T$ is a \htree{}, the non-leaves of $T$ and $T'$ comprise those vertices of $G$ that have degree at least three, whereas the leaves of $T$ and $T'$ comprise the remaining vertices of~$G$ (that is, its vertices of degree two). In particular, there are no $2$-edges in $T\cup T'$, and $T\cup T'= G-E_2$.

    Now, $|E(T)|=|V(T)|-1$. So, $|E(T\cup T')|=n-2$. On the other hand, $|E(T\cup T')|=|E|-|E_2|=m-|E_2|$. Thus, $|E_2|=m-n+2$. Since $E_2$ is a matching of $G$, and the corresponding matched vertices comprise the set $V_2$, we get $|V_2|=2|E_2|=2(m-n+2)$. Hence, $G$ is \evm{}. It remains to prove statements \i{(i)} and \i{(ii)}.
    
    % \begin{enumerate}
    First, suppose that $T$ is a cubic \htree{}. So, $T\cup T'$ has $|V_2|$ leaves and $n-|V_2|$ vertices of degree three. By handshaking lemma, $3(n-|V_2|)+|V_2|=2|E(T\cup T')|=2(n-2)$. By simplifying, we get $|V_2|=\frac{n}{2}+2$; whence $G$ is \evn{}.
    
    Now, suppose that $T$ is a star $K_{1,p}$, where $p\geqslant3$. Then, $n=2(p+1)$. Observe that $m=3p=\frac{3n-6}{2}$. Ergo, $G$ is \ee{}.
\end{proof}

We now switch our attention to the remaining ``extremal" classes $\c{H}_0$ and $\c{H}_1$.

\subsection{Characterizations of $\c{H}_0$ and $\c{H}_1$}
\label{subsec : h0 and h1}

In general, members of $\c{H}_0$ and $\c{H}_1$ may not be obtained from a tree by \isojoin{}, unlike the members of $\c{H}_2,\c{H}_3$ and $\c{H}_4$. To see this, we discuss a couple of examples that are shown in Figure~\ref{fig: examples not isojoin}; using the ear decomposition theorem (\ref{thm : odd ear decomp}) and Lemma~\ref{lem : nonremovable edge}, the reader may easily verify that both graphs belong to $\c{H}$. By counting, one may infer these graphs belong to $\c{H}_0$ and $\c{H}_1$,  respectively. For the graph shown in Figure~\ref{subfig : ex h0 1}, deleting all of the $2$-edges (displayed in blue) results in precisely two components that are trees but nonisomorphic. On the other hand, for the graph shown in Figure~\ref{subfig : ex h0 2}, deleting all of the $2$-edges results in two isomorphic trees; however, the resultant graph is not obtained by \isojoin{} operation. Interestingly, we are able to reduce members of $\c{H}_0$~and~$\c{H}_1$ to members of $\c{H}_2$ and $\c{H}_4$, respectively, using the notion of ``\retract" that we describe next. %\b{Caption}

\begin{figure}[htb]
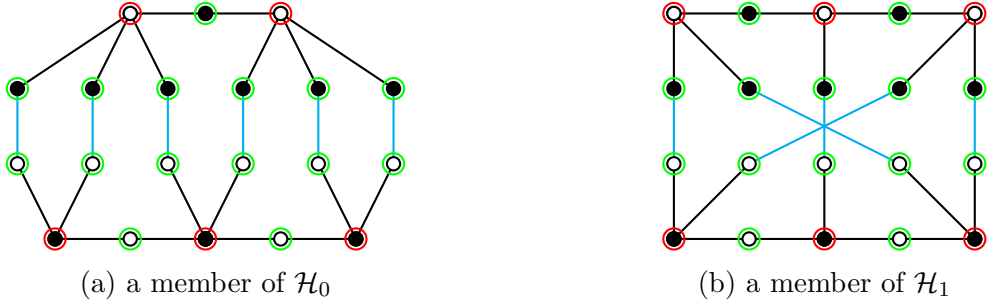

    \centering
    \begin{subfigure}{0.49\linewidth}
        \centering
        \begin{drawing}{1}
            \foreach \x in {-2.5,-1.5,...,2.5}{
                \draw[color=cyan] (\x,1)--(\x,2);
            }
            \foreach \x in {1,-1}{
                \draw (0,0)node[bnode]{}--++(1*\x,0)node[wnode]{}--++(1*\x,0)node[bnode]{}--++(-0.5*\x,1)node[wnode]{}++(0,1)node[bnode]{}--++(-0.5*\x,1)node[wnode]{}--++(-1*\x,0)node[bnode]{};
                \draw (0,0)node[bnode]{}--++(0.5*\x,1)node[wnode]{}++(0,1)node[bnode]{}--++(0.5*\x,1)node[wnode]{}--++(1.5*\x,-1)node[bnode]{}++(0,-1)node[wnode]{}--++(-0.5*\x,-1)node[bnode]{};
            }

            \foreach \x in {-2.5,-1.5,...,2.5}{
                \draw[color=green] (\x,1) circle (4pt);
                \draw[color=green] (\x,2) circle (4pt);
            }

            \foreach \x in {-2,0,2}{  
                \draw[color=red] (\x,0) circle (4pt);
            }          
            \foreach \x in {-1,1}{  
                \draw[color=green] (\x,0) circle (4pt);
            }
            \draw[color=red] (-1,3) circle (4pt);
            \draw[color=red] (1,3) circle (4pt);
            \draw[color=green] (0,3) circle (4pt);

        \end{drawing}
        \caption{a member of $\c{H}_0$}
        \label{subfig : ex h0 1}
    \end{subfigure}
    \begin{subfigure}{0.49\linewidth}
        \centering
        \begin{drawing}{1}
            \foreach \x in {0,4}{
                \draw[color=cyan] (\x,1)--(\x,2);
            }
            \foreach \x in {1,2,3}{
                \draw[color=cyan] (\x,1)--(4-\x,2);
            }
            
            \draw (0,0)node[bnode]{}--++(1,0)node[wnode]{}--++(1,0)node[bnode]{}--++(1,0)node[wnode]{}--++(1,0)node[bnode]{}--++(0,1)node[wnode]{}++(0,1)node[bnode]{}--++(0,1)node[wnode]{}--++(-1,0)node[bnode]{}--++(-1,0)node[wnode]{}--++(-1,0)node[bnode]{}--++(-1,0)node[wnode]{}--++(0,-1)node[bnode]{}++(0,-1)node[wnode]{}--++(0,-1)node[bnode]{};
    
            \draw (2,0)node[bnode]{}--++(0,1)node[wnode]{}++(0,1)node[bnode]{}--++(0,1)node[wnode]{};
            \draw (0,0)node[bnode]{}--++(1,1)node[wnode]{}++(2,1)node[bnode]{}--++(1,1)node[wnode]{};
            \draw (4,0)node[bnode]{}--++(-1,1)node[wnode]{}++(-2,1)node[bnode]{}--++(-1,1)node[wnode]{};

            \foreach \x in {0,1,...,4}{
                \draw[color=green] (\x,1) circle (4pt);
                \draw[color=green] (\x,2) circle (4pt);
            }
            \foreach \y in {0,3}{
                \foreach \x in {0,2,4}{  
                    \draw[color=red] (\x,\y) circle (4pt);
                }
                \foreach \x in {1,3}{  
                    \draw[color=green] (\x,\y) circle (4pt);
                }
            }
            
        \end{drawing}
        \caption{a member of $\c{H}_1$}
        \label{subfig : ex h0 2}
    \end{subfigure}
    \caption{extremal graphs that may not be obtained from a tree by \isojoin{}}
    \label{fig: examples not isojoin}
\end{figure}

Let $G$ be a graph and $v$ be a vertex of degree two that has two distinct neighbours, say $v_1$~and~$v_2$. Let $G'$ denote the graph obtained from~$G$ by contracting the two edges $vv_1$ and $vv_2$. We say that $G'$ is obtained from $G$ by \i{bicontraction} (of $v$), and we denote it by $G':=G/v$; see Figure~\ref{fig: bicontraction and bisplitting}. Furthermore, if each of $v_1$ and $v_2$ has degree at least three in $G$, we say that $G'$ is obtained from $G$ by \i{\bicontract{}} (of $v$); see Figure~\ref{fig: restricted bicontraction and restricted bisplitting}. Note that \bicontract{} may be performed if and only if the subgraph of $G$ induced by its degree two vertices has an isolated vertex.

We define the \i{\retract} of $G$ as the graph $\widehat{G}$ obtained by repeatedly applying the \bicontract{} operation until the resulting graph has the property that the subgraph induced by its degree two vertices has no isolated vertex. Observe that given a graph~$G$, its \retract{} $\widehat{G}$ is unique; Figure~\ref{fig: ex h0} shows an example.

Using the notion of \retract{}, we are able to relate the extremal class $\c{H}_0$ with the extremal class $\c{H}_2$, as stated below.

\begin{thm}{\sc[Characterization of $\c{H}_0$]}\\
    A graph $G$, distinct from $C_4$, is a \eem{} \mbmcg{} if and only if its \retract{} $\widehat{G}$ is a \evm{} \mbmcg{}.
    \label{thm : eem}
\end{thm}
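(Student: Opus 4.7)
The plan is to show that \bicontract{} preserves both the class of \mbmcg{}s and the quantities $|E_2|$ and $m-n+2$; this reduces the desired equivalence to the case $G=\widehat{G}$, after which a counting argument handles one direction and the Main Theorem~\ref{thm : evm} handles the other.

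First I would verify the invariance properties. Let $v$ be a vertex admitting \bicontract{} in $G$, with neighbours $v_1,v_2$ of degree at least three. Since $v_1,v_2$ lie in the same colour class of the bipartite graph $G$, the contraction $G/v$ is a loopless bipartite multigraph; bicontraction preserves matching-coveredness in both directions. For minimality, the edges $vv_1$ and $vv_2$ are non-removable in $G$ by Lemma~\ref{lem : nonremovable edge}, and for any other edge $e$ the identity $(G-e)/v=(G/v)-e$ combined with the preservation of matching-coveredness under bicontraction shows that $e$ is removable in $G$ iff it is removable in $G/v$. A direct count shows that $m$ and $n$ each decrease by two, preserving $m-n+2$; the removed edges $vv_1,vv_2$ are not $2$-edges (as $\deg v_1,\deg v_2\geq 3$), and the merged vertex has degree at least $4$, so no other edge's $2$-edge status changes, making $|E_2|$ invariant as well. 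Iterating, $G$ is a \mbmcg{} iff $\widehat{G}$ is, and $G\in\c{H}_0$ iff $\widehat{G}\in\c{H}_0$. Moreover, the hypothesis $G\neq C_4$ implies $\widehat{G}\neq C_4$: if $\widehat{G}=C_4$, then $G$ is obtained from $C_4$ by \bisplit{}s, but $C_4$ admits none (every vertex has degree two), so $G=C_4$.

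It therefore suffices to prove, for such $\widehat{G}$, that $\widehat{G}\in\c{H}_0$ iff $\widehat{G}\in\c{H}_2$. By definition of the \retract{}, the subgraph $\widehat{G}[V_2(\widehat{G})]$ has no isolated vertex, so its components are either paths on at least two vertices, or cycles. A cycle component would, by connectedness of $\widehat{G}$, equal all of $\widehat{G}$, making $\widehat{G}=C_{2\ell}$ for some $\ell\geq 3$; then $|E_2|=|V_2|=2\ell>2=m-n+2$, so $\widehat{G}$ lies in neither extremal class and the equivalence holds vacuously. Otherwise, the components of $\widehat{G}[V_2]$ are paths with $p_1,\ldots,p_c$ vertices (each $p_i\geq 2$), giving $|V_2|=\sum_i p_i$ and $|E_2|=\sum_i(p_i-1)$. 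For the forward direction, if $|E_2|=m-n+2$ then the Lov\'asz--Plummer bound supplies $\sum_i\lfloor p_i/2\rfloor \geq m-n+2$ (since a maximum matching within a path on $p_i$ vertices has size $\lfloor p_i/2\rfloor$); combined with the elementary inequality $\lfloor p/2\rfloor\leq p-1$ for $p\geq 2$ (with equality iff $p=2$), this forces each $p_i=2$, so $|V_2|=2c=2|E_2|=2(m-n+2)$ and $\widehat{G}\in\c{H}_2$. For the reverse direction, if $\widehat{G}\in\c{H}_2$, then the Main Theorem~\ref{thm : evm} gives $\widehat{G}$ as an \isojoin{} of some \htree{} $T$; its $2$-edges are precisely the matching edges (the degree-two vertices being exactly the leaves of $T\cup T'$), so $|E_2|=|V_2|/2=m-n+2$ and $\widehat{G}\in\c{H}_0$.

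The principal technical obstacle is verifying the minimality half of the invariance statement --- namely, that the natural edge-bijection $E(G)\setminus\{vv_1,vv_2\}\leftrightarrow E(G/v)$ identifies removable edges. This is a standard matching-theoretic observation (bicontraction commutes with edge-deletion and preserves matching-coveredness), but it warrants careful verification.
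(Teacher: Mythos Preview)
Your proof is correct and follows the same overall architecture as the paper: first establish that \bicontract{} preserves membership in $\c{H}$ and in $\c{H}_0$ (this is the paper's Theorem~\ref{thm : bicontraction preserves extremality} and Corollary~\ref{cor : retract in h0}, proved essentially as you do), then reduce to the claim that $\widehat{G}\in\c{H}_0$ iff $\widehat{G}\in\c{H}_2$.

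Where you diverge is in how you finish each direction of that reduced claim, and in both places the paper is shorter. For the forward direction you analyse path components of $\widehat{G}[V_2]$ and invoke the induced-matching version of the Lov\'asz--Plummer bound (Theorem~\ref{thm : lower bound on 2-edges}); the paper simply observes that ``no isolated vertex in $\widehat{G}[V_2]$'' gives $|V_2(\widehat{G})|\leqslant 2|E_2(\widehat{G})|=2(\widehat{m}-\widehat{n}+2)$ by handshaking, which together with Corollary~\ref{cor : bound evm} forces equality. For the reverse direction you invoke the Main Theorem~(\ref{thm : evm}) to get the \isojoin{} structure and then count; this is overkill, since Lemma~\ref{lem : basic prop of embmcg}~\i{(i)} (equivalently Corollary~\ref{cor : h2 subset h0}) already gives $|E_2|=m-n+2$ for any member of $\c{H}_2- C_4$ directly from enforcing equality in the lower-bound proof, without needing the structural characterisation. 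Neither detour is wrong, but both can be trimmed.
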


\begin{figure}[htb]
    \centering
    \begin{subfigure}{0.49\linewidth}
        \centering
        \begin{drawing}{1}
            \foreach \x in {-3.5,-2.5,...,2.5,3.5}{
                \draw[color=cyan] (\x,1)--(\x,2);
            }
            \foreach \x in {1,-1}{
                \draw (3*\x,0)--++(-0.5*\x,1)node[bnode]{}++(0,1)node[wnode]{}--++(-0.5*\x,1);
                \draw (2*\x,0)--++(1*\x,0)node[wnode]{}--++(0.5*\x,1)node[bnode]{}++(0,1)node[wnode]{}--++(-1.5*\x,1)node[bnode]{}--++(-1*\x,0);
                \draw (0,0)node[bnode]{}--++(1*\x,0)node[wnode]{}--++(1*\x,0)node[bnode]{}--++(-0.5*\x,1)node[wnode]{}++(0,1)node[bnode]{}--++(-0.5*\x,1)node[wnode]{}--++(-1*\x,0)node[bnode]{};
                \draw (0,0)node[bnode]{}--++(0.5*\x,1)node[wnode]{}++(0,1)node[bnode]{}--++(0.5*\x,1)node[wnode]{};%++(1.5*\x,-1)node[bnode]{}++(0,-1)node[wnode]{}++(-0.5*\x,-1)node[bnode]{};
            }

            \foreach \x in {-3.5,-2.5,...,2.5,3.5}{
                \draw[color=green] (\x,1) circle (4pt);
                \draw[color=green] (\x,2) circle (4pt);
            }

            \foreach \x in {-3,-2,0,2,3}{  
                \draw[color=red] (\x,0) circle (4pt);
            }          
            \foreach \x in {-1,1}{  
                \draw[color=green] (\x,0) circle (4pt);
            }
            \draw[color=red] (-1,3) circle (4pt);
            \draw[color=red] (1,3) circle (4pt);
            \draw[color=red] (-2,3) circle (4pt);
            \draw[color=red] (2,3) circle (4pt);
            \draw[color=green] (0,3) circle (4pt);

        \end{drawing}
        \caption{$G\in\c{H}_0$}
        \label{subfig : h_0}
    \end{subfigure}
    \begin{subfigure}{0.49\linewidth}
        \centering
        \begin{drawing}{1}
            \foreach \x in {-3.5,-2.5,...,2.5,3.5}{
                \draw[color=cyan] (\x,1)--(\x,2);
            }
            
            \foreach \x in {1,-1}{
                \draw (2*\x,0)--++(1.5*\x,1)node[bnode]{}++(0,1)node[wnode]{}--++(-1.5*\x,1);
                \draw (0,0)--++(2*\x,0)node[wnode]{}--++(0.5*\x,1)node[bnode]{}++(0,1)node[wnode]{}--++(-0.5*\x,1)node[bnode]{}--++(-2*\x,0);
                
            }
            
            \foreach \x in {-1.5,-0.5,...,1.5}{
                \draw (\x,2)node[bnode]{}--(0,3)node[wnode]{};
                \draw (\x,1)node[wnode]{}--(0,0)node[bnode]{};
            }

            \foreach \x in {-3.5,-2.5,...,2.5,3.5}{
                \draw[color=green] (\x,1) circle (4pt);
                \draw[color=green] (\x,2) circle (4pt);
            }
            \draw[color=red] (0,0) circle (4pt);
            \draw[color=red] (-2,0) circle (4pt);
            \draw[color=red] (2,0) circle (4pt);
            \draw[color=red] (0,3) circle (4pt);
            \draw[color=red] (-2,3) circle (4pt);
            \draw[color=red] (2,3) circle (4pt);            
        \end{drawing}
        \caption{$\widehat{G}\in \c{H}_2$}
        \label{subfig : h_2}
    \end{subfigure}
    
    \caption{an illustration of Theorem~\ref{thm : eem} that relates $\c{H}_0$ and $\c{H}_2$}
    \label{fig: ex h0}
\end{figure}
% \b{explan}

To see an example of the above theorem, consider the graph $G$ shown in Figure~\ref{subfig : h_0} and its \retract{} $\widehat{G}$ shown in Figure~\ref{subfig : h_2}. The reader may verify using Theorem~\ref{thm : odd ear decomp} and Lemma~\ref{lem : nonremovable edge} that $G$ and $\widehat{G}$ belong to $\c{H}$. Note that $G\in \c{H}_0$ as $|E_2(G)|=m-n+2$. On the other hand, $\widehat{G}\in \c{H}_2$ as $|V_2(\widehat{G})|=2(m-n+2)$.
% \begin{thm}{\sc[Characterization of $\c{H}_0$]}\\
%     A graph $G\in \c{H}_0$ if and only if its \retract{} $\widehat{G}\in \c{H}_2$.
%     \label{thm : eem}
% \end{thm}

Likewise, using the notion of \retract{}, we are able to relate the extremal class $\c{H}_1$ with the extremal class $\c{H}_4$, as stated below.

\begin{thm}{\sc[Characterization of $\c{H}_1$]}\\
    A graph $G$ is a \een{} \mbmcg{} if and only if its \retract{} $\widehat{G}$ is an \ee{} \mbmcg{} and $\Delta(G)=3$.
    \label{thm : een}
\end{thm}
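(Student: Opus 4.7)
The plan is to parallel the proof of Theorem~\ref{thm : eem} by using the \retract{} operation to translate between the extremal classes $\c{H}_1$ and $\c{H}_4$, with the condition $\Delta(G) = 3$ controlling how the bisplitting expands $\widehat{G}$ back into $G$. Three invariants of \bicontract{} will be used throughout, all implicit in the development leading to Theorem~\ref{thm : eem}: (K1) $G \in \c{H}$ if and only if $\widehat{G} \in \c{H}$; (K2) bicontracting a degree-two vertex merges it with its two degree-$\geq 3$ neighbours into a vertex of degree at least $4$, so no 2-edge is destroyed or created and $E_2(G) = E_2(\widehat{G})$; (K3) each bicontraction reduces both $n$ and $m$ by exactly $2$, giving $n - \widehat{n} = m - \widehat{m} = 2k$, where $k$ is the number of bicontractions performed in the retract sequence.

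\textbf{Reverse direction.} Suppose $\widehat{G} \in \c{H}_4$ and $\Delta(G) = 3$. By Theorem~\ref{thm : ee}, $\widehat{G}$ is obtained from a star $K_{1,p}$ by \isojoin{}. The case $p = 2$ yields $\widehat{G} = C_6$, which has no vertex of degree at least $4$ available for bisplitting, so $G = \widehat{G}$ would give $\Delta(G) = 2$, a contradiction; hence $p \geq 3$, with $\widehat{n} = 2p + 2$, $\widehat{m} = 3p$, and $|E_2(\widehat{G})| = p$. The two degree-$p$ centres are the only candidates for bisplitting, and to achieve $\Delta(G) = 3$ each must be expanded into $p - 2$ vertices of degree $3$ linked by $p - 3$ degree-two subdivision vertices. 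Summing, $n = 2p + 2 + 4(p - 3) = 6p - 10$, and by (K2), $|E_2(G)| = p = \frac{n + 10}{6}$, so $G \in \c{H}_1$. Membership $G \in \c{H}$ holds because \bisplit{} preserves \mbmcg{} status.

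\textbf{Forward direction.} Assume $G \in \c{H}_1$, so $|E_2(G)| = \frac{n + 10}{6}$. I first argue that $G \in \c{H}_0$. The lower bound $|E_2(G)| \geq \frac{n + 10}{6}$ can be recast as a chain combining $|E_2(G)| \geq m - n + 2$, the handshake $|V_2(G)| \geq 3n - 2m$, $|V_2(G)| \leq 2|E_2(\widehat{G})| + k$ (which uses (K2) and the fact that every degree-two vertex of $\widehat{G}$ lies on a 2-edge), and $m \leq \frac{3n - 6}{2}$; the extremality $|E_2(G)| = \frac{n+10}{6}$ forces every link of this chain to be tight, so in particular $|E_2(G)| = m - n + 2$, i.e., $G \in \c{H}_0$. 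By Theorem~\ref{thm : eem}, $\widehat{G} \in \c{H}_2$, and by Theorem~\ref{thm : evm}, $\widehat{G}$ is obtained from some Halin tree $T$ on $n_T$ vertices and $k_T$ leaves by \isojoin{}, so $|E_2(\widehat{G})| = k_T$. The tightness of the same chain forces $\Delta(G) = 3$, meaning each non-leaf $v$ of $T$ has been bisplit exactly $d_T(v) - 3$ times; totalling over both copies of $T$ and using $\sum_v d_T(v) = 2(n_T - 1)$,
\begin{align*}
k \;=\; 2 \sum_{v \text{ non-leaf}} \bigl(d_T(v) - 3\bigr) \;=\; 2\bigl[\,(2 n_T - 2 - k_T) - 3(n_T - k_T)\bigr] \;=\; 4 k_T - 2 n_T - 4.
\end{align*}
By (K3), $n = \widehat{n} + 2k = 8 k_T - 2 n_T - 8$. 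Substituting into $k_T = |E_2(G)| = \frac{n + 10}{6}$ gives $6 k_T = 8 k_T - 2 n_T + 2$, i.e., $k_T = n_T - 1$; this forces $T$ to have exactly one non-leaf, so $T = K_{1, n_T - 1}$ is a star. Hence $\widehat{G} \in \c{H}_4$ by Theorem~\ref{thm : ee}.

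The main obstacle is the ``slackless chain'' argument that establishes $G \in \c{H}_0$ and $\Delta(G) = 3$ simultaneously: one must trace through the derivation of the lower bound $|E_2(G)| \geq \frac{n+10}{6}$ carefully enough to verify that each intermediate inequality is in fact an equality under the extremality hypothesis. Once this is in hand, the remaining counting steps are routine bookkeeping on top of Theorems~\ref{thm : eem}, \ref{thm : ee}, and~\ref{thm : evm}.
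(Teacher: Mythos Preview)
Your overall plan---trace equality through the derivation of the bound $|E_2|\geqslant\frac{n+10}{6}$ to extract structural consequences, then pass to $\widehat G$---is exactly the paper's strategy. The reverse direction is fine (if more computational than necessary), and your final tree-counting showing $k_T=n_T-1$ is correct. However, the forward direction has a genuine gap at precisely the point you flag as the ``main obstacle'': the chain you write down does not actually prove the bound, so enforcing equality in it does not yield what you need.

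Concretely, the four inequalities you list---$|E_2|\geqslant m-n+2$, $|V_2|\geqslant 3n-2m$, $|V_2|\leqslant 2|E_2(\widehat G)|+k$, and $m\leqslant\frac{3n-6}{2}$---do not combine to give $6|E_2|\geqslant n+10$. The last one is an \emph{upper} bound on $m$ and pulls in the wrong direction, and the third introduces the retract parameter $k$ which you cannot eliminate. The derivation in Corollary~\ref{cor : bound een} uses instead the three inequalities
\[
|E_2|\geqslant m-n+2,\qquad 2m\geqslant 3n-|V_2|,\qquad |E_2|+m\geqslant 2|V_2|,
\]
the last coming from $|E_3|\geqslant 0$; scaling by $5,2,1$ and summing gives the bound. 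Equality in these three forces, respectively, $G\in\c{H}_0$, $\Delta(G)=3$, and $|E_3(G)|=0$. You recover the first two conclusions by luck (they happen to match two of your listed inequalities), but you miss $|E_3(G)|=0$, which is why you are forced into the long leaf-counting detour. The paper instead passes $|E_3(G)|=0$ through Lemma~\ref{lem : E_3 is preserved} to get $|E_3(\widehat G)|=0$, then Corollary~\ref{cor : counting prop of embmcg}\,\i{(iii)} gives $|V_3(\widehat G)|=2$, whence $\widehat G\in\c{H}_4$ immediately via Lemma~\ref{lem : base case - main theorem}. Replace your chain with the correct one and the whole argument collapses to a few lines.
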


\begin{figure}[htb]
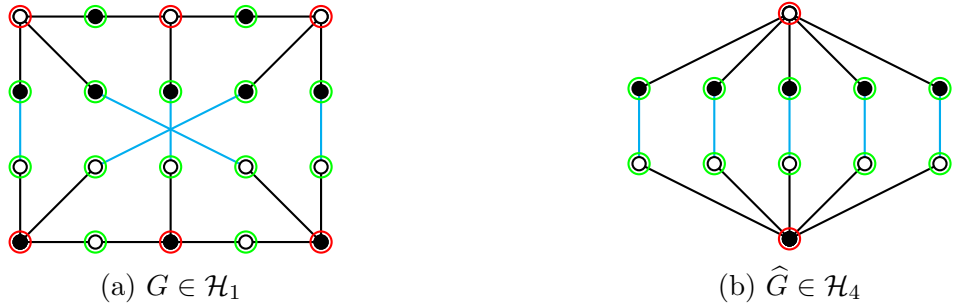

    \centering
    \begin{subfigure}{0.49\linewidth}
        \centering
        \begin{drawing}{1}
            \foreach \x in {0,4}{
                \draw[color=cyan] (\x,1)--(\x,2);
            }
            \foreach \x in {1,2,3}{
                \draw[color=cyan] (\x,1)--(4-\x,2);
            }
            
            \draw (0,0)node[bnode]{}--++(1,0)node[wnode]{}--++(1,0)node[bnode]{}--++(1,0)node[wnode]{}--++(1,0)node[bnode]{}--++(0,1)node[wnode]{}++(0,1)node[bnode]{}--++(0,1)node[wnode]{}--++(-1,0)node[bnode]{}--++(-1,0)node[wnode]{}--++(-1,0)node[bnode]{}--++(-1,0)node[wnode]{}--++(0,-1)node[bnode]{}++(0,-1)node[wnode]{}--++(0,-1)node[bnode]{};
    
            \draw (2,0)node[bnode]{}--++(0,1)node[wnode]{}++(0,1)node[bnode]{}--++(0,1)node[wnode]{};
            \draw (0,0)node[bnode]{}--++(1,1)node[wnode]{}++(2,1)node[bnode]{}--++(1,1)node[wnode]{};
            \draw (4,0)node[bnode]{}--++(-1,1)node[wnode]{}++(-2,1)node[bnode]{}--++(-1,1)node[wnode]{};

            \foreach \x in {0,1,...,4}{
                \draw[color=green] (\x,1) circle (4pt);
                \draw[color=green] (\x,2) circle (4pt);
            }
            \foreach \y in {0,3}{
                \foreach \x in {0,2,4}{  
                    \draw[color=red] (\x,\y) circle (4pt);
                }
                \foreach \x in {1,3}{  
                    \draw[color=green] (\x,\y) circle (4pt);
                }
            }
            
        \end{drawing}
        \caption{$G\in\c{H}_1$}
        \label{subfig : h_1}
    \end{subfigure}
    \begin{subfigure}{0.49\linewidth}
        \centering
        \begin{drawing}{1}
            \foreach \x in {-2,-1,...,2}{
                \draw[color=cyan] (\x,1)--(\x,2);
                \draw (\x,2)node[bnode]{}--(0,3)node[wnode]{};
                \draw (\x,1)node[wnode]{}--(0,0)node[bnode]{};
            }

            \foreach \x in {-2,-1,...,2}{
                \draw[color=green] (\x,1) circle (4pt);
                \draw[color=green] (\x,2) circle (4pt);
            }
            \draw[color=red] (0,0) circle (4pt);
            \draw[color=red] (0,3) circle (4pt);            
        \end{drawing}
        \caption{$\widehat{G}\in \c{H}_4$}
        \label{subfig : h_4}
    \end{subfigure}
    
    \caption{an illustration of Theorem~\ref{thm : een} that relates $\c{H}_1$ and $\c{H}_4$}
    \label{fig: ex h1}
\end{figure}

To see an example of the above theorem, consider the graph $G$ shown in Figure~\ref{subfig : h_1} and its \retract{} $\widehat{G}$ shown in Figure~\ref{subfig : h_4}. The reader may verify using Theorem~\ref{thm : odd ear decomp} and Lemma~\ref{lem : nonremovable edge} that $G$ and $\widehat{G}$ belong to $\c{H}$. Note that $G\in \c{H}_1$ as $|E_2(G)|=\frac{n+10}{6}$. On the other hand, $\widehat{G}\in \c{H}_4$ as $|E(\widehat{G})|=\frac{3n-6}{2}$.

We have thus stated our characterizations for all of the extremal classes shown in Table~\ref{tab: extremality notions}. We now proceed to discuss further relations between them. 

\subsection{Containment poset for the extremal classes}

In order to discuss the containment relationships between different extremal classes, we shall find it convenient to exclude the cycle graphs. Following Lucchesi and Murty~\cite{lumu24}, for an element $t$ and a set $S$, we use $S-t$ to denote the set obtained from $S$ by deleting $t$. It follows from the definitions of the extremal classes (see Table~\ref{tab: extremality notions}) that neither of $\c{H}_0$ and $\c{H}_1$ contains any cycle graph, that $C_4$ is the only cycle in $\c{H}_2$ as well as in $\c{H}_3$, whereas $C_6$ is the only cycle in $\c{H}_4$. In light of this, we let $\c{H}_2^*:=\c{H}_2-C_4$, $\c{H}_3^*:=\c{H}_3-C_4$ and $\c{H}_4^*:=\c{H}_4-C_6$.

%With the exception of cycles, the reader may observe the containment diagram given in Figure~\ref{fig: poset}. Using the characterization theorems (\ref{thm : evm},~\ref{thm : evn},~\ref{thm : ee},~\ref{thm : eem} and~\ref{thm : een}), the reader may observe that for each arrow in the diagram, its ``tail" is a proper subset of its ``head". Furthermore, it may be observed that $\c{H}_1\cap\c{H}_2=\c{H}_3\cap \c{H}_4=\{\Theta\}$.

\begin{figure}[htb]
\centering
    \begin{minipage}{0.49\linewidth}
    \centering
    \begin{drawing}{1.5}
            \foreach \x in {-1,0,1}{
                \draw[color=cyan] (\x,1)--(\x,2);
                \draw (\x,2)node[bnode]{}--(0,3)node[wnode]{};
                \draw (\x,1)node[wnode]{}--(0,0)node[bnode]{};
            }

            \foreach \x in {-1,0,1}{
                \draw[color=green] (\x,1) circle (2.5pt);
                \draw[color=green] (\x,2) circle (2.5pt);
            }
            \draw[color=red] (0,0) circle (2.5pt);
            \draw[color=red] (0,3) circle (2.5pt);            
    \end{drawing}
    \caption{The $\Theta$ graph}
    \label{fig: theta}
    \end{minipage}
    \begin{minipage}{0.49\linewidth}
    \centering
    
    \begin{drawing}{1}
        % \draw (0,0) node[nodelabel]{$\{\theta\}$}; 
        \node [circle, draw] at (0,-3)(theta){$\{\Theta\}$};
        \node [circle, draw] at (0,0)(h3){$\c{H}_3^*$};
        \node [circle, draw] at (1.9,-1.5)(h4){$\c{H}_4^*$};
        \node [circle, draw] at (1.9,1.5)(h2){$\c{H}_2^*$};
        \node [circle, draw] at (-2.2,0)(h1){$\c{H}_1$};
        \node [circle, draw] at (0,3)(h0){$\c{H}_0$};

        \draw [-stealth](theta)--(h3);
        \draw [-stealth](theta)--(h4);
        \draw [-stealth](theta)--(h1);
        \draw [-stealth](h3)--(h2);
        \draw ($(h3)!0.5!(h2)$)++(0.1,-0.1)node[above left,nodelabel]{\ref{cor : h3 subset h2}};
        \draw [-stealth](h4)--(h2);
        \draw ($(h4)!0.5!(h2)$)++(-0.1,-0.0)node[right,nodelabel]{\ref{cor : h4 subset h2}};
        \draw [-stealth](h1)--(h0);
        \draw ($(h1)!0.5!(h0)$)++(0.1,-0.1)node[above left,nodelabel]{\ref{cor : h1 subset h0}};
        \draw [-stealth](h2)--(h0);
        \draw ($(h2)!0.5!(h0)$)++(-0.1,-0.1)node[above right,nodelabel]{\ref{cor : h2 subset h0}};
    \end{drawing}
    \caption{Containment poset}
    \label{fig: poset}
    \end{minipage}
\end{figure}

Figure~\ref{fig: poset} shows the containment poset for all of the extremal classes. For instance, the arrow from $\c{H}_1$ to $\c{H}_0$, labelled as~\ref{cor : h1 subset h0}, is to be read as follows: Corollary~\ref{cor : h1 subset h0} states that $\c{H}_1\subset \c{H}_0$. Additionally, our characterizations imply that $\c{H}_1\cap\c{H}_2=\c{H}_3\cap \c{H}_4=\{\Theta\}$ where $\Theta$ is the graph shown in Figure~\ref{fig: theta}; see Corollaries~\ref{cor : h4 cap h3 = theta} and~\ref{cor : h1 cap h2 = theta}.

%In this paper, we will first prove the containment relations (see Corollaries~\ref{cor : h2 subset h0},~\ref{cor : h3 subset h2},~\ref{cor : h4 subset h2},~\ref{cor : h1 subset h0}), and use them to prove our characterization theorems. 

\subsection{Organization of the rest of this paper}

In the rest of the paper (except the last section), we will prove the difficult directions of Theorems~\ref{thm : evm},~\ref{thm : evn} and~\ref{thm : ee}, and we will also prove Theorems~\ref{thm : eem} and~\ref{thm : een}. 

We first focus on the class $\c{H}_2$. In order to prove the Main Theorem (\ref{thm : evm}), we need some known properties of \bmcg{}s and \mbmcg{}s; these are discussed in Sections~\ref{sec : ear decomp} and~\ref{sec : mbmcg}. In particular, in Section~\ref{sec : mbmcg}, we state and prove the lower bound on the number of $2$-edges due to Lov\'asz and Plummer \cite{lopl86}, and we deduce an easy lower bound on the number of degree two vertices. In Section~\ref{sec : embmcg}, we enforce equality in its proof to infer some useful properties of members of $\c{H}_2$ --- one of which is the existence of ``balanced" $2$-cuts. We then develop an induction tool using balanced $2$-cuts in Section~\ref{sec : 2-cut induction tool}. In Section~\ref{sec : main theorem}, we prove the Main Theorem (\ref{thm : evm}).

%rewrite later
% \b{review}
In Section~\ref{sec : other classes}, we address the other notions of extremality. In Subsections~\ref{subsec : h3} and~\ref{subsec : h4}, we first state and prove the bounds corresponding to the classes $\c{H}_3$ and $\c{H}_4$, respectively; thereafter, we enforce equality in their proofs to deduce their characterizations (Theorems~\ref{thm : evn} and~\ref{thm : ee}) using the Main Theorem (\ref{thm : evm}). In Subsection~\ref{subsec : h0}, we prove our characterization of~$\c{H}_0$ (Theorem~\ref{thm : eem}). Lastly, in Subsection~\ref{subsec : h1}, we prove our characterization of $\c{H}_1$ (Theorem~\ref{thm : een}) using the characterization of $\c{H}_0$.

Finally, in Section \ref{sec : kex}, we consider problems similar to those solved by Corollaries \ref{cor : bound evm}, \ref{cor : bound evn} and \ref{cor : bound ee} in the context of ``minimal \ex{k}" bipartite graphs. We first discuss the bounds established by Lou \cite{l99} and then proceed to conjecture stronger bounds; see {Conjectures \ref{conj : evm}, \ref{conj : evn} and \ref{conj : ee}}. In the rest of that section, we provide evidence for our conjectures by constructing tight examples that are straightforward generalizations of the ones that appear in the \ex{1} case. %In Subsection \ref{subsec : essential edge connectivity}, we first prove {Theorem \ref{thm : essential edge connectivity}} which states that any \bkex{k} is essentially \kEC{2k}. We then use it to prove minimality of our constructed examples. In Subsection \ref{subsec : tight examples}, we use induction to prove that our examples are indeed \ex{k} and also show that they satisfy the conjectured bounds with equality; see {Theorem \ref{thm : proof for tight examples}}.

\section{Ear decomposition theory}
\label{sec : ear decomp}
Recall that a subgraph $H$ of a graph $G$ is {conformal} if $G-H$ is matchable. It is easy to see that, in a \mc{} graph, there is a conformal cycle containing any two adjacent edges. As an immediate consequence, we get the following.

\begin{prop}
    Let $G$ be a \mcg{}, let $u$ be a vertex of degree at least three and let $e\in \partial(u)$. Then there is a conformal cycle $C$ such that $u\in C$ but $e\notin C$. \qed
    \label{prop : C_u}
    
\end{prop}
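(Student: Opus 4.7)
The plan is to apply, essentially verbatim, the observation cited immediately before the proposition statement: in a \mc{} graph, any two adjacent edges lie on some conformal cycle. Since $d(u) \geqslant 3$, the set $\partial(u) - e$ contains at least two distinct edges; pick any two such edges $f$ and $g$. They share the endpoint $u$, hence are adjacent, so the cited observation yields a conformal cycle $C$ of $G$ containing both $f$ and $g$. In particular $u \in V(C)$. Finally, a cycle meets each of its vertices in exactly two edges, and the two edges of $C$ at $u$ must be $f$ and $g$ (since $C$ contains both); therefore $e \notin E(C)$, as required.

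The only nontrivial ingredient is the cited observation itself, which one would verify by a standard symmetric-difference argument: given adjacent edges $f = uv$ and $g = uw$, choose \pema{}s $M_f \ni f$ and $M_g \ni g$, both of which exist because $G$ is \mc{}, and consider $M_f \triangle M_g$. The vertex $u$ is covered by both matchings via distinct edges, so the component of $M_f \triangle M_g$ through $u$ must be an (even) cycle $C$ rather than a path, alternating between $M_f$ and $M_g$, and its two edges at $u$ are $f$ and $g$. Conformality follows because $M_f \setminus E(C)$ is a \pema{} of $G - V(C)$.

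The main obstacle is really just to make sure the hypothesis $d(u) \geqslant 3$ is put to work: without it, $\partial(u) - e$ could be a single edge and one would be unable to pair up two edges at $u$ both different from $e$. With the hypothesis in hand, the proposition reduces to the bookkeeping sketched in the first paragraph.
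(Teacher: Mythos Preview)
Your proposal is correct and follows exactly the approach the paper intends: the proposition is marked with \qed{} precisely because it is meant to be read as an immediate consequence of the observation (stated just before it) that any two adjacent edges of a \mcg{} lie on a common conformal cycle. Your symmetric-difference sketch of that observation is also the standard justification.
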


In fact, Little \cite{litt74} proved the stronger statement that any two edges (not necessarily adjacent) lie in a conformal cycle; we will not require this. 

Recall that an {ear of a subgraph $H$ in a graph $G$} is an odd path of $G$ whose ends are in~$H$ but is otherwise disjoint from~$H$. We will now discuss the proof of the Ear Decomposition Theorem (\ref{thm : odd ear decomp}) as it appears in Lov\'asz and Plummer \cite[Theorem 4.1.6]{lopl86}. In particular, we extract a couple of lemma statements that are implicit in their proof. The first of these shows that ``adding an ear" preserves the \mc{} property. For a subgraph $H$ and an ear $P$ of $H$, the subgraph $H\cup P$ is said to be obtained by adding the ear $P$ to $H$.

\begin{lem}
    Let $G$ be a bipartite graph, $H$ be a \mc{} subgraph and $P$ be an ear of $H$ in $G$. Then $H\cup P$ is \mc{}. 
    \label{lem : H cup P is mc}
\end{lem}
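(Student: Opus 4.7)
The plan is to verify directly that every edge of $H \cup P$ lies in some \pema{} of $H \cup P$. Write $P = x_0 x_1 \cdots x_{2k+1}$ with $x_0 = u$ and $x_{2k+1} = v$; since $P$ is an odd path in a bipartite graph, $u$ and $v$ lie in opposite color classes, say $u \in A$ and $v \in B$. The path $P$ has two natural matchings whose union is $E(P)$: $M_P^{\mathrm{int}} := \{x_1 x_2, x_3 x_4, \ldots, x_{2k-1} x_{2k}\}$, which saturates exactly the internal vertices of $P$, and $M_P^{\mathrm{all}} := \{x_0 x_1, x_2 x_3, \ldots, x_{2k} x_{2k+1}\}$, which saturates all of $V(P)$ including $u$ and $v$. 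For any \pema{} $M_H$ of $H$, the set $M_H \cup M_P^{\mathrm{int}}$ is a \pema{} of $H \cup P$; hence edges of $H$ are handled by choosing $M_H$ to contain them (possible since $H$ is \mc{}), and edges of $P$ belonging to $M_P^{\mathrm{int}}$ are handled by any choice of $M_H$. The remaining case---an edge $e \in M_P^{\mathrm{all}}$---reduces to finding a \pema{} $M'$ of $H - u - v$, for then $M' \cup M_P^{\mathrm{all}}$ is a \pema{} of $H \cup P$ containing $e$.

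The crux is therefore the sub-claim that $H - u - v$ admits a \pema{}, and I plan to prove it by contradiction via Hall's theorem. If $H - u - v$ has no \pema{} then there is some $S \subseteq A - u$ with $|N_{H - u - v}(S)| < |S|$; combined with $|N_H(S)| \geq |S|$ (Hall's condition applied to $H$, which has a \pema{}), this forces $|N_H(S)| = |S|$ and $v \in T := N_H(S)$. The tightness of Hall at $S$ implies that in every \pema{} of $H$ the set $S$ is matched bijectively with $T$, so no \pema{} of $H$ can contain an edge joining $A - S$ with $T$; by the \mc{} property, no such edge can exist in $H$ at all. Combined with the absence of edges between $S$ and $B - T$ (by the definition of $T$), this partitions $V(H)$ into two parts $S \cup T$ and $(A - S) \cup (B - T)$ with no edges crossing, where both parts are nonempty: $v \in T$ forces $S \neq \emptyset$, while $u \in A - S$ together with $|T| = |S| \leq |A| - 1 = |B| - 1$ forces $(A - S) \cup (B - T) \neq \emptyset$. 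This contradicts the connectedness of $H$. The main obstacle is precisely this sub-claim; once it is in hand, the rest of the argument is a routine assembly of matchings from the alternating structure of the odd path $P$, and the three cases (edge in $H$, edge in $M_P^{\mathrm{int}}$, edge in $M_P^{\mathrm{all}}$) together cover every edge of $H \cup P$.
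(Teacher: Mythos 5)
Your proof is correct. The paper does not reproduce a proof of this lemma (it cites it as implicit in Lov\'asz and Plummer's proof of the Ear Decomposition Theorem), and your argument is precisely that standard one: the only nontrivial point is that $H-u-v$ is matchable when $u$ and $v$ lie in opposite colour classes of the \mc{} bipartite graph $H$, and your Hall's-theorem tight-set argument (a tight set would force, via the \mc{} property, a disconnection of $H$) establishes exactly that known fact --- it is the $k=1$ instance of the equivalence (i)$\Leftrightarrow$(iii) in the characterization of \ex{k} bipartite graphs quoted later in the paper.
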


The second lemma establishes the existence of an ear of any conformal matching covered proper subgraph $H$ with the additional constraint that any specified edge ``sticking out of~$H$" is included in the ear.

\begin{lem}
    Let $G$ be a \bmcg{}, $H$ be any conformal \mc{} subgraph and $e\notin H$ denote an edge that has at least one end in $H$. Then there exists an ear of $H$, say $P_e$, containing $e$ such that $H\cup P_e$ is a conformal \mc{} subgraph of $G$.
    \label{lem : P_e}
\end{lem}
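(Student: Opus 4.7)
The plan is to use the standard alternating-cycle construction on a pair of \pema{}s. First, I would choose a \pema{} $M$ of $G$ that respects $H$: since $H$ is \mc{} it has a \pema{} $M_H$, and since $H$ is conformal, $G-H$ has a \pema{} $M'$; then $M:=M_H\cup M'$ is a \pema{} of $G$ in which no edge crosses $\partial(V(H))$. Since $G$ is \mc{}, there also exists a \pema{} $N$ of $G$ containing $e$. Because $e\notin E(H)$ and $e$ has an end in $V(H)$, we have $e\notin M$, and hence $e\in M\triangle N$. In the bipartite graph $G$, $M\triangle N$ decomposes into even alternating cycles; let $C_e$ be the cycle containing $e$.

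I would then extract $P_e$ as a subpath of $C_e$. Let $x$ be an end of $e$ in $V(H)$ and walk along $C_e$ starting at $x$ through $e$, alternating $N$- and $M$-edges, until the first vertex $y$ back in $V(H)$; such a $y$ exists because the $M$-edge of $C_e$ incident to $x$ other than $e$ already leads to a vertex of $V(H)$, so the walk re-enters $V(H)$ at worst after going all the way around $C_e$. Call this traversed subpath $P_e$; its internal vertices lie outside $V(H)$ by construction. Since $M$-edges cannot cross $\partial(V(H))$, the edge entering $y$ must be an $N$-edge, and the edge sequence of $P_e$ is therefore $N,M,N,\dots,N$, which has odd length. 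Hence $P_e$ is an ear of $H$ containing $e$.

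Finally, I would confirm that $H\cup P_e$ is both conformal and \mc{}. For conformality, the internal vertices of $P_e$ lie in $V(G)-V(H)$ and are matched pairwise by the $M$-edges of $P_e$ inside $M'$; deleting these edges from $M'$ yields a \pema{} of $G-(H\cup P_e)$. For \mc{}-ness I would cover the edges of $H\cup P_e$ in three groups: every edge of $H$ lies in a \pema{} of $H\cup P_e$ obtained by augmenting a \pema{} of $H$ containing it (which exists since $H$ is \mc{}) with the $M$-edges of $P_e$; the $M$-edges of $P_e$ collectively lie in the \pema{} $M_H\cup\{\text{$M$-edges of }P_e\}$; and the $N$-edges of $P_e$ collectively lie in a \pema{} of $H\cup P_e$ formed by combining the $N$-edges of $P_e$ (which perfectly match $V(P_e)$) with a \pema{} of $H-\{x,y\}$.

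The delicate step is producing a \pema{} of $H-\{x,y\}$. Since $P_e$ is an odd path, its ends $x$ and $y$ lie in opposite color classes of the bipartition of $G$, and hence of $H$. A standard characterization of \bmcg{}s via strict Hall's condition---namely, $|N_H(S)|>|S|$ for every non-empty proper subset $S$ of a color class---delivers, by a short case split on whether $y\in N_H(S)$, the required \pema{} of $H-\{x,y\}$. This Hall check is the only point that requires more than direct alternating-cycle manipulation.
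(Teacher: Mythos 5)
Your proof is correct and follows essentially the same route as the argument the paper extracts from Lov\'asz and Plummer: take the symmetric difference of a perfect matching containing $e$ with one that is the union of perfect matchings of $H$ and of $G-V(H)$, and cut out the alternating subpath of the resulting cycle from $x$ through $e$ to the first return to $V(H)$. Your final Hall-condition step is also fine (and is just the $k=1$ case of the fact, recorded later in the paper as Corollary~\ref{cor : G-u-v is k-1 ex}, that deleting one vertex from each colour class of a \bmcg{} leaves a matchable graph).
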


The reader may verify that the above two lemmas imply the following stronger version of the Ear Decomposition Theorem (\ref{thm : odd ear decomp}) that is also stated in Lov\'asz and Plummer \cite[pg 126]{lopl86}. 

% \begin{restate}{\ref{thm : odd ear decomp}} {\sc[Bipartite Ear Decomposition Theorem]}\\
%     A bipartite graph is matching covered if and only if it admits an ear decomposition.
%\end{restate}

%As the Odd Ear Decomposition is the only ear decomposition we will be referring to throughout this section, we will drop the prefix ``odd" and simply call it Ear Decomposition.
%Using Theorem~\ref{thm : odd ear decomp} and Lemma~\ref{lem : P_e}, one can also complete an ear decomposition starting from any \cmcs{}.

\begin{thm}
    For any \cmcs{} $H$ of a \bmcg{} $G$, there exists an ear decomposition of $G$ in which $H$ appears as one of the subgraphs.\qed
    \label{thm : strong ear decomp}
\end{thm}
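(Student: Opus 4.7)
The plan is to first build an ear decomposition of $H$ itself, and then to extend it edge-by-edge to an ear decomposition of all of $G$ using Lemma~\ref{lem : P_e}.

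Since $H$ is a \mc{} subgraph of a bipartite graph, $H$ is itself a \bmcg{}, so by the Ear Decomposition Theorem (\ref{thm : odd ear decomp}) there is an ear decomposition $(G_0,G_1,\dots,G_s)$ of $H$ with $G_s=H$. The first step is simply to take this as the initial segment of the ear decomposition we are constructing for $G$. Note that $G_0,G_1,\dots,G_s=H$ are all \cmcs{}s of $G$: each $G_i$ is \mc{} by construction, and $H$ is conformal in $G$ by hypothesis, which makes every $G_i$ conformal in $G$ as well (a \pema{} of $G-G_i$ can be obtained by concatenating a \pema{} of $G-H$ with a \pema{} of $H-G_i$, both of which exist).

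Next, I would iteratively extend the decomposition. Suppose we have constructed a \cmcs{} $H_j\supseteq H$ of $G$ with $j\geqslant s$ and $H_j\neq G$. I claim there exists an edge $e\in E(G)\setminus E(H_j)$ having at least one end in $H_j$: indeed, if every edge of $G$ incident to $V(H_j)$ lay in $H_j$, then since $G$ is connected and $H_j$ is nonempty we would have $V(H_j)=V(G)$; combined with no edge missing, this would give $H_j=G$, a contradiction. Applying Lemma~\ref{lem : P_e} to $H_j$ and such an edge $e$, we obtain an ear $P_{j+1}$ of $H_j$ through $e$ such that $H_{j+1}:=H_j\cup P_{j+1}$ is a \cmcs{} of $G$. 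Set $G_{j+1}:=H_{j+1}$.

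Since each new ear contributes at least one edge and $G$ is finite, this process terminates after finitely many steps, say at step $r$. At termination we must have $G_r=G$ (otherwise the argument above would produce a further extension). The resulting sequence $(G_0,G_1,\dots,G_s,G_{s+1},\dots,G_r)$ is then an ear decomposition of $G$ in which $H=G_s$ appears as one of the subgraphs, completing the proof.

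The content of the argument is entirely carried by Lemma~\ref{lem : P_e}; the only step requiring any thought is verifying that, whenever $H_j\neq G$, there is some edge of $G$ outside $H_j$ with an end in $V(H_j)$, which is a routine consequence of the connectedness of $G$. Hence there is no real obstacle beyond organizing the inductive extension correctly.
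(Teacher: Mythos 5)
Your proof is correct and follows exactly the route the paper intends: the paper states Theorem~\ref{thm : strong ear decomp} as a consequence of Lemmas~\ref{lem : H cup P is mc} and~\ref{lem : P_e} left to the reader, and your argument (ear-decompose $H$ first via Theorem~\ref{thm : odd ear decomp}, then grow to $G$ by repeated application of Lemma~\ref{lem : P_e}, maintaining the conformal matching covered invariant) is the standard way to carry that out. The two verifications you supply --- that each $G_i$ in the decomposition of $H$ is conformal in $G$, and that a proper conformal subgraph always has an incident edge outside it --- are precisely the points that need checking, and both are handled correctly.
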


In the next section, we discuss the proofs of the lower bounds on the number of $2$-edges due to Lov\'asz and Plummer \cite[Theorem 4.2.7]{lopl86}. These proofs rely heavily on the ear decomposition theory, and we will need some of their details to prove our results.

% We now prove the following for any vertex of degree at least $3$. 

% \begin{proof}
%     We know that there is a conformal cycle $C'$ such that $e\in C'$. Let $\partial(u)\setminus E(C):=\{f\}$. By Lemma~\ref{lem : P_e}, there is an ear $P_f$ containing $f$. Endpoints of $P_f$ divides the cycle $C$ into two paths, let $P$ be the one that doesn't contain $e$. Then $P_f\cup P$ is a conformal cycle containing $u$ and not containing $e$.  
% \end{proof}

\section{Lower bounds established by Lov\'asz and Plummer}
\label{sec : mbmcg}

Recall that, for a \mcg{} $G$, an edge $e$ is {removable} if $G-e$ is \mc{}. Thus, $G$ is minimal if and only if it has no removable edges. It follows from Theorem~\ref{thm : odd ear decomp} that, given any ear decomposition of a \bmcg{} $G$, every trivial ear (that is, an ear comprising a single edge) is a removable edge in $G$. This observation, coupled with Theorem~\ref{thm : strong ear decomp}, yields the following; see Lov\'asz and Plummer \cite[Theorem 4.2.1]{lopl86}.

%Furthermore, by Theorem~\ref{thm : strong ear decomp}, any removable edge of a \cmcs{} $H$ is also removable in $G$.

\begin{lem}
    Every \cmcs{} of a \mbmcg{} is an induced subgraph.\hfill{$\qed$}
    \label{lem : cmcs is induced}
\end{lem}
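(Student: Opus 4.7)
The plan is to argue by contradiction using the observation, stated just before the lemma, that every trivial ear in an ear decomposition of a \bmcg{} is a removable edge. Suppose $H$ is a \cmcs{} of a \mbmcg{} $G$ that fails to be induced; then there exists an edge $e \in E(G)-E(H)$ whose two ends both lie in $V(H)$. My goal is to exhibit an ear decomposition of $G$ in which $e$ appears as a trivial ear, which then forces $e$ to be removable, contradicting the minimality of $G$.

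First, I would apply Theorem~\ref{thm : strong ear decomp} to the \cmcs{} $H$ to obtain an ear decomposition $(G_0, G_1, \ldots, G_r)$ of $G$ with $G_i = H$ for some index~$i$. Since $e \in E(G)-E(H) = E(G_r)-E(G_i)$, the edge~$e$ must lie in some ear $P_k$ with $k > i$. Writing $P_k$ as the path $v_0 v_1 \cdots v_t$, say $e = v_a v_{a+1}$ for some $0 \leqslant a \leqslant t-1$.

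The crucial step is the following elementary observation about ears. Both endpoints $v_a$ and $v_{a+1}$ of $e$ lie in $V(H) \subseteq V(G_{k-1})$, whereas, by the definition of an ear, the internal vertices $v_1, \ldots, v_{t-1}$ of $P_k$ are disjoint from $V(G_{k-1})$. Consequently each of $a$ and $a+1$ must lie in $\{0, t\}$, and combined with $a \leqslant t-1$ this leaves only the possibility $a=0$ and $t=1$. Hence $P_k$ is exactly the trivial ear~$e$. The hinted fact that trivial ears are removable (itself an immediate consequence of Theorem~\ref{thm : odd ear decomp} applied to the shortened sequence $(G_0, \ldots, G_{k-1}, G_{k-1} \cup P_{k+1}, \ldots, G_r - e)$) then yields that $G-e$ is matching covered, contradicting the minimality of $G$.

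The proof presents no real obstacle: Theorems~\ref{thm : odd ear decomp} and~\ref{thm : strong ear decomp} supply all the ear-decomposition machinery, and the only genuinely new step is the small combinatorial remark that an ear containing an edge whose two ends are already in the previous subgraph must itself be trivial.
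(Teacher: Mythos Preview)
Your argument is correct and follows exactly the route the paper sketches: the paper merely says that the observation ``every trivial ear is removable'' (deduced from Theorem~\ref{thm : odd ear decomp}) together with Theorem~\ref{thm : strong ear decomp} yields the lemma, and cites Lov\'asz and Plummer for details. Your proposal is precisely the fleshing-out of that sketch, including the clean verification that the offending edge must itself constitute a trivial ear.
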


%We now introduce some terminology and notation that we will use throughout this section. For a graph $G$, we use $V_k(G)$ to denote the set of vertices of degree exactly $k$, and $V_{\geqslant k}(G)$ to denote the set of vertices of degree at least $k$. Furthermore, we use $E_k(G)$ to denote the set of edges whose both ends are in $V_k(G)$. Edges in $E_2(G)$ are referred to as $2$-edges.

% An edge is a \i{$2$-edge} if each of its ends has degree two. Let $V_k(G):=\{v\in V(G)|d(v)=k\}$ and similarly $V_{\geqslant k}(G):=\{v\in V(G)|d(v)\geqslant k\}$. Let $E_k(G):=\{uv\in E(G)|d(u)=d(v)=k\}$. 

%and similarly $E_{kl}(G):=\{uv\in E(G)|d(u)=k,d(v)=l\}$.

% \begin{obs}
%     Let $G$ be a \mbmcg{} and $H$ be any conformal matching covered subgraph of $G$. Then $H$ is induced.
%     \label{obs : conformal mc subgraph is induced}
% \end{obs}

In order to establish their lower bound on the number of $2$-edges in a \mbmcg{}, Lov\'asz and Plummer proved a lower bound for each element (the starting conformal cycle and each ear) of an ear sequence. The following is their lemma for ears. %Their main idea is to prove a lower bound for each element (the starting conformal cycle and the ears) of an ear sequence. 

\begin{lem}
    Let $G$ be a \mbmcg{} and let $(C,P_1,P_2,\dots ,P_r)$ be any ear sequence. Then each ear $P_i$ contains a $2$-edge of $G$.
    \label{lem : 2-edge in ear}
\end{lem}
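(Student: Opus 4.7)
The plan is to prove the contrapositive: assuming some ear $P_i$ in the given ear sequence contains no $2$-edge of $G$, I will exhibit a removable edge, contradicting minimality. The starting observation, using Lemma~\ref{lem : cmcs is induced}, is that every conformal \mc{} subgraph $G_j$ in the sequence is an \emph{induced} subgraph of $G$. Applied to $G_i$, this means every edge of $G$ incident with an internal vertex $v$ of $P_i$ but not contained in $P_i$ must connect $v$ to a vertex of $V(G)\setminus V(G_i)$, which is therefore an internal vertex of some later ear --- forcing $v$ itself to be an endpoint of that later ear. In particular, the two end-vertices $v_0$ and $v_{2k+1}$ of $P_i=v_0v_1\cdots v_{2k+1}$ already have degree $\geqslant 3$ in $G_i\subseteq G$, so any $2$-edge of $P_i$ must lie strictly between two internal vertices.

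The proof would proceed by induction on the length of $P_i$. The base case $|P_i|=1$ says $P_i$ is a single edge $e$ with $V(G_i)=V(G_{i-1})$; hence every later ear remains an ear of the graph obtained by simply skipping $P_i$, and the reordered sequence $(C,P_1,\ldots,P_{i-1},P_{i+1},\ldots,P_r,P_i)$ is a valid ear decomposition of $G$ by Theorem~\ref{thm : odd ear decomp}. Consequently $G-e$ is \mc{} and $e$ is removable, the desired contradiction.

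For $|P_i|\geqslant 3$, the no-$2$-edge hypothesis forces no two consecutive internal vertices of $P_i$ to both have degree $2$ in $G$, so some internal vertex $v_a$ satisfies $d_G(v_a)\geqslant 3$ and is therefore an endpoint of a later ear $P_j$; let $w$ denote its other endpoint. In the generic situation where $w\in V(G_{i-1})$ and $a$ has the parity making the prefix $v_0\cdots v_a$ even, I would concatenate this prefix with $P_j$ at $v_a$ to form a single odd ear $Q$ of $G_{i-1}$ (whose interior lies in $V(G)\setminus V(G_{i-1})$ by the induced property). The suffix $B=v_a\cdots v_{2k+1}$ is then an ear of $G_{i-1}\cup Q$ that is strictly shorter than $P_i$; the rearranged sequence $(C,P_1,\ldots,P_{i-1},Q,B,P_{i+1},\ldots,\widehat{P_j},\ldots,P_r)$ is a valid ear decomposition of the same graph $G$, and the inductive hypothesis applied to $B$ produces a $2$-edge of $G$ inside $E(B)\subseteq E(P_i)$, contradicting the assumption.

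The main obstacle is handling the degenerate subcases in which the generic rewiring fails: the only internal $v_a$ with $d_G(v_a)\geqslant 3$ may force the prefix to have odd rather than even length, or $w$ may fall outside $V(G_{i-1})$ --- possibly lying in the interior of $P_i$ itself (so that $P_j$ connects two internal vertices of $P_i$) or in the interior of some intermediate ear $P_{j'}$ with $i<j'<j$. Each variant requires a tailored rewiring --- combining with the suffix instead of the prefix, or excising a subpath of $P_i$ between two points and splicing in $P_j$ --- while still yielding a strictly shorter replacement ear that $E(B)\subseteq E(P_i)$. The cleanest organisation seems to be to choose $j$ \emph{minimally} among later ears with an endpoint in the interior of $P_i$; this extra control rules out the worst configurations (every ear strictly between $P_i$ and $P_j$ has both endpoints in $V(G_{i-1})$) and is what I expect to make the induction go through.
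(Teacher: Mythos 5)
The paper does not actually prove this lemma --- it is imported from Lov\'asz and Plummer's Lemma 4.2.4 --- so there is no in-paper argument to compare against; I am judging your proposal on its own terms. Your overall strategy is sound: induct on the length of the ear, note that a trivial ear is a removable edge (base case), locate an internal vertex $v_a$ of degree at least three in $G$ (which must exist if $P_i$ has no $2$-edge, since two consecutive internal vertices of degree two would give one), observe via Lemma~\ref{lem : cmcs is induced} that $v_a$ is then an endpoint of a later ear $P_j$, and reroute so that a strictly shorter ear contained in $E(P_i)$ appears in a new ear decomposition of the same graph $G$, to which the inductive hypothesis applies. The parity fix (attach $P_j$ to the prefix or the suffix of $P_i$ according to whether $a$ is even or odd) and the excision fix (when the other end $w$ of $P_j$ is also internal to $P_i$) both check out, as does the verification that the remaining ears of the original sequence can be appended unchanged.

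The genuine gap is the case you leave at the level of ``I expect this to work'': $w$ internal to an intermediate ear $P_{j'}$ with $i<j'<j$. Your parenthetical claim that choosing $j$ minimal forces every ear strictly between $P_i$ and $P_j$ to have both endpoints in $V(G_{i-1})$ is false: minimality only guarantees that those ears have no endpoint in the \emph{interior of $P_i$}; their endpoints may perfectly well lie in the interiors of other intermediate ears. What the minimal choice actually buys is a \emph{reordering}: since no $P_t$ with $i<t<j$ touches the interior of $P_i$, each such $P_t$ has both ends in $V(G_{t-1})$ minus the interior of $P_i$, which is exactly $V(G_{i-1}\cup P_{i+1}\cup\dots\cup P_{t-1})$, and its interior is disjoint from everything built before it; hence $(C,P_1,\dots,P_{i-1},P_{i+1},\dots,P_{j-1},P_i,P_j,\dots,P_r)$ is again a valid ear decomposition of $G$. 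After this reordering $P_j$ immediately follows $P_i$, so its second end $w$ lies either in the enlarged base graph or in $V(P_i)$, and you are back in one of the two cases you already handled. With that step inserted the induction closes; as written, the case analysis is incomplete and rests on an incorrect claim about what the minimal choice of $j$ provides.
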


Note that the above lemma (see Lov\'asz and Plummer \cite[Lemma 4.2.4]{lopl86}) refers to degrees in the original graph $G$. Adding a nontrivial ear $P_i$ clearly creates a $2$-edge in the resultant subgraph. However, one may add ears later thereby increasing the degrees of internal vertices of $P_i$ and possibly destroying all $2$-edges of $P_i$ in the final graph $G$. The above lemma states that at least one such $2$-edge survives in $G$. We now switch our attention to conformal cycles.

Let $G$ be a \mbmcg{} that is not a cycle. Using Lemma~~\ref{lem : 2-edge in ear} and some standard ear decomposition tricks, one may infer that each conformal cycle $C$ of~$G$ contains a pair of $2$-edges that are separated in $C$ by vertices that have degree at least three (in $G$). Interestingly, the same holds even for cycles that are not conformal; see Lov\'asz and Plummer \cite[Corollary 4.2.5]{lopl86}. 

A matching $M$ is \i{induced} if the subgraph induced by $V(M)$ contains precisely the edges of $M$. The facts stated in the above paragraph, along with Lemma~\ref{lem : cmcs is induced}, imply the following (that is easily verified in the case in which the graph itself is a cycle of order six or more). 

\begin{cor}
    In a \mbmcg{}, distinct from $C_4$, each conformal cycle contains a pair of $2$-edges that comprise an induced matching. % that are separated by vertices of degree at least three.  
    \label{cor : 2-edge in cycle}
\end{cor}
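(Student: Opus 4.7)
The plan is to split into two cases based on whether $G$ is itself a cycle. If $G$ is a cycle, then since $G$ is bipartite matching covered and distinct from $C_4$, $G = C_{2k}$ for some $k\geq 3$. The only conformal cycle is $G$, every edge is a $2$-edge, and any two non-adjacent edges form an induced matching (a simple cycle has no chords). This disposes of the cycle case directly.

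Assume now that $G$ is not a cycle, and let $C$ be a conformal cycle. I would first establish the strengthened statement mentioned in the paragraph preceding this corollary: $C$ contains a pair of $2$-edges of $G$ that are separated on $C$ by vertices of degree at least $3$ in $G$. To this end, decompose $C$ into maximal \emph{arcs}, i.e., subpaths whose internal vertices have degree $2$ in $G$. An arc of length $\ell$ contributes exactly $\max(0,\ell-2)$ $2$-edges to $C$, and the endpoints of the arcs are precisely the degree-$\geq 3$ vertices of $C$; hence the target claim reduces to showing that $C$ has at least two arcs of length $\geq 3$. Using Theorem~\ref{thm : strong ear decomp}, I would fix an ear decomposition $(C,P_1,\ldots,P_r)$ of $G$; note $r\geq 1$ since $G$ is not a cycle, and the degree-$\geq 3$ vertices of $C$ are precisely those appearing as endpoints of some ear $P_i$. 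The main step is to derive a contradiction from the supposition that $C$ has at most one long arc, by combining Lemma~\ref{lem : 2-edge in ear} (which forces each ear to contain a $2$-edge of $G$) with the flexibility of Theorem~\ref{thm : strong ear decomp} (which lets one replace $C$ by a nearby conformal cycle built from $C$ and the first few ears, or reorder ears), eventually producing an ear whose structure contradicts Lemma~\ref{lem : 2-edge in ear}.

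Once the pair $e_1,e_2\in E(C)$ of separated $2$-edges is in hand, I upgrade to an induced matching using Lemma~\ref{lem : cmcs is induced}: being a conformal \mc{} subgraph of the \mbmcg{} $G$, $C$ is induced in $G$, so every edge of $G$ among the four endpoints of $e_1$ and $e_2$ lies on $C$. Since $e_1$ and $e_2$ are separated on $C$ by an intermediate vertex on each side, no endpoint of $e_1$ is adjacent on $C$ to an endpoint of $e_2$; thus the four endpoints span only $e_1$ and $e_2$ themselves in $G$, and so $\{e_1,e_2\}$ is an induced matching. The main obstacle is the ear-rearrangement argument producing two long arcs in the non-cycle case: a direct counting argument is insufficient since short arcs may accept many ear attachments without contributing $2$-edges, so one must carefully exploit Lemma~\ref{lem : 2-edge in ear} together with Theorem~\ref{thm : strong ear decomp} to expose the contradiction.
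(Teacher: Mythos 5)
Your overall architecture matches the paper's: dispose of the case where $G$ is itself a cycle directly, produce on $C$ a pair of $2$-edges of $G$ separated by vertices of degree at least three, and then upgrade to an induced matching via Lemma~\ref{lem : cmcs is induced} (the conformal cycle is induced, so any edge among the four ends would have to lie on $C$, and separation rules that out). The cycle case, the reduction to ``$C$ has at least two arcs of length at least three,'' and the final upgrade step are all correct as you state them.

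The gap is that the central claim --- every conformal cycle of a non-cycle member of $\c{H}$ carries two such separated $2$-edges --- is never actually proved. Your paragraph for this step is a plan rather than an argument: ``derive a contradiction \ldots\ by combining Lemma~\ref{lem : 2-edge in ear} with the flexibility of Theorem~\ref{thm : strong ear decomp} \ldots\ eventually producing an ear whose structure contradicts Lemma~\ref{lem : 2-edge in ear}'' does not identify the ear or the contradiction, and you yourself flag the obstacle without resolving it. For what it is worth, the paper does not prove this step either; it attributes it to Lov\'asz and Plummer (Corollary 4.2.5 of their book) together with ``standard ear decomposition tricks,'' and derives the corollary from that cited fact plus Lemma~\ref{lem : cmcs is induced}. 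If you want a self-contained argument, the standard trick is more direct than your contradiction scheme: since $G$ is not a cycle, $C$ has an ear $P_1$ whose ends $a$ and $b$ have degree at least three and split $C$ into two odd paths $Q_1$ and $Q_2$; one checks that $Q_1\cup P_1$ is a conformal cycle with $Q_2$ as an ear, and that the two-term sequence $(Q_1\cup P_1,\,Q_2)$ extends (via Theorem~\ref{thm : strong ear decomp} applied to the conformal matching covered subgraph $C\cup P_1$) to a full ear decomposition of $G$; Lemma~\ref{lem : 2-edge in ear} then places a $2$-edge of $G$ on $Q_2$, and symmetrically one on $Q_1$. Neither of these can be incident to $a$ or $b$, so they are separated as required. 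As written, though, your proposal leaves this key existence statement unestablished, and that is exactly the nontrivial content of the corollary.
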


We now invoke Lemma~\ref{lem : 2-edge in ear} and Corollary~\ref{cor : 2-edge in cycle}, and the fact that the length of any ear sequence is $m-n+1$, to deduce the following lower bound result that is a mild strengthening of what is stated in Lov\'asz and Plummer; see \cite[Theorem 4.2.7]{lopl86}. 

\begin{thm}
    A \mbmcg{}, distinct from $C_4$, has an induced matching of size at least $m-n+2$, each of whose members is a $2$-edge.
    \label{thm : lower bound on 2-edges}
\end{thm}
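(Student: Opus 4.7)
The plan is to use Theorem~\ref{thm : odd ear decomp} to fix any ear decomposition $(C, P_1, \dots, P_r)$ of $G$, noting that $r = m - n$, and then to harvest one $2$-edge from each piece. Since $G \neq C_4$, Corollary~\ref{cor : 2-edge in cycle} applies to the conformal cycle $C$ and yields two $2$-edges $e_1, e_2$ of $G$ that already comprise an induced matching; Lemma~\ref{lem : 2-edge in ear} then furnishes, for each $i$, a $2$-edge $f_i$ of $G$ contained in $P_i$. This produces a candidate collection $\{e_1, e_2, f_1, \dots, f_r\}$ of exactly $m - n + 2$ $2$-edges, and the real content is to argue that this collection is an induced matching in $G$.

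The key tool is a degree-bookkeeping argument at ear ends. Every end of $P_i$ already lies in $G_{i-1}$, which has minimum degree at least two (being $2$-connected), and $P_i$ contributes one additional incidence, so both ends of $P_i$ have degree at least three in $G_i$ and hence in $G$. Both endpoints of the $2$-edge $f_i$ therefore lie in the interior of $P_i$. Moreover, an interior vertex of $P_i$ that later served as an end of some ear $P_j$ with $j > i$ would similarly acquire degree at least three in $G$; so the endpoints of $f_i$ are not touched by any subsequent ear, and their two $G$-neighbors are exactly their neighbors along $P_i$. The analogous argument confines the $G$-neighbors of each endpoint of $e_1$ and $e_2$ to $C$, because otherwise some ear would have attached there and bumped their degree above two.

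From these localizations, the induced-matching property is immediate. For $i \neq j$, the endpoints of $f_i$ lie in the interior of $P_i$, which is disjoint from $P_j$ by the definition of an ear decomposition; and since all $G$-neighbors of those endpoints lie inside $P_i$, no edge of $G$ connects a vertex of $f_i$ to a vertex of $f_j$. The same reasoning separates each $f_i$ from $\{e_1, e_2\}$, whose endpoints reside in $C$. Finally, $\{e_1, e_2\}$ was already chosen to be induced inside $C$, and the confinement of their $G$-neighbors to $C$ lifts this induced property from $C$ to $G$. The main obstacle throughout is really this degree tracking: one must exclude the scenario in which a later ear attaches at an interior vertex of an earlier ear, and it is precisely here that the hypothesis ``degree two in $G$'' does the crucial work; once it is in hand, pairwise vertex-disjointness and the absence of chords follow mechanically from the structure of an ear decomposition.
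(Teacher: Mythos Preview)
Your proof is correct and follows essentially the same approach as the paper's own proof: harvest two $2$-edges from the starting conformal cycle via Corollary~\ref{cor : 2-edge in cycle}, one $2$-edge from each ear via Lemma~\ref{lem : 2-edge in ear}, and then argue that the collection is an induced matching by observing that each endpoint of a chosen $2$-edge has both of its $G$-neighbours confined to the piece (cycle or ear) from which it was drawn. Your exposition spells out the degree-bookkeeping more explicitly than the paper does, but the argument is the same.
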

\begin{proof}
    Let $G$ be a \mbmcg{} and $(C,P_1,\dots, P_r)$ be any ear sequence. By Corollary~\ref{cor : 2-edge in cycle}, the conformal cycle $C$ has a pair of $2$-edges, say $e$ and $e'$, that comprise an induced matching of $G$. By Lemma~\ref{lem : 2-edge in ear}, each ear $P_i$ has a $2$-edge, say $e_i$. Observe that each end of $e$ and $e'$ has both its neighbours in $C$. Likewise, each end of $e_i$ has both its neighbours in $P_i$. Furthermore, the ends of each ear $P_i$ have degree at least three in $G$. All of these observations imply that the set $F:=\{e,e',e_1,e_2,\dots, e_r\}$ is an induced matching of $G$ and each of its members is a $2$-edge. Also, $r=m-n$. Thus, $|F|=m-n+2$.
\end{proof}

The following lower bound on the number of vertices of degree two comes for free. 

\begin{cor} In a \mbmcg{}, $|V_2|\geqslant 2(m-n+2)$.
\label{cor : bound evm}
\end{cor}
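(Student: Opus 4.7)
The plan is to derive this bound directly from Theorem~\ref{thm : lower bound on 2-edges}, which is the substantive content; the corollary is essentially a counting translation from $2$-edges to the vertices incident with them.

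First I would dispose of the only graph excluded by Theorem~\ref{thm : lower bound on 2-edges}, namely $C_4$. A direct check gives $m=n=4$, so $m-n+2=2$, while $V_2$ is the entire vertex set and has cardinality $4=2(m-n+2)$; the bound thus holds with equality.

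For every other \mbmcg{} $G$, Theorem~\ref{thm : lower bound on 2-edges} supplies an induced matching $F$ of $G$ with $|F|\geqslant m-n+2$ and with the property that every edge of $F$ is a $2$-edge. Recall that a $2$-edge is, by definition, an edge whose two ends both lie in $V_2$. Since $F$ is a matching, the $2|F|$ endpoints of its edges are pairwise distinct vertices, and because each such endpoint belongs to $V_2$, these $2|F|$ vertices all lie in $V_2$. Consequently $|V_2|\geqslant 2|F|\geqslant 2(m-n+2)$, which is exactly the claimed inequality.

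I do not anticipate any real obstacle: the fact that $F$ is an induced matching is stronger than what is needed here (it will be used elsewhere to analyse the extremal case), and for the corollary only the matching property and the fact that each edge in $F$ is a $2$-edge are required.
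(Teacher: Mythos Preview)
Your proof is correct and follows essentially the same approach as the paper: handle $C_4$ separately, then apply Theorem~\ref{thm : lower bound on 2-edges} to obtain a matching $F$ of $2$-edges with $|F|\geqslant m-n+2$ and conclude $|V_2|\geqslant 2|F|$. Your remark that only the matching property (not the induced property) is needed here is also accurate.
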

\begin{proof}
    The statement clearly holds for $C_4$. Now, let $G$ be a \mbmcg{}, distinct from $C_4$, and let $F$ denote a maximum induced matching, each of whose members is a $2$-edge. Clearly, $|V_2|\geqslant 2|F|$. By Theorem~\ref{thm : lower bound on 2-edges}, $|F|\geqslant m-n+2$. Thus, $|V_2|\geqslant 2(m-n+2)$. 
\end{proof}

Throughout the next three sections, for the sake of brevity, we shall use the term \i{extremal} to refer to \evm{} --- that is, those \mbmcg{}s that satisfy the bound in Corollary~\ref{cor : bound evm} with equality. 

%We will find another consequence of Corollary~\ref{cor : 2-edge in cycle} useful in the next section.
% \begin{cor}
%     In a \mbmcg{} $G$, the subgraph $G-E_2$ is a disconnected forest.\qed
%     \label{cor : G-E_2(G) is acyclic}
% \end{cor}

\section{Properties of extremal graphs}
\label{sec : embmcg}

In this section, we establish some useful properties of this class including the ``balanced \mbox{$2$-cut"} property that is crucial for our inductive proof of Theorem~\ref{thm : evm} that characterizes this class. To this end, we shall enforce equality in the proofs of Corollary~\ref{cor : bound evm} and Theorem~\ref{thm : lower bound on 2-edges}.

% \subsection{Easy Direction}
% A \mbmcg{} $G$ is extremal if $|V_2(G)|=2(m-n+2)$. We will now describe a way to construct \embmcg{}s. For that we first describe a way to construct \mbmcg{}s. 

% In the remaining part of this section, we will prove that all \embmcg{}s can be obtained from \htree{}s as described in Theorem
%~\ref{thm : embmcg TcupT'}.

%Now, let $G$ be a \embmcg{}. Thus, every inequality in a Corollary~\ref{cor : mbmcg lower bound} is a equality. In particular, $|V_2(G)|=2|F|$ where $F$ is the matching of $2$-edges given by Theorem~\ref{thm : lower bound on 2-edges}. Thus, $n$ is a multiple of $4$. Further, observe that every vertex with degree at least three has degree exactly three. In particular, every vertex of $G$ has degree either two or three.

% \subsection{Hard Direction}
%We will now focus on \mbmcg{}s which are \evm{} --- that is, those which satisfy $|V_2(G)|=2(m-n+2)$. 

% \begin{prop}
%     If $G\in \c{H}_2$ then $G\in \c{H}_0$.    
% \end{prop}
% \begin{proof}
      
% \end{proof}

\begin{lem}
    Every \embmcg{}, distinct from $C_4$, satisfies the following:
    \begin{enumerate}
        \item $E_2$ is a perfect matching of $G[V_2]$ and $|E_2|=m-n+2$,
        \item each ear in any ear decomposition has exactly one $2$-edge, and
        \item each conformal cycle has precisely two $2$-edges.
        % \item $|V_3|=3n-2m-4$,
        % \item $|E_2|=m-n+2$ and $|E_{3,2}|=2(m-n+2)$, and 
        % \item $|E_{3}|=3n-2m-6=|V_3|-2$.\hfill{$\qed$}        
    \end{enumerate}
    
    \label{lem : basic prop of embmcg}
\end{lem}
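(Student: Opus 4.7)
The plan is to enforce equality throughout the chain of inequalities used to prove Corollary~\ref{cor : bound evm}, and to then exploit the resulting rigidity. Let $G$ be an \embmcg{} distinct from $C_4$, fix any ear decomposition $(C, P_1, \ldots, P_r)$, and let $F$ denote the induced matching of 2-edges produced in the proof of Theorem~\ref{thm : lower bound on 2-edges}. Since every edge of $F$ is a 2-edge, $V(F) \subseteq V_2$, so
\[
|V_2| \;\geq\; |V(F)| \;=\; 2|F| \;\geq\; 2(m-n+2) \;=\; |V_2|.
\]
Equality throughout forces $|F| = m-n+2$ and $V(F) = V_2$; these two facts drive all three conclusions.

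For part~(i), I claim $F = E_2$. The inclusion $F \subseteq E_2$ is immediate from the construction of $F$. Conversely, suppose $e = uv$ is a 2-edge outside $F$. Then $u, v \in V_2 = V(F)$, so the induced matching property of $F$ applied to the edge $uv$ between two $F$-saturated vertices forces $e \in F$, a contradiction. Hence $E_2 = F$, which simultaneously yields $|E_2| = m - n + 2$ and that $E_2$ saturates $V_2$; being a matching of 2-edges, it is then a perfect matching of $G[V_2]$.

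For parts~(ii) and~(iii), I combine the now-known value $|E_2| = m - n + 2$ with the partition of $E(G)$ furnished by an ear decomposition. Given any ear decomposition $(C', P_1', \ldots, P_r')$, Corollary~\ref{cor : 2-edge in cycle} and Lemma~\ref{lem : 2-edge in ear} give $|E_2 \cap E(C')| \geq 2$ and $|E_2 \cap E(P_i')| \geq 1$, and these lower bounds sum to $r + 2 = m - n + 2 = |E_2|$. So equality must hold termwise, delivering~(ii) and the case of the starting cycle in~(iii). To handle an arbitrary conformal cycle $C^*$ of $G$ in~(iii), I invoke Theorem~\ref{thm : strong ear decomp}: since $G \neq C_4$, the extremal graph $G$ is not itself an even cycle, so starting with $G_0 := C^*$ (a \cmcs{}) and repeatedly applying Lemma~\ref{lem : P_e}, I build an ear decomposition with $C^*$ as its starting cycle. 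The termwise equality above then gives $|E_2 \cap E(C^*)| = 2$.

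The main obstacle is part~(i): Theorem~\ref{thm : lower bound on 2-edges} only exhibits an induced matching $F \subseteq E_2$ of the right size, leaving open the possibility that $E_2$ contains additional 2-edges outside $F$. Ruling this out crucially uses the extracted equality $V(F) = V_2$ together with the induced-matching property of $F$. Once~(i) is in hand, parts~(ii) and~(iii) reduce to straightforward counting against the ear-decomposition partition.
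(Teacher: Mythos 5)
Your proof is correct and follows essentially the same route as the paper: enforce equality in the proofs of Corollary~\ref{cor : bound evm} and Theorem~\ref{thm : lower bound on 2-edges} to obtain $|F|=m-n+2$ and $V(F)=V_2$, deduce $E_2=F$ from the induced-matching property (giving (i)), and then get (ii) and (iii) by termwise equality in the ear-decomposition count of $2$-edges. Your explicit handling of an arbitrary conformal cycle via Theorem~\ref{thm : strong ear decomp} merely spells out a step the paper leaves implicit.
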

\begin{proof}
Let $G$ be an \embmcg{}, distinct from $C_4$, and let $F$ denote an induced matching of maximum cardinality, each of whose members is a $2$-edge, as in the proof of Corollary~\ref{cor : bound evm}. Since, by definition, $|V_2|=2(m-n+2)$, equality holds everywhere in the proof of Corollary~\ref{cor : bound evm}. In particular, $|F|=m-n+2$ and $|V_2|=2|F|$; thus $V_2=V(F)$. Observe that, since $F$ is an induced matching, $E_2=F$. Consequently, $E_2$ is a perfect matching of $G[V_2]$ and $|E_2|=m-n+2$. This proves \i{(i)}.

Now, let $(C,P_1,\dots ,P_r)$ be any ear sequence of $G$. Since $|F|=m-n+2$, equality holds everywhere in the proof of Theorem~\ref{thm : lower bound on 2-edges}. Furthermore, using the fact that $|E_2|=m-n+2$, we infer that the conformal cycle $C$ contributes exactly two $2$-edges, and each ear $P_i$ contributes precisely one $2$-edge, to the set $E_2$. This proves \i{(ii)} and \i{(iii)}. 
\end{proof}
%Suppose there is a vertex $v\in V_2$ with both neighbours $v_1,v_2\in V_2$. Then, there is a conformal cycle $C$ containing $vv_1$ and $vv_2$. Thus, by Corollary~\ref{cor : 2-edge in cycle}, there at least three $2$-edges in $C$ which is a contradiction. Now, as $|V_2|=2|F|$, each vertex of degree two has at least one neighbour of degree two. So, each vertex in $V_2$ has exactly one neighbour in $V_2$ and hence $|E_2|=\frac{|V_2|}{2}$.

Using Lemma~\ref{lem : basic prop of embmcg} \i{(i)}, we immediately deduce the following containment.
\begin{cor}
    $\c{H}_2-C_4\subset \c{H}_0$. \qed
    \label{cor : h2 subset h0}
\end{cor}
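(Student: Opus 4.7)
My plan is to read the inclusion directly off Lemma \ref{lem : basic prop of embmcg}. Part (i) of that lemma has just established that every $G \in \c{H}_2$ distinct from $C_4$ satisfies $|E_2| = m - n + 2$, which is exactly the defining equality for $\c{H}_0$ recorded in Table \ref{tab: extremality notions}. Hence the containment $\c{H}_2 - C_4 \subseteq \c{H}_0$ is a one-line deduction with no hidden step; the corollary really just repackages the numerical content of the preceding lemma.

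The only remaining issue is justifying that the containment is \emph{proper}. For this I would point to the graph already displayed in Figure \ref{subfig : ex h0 1}, which was identified in Subsection \ref{subsec : h0 and h1} as a member of $\c{H}_0$. By inspection of that figure, some of its degree-two vertices have both neighbours of degree at least three and are therefore not covered by any $2$-edge; consequently $|V_2| > 2|E_2| = 2(m - n + 2)$, and the graph lies outside $\c{H}_2$. For completeness one should also record why the exclusion of $C_4$ in the statement is genuinely needed: in $C_4$ every edge is a $2$-edge, so $|E_2| = 4 \neq 2 = m - n + 2$, and thus $C_4 \in \c{H}_2$ but $C_4 \notin \c{H}_0$.

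There is no real obstacle in this proof. Its purpose is bookkeeping: to establish the first edge of the containment poset of Figure \ref{fig: poset} by combining the already-proved extremal identity on $|E_2|$ with one of the counterexamples the paper has already exhibited.
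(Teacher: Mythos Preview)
Your argument is correct and matches the paper's approach exactly: the inclusion is read off Lemma~\ref{lem : basic prop of embmcg}\,(i), while properness (and the need to exclude $C_4$) is handled by the examples already exhibited in Subsection~\ref{subsec : h0 and h1}. The paper treats this as immediate and simply writes \qed after the statement, so your proposal is, if anything, more explicit than the original.
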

We now derive, using the above lemma and simple counting arguments, a few additional properties of this extremal class. In order to do so, we need some notation and terminology that we define next.
%Further, every internal vertex of an ear (vertex of a conformal cycle) has two neighbours in the ear (cycle) itself. Thus, every vertex of degree two has one neighbour of degree two and another of degree three (otherwise $|V_2(G)|\geqslant 2|F|+1$). In particular, $E_2(G)=F$ is a \pema{} of $G[V_2(G)]$.

%As $n$ is a multiple of $4$, let $n:=4k$. 
Following Lov\'asz and Plummer \cite{lopl86}, we call an edge (of a graph $G$) a \i{\tedge{}} if each of its ends has degree at least three, and we use $E_3(G)$ to denote the set comprising these edges. In the same spirit, we use $V_3(G)$ to denote the set of vertices of degree at least three. Technically, it should be denoted by $V_{\geqslant3}(G)$. However, throughout this paper, we distinguish between vertices of degree two and the rest --- that is, vertices of degree three or more. So, for the sake of brevity, we drop ``$\geqslant$" from the notation. Finally, we use $E_{3,2}(G)$ to denote the set of those edges that join a vertex of degree two with a vertex of degree at least three. As usual, if the graph $G$ is clear from the context, we shall drop $G$ from these notations. Note that $E_{3,2}=\partial(V_3)=\partial(V_2)$. 

Now, let $G$ be an \embmcg{} distinct from~$C_4$. By definition,  $|V_3|=n-|V_2|=3n-2m-4$. By Lemma~\ref{lem : basic prop of embmcg}, each vertex of degree two has exactly one neighbour of degree at least three; consequently, $|E_{3,2}|=|V_2|=2(m-n+2)$. Lastly, $|E_{3}|=m-|E_2|-|E_{3,2}|=3n-2m-6$. Also, since $|V_3|=3n-2m-4$, it follows that $|E_3|=|V_3|-2$. This proves the following.  

\begin{cor}
    Every \embmcg{}, distinct from $C_4$, satisfies the following:
    \begin{enumerate}
        % \item each ear in any ear decomposition has exactly one $2$-edge,
        % \item any conformal cycle has precisely two $2$-edge,
        % \item $E_2$ is a perfect matching of $G[V_2]$,
        \item $|V_3|=3n-2m-4$,
        \item $|E_{3,2}|=2m-2n+4$, and 
        \item $|E_{3}|=3n-2m-6=|V_3|-2$.\hfill{$\qed$}        
    \end{enumerate}
    
    \label{cor : counting prop of embmcg}
\end{cor}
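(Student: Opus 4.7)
The plan is to derive all three identities from Lemma~\ref{lem : basic prop of embmcg}~\emph{(i)} by nothing more than bookkeeping; the work has essentially been set up by the preceding lemma, so no new structural idea is required.

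First I would compute $|V_2|$ and $|V_3|$. By Lemma~\ref{lem : basic prop of embmcg}~\emph{(i)}, $E_2$ is a perfect matching of $G[V_2]$ with $|E_2| = m - n + 2$. Hence $|V_2| = 2|E_2| = 2(m-n+2) = 2m-2n+4$, and consequently $|V_3| = n - |V_2| = 3n-2m-4$, proving \emph{(i)}.

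Next I would read off $|E_{3,2}|$ from the same lemma. Since $E_2$ is a \emph{perfect} matching of $G[V_2]$, every vertex $v\in V_2$ has one of its two neighbors in $V_2$ (its $E_2$-mate) and therefore exactly one neighbor in $V_3$; moreover no two distinct vertices of $V_2$ can share that $V_3$-neighbor via a single edge, so the map $v\mapsto$ (its edge to $V_3$) is a bijection from $V_2$ to $E_{3,2}$. Hence $|E_{3,2}| = |V_2| = 2m-2n+4$, proving \emph{(ii)}.

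Finally, the edge set partitions as $E = E_2 \cupdot E_{3,2} \cupdot E_3$ (every edge joins two degree-$2$ vertices, two degree-$\geqslant 3$ vertices, or one of each). Subtracting gives
\[
|E_3| = m - |E_2| - |E_{3,2}| = m - (m-n+2) - (2m-2n+4) = 3n-2m-6,
\]
and comparing with \emph{(i)} yields $|E_3| = |V_3| - 2$, proving \emph{(iii)}. There is no genuine obstacle here: the proof is a three-step arithmetic unpacking of Lemma~\ref{lem : basic prop of embmcg}, and the only point requiring a moment's care is the bijection argument for \emph{(ii)}, which relies crucially on $E_2$ being a \emph{perfect} matching of $G[V_2]$ rather than an arbitrary matching.
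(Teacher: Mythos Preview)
Your proof is correct and follows essentially the same route as the paper: derive $|V_2|$ from extremality (equivalently from Lemma~\ref{lem : basic prop of embmcg}~\emph{(i)}), read off $|V_3|$ by complementation, use the fact that each degree-two vertex has exactly one neighbour in $V_3$ to get $|E_{3,2}|=|V_2|$, and then obtain $|E_3|$ from the partition $E=E_2\cupdot E_{3,2}\cupdot E_3$. The only cosmetic difference is that the paper invokes the extremality definition directly for $|V_2|$, whereas you route it through $|V_2|=2|E_2|$; and your bijection remark for \emph{(ii)} is slightly more explicit than needed but not incorrect.
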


We shall now use Lemma~\ref{lem : basic prop of embmcg} to prove the ``balanced $2$-cut" property of \embmcg{}s that was alluded to at the beginning of this section. 

\subsection{Balanced $2$-cut property}
For a cut $\partial(W)$ of a bipartite graph $G[A,B]$, we classify the edges of $\partial(W)$ into two types, depending upon the color classes of their ends in $W$. We use $\partial^A(W)$ to denote those edges of $\partial(W)$ whose end in $W$ belongs to $A$. The set $\partial^B(W)$ is defined analogously. The cut $\partial(W)$ is \i{balanced} if $|\partial^A(W)|=|\partial^B(W)|$. In our work, balanced $2$-cuts play a crucial role. Note that a $2$-cut $\partial(W)$ is balanced if $|\partial^A(W)|=|\partial^B(W)|=1$. We are now ready to prove the balanced $2$-cut property stated below.

\begin{thm}{\sc[Balanced $2$-cut Property]}\\
    In an \embmcg{}, each \tedge{} participates in a balanced $2$-cut with some other \tedge{}.
    \label{thm : 2-cut property}
\end{thm}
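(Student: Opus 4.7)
\emph{Proof plan.} Let $G[A,B]$ be an \embmcg{} distinct from $C_4$, and let $e=uv$ be a 3-edge with $u\in A$ and $v\in B$. The plan is to exhibit a set $W\subsetneq V(G)$ with $\partial(W)=\{e,e'\}$ for some $e'\in E_3$, and with one of $e,e'$ in $\partial^A(W)$ and the other in $\partial^B(W)$.

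First, I would locate a conformal cycle through $e$. Since $d(u)\geq 3$, there is a perfect matching of $G$ containing $e$ and another containing a distinct edge at $u$; their symmetric difference contains an even cycle through $e$, which is a conformal cycle $C$. By Lemma~\ref{lem : cmcs is induced}, $C$ is induced; by Lemma~\ref{lem : basic prop of embmcg}(iii), $C$ has exactly two $2$-edges $f_1,f_2$, which split $C$ into two arcs, and $e$ lies in one of them, say $P$. Using Lemma~\ref{lem : basic prop of embmcg}(i), every vertex of $C$ not incident to $f_1\cup f_2$ must lie in $V_3$: otherwise the $E_2$-partner of such a $V_2$-vertex would, by inducedness of $C$ and the degree-two constraint, also sit on $C$, producing a third $2$-edge in $C$.

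The central step is to apply Theorem~\ref{thm : strong ear decomp} to extend $C$ to an ear decomposition $(C,P_1,\ldots,P_r)$ of $G$, and to define $W$ by starting with the interior of $P$ and iteratively absorbing every later ear whose two endpoints both lie in the growing candidate, while simultaneously pulling in the $E_2$-partner of any $V_2$-vertex drawn in (as forced by Lemma~\ref{lem : basic prop of embmcg}(i)). The accounting from Lemma~\ref{lem : basic prop of embmcg}(ii)--(iii) --- exactly one $2$-edge per ear and exactly two per conformal cycle --- rules out a third boundary edge of $\partial(W)$, since such an edge would either complete a conformal cycle carrying three or more $2$-edges or force an ear with two $2$-edges. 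The same accounting forces $e'\in E_3$, since any $e'\in E_{3,2}\cup E_2$ would have a $V_2$-end whose unique $E_2$-partner must also cross the cut, contributing a third boundary edge.

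Finally, balancedness would follow from the bipartite alternation along $C$ together with a symmetry argument: since $C$ is even and the two arcs flanking $f_1,f_2$ carry matched parities, tracing the alternation of colors along $P$ and along the analogous absorbed structure that delivers $e'$ should show that the $W$-ends of $e$ and $e'$ lie in opposite classes of $G[A,B]$. The principal obstacle will be the second step: executing the case analysis --- likely by induction on $m-n$ --- that pinpoints $W$ and rules out every extraneous boundary edge. The delicate part is controlling which subsequent ears attach internally to the growing $W$, and leveraging the single-$2$-edge-per-ear invariant as the main lever that keeps the bookkeeping tractable.
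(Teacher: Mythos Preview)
Your construction has a fatal flaw at the seed step. With $W$ initialized as the interior of the arc $P$ of $C$ that contains $e$, the edge $e$ lies \emph{inside} $W$, not on its boundary: both ends $u,v$ of $e$ belong to $V_3$, while the endpoints of $P$ are ends of the $2$-edges $f_1,f_2$ and hence belong to $V_2$; so $u,v$ are internal vertices of $P$, whence $e\in G[W]$ rather than $e\in\partial(W)$. Since your growth rule only adds vertices to $W$, the edge $e$ can never migrate to the boundary. Even if you patch the seed (say, by taking one component of $P-e$), the rest of the argument is not yet a proof: the assertion that a third boundary edge ``would complete a conformal cycle carrying three or more $2$-edges or force an ear with two $2$-edges'' does not follow from your absorption rule, and the balancedness paragraph is a hope rather than an argument.

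The paper's approach is essentially dual to yours and sidesteps all of this. It starts from a conformal cycle through one end $a$ of $e$ that \emph{avoids} $e$ (Proposition~\ref{prop : C_u}), and takes $H$ to be a \emph{maximal} conformal matching covered subgraph with $a\in H$ and $e\notin H$. Maximality together with Lemma~\ref{lem : P_e} gives $\partial^A(H)=\{e\}$ in one line; the real work is an ear-rearrangement argument, driven by Lemma~\ref{lem : basic prop of embmcg}(ii), showing $|\partial^B(H)|=1$ and that this second edge is a $3$-edge. Balancedness is then automatic, since $e\in\partial^A(H)$ forces the remaining edge into $\partial^B(H)$. The maximal-avoiding-subgraph device is the key idea you are missing.
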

\begin{proof}
    Let $G[A,B]$ denote an \embmcg{} and let $e:=ab$ denote any \tedge{} so that $a\in A$. We intend to show the existence of another \tedge{}, say $f$, such that $\{e,f\}$ is a balanced $2$-cut.
    
    By Proposition~\ref{prop : C_u}, there is a conformal cycle $C$ such that $a\in C$ and $e\notin C$. Now, let $H$ be a maximal conformal matching covered subgraph of $G$ such that $a\in H$ and $e\notin H$. By Lemma~\ref{lem : cmcs is induced}, $H$ is induced; thus $b\notin H$. In particular, $e\in \partial^A(H)$. We shall demonstrate that $\partial(H)$ is the desired balanced $2$-cut. In order to do so, we state and prove specific claims; within their proofs, we shall invoke ear decomposition results (namely, Lemma~\ref{lem : P_e} and Theorem~\ref{thm : strong ear decomp}) several times without mentioning it explicitly.

    \begin{stat}
    $\partial^A(H)=\{e\}$.
    \end{stat}

    \begin{proof}
    Suppose not; then there is an edge $f\neq e$ in $\partial^A(H)$. Now, there is an ear $P_f$ of $H$ containing $f$. Observe that $e\notin P_f$ since $P_f$ is odd. Thus, $H':=H\cup P_f$ is a larger conformal matching covered subgraph of $G$ such that $a\in H'$ and $e\notin H'$; this contradicts the maximality of $H$. This proves that $\partial^A(H)=\{e\}$. 
    \end{proof}

    In the proofs of the next two claims, every instance of $2$-edge refers to the condition of being a $2$-edge in the graph $G$ similar to our earlier discussion (after the proof of Lemma~\ref{lem : 2-edge in ear}).
    \begin{stat}
    $|\partial^B(H)|= 1$.
    \label{stat : one bw edge}
    \end{stat}
    \begin{proof}
    Since $G$ is \kC{2}, $|\partial^B(H)|\geqslant1$. Now suppose that $f_1$ and $f_2$ are distinct edges in $\partial^B(H)$ and let $b_1$ and $b_2$ denote their (not necessarily distinct) ends in $H$, respectively. Now, there is an ear $P'$ of $H$ containing $f_1$ (which must also contain $e$). Furthermore, there is an ear $P_2$ of $H\cup P'$ containing $f_2$. Let $x\in A$ be the end of $P_2$ distinct from $b_2$ (see Figure~\ref{subfig: h cup p+p1 cup p2}). Observe that $x\notin H$ since $\partial^A(H)=\{e\}$ and $e\in P'$. Thus, $x$ is an internal vertex of $P'$. Consequently, $x$ splits $P'$ into two paths, say $P:=aP'x$ and $P_1:=xP'b_1$. Note that $P_2$ is an ear of $H\cup (P+P_1)=H\cup P'$; likewise, $P_1$ is an ear of $H\cup (P+P_2)$. By Lemma~\ref{lem : basic prop of embmcg} \i{(ii)}, each of $P'$, $P_1$ and $P_2$ has exactly one $2$-edge. Since $P_1$ is a subpath of $P'$, we conclude that $P$ is free of $2$-edges. Furthermore, by Lemma~\ref{lem : basic prop of embmcg} \i{(i)}, each vertex of $P$ has degree at least three in $G$.

    \begin{figure}[htb]
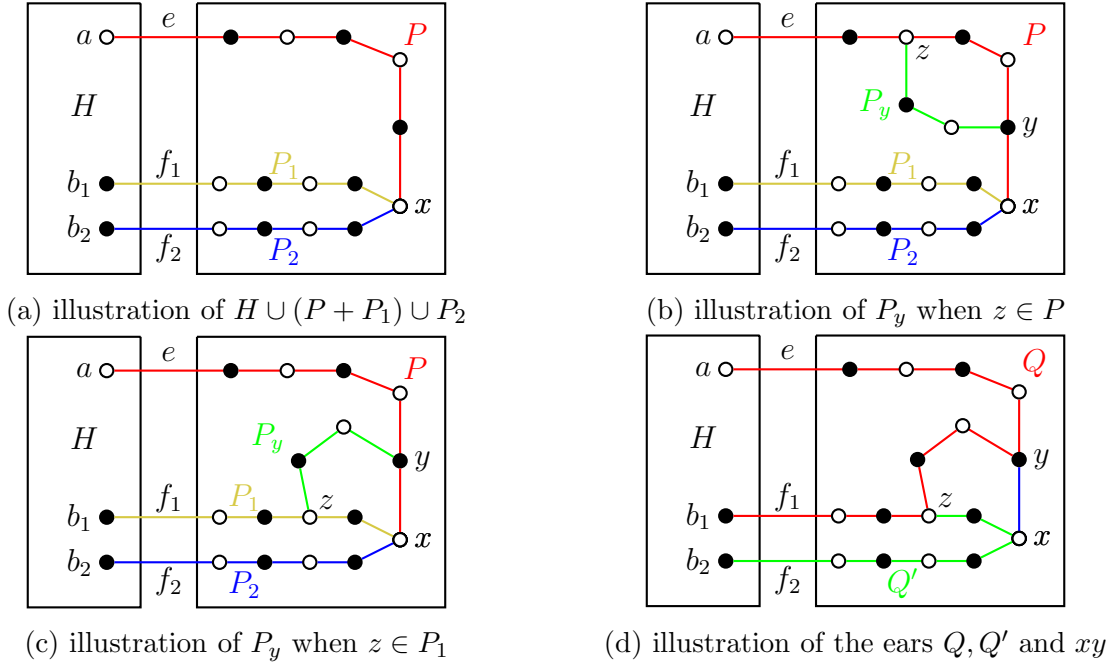

        \centering
        \begin{subfigure}{0.49\linewidth}
            \centering
            \begin{drawing}{1.5}
                \draw (-0.5,1.2)--(-1.5,1.2)--(-1.5,-1.2)--(-0.5,-1.2)--(-0.5,1.2);
            \draw (0,1.2)--(2.2,1.2)--(2.2,-1.2)--(0,-1.2)--(0,1.2);
            
            \draw (-0.8,0.9)node(a)[wnode]{}edge++(1.1,0)node[bnode]{}edge++(0.5,0)node[wnode]{}edge++(0.5,0)node[bnode]{}edge++(0.5,-0.2)node[wnode]{}edge++(0,-0.6)node[bnode]{}edge++(0,-0.7)node[wnode](x){}node[right,nodelabel]{$x$};
    
            \let\edgecolor\blueedge
            \draw (-0.8,-0.8)node(b2)[bnode]{}edge++(1,0)node[wnode]{}edge++(0.4,0)node[bnode]{}edge++(0.4,0)node[wnode]{}edge++(0.4,0)node[bnode]{}edge(x)node[wnode]{};
    
            \let\edgecolor\yellowedge       
            \draw (-0.8,-0.4)node(b1)[bnode]{}edge++(1,0)node[wnode]{}edge++(0.4,0)node[bnode]{}edge++(0.4,0)node[wnode]{}edge++(0.4,0)node[bnode]{}edge(x)node[wnode]{};
    
            \draw (x)node[wnode]{}node[right,nodelabel]{$x$};
            \draw (a)node[left,nodelabel]{$a$};
            \draw (b1)node[left,nodelabel]{$b_1$};
            \draw (b2)node[left,nodelabel]{$b_2$};
            \draw (a)++(0.55,0.15)node[nodelabel]{$e$};
            \draw (b1)++(0.55,0.17)node[nodelabel]{$f_1$};
            \draw (b2)++(0.55,-0.17)node[nodelabel]{$f_2$};
            
            \draw (-1,0.3)node[nodelabel]{$H$};
            \draw[color=yellow!80!black] (0.5,-0.24)node[right,nodelabel]{$P_1$};
            \draw[color=blue] (0.5,-1)node[right,nodelabel]{$P_2$};
            \draw[color=red] (1.7,0.7)node[above right,nodelabel]{$P$};
            \end{drawing}
            \caption{illustration of $H\cup (P+P_1)\cup P_2$}
            \label{subfig: h cup p+p1 cup p2}
        
        \end{subfigure}
        \begin{subfigure}{0.49\linewidth}
            \centering
            \begin{drawing}{1.5}
            \draw (-0.5,1.2)--(-1.5,1.2)--(-1.5,-1.2)--(-0.5,-1.2)--(-0.5,1.2);
            \draw (0,1.2)--(2.2,1.2)--(2.2,-1.2)--(0,-1.2)--(0,1.2);
            
            \draw (-0.8,0.9)node(a)[wnode]{}edge++(1.1,0)node[bnode]{}edge++(0.5,0)node[wnode](z){}edge++(0.5,0)node[bnode]{}edge++(0.4,-0.2)node[wnode]{}edge++(0,-0.6)node[bnode](y){}edge++(0,-0.7)node[wnode](x){}node[right,nodelabel]{$x$};
    
            \let\edgecolor\blueedge
            \draw (-0.8,-0.8)node(b2)[bnode]{}edge++(1,0)node[wnode]{}edge++(0.4,0)node[bnode]{}edge++(0.4,0)node[wnode]{}edge++(0.4,0)node[bnode]{}edge(x)node[wnode]{};
    
            \let\edgecolor\yellowedge       
            \draw (-0.8,-0.4)node(b1)[bnode]{}edge++(1,0)node[wnode]{}edge++(0.4,0)node[bnode]{}edge++(0.4,0)node[wnode]{}edge++(0.4,0)node[bnode]{}edge(x)node[wnode]{};
    
            \let\edgecolor\greenedge
            \draw (y)edge++(-0.5,0)node[wnode]{}edge++(-0.4,0.2)node[bnode]{}edge(z);
    
            \draw (x)node[wnode]{}node[right,nodelabel]{$x$};
            \draw (a)node[left,nodelabel]{$a$};
            \draw (b1)node[left,nodelabel]{$b_1$};
            \draw (b2)node[left,nodelabel]{$b_2$};
            \draw (y)node[right,nodelabel]{$y$};
            \draw (z)++(-0.05,0.05)node[below right,nodelabel]{$z$};
            
            \draw (-1,0.3)node[nodelabel]{$H$};
            \draw[color=yellow!80!black] (0.5,-0.24)node[right,nodelabel]{$P_1$};
            \draw[color=blue] (0.5,-1)node[right,nodelabel]{$P_2$};
            \draw[color=red] (1.7,0.7)node[above right,nodelabel]{$P$};
            \draw[color=green] (0.8,0.3)node[left,nodelabel]{$P_y$};
            \draw (a)++(0.55,0.15)node[nodelabel]{$e$};
            \draw (b1)++(0.55,0.17)node[nodelabel]{$f_1$};
            \draw (b2)++(0.55,-0.17)node[nodelabel]{$f_2$};
            \end{drawing}
            \caption{illustration of $P_y$ when $z\in P$}
            \label{subfig: z in P}
        \end{subfigure}
        % \vspace*{2cm}
        \begin{subfigure}{0.49\linewidth}
            \centering
            \begin{drawing}{1.5}
                \draw (-0.5,1.2)--(-1.5,1.2)--(-1.5,-1.2)--(-0.5,-1.2)--(-0.5,1.2);
                \draw (0,1.2)--(2.2,1.2)--(2.2,-1.2)--(0,-1.2)--(0,1.2);

                \draw (-0.8,0.9)node[wnode](a){}edge++(1.1,0)node[bnode]{}edge++(0.5,0)node[wnode]{}edge++(0.5,0)node[bnode]{}edge++(0.5,-0.2)node[wnode]{}edge++(0,-0.6)node[bnode](y){}edge++(0,-0.7)node[wnode](x){}node[right,nodelabel]{$x$};

                \let\edgecolor\blueedge
                \draw (-0.8,-0.8)node(b2)[bnode]{}edge++(1,0)node[wnode]{}edge++(0.4,0)node[bnode]{}edge++(0.4,0)node[wnode]{}edge++(0.4,0)node[bnode]{}edge(x)node[wnode]{};
        
                \let\edgecolor\yellowedge       
                \draw (-0.8,-0.4)node(b1)[bnode]{}edge++(1,0)node[wnode]{}edge++(0.4,0)node[bnode]{}edge++(0.4,0)node[wnode](z){}edge++(0.4,0)node[bnode]{}edge(x)node[wnode]{};

                \let\edgecolor\greenedge
                \draw (y)edge++(-0.5,0.3)node[wnode]{}edge++(-0.4,-0.3)node[bnode]{}edge(z);
        
                \draw (x)node[wnode]{}node[right,nodelabel]{$x$};
                \draw (a)node[left,nodelabel]{$a$};
                \draw (b1)node[left,nodelabel]{$b_1$};
                \draw (b2)node[left,nodelabel]{$b_2$};
                \draw (y)node[right,nodelabel]{$y$};
                \draw (z)++(-0.05,-0.05)node[above right,nodelabel]{$z$};
                
                \draw (-1,0.3)node[nodelabel]{$H$};
                \draw[color=yellow!80!black] (0.15,-0.23)node[right,nodelabel]{$P_1$};
                \draw[color=blue] (0.15,-1)node[right,nodelabel]{$P_2$};
                \draw[color=red] (1.7,0.7)node[above right,nodelabel]{$P$};
                \draw[color=green] (0.9,0.3)node[left,nodelabel]{$P_y$};
                \draw (a)++(0.55,0.15)node[nodelabel]{$e$};
                \draw (b1)++(0.55,0.17)node[nodelabel]{$f_1$};
                \draw (b2)++(0.55,-0.17)node[nodelabel]{$f_2$};
            \end{drawing}
            \caption{illustration of $P_y$ when $z\in P_1$}
            \label{subfig: z in p_1}    
        \end{subfigure}
        \begin{subfigure}{0.49\linewidth}
            \centering
            \begin{drawing}{1.5}
                \draw (-0.5,1.2)--(-1.5,1.2)--(-1.5,-1.2)--(-0.5,-1.2)--(-0.5,1.2);
                \draw (0,1.2)--(2.2,1.2)--(2.2,-1.2)--(0,-1.2)--(0,1.2);

                \draw (-0.8,0.9)node[wnode](a){}edge++(1.1,0)node[bnode]{}edge++(0.5,0)node[wnode]{}edge++(0.5,0)node[bnode]{}edge++(0.5,-0.2)node[wnode]{}edge++(0,-0.6)node[bnode](y){};

                \draw (y)edge++(-0.5,0.3)node[wnode]{}edge++(-0.4,-0.3)node[bnode]{}edge(z);
                
                \draw (-0.8,-0.4)node(b1)[bnode]{}edge++(1,0)node[wnode]{}edge++(0.4,0)node[bnode]{}edge++(0.4,0)node[wnode](z){};

                \let\edgecolor\greenedge
                \draw (z)edge++(0.4,0)node[bnode]{}edge(x)node[wnode]{};

                \draw (-0.8,-0.8)node(b2)[bnode]{}edge++(1,0)node[wnode]{}edge++(0.4,0)node[bnode]{}edge++(0.4,0)node[wnode]{}edge++(0.4,0)node[bnode]{}edge(x)node[wnode]{};

                \let\edgecolor\blueedge
                \draw (y)edge++(0,-0.7)node[wnode](x){}node[right,nodelabel]{$x$};
                
                % \let\edgecolor\greenedge       
                % \draw (z)edge++(0.7,0)node[bnode]{}edge(x)node[wnode]{};
                % \draw (-0.8,-0.8)node[bnode](b2){}edge++(1.1,0)node[wnode]{}edge++(0.7,0)node[bnode]{}edge(x)node[wnode]{};
                
                \draw (x)node[wnode]{}node[right,nodelabel]{$x$};
                \draw (a)node[left,nodelabel]{$a$};
                \draw (b1)node[left,nodelabel]{$b_1$};
                \draw (b2)node[left,nodelabel]{$b_2$};
                \draw (y)node[right,nodelabel]{$y$};
                \draw (z)++(-0.05,-0.05)node[above right,nodelabel]{$z$};
                
                \draw (-1,0.3)node[nodelabel]{$H$};
                % \draw[color=cyan] (0.5,-0.24)node[right,nodelabel]{$P_1$};
                \draw[color=green] (0.5,-1)node[right,nodelabel]{$Q'$};
                \draw[color=red] (1.7,0.7)node[above right,nodelabel]{$Q$};
                % \draw[color=green] (0.8,0.3)node[left,nodelabel]{$P_y$};
                \draw (a)++(0.55,0.15)node[nodelabel]{$e$};
                \draw (b1)++(0.55,0.17)node[nodelabel]{$f_1$};
                \draw (b2)++(0.55,-0.17)node[nodelabel]{$f_2$};
            \end{drawing}
            \caption{illustration of the ears $Q,Q'$ and $xy$}
            \label{subfig: ears Q, Q' and xy}    
        \end{subfigure}
        \caption{illustrations for the proof of statement~\ref{stat : one bw edge}}
        \label{fig:enter-label}
    \end{figure}

    Now, let $y$ be the neighbour of $x$ in $P$. As discussed above, $d_G(y)\geqslant3$. So, there is an ear $P_y$ of $H\cup (P+P_1)\cup P_2$ starting from $y$. Let $z$ be the other end of $P_y$. Observe that $z\notin H$ as $\partial^A(H)=\{e\}$. 
    
    Now, suppose that $z\in P$ (see Figure~\ref{subfig: z in P}). Note that the \cmcs{} $H\cup (P+P_1)\cup P_y$ can also be obtained from $H$ by adding the ears $aPzP_yyxP_1b_1$ and $yPz$ in that order. However, $yPz$ has no $2$-edges since, as noted earlier, each vertex of $P$ has degree at least three in $G$. This contradicts Lemma~\ref{lem : basic prop of embmcg} \i{(ii)}. Thus, $z\notin P$.

    Thus, $z\in P_1\cup P_2$. Adjust notation so that $z\in P_1$ (see Figure~\ref{subfig: z in p_1}). Let $Q:=aPyP_yzP_1b_1$ and $Q':=zP_1xP_2b_2$ (see Figure~\ref{subfig: ears Q, Q' and xy}). Observe that the \cmcs{} $H\cup (P+P_1)\cup P_2\cup P_y$ can also be obtained from $H$ by adding the ears $Q$, $Q'$ and $xy$ in that order. In particular, $H\cup Q\cup Q'$ is a \cmcs{} of $G$ that is not induced. This contradicts Lemma~~\ref{lem : cmcs is induced}. Thus, $|\partial^B(H)|=1$. 
    \end{proof}

    Let $f$ denote the unique edge of $\partial^B(H)$. Note that $\partial(H)=\{e,f\}$ is indeed a balanced $2$-cut. It remains to prove the following claim.
    
    \begin{stat}
        $f$ is a \tedge{}.
        \label{stat : f is a 3edge}
    \end{stat}
    \begin{proof}
    We let $f:=uv$ where $v\in V(H)\cap B$. As $H$ is \mc{}, $d_H(v)\geqslant2$; thus $d_G(v)\geqslant3$. Suppose to the contrary that $d_G(u)=2$. By Lemma~\ref{lem : basic prop of embmcg} \i{(i)}, the neighbour of $u$ distinct from $v$, say $w$, has degree two in $G$. Let $P$ be an ear of $H$ containing $f$. Observe that $uw,e\in P$. By Lemma~\ref{lem : basic prop of embmcg}~\i{(ii)}, $uw$ is the only $2$-edge of $P$. Furthermore, by Lemma~\ref{lem : basic prop of embmcg}~\i{(i)}, each internal vertex of $P$, distinct from $u$ and $w$, has degree at least three in $G$. 

    \begin{figure}[htb]
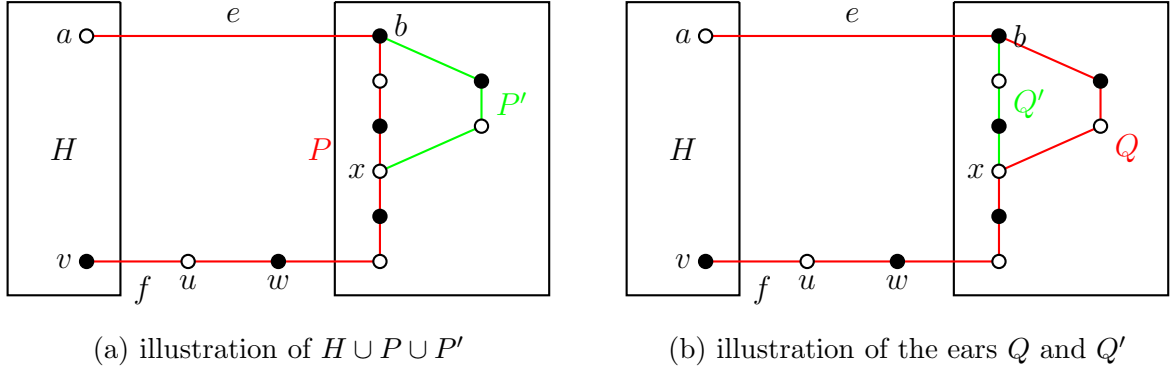

        \centering
        \begin{subfigure}{0.49\linewidth}
            \centering
            \begin{drawing}{1.5}
                \draw (-1,1.3)--(-2,1.3)--(-2,-1.3)--(-1,-1.3)--(-1,1.3);
            \draw (0.9,1.3)--(2.8,1.3)--(2.8,-1.3)--(0.9,-1.3)--(0.9,1.3);
            
            \draw [color=red](-1.3,1)--(1.3,1)--(1.3,-1)--(-1.3,-1);
            \draw [color=red] (1,0)node[left,nodelabel]{$P$};
            \draw [color=green](1.3,1)--(2.2,0.6)--(2.2,0.2)--(1.3,-0.2);
            \draw [color=green] (2.2,0.4)node[right,nodelabel]{$P'$};
            
            \draw (1.3,-0.2)node[left,nodelabel]{$x$};
            % \draw (1.3,-0.6)node[left,nodelabel]{$y$};
            \draw (2.2,0.6)node[bnode]{};
            \draw (2.2,0.2)node[wnode]{};
            
            \draw (1.3,0.6)node[wnode]{};
            \draw (1.3,-0.6)node[bnode]{};
            \draw (1.3,0.2)node[bnode]{};
            \draw (1.3,-0.2)node[wnode]{};  
            
            \draw (-1.3,1)node[wnode]{}node[left,nodelabel]{$a$};
            \draw (1.3,1)node[bnode]{};
            \draw (1.3,1.1)node[right,nodelabel]{$b$};
            \draw (0,1)node[above,nodelabel]{$e$};
            \draw (-0.8,-1)node[below,nodelabel]{$f$};
            \draw (-1.3,-1)node[bnode]{}node[left,nodelabel]{$v$};

            \draw (1.3,-1)node[wnode]{};
            \draw (-0.4,-1)node[wnode]{}node[below,nodelabel]{$u$};
            \draw (0.4,-1)node[bnode]{}node[below,nodelabel]{$w$};
            
            \draw (-1.5,0)node[nodelabel]{$H$};
            \end{drawing}
            \caption{illustration of $H \cup P\cup P'$}
            \label{subfig: h cup p cup p'}    
        \end{subfigure}
        \begin{subfigure}{0.49\linewidth}
            \centering
            \begin{drawing}{1.5}
            \draw (-1,1.3)--(-2,1.3)--(-2,-1.3)--(-1,-1.3)--(-1,1.3);
            \draw (0.9,1.3)--(2.8,1.3)--(2.8,-1.3)--(0.9,-1.3)--(0.9,1.3);

            \draw [color=red](-1.3,1)--(1.3,1)--(2.2,0.6)--(2.2,0.2)--(1.3,-0.2)--(1.3,-1)--(-1.3,-1);
            \draw [color=green](1.3,1)--(1.3,-0.2);
            \draw [color=green] (1.3,0.4)node[right,nodelabel]{$Q'$};
            \draw [color=red] (2.2,0)node[right,nodelabel]{$Q$};
            
            \draw (1.3,-0.2)node[left,nodelabel]{$x$};
            % \draw (1.3,-0.6)node[left,nodelabel]{$y$};
            \draw (2.2,0.6)node[bnode]{};
            \draw (2.2,0.2)node[wnode]{};
            
            \draw (1.3,0.6)node[wnode]{};
            \draw (1.3,-0.6)node[bnode]{};
            \draw (1.3,0.2)node[bnode]{};
            \draw (1.3,-0.2)node[wnode]{}; 
            
            \draw (-1.3,1)node[wnode]{}node[left,nodelabel]{$a$};
            \draw (1.3,1)node[bnode]{}node[right,nodelabel]{$b$};
            \draw (0,1)node[above,nodelabel]{$e$};
            \draw (-0.8,-1)node[below,nodelabel]{$f$};
            \draw (-1.3,-1)node[bnode]{}node[left,nodelabel]{$v$};

            \draw (1.3,-1)node[wnode]{};
            \draw (-0.4,-1)node[wnode]{}node[below,nodelabel]{$u$};
            \draw (0.4,-1)node[bnode]{}node[below,nodelabel]{$w$};
            
            \draw (-1.5,0)node[nodelabel]{$H$};
            \end{drawing}
            \caption{illustration of the ears $Q$ and $Q'$}
            \label{subfig: ears Q and Q'}    
        \end{subfigure}
        
        \caption{illustrations for the proof of statement~\ref{stat : f is a 3edge}}
        \label{fig:enter-label}
    \end{figure}

    Since $d_G(b)\geqslant3$, there is an ear $P'$ of $H\cup P$ starting from $b$ (see Figure~\ref{subfig: h cup p cup p'}). Let $x\in A$ be the other end of~$P'$. Observe that $x$ is an internal vertex of $P$ that is distinct from $u$. Let $Q:=abP'xPv$ and $Q':=bPx$ (see Figure~\ref{subfig: ears Q and Q'}). Observe that the \cmcs{} $H\cup P\cup P'$ can also be obtained from $H$ by adding the ears $Q$ and $Q'$ in that order. It follows from the preceding paragraph that each vertex of $Q'$ has degree at least three in $G$. Consequently, the ear $Q'$ has no $2$-edges. This contradicts Lemma~\ref{lem : basic prop of embmcg} \i{(ii)}. Thus, $d_G(u)\geqslant3$ and $f$ is indeed a \tedge{}.
    \end{proof}
    This completes the proof of Theorem~\ref{thm : 2-cut property}.
\end{proof}

Note that Theorem~\ref{thm : 2-cut property} does not hold for all \mbmcg{}s --- that is, it requires the extremality hypothesis. As we noted in Section 1, the graph $G$ shown in Figure~\ref{fig: counterexample mbmcg} is a \mbmcg{}. Observe that $e$ is a $3$-edge that is not contained in any $2$-cut since $G-e$ is \kC{2}. 

In the next section, we establish an induction tool based on balanced $2$-cuts. This tool, coupled with Theorem~\ref{thm : 2-cut property}, will be used in Section~\ref{sec : main theorem} to prove the Main Theorem (\ref{thm : evm}).

\section{An induction tool using balanced $2$-cuts}
\label{sec : 2-cut induction tool}

%We first prove a lemma about ``breaking" a graph at balanced $2$-cuts into two smaller graphs. Conversely, we prove a lemma about ``joining" two graphs to form a bigger graph. %These tools will then help us in proving the Main Theorem (\ref{thm : evm}).

We begin this section by defining an operation that ``breaks" a given bipartite graph, that has a balanced $2$-cut, into ``smaller" bipartite graphs, and its converse operation.

Let $G[A,B]$ denote a bipartite graph that has a balanced $2$-cut, say $F:=\partial(X)$. Let $H_1:=G[X]$ and $H_2:=G[\overline{X}]$. As $F$ is balanced, for each $i\in \{1,2\}$, let $a_i\in A$ and $b_i\in B$ denote the ends of the edges of $F$ in $H_i$, as shown in Figure~\ref{subfig : G*}. Let $G_1$ be the graph obtained from $H_1$ by adding the ear $a_1v_1u_1b_1$ (of length three), as shown in Figure~\ref{subfig : G_1 and G_2}. The graph $G_2$ is obtained from $H_2$ analogously. We say that the (bipartite) graphs $G_1$ and~$G_2$ are obtained from $G$ by a \i{balanced $2$-cut decomposition}, or simply by a \i{$2$-cut decomposition},  across the $2$-cut $F$. Note that $u_1v_1$ is a $2$-edge in $G_1$; likewise, $u_2v_2$ is a $2$-edge in $G_2$.%; see Figure~\ref{fig : 2-cut decomp and 2-edge splicing}.

Conversely, let $G_1$ and $G_2$ be disjoint bipartite graphs, each of which has a $2$-edge, say $u_1v_1$ and $u_2v_2$, respectively. Let $a_1$ denote the neighbour of $v_1$ that is distinct from $u_1$, and let $b_1$ denote the neighbour of $u_1$ that is distinct from $v_1$, as shown in Figure~\ref{subfig : G_1 and G_2}. The vertices $a_2$ and $b_2$ are defined analogously. Let $G$ be the graph obtained from $G_1\cup G_2$ by deleting the vertices $u_1,v_1,u_2$ and $v_2$, and adding the edges $a_1b_2$ and $a_2b_1$, as shown in Figure~\ref{subfig : G*}. We say that the (bipartite) graph $G$ is obtained from $G_1$ and $G_2$ by \i{$2$-edge splicing} across the pair of $2$-edges $\{u_1v_1,u_2v_2\}$. Note that $\{a_1b_2,a_2b_1\}$ is a balanced $2$-cut of $G$.

Observe that the $2$-cut decomposition operation is the ``inverse" of the $2$-edge splicing operation --- that is, $G_1$ and $G_2$ are obtained from $G$ by $2$-cut decomposition across a balanced $2$-cut if and only if $G$ is obtained from $G_1$ and $G_2$ by $2$-edge splicing across a pair of $2$-edges. We now show that these operations preserve the matching covered, minimality and extremality properties.

\begin{figure}[htb]
    \centering
        
    \begin{subfigure}{0.45\linewidth}
        \centering
        \begin{tikzpicture}[thick, scale=1.5,nodelabel/.style={rounded corners,fill=none,inner sep=5pt,draw=none}]
        
        \draw (-1,1)--(-2,1)--(-2,-1)--(-1,-1)--(-1,1);
        \draw (1,1)--(2,1)--(2,-1)--(1,-1)--(1,1);
        \draw [color=red](-1.3,0.7)--(1.3,0.7)(1.3,-0.7)--(-1.3,-0.7);
        
        \draw (-1.3,0.7)node[wnode]{}node[left,nodelabel]{$a_1$};
        \draw (1.3,0.7)node[bnode]{}node[right,nodelabel]{$b_2$};
        \draw (-1.3,-0.7)node[bnode]{}node[left,nodelabel]{$b_1$};
        \draw (1.3,-0.7)node[wnode]{}node[right,nodelabel]{$a_2$};
        
        \draw (-1.5,-1)node[below,nodelabel]{$H_1:=G[X]$};
        \draw (1.5,-1)node[below,nodelabel]{$H_2:=G[\overline{X}]$};
        
        \end{tikzpicture}
        \caption{$G$}
        \label{subfig : G*}
        \label{fig}
    \end{subfigure}
    \begin{subfigure}{0.45\linewidth}
        \centering
        \begin{tikzpicture}[thick, scale=1.5,nodelabel/.style={rounded corners,fill=none,inner sep=5pt,draw=none}]
        
        \draw (-1,1)--(-2,1)--(-2,-1)--(-1,-1)--(-1,1);
        \draw (1,1)--(2,1)--(2,-1)--(1,-1)--(1,1);
        \draw [color=green](1.3,0.7)--(0.7,0.2)--(0.7,-0.2)--(1.3,-0.7);
        \draw [color=green](-1.3,0.7)--(-0.7,0.2)--(-0.7,-0.2)--(-1.3,-0.7);
        
        \draw (0.7,0.2)node[wnode]{}node[above left,nodelabel]{$u_2$};
        \draw (0.7,-0.2)node[bnode]{}node[below left,nodelabel]{$v_2$};
        \draw (-0.7,0.2)node[bnode]{}node[above right,nodelabel]{$v_1$};
        \draw (-0.7,-0.2)node[wnode]{}node[below right,nodelabel]{$u_1$};
        
        \draw (-1.3,0.7)node[wnode]{}node[left,nodelabel]{$a_1$};
        \draw (1.3,0.7)node[bnode]{}node[right,nodelabel]{$b_2$};
        \draw (-1.3,-0.7)node[bnode]{}node[left,nodelabel]{$b_1$};
        \draw (1.3,-0.7)node[wnode]{}node[right,nodelabel]{$a_2$};
        
        % \draw [color=green] (-0.7,0)node[right,nodelabel]{$W_1$};
        % \draw [color=green] (0.7,0)node[left,nodelabel]{$W_2$};
        \draw (-1.5,-1)node[below,nodelabel]{$G_1$};
        \draw (1.5,-1)node[below,nodelabel]{$G_2$};
        
        \end{tikzpicture}
        \caption{$G_1$ and $G_2$}
        \label{subfig : G_1 and G_2}
        \label{fig}
    \end{subfigure}
    
    \caption{illustration of balanced $2$-cut decomposition and $2$-edge splicing}
    \label{fig : 2-cut decomp and 2-edge splicing}
\end{figure}
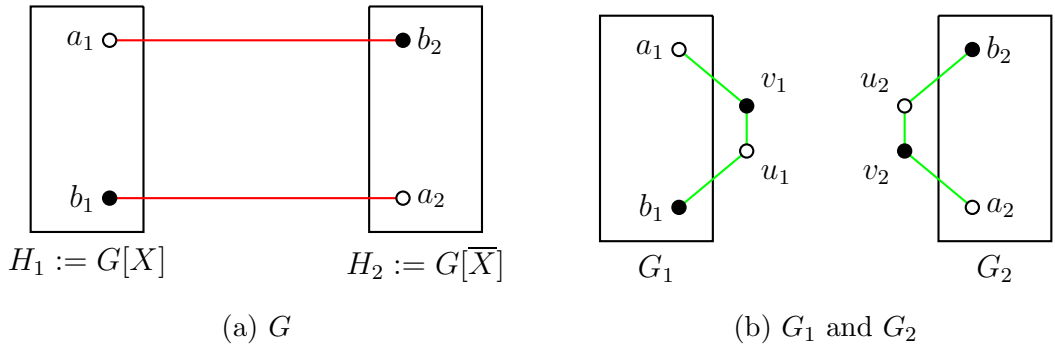

\begin{thm} {\sc [Balanced $2$-cut Induction Tool]}\\
    Let $G$ be a bipartite graph that has a balanced $2$-cut $F$, and let $G_1$ and $G_2$ be the (bipartite) graphs obtained from $G$ by a $2$-cut decomposition across $F$. Then the following statements hold:
    \begin{enumerate}
        \item $G$ is \mc{} if and only if $G_1$ and $G_2$ are both \mc{};
        \item furthermore, $G$ is minimal if and only if $G_1$ and $G_2$ are both minimal; and
        \item finally, $G$ is extremal if and only if $G_1$ and $G_2$ are both extremal.
    \end{enumerate}
    \label{thm : 2-cut induction tool}
\end{thm}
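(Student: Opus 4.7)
My plan is to treat all three statements uniformly by understanding how perfect matchings of $G$ correspond to pairs of perfect matchings of $G_1$ and $G_2$. The pivotal structural fact is that, because $F = \{a_1 b_2, a_2 b_1\}$ is balanced, any perfect matching of $G$ uses either $0$ or $2$ edges of $F$ --- the parity constraint coming from $|A \cap X| = |B \cap X|$, which is itself forced whenever $G$ is matchable, since otherwise one of the two cut edges would lie in no perfect matching. These two types of pms of $G$ correspond bijectively to pms of $G_1$ that either use the $2$-edge $u_1 v_1$ (Case $0$) or use both $a_1 v_1$ and $u_1 b_1$ (Case $2$), and analogously for $G_2$. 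Making this correspondence precise is the first task.

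For (i), I would then exploit the correspondence edge by edge. In the forward direction, every edge of $G_1$ is either one of the three new path edges (each handled by a perfect matching of $G$ of the appropriate type, using $2$-connectivity to guarantee that $a_1$ and $b_1$ each have at least one neighbour inside $H_1$, so Case $0$ pms of $G$ exist) or an edge of $H_1$ (handled by restricting a pm of $G$ containing it and extending by the path as dictated by its type). The reverse direction combines pms of matching type. For (ii), the three new edges of $G_1$ each lie in a $2$-cut (namely $\partial(v_1)$, $\partial(u_1)$, $\partial(\{u_1, v_1\})$) and the two edges of $F$ lie in the $2$-cut $F$ itself, so by Lemma~\ref{lem : nonremovable edge} none of these is removable in the respective graph. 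For any remaining edge $e \in H_1$, applying (i) to $G - e$ (whose balanced $2$-cut decomposition yields $G_1 - e$ and $G_2$) shows that $G - e$ is matching covered if and only if $G_1 - e$ is; thus removability in $G$ is equivalent to removability in $G_1$. The statement for $G_2$ is symmetric.

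For (iii), I would establish two counting identities. Writing $n_i := |V(G_i)|$ and $m_i := |E(G_i)|$, the $2$-cut decomposition introduces four new vertices ($u_1, v_1, u_2, v_2$) and six new path edges while deleting the two edges of $F$, giving $n_1 + n_2 = n + 4$ and $m_1 + m_2 = m + 4$, whence
\[
2(m_1 - n_1 + 2) + 2(m_2 - n_2 + 2) = 2(m - n + 2) + 4.
\]
For degree two vertices, the endpoints $a_i, b_i$ retain their degrees from $G$, all other vertices of $H_i$ have unchanged degree, and each of $u_i, v_i$ is a new vertex of degree $2$, giving $|V_2(G_1)| + |V_2(G_2)| = |V_2(G)| + 4$. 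Subtracting the two identities yields
\[
|V_2(G)| - 2(m - n + 2) \;=\; \sum_{i=1}^{2}\bigl(|V_2(G_i)| - 2(m_i - n_i + 2)\bigr).
\]
By (i) and (ii), $G$ is a \mbmcg{} exactly when both $G_i$ are, and in that case Corollary~\ref{cor : bound evm} makes each summand on the right non-negative. Hence the left side vanishes (i.e., $G$ is extremal) if and only if each summand vanishes, establishing the desired equivalence. The main obstacle will be carefully executing the pm correspondence in (i), particularly verifying that both Case $0$ and Case $2$ pms of $G$ exist whenever they are needed --- this is where both $2$-connectivity of $G$ and the balanced (as opposed to unbalanced) nature of the cut play essential roles.
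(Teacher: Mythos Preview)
Your argument is correct; parts (ii) and (iii) match the paper's proof essentially verbatim (the paper also reduces minimality to part (i) applied to $G-e$, and proves extremality via the identity $\lambda_1+\lambda_2=\lambda$ with $\lambda:=|V_2|-2(m-n+2)$ and non-negativity from Corollary~\ref{cor : bound evm}).

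For part (i) you take a genuinely different route. The paper argues entirely via ear decompositions: in the forward direction it starts from a conformal cycle through $a_1b_2$ (hence through $a_2b_1$), extends to an ear sequence of $G$ via Theorem~\ref{thm : strong ear decomp}, observes that every ear lies wholly in $H_1$ or in $H_2$, and projects to obtain ear sequences of $G_1$ and $G_2$; the reverse direction splices ear sequences of $G_1$ and $G_2$ through the new $2$-edges. Your approach is more elementary, bypassing ear decompositions and working directly with the perfect-matching correspondence; it has the advantage of making the bijection between perfect matchings of $G$ and compatible pairs in $G_1,G_2$ explicit, which is conceptually clean. One small imprecision: the assertion $|A\cap X|=|B\cap X|$ follows not merely from $G$ being \emph{matchable} but from $G$ being \emph{matching covered} (your own parenthetical reasoning needs that $a_2b_1$ lying in no perfect matching is a contradiction). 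Since in the forward direction of (i) you are assuming $G$ is matching covered anyway, this causes no real trouble, but the phrasing should be tightened.
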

\begin{proof}
    We adopt all of the notations from the above definition of $2$-cut decomposition, as shown in Figure~\ref{fig : 2-cut decomp and 2-edge splicing}. We will prove the three statements one by one. 
    \begin{stat}
        $G$ is \mc{} if and only if $G_1$ and $G_2$ are both \mc{}.
        \label{stat : mc}
    \end{stat}
    \begin{proof}
        First suppose that $G$ is \mc{}. Let $C$ denote a conformal cycle containing $a_1b_2$. Note that $a_2b_1\in C$ as well. Let $(C,P_1,\dots,P_r)$ be an ear sequence of $G$; its existence is guaranteed by Theorem~\ref{thm : strong ear decomp}. Observe that each ear $P_i$ is a subgraph of exactly one of $H_1$~and~$H_2$. Let $P_{11},P_{12},\dots ,P_{1r_1}$ denote those ears that are subgraphs of $H_1$ and appear in that order in the ear sequence $(C,P_1,\dots ,P_r)$. Let $C_1:=(C\cap H_1)+a_1v_1u_1b_1$. Observe that $(C_1,P_{11},P_{12},\dots ,P_{1r_1})$ is an ear sequence of $G_1$. Thus, by the Ear Decomposition Theorem~(\ref{thm : odd ear decomp}), the bipartite graph $G_1$ is \mc{}. An analogous argument proves that $G_2$ is \mc{}.

        Conversely, suppose that $G_1$ and $G_2$ are \mc{}. Let $C_i$ be a conformal cycle of $G_i$ containing the $2$-edge $u_iv_i$ for $i\in\{1,2\}$. Let $(C_1,P_{11},P_{12},\dots ,P_{1r_1})$ and $(C_2,P_{21},P_{22},\dots ,P_{2r_2})$ be ear sequences of $G_1$ and $G_2$, respectively; these exist due to Theorem~\ref{thm : strong ear decomp}. Let $C:=(C_1-u_1-v_1)+ (C_2-u_2-v_2)+a_1b_2+a_2b_1$. Observe that $(C,P_{11},P_{12},\dots P_{1r_1},P_{21},P_{22},\dots ,P_{2r_2})$ is an ear sequence of $G$. Thus, by the Ear Decomposition Theorem~(\ref{thm : odd ear decomp}), $G$ is \mc{}. 
    \end{proof}

    Henceforth, we assume that $G$ is \mc{}, or equivalently that $G_1$ and $G_2$ are both \mc{}.
    
    \begin{stat}
        $G$ is minimal if and only if $G_1$ and $G_2$ are both minimal.
        \label{stat : minimal}
    \end{stat}
    \begin{proof}
        First suppose that $G$ is minimal. We claim that $G_1$ is also minimal. Suppose not, and let $e$ be a removable edge of $G_1$. Note that $e\in H_1$ since each of the remaining three edges has an end of degree two. Observe that $G_1-e$ and $G_2$ are obtained by a $2$-cut decomposition of $G-e$. Consequently, by~\ref{stat : mc}, the graph $G-e$ is matching covered; this contradicts the minimality of $G$. Thus, $G_1$ is minimal. Likewise, $G_2$ is minimal.
    
        Conversely, suppose that $G_1$ and $G_2$ are both minimal. We claim that $G$ is minimal. Suppose not, and let $e$ be a removable edge of $G$. Note that, since $F$ is $2$-cut, $e$ belongs to exactly one of $H_1$~and~$H_2$. Adjust notation so that $e\in H_1$. As in the preceding paragraph, $G_1-e$ and $G_2$ are obtained by a $2$-cut decomposition of $G-e$. Thus, by~\ref{stat : mc}, the graph $G_1-e$ is matching covered; this contradicts the minimality of $G_1$. Hence, $G$ is minimal.    
    \end{proof}

    Henceforth, we assume that $G$ is a \mbmcg{}, or equivalently that $G_1$ and $G_2$ are both \mbmcg{}s.

    \begin{stat}
        $G$ is extremal if and only if $G_1$ and $G_2$ are both extremal.
        \label{stat : extremal}
    \end{stat}

    \begin{proof}
        In order to argue extremality, we find it convenient to use $\lambda :=|V_2(G)|-2(m-n+2)$ to denote the ``surplus" vertices of degree two in $G$. Analogously, we define $\lambda_1$ and $\lambda_2$ for the graphs $G_1$ and $G_2$, respectively. By Corollary~\ref{cor : bound evm}, since all three graphs are \mbmcg{}s, the quantities $\lambda$, $\lambda_1$ and $\lambda_2$ are non-negative. Note that $G$ is extremal if and only if $\lambda=0$. Likewise, for $i\in \{1,2\}$, the graph $G_i$ is extremal if and only if $\lambda_i=0$. Thus, it suffices to prove that $\lambda=0$ if and only if $ \lambda_1=0$ and $\lambda_2=0$.
    
        For $i\in \{1,2\}$, we let $n_i:=|V(G_i)|$ and $m_i:=|E(G_i)|$. The reader may find it useful to look at Figure~\ref{fig : 2-cut decomp and 2-edge splicing}. Note that $n_1+n_2=n+4$ and $m_1+m_2=m+4$. Observe that each vertex $w$ of $G$ corresponds to a vertex of $G_1$ or of $G_2$ (that is distinct from $u_1,v_1,u_2$ and $v_2$) whose degree is the same as that of $w$. This implies that $|V_2(G_1)|+|V_2(G_2)|=|V_2(G)|+4$. Using these three equalities, and the definitions of $\lambda,\lambda_1$ and $\lambda_2$:
        \begin{align*}
            \lambda_1+\lambda_2=&|V_2(G_1)|+|V_2(G_2)|-2(m_1+m_2-n_1-n_2+4)\\
            =&|V_2(G)|+4-2(m-n+4)\\
            =&|V_2(G)|-2(m-n+2)=\lambda
        \end{align*}
        Thus, since $\lambda,\lambda_1,\lambda_2$ are non-negative, $\lambda=0$ if and only if $ \lambda_1=0$ as well as $\lambda_2=0$. %Hence, $G$ is extremal if and only if $G_1$ and $G_2$ are extremal.
    \end{proof}
    This completes the proof of Theorem~\ref{thm : 2-cut induction tool}.
\end{proof}

\section{Main Theorem: Characterization of $\c{H}_2$}
\label{sec : main theorem}

In this section, we prove our Main Theorem (\ref{thm : evm}) that provides a characterization of \embmcg{}s. Before doing so, we prove a lemma that will be used in the base case of our inductive proof.

\begin{lem}
    Every \embmcg{}, that has precisely two vertices of degree at least three, is obtained from a star by \isojoin{}. 
    \label{lem : base case - main theorem}
\end{lem}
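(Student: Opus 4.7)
The plan is to first extract the structure of $G$ and then successively rule out possibilities using bipartiteness, matching coverage, and extremality. Since $u$ and $v$ are the only vertices of degree at least three, every other vertex has degree exactly two, so each component of $G-\{u,v\}$ has maximum degree two; it cannot contain a cycle (such a cycle would be its own connected component of $G$ by connectivity), so each component is a single path whose endpoints attach to $\{u,v\}$. Counting parallel $uv$-edges as degenerate chains of length one, $G$ is therefore a ``generalised theta graph'' on $\{u,v\}$: a disjoint union of $k$ chains, each of which is a $uv$-path, a cycle through $u$ only (``loop at $u$''), or a cycle through $v$ only. A direct count gives $m-n+2=k$, and Lemma~\ref{lem : basic prop of embmcg}\,(i) yields $|E_2|=k$; from $d(u),d(v)\geqslant 3$ we also get $k\geqslant 3$.

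First I would use bipartiteness together with the existence of a perfect matching. If $u,v$ lie in the same colour class, then each chain has even length, and a direct count of the bipartition sizes gives $|A|-|B|=2-k\neq 0$, a contradiction. Hence $u$ and $v$ lie in different colour classes; in particular, every $uv$-path has odd length $\geqslant 1$ and every loop has even length $\geqslant 2$.

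The hard part will be eliminating the loops. Suppose there is a loop at $u$, say $u\,x_1\,x_2\cdots x_{\ell-1}\,u$ of even length $\ell\geqslant 2$. Pick any perfect matching $M$. Each $x_i$ is adjacent only to $x_{i-1}$ and $x_{i+1}$ within the loop (with $x_0=x_\ell=u$), so if $u$ is not matched to $x_1$ or $x_{\ell-1}$ in $M$, then $M$ must pair up the internal vertices $x_1,\ldots,x_{\ell-1}$ using only the edges $x_i x_{i+1}$; but this path has $\ell-1$ (odd) vertices and admits no perfect matching. Hence $u$ is matched inside every loop incident to it in every perfect matching. Having two or more loops at $u$ is therefore impossible (each would demand $u$ as partner), and having exactly one forces every remaining edge at $u$ (which exists since $d(u)\geqslant 3$) to lie in no perfect matching, contradicting the matching covered property. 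Thus there are no loops at $u$, and symmetrically none at $v$. So $G$ consists solely of internally disjoint $uv$-paths $P_1,\ldots,P_k$ of odd lengths $\ell_1,\ldots,\ell_k$.

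Finally, the extremality bookkeeping pins down each $\ell_i=3$. From $|V_2|=\sum_i(\ell_i-1)$ and extremality $|V_2|=2k$, I get $\sum_i\ell_i=3k$. A $uv$-path of length $1$ contains no $2$-edge, while one of length $\ell\geqslant 3$ contains exactly $\ell-2$ $2$-edges, so $|E_2|=k$ forbids length-$1$ paths; hence each $\ell_i\geqslant 3$, and $\sum_i\ell_i=3k$ then forces $\ell_i=3$ for every $i$. Identifying $u$ and $v$ with the two centres of two copies of $K_{1,k}$ (with $k\geqslant 3$), and the two internal vertices of each $P_i$ with the corresponding pair of leaves, exhibits $G$ as the graph obtained from the star $K_{1,k}$ by \isojoin{}.
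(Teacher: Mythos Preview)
Your proof is correct and takes a genuinely different route from the paper's. The paper argues via ear decompositions: fixing any ear sequence $(C,P_1,\dots,P_r)$, the ends of every ear have degree at least three, so each $P_i$ is an $ab$-path and $a,b\in C$; then Lemma~\ref{lem : basic prop of embmcg}\,\i{(ii)} forces each $P_i$ to have length three, and Lemma~\ref{lem : basic prop of embmcg}\,\i{(iii)} forces $C$ to be a $6$-cycle split by $a,b$ into two paths of length three. This is short precisely because it leans on the ear-decomposition machinery already developed.

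Your argument instead works directly with the ``generalised theta'' structure of $G$, using only Lemma~\ref{lem : basic prop of embmcg}\,\i{(i)} together with bipartiteness and the matching covered property. The parity count to place $u,v$ in opposite colour classes, and the matching argument to eliminate loops at $u$ or $v$, replace the appeal to parts~\i{(ii)} and~\i{(iii)} of Lemma~\ref{lem : basic prop of embmcg}. The final bookkeeping step---that $|E_2|=k$ together with $\sum\ell_i=3k$ forbids length-$1$ paths---is correct but terse; spelling out that $|E_2|=\sum_{\ell_i\geqslant 3}(\ell_i-2)=k+p$ (where $p$ is the number of length-$1$ paths) would make it transparent. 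The trade-off is clear: the paper's proof is shorter but relies on heavier prerequisites, while yours is more self-contained at the cost of some extra case analysis.
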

\begin{proof}
    Let $G$ be an \embmcg{} with precisely two vertices of degree at least three, say $a$ and $b$. Let $(C,P_1,\dots, P_r)$ be an ear sequence. Clearly, $G$ is not a cycle; thus $r\geqslant 1$. Also, each end of each ear $P_i$ has degree at least three in $G$. Thus, every ear is an $ab$-path. Consequently, $a,b\in C$. By Lemma~\ref{lem : basic prop of embmcg} \i{(ii)}, each $P_i$ has length three. Furthermore, by Lemma~\ref{lem : basic prop of embmcg} \i{(iii)}, the vertices $a$ and $b$ split $C$ into two paths --- each of length three. In particular, $C$ is a $6$-cycle. Observe that $G$ is indeed obtained from the star $K_{1,r+2}$ by \isojoin{}.
\end{proof}

We will now use the balanced $2$-cut property (Theorem~\ref{thm : 2-cut property}) and the balanced $2$-cut induction tool (Theorem~\ref{thm : 2-cut induction tool}), along with the above lemma, to prove our characterization of the class of \embmcg{}s. 

\begin{restate}{\ref{thm : evm}}{\sc[Main Theorem: Characterization of $\c{H}_2$]}\\
    A graph $G$ belongs to $\c{H}_2$  if and only if it is obtained from a \htree{} by \isojoin{}.
    % \label{thm : evm}
\end{restate}

\begin{proof}
    The reverse implication is simply the main statement of Theorem~\ref{thm : embmcg TcupT'} that we have already proved.
    
    For the forward direction, we let $G[A,B]$ denote a member of $\c{H}_2$, and we proceed by \mbox{induction} on the order of $G$. First suppose that $|V_3|\leqslant 2$. Since $|A|=|B|$, it follows that $|V_3|\in\{0,2\}$. If $|V_3|=0$, then $G$ is a cycle and $|V|=|V_2|=2(m-n+2)=4$; thus $G$ is~$C_4$. On the other hand, if $|V_3|=2$, we invoke Lemma~\ref{lem : base case - main theorem}. In either case, we conclude that $G$ is obtained from a \htree{} by \isojoin{}.    

    Now, suppose that $|V_3|\geqslant3$. By Corollary~\ref{cor : counting prop of embmcg} \i{(iii)}, we note that $|E_{3}|=|V_3|-2\geqslant 1$, and we let $a_1b_2$ denote a \tedge{}. By Theorem~\ref{thm : 2-cut property}, there exists another \tedge{} $b_1a_2$ such that $\{a_1b_2,b_1a_2\}$ is a balanced $2$-cut, say $\partial(X)$, where $\{a_1,b_1\}\subseteq X\subset V(G)$. Now, let $G_1$ and $G_2$ be the graphs obtained by $2$-cut decomposition across $\partial(X)=\{a_1b_2,b_1a_2\}$. In particular, $G_1$ is obtained from $G[X]$ by adding the ear $a_1v_1u_1b_1$; likewise, $G_2$ is obtained from $G[\overline{X}]$ by adding the ear $a_2v_2u_2b_2$. See Figure~\ref{fig : 2-cut decomp and 2-edge splicing} for an illustration.
    
    By Theorem~\ref{thm : 2-cut induction tool}, each of $G_1$ and $G_2$ belongs to $\c{H}_2$. Since each of $a_1,b_1,a_2$ and $b_2$ has degree at least three in $G$, it follows that each of $X$ and $\overline{X}$ has at least four vertices. This implies that $6\leqslant|V(G_i)|<|V(G)|$ for each $i\in \{1,2\}$. Thus, by the induction hypothesis, each of \mbox{$G_1$ and $G_2$} is obtained from a \htree{} by \isojoin{}; also, neither of them is isomorphic to $C_4$. By Proposition~\ref{prop : t cup t'}, for each $i\in \{1,2\}$, the graph $G_i-E_2(G_i)$ has precisely two components that are isomorphic \htree{}s, say $T_i$ and $T_i'$. We let $\phi_i:V(T_i)\mapsto V(T_i')$ denote the corresponding isomorphism as per the \isojoin{} used to obtain~$G_i$. 

    \begin{figure}[htb]
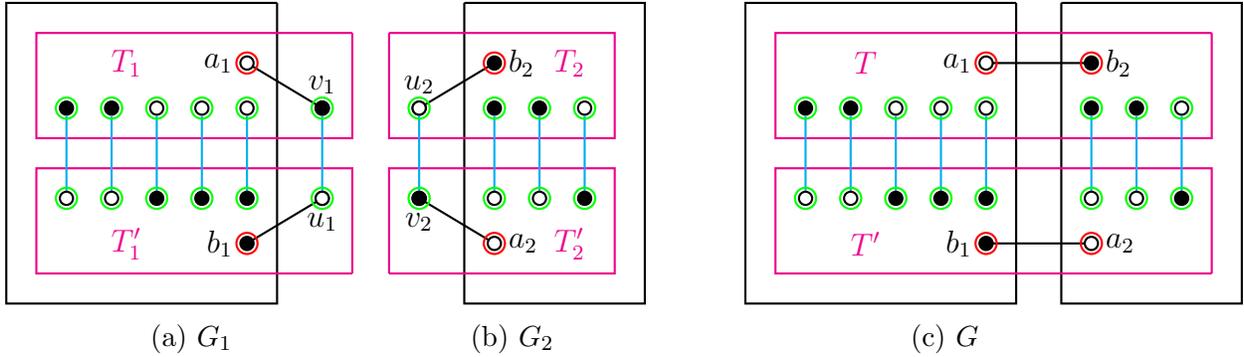

        \centering
        \begin{subfigure}{0.3\linewidth}
            % \centering
            \begin{drawing}{2}
                \let\edgecolor\cyanedge
                \draw (-1,1)--(-2.8,1)--(-2.8,-1)--(-1,-1)--(-1,1);
                \draw[color=magenta] (-0.5,0.8)--(-2.6,0.8)--(-2.6,0.1)--(-0.5,0.1)--(-0.5,0.8);
                \draw[magenta] (-0.5,-0.8)--(-2.6,-0.8)--(-2.6,-0.1)--(-0.5,-0.1)--(-0.5,-0.8);
                \draw (-1.2,0.6)--(-0.7,0.3)(-0.7,-0.3)--(-1.2,-0.6);

                \draw[color=magenta] (-2,0.6)node[nodelabel]{$T_1$};
                \draw[color=magenta] (-2,-0.6)node[nodelabel]{$T'_1$};
                \draw (-0.7,0.3)node[bnode]{}node[above,nodelabel]{$v_1$};
                \draw (-0.7,-0.3)node[wnode]{}node[below,nodelabel]{$u_1$};

                \foreach \x in {-1.2,-1.5,-1.8}{
                    \draw (\x,0.3)node[wnode]{}edge(\x,-0.3)node[bnode]{};
                    \draw[color=green] (\x,0.3) circle (2pt);
                    \draw[color=green] (\x,-0.3) circle (2pt);
                }
                \foreach \x in {-2.1,-2.4,-0.7}{
                    \draw (\x,0.3)node[bnode]{}edge(\x,-0.3)node[wnode]{};
                    \draw[color=green] (\x,0.3) circle (2pt);
                    \draw[color=green] (\x,-0.3) circle (2pt);
                }
                
                \draw (-1.2,0.6)node[wnode]{}node[left,nodelabel]{$a_1$};
                \draw (-1.2,-0.6)node[bnode]{}node[left,nodelabel]{$b_1$};
                \draw[color=red] (-1.2,0.6) circle (2pt);
                \draw[color=red] (-1.2,-0.6) circle (2pt);

                % \draw (-1.5,-1)node[below,nodelabel]{$G_1$};
            \end{drawing}
            \caption{$G_1$}
            \label{subfig : G_1}
        \end{subfigure}
        \begin{subfigure}{0.2\linewidth}
            \centering
            \begin{drawing}{2}
                \let\edgecolor\cyanedge
                \draw (1,1)--(2.2,1)--(2.2,-1)--(1,-1)--(1,1);
                \draw[color=magenta] (0.5,0.8)--(2,0.8)--(2,0.1)--(0.5,0.1)--(0.5,0.8);
                \draw[color=magenta] (0.5,-0.8)--(2,-0.8)--(2,-0.1)--(0.5,-0.1)--(0.5,-0.8);
                \draw (1.2,0.6)--(0.7,0.3)(0.7,-0.3)--(1.2,-0.6);

                \draw[color=magenta] (1.7,0.6)node[nodelabel]{$T_2$};
                \draw[color=magenta] (1.7,-0.6)node[nodelabel]{$T'_2$};
                
                \draw (0.7,0.3)node[wnode]{}node[above,nodelabel]{$u_2$};
                \draw (0.7,-0.3)node[bnode]{}node[below,nodelabel]{$v_2$};

                \foreach \x in {1.2,1.5}{
                    \draw (\x,0.3)node[bnode]{}edge(\x,-0.3)node[wnode]{};
                    \draw[color=green] (\x,0.3) circle (2pt);
                    \draw[color=green] (\x,-0.3) circle (2pt);
                }
                \foreach \x in {1.8,0.7}{
                    \draw (\x,0.3)node[wnode]{}edge(\x,-0.3)node[bnode]{};
                    \draw[color=green] (\x,0.3) circle (2pt);
                    \draw[color=green] (\x,-0.3) circle (2pt);
                }
                
                \draw (1.2,0.6)node[bnode]{}node[right,nodelabel]{$b_2$};
                \draw (1.2,-0.6)node[wnode]{}node[right,nodelabel]{$a_2$};
                \draw[color=red] (1.2,0.6) circle (2pt);
                \draw[color=red] (1.2,-0.6) circle (2pt);

                % \draw (1.5,-1)node[below,nodelabel]{$G_2$};
            \end{drawing}
            \caption{$G_2$}
            \label{subfig : G_2}
        \end{subfigure}
        \begin{subfigure}{0.48\linewidth}
            % \centering
            \hfill
            % \raggedright
            \begin{drawing}{2}
                \let\edgecolor\cyanedge
                \draw (-1,1)--(-2.8,1)--(-2.8,-1)--(-1,-1)--(-1,1);
                \draw (-0.7,1)--(0.5,1)--(0.5,-1)--(-0.7,-1)--(-0.7,1);
                \draw[color=magenta] (0.3,0.8)--(-2.6,0.8)--(-2.6,0.1)--(0.3,0.1)--(0.3,0.8);
                \draw[magenta] (0.3,-0.8)--(-2.6,-0.8)--(-2.6,-0.1)--(0.3,-0.1)--(0.3,-0.8);
                \draw (-1.2,0.6)--(-0.5,0.6)(-0.5,-0.6)--(-1.2,-0.6);

                \draw[color=magenta] (-2,0.6)node[nodelabel]{$T$};
                \draw[color=magenta] (-2,-0.6)node[nodelabel]{$T'$};

                \foreach \x in {-1.2,-1.5,-1.8}{
                    \draw (\x,0.3)node[wnode]{}edge(\x,-0.3)node[bnode]{};
                    \draw[color=green] (\x,0.3) circle (2pt);
                    \draw[color=green] (\x,-0.3) circle (2pt);
                }
                \foreach \x in {-2.1,-2.4}{
                    \draw (\x,0.3)node[bnode]{}edge(\x,-0.3)node[wnode]{};
                    \draw[color=green] (\x,0.3) circle (2pt);
                    \draw[color=green] (\x,-0.3) circle (2pt);
                }
                \foreach \x in {-0.5,-0.2}{
                    \draw (\x,0.3)node[bnode]{}edge(\x,-0.3)node[wnode]{};
                    \draw[color=green] (\x,0.3) circle (2pt);
                    \draw[color=green] (\x,-0.3) circle (2pt);
                }
                \foreach \x in {0.1}{
                    \draw (\x,0.3)node[wnode]{}edge(\x,-0.3)node[bnode]{};
                    \draw[color=green] (\x,0.3) circle (2pt);
                    \draw[color=green] (\x,-0.3) circle (2pt);
                }
                
                \draw (-1.2,0.6)node[wnode]{}node[left,nodelabel]{$a_1$};
                \draw (-1.2,-0.6)node[bnode]{}node[left,nodelabel]{$b_1$};
                \draw[color=red] (-1.2,0.6) circle (2pt);
                \draw[color=red] (-1.2,-0.6) circle (2pt);
                \draw (-0.5,0.6)node[bnode]{}node[right,nodelabel]{$b_2$};
                \draw (-0.5,-0.6)node[wnode]{}node[right,nodelabel]{$a_2$};
                \draw[color=red] (-0.5,0.6) circle (2pt);
                \draw[color=red] (-0.5,-0.6) circle (2pt);

                % \draw (-1.5,-1)node[below,nodelabel]{$G_1$};
            \end{drawing}
            \caption{$G$}
            \label{subfig : G}
        \end{subfigure}
        \caption{illustration for the proof of the Main Theorem}
        \label{fig : main theorem}
    \end{figure}
    
    % Now, for each $i\in \{1,2\}$, let $G_i$ be obtained from isomorphic \htree{}s $T_i$ and $T_i'$ by \isojoin{} and let $\phi_i:V(T_i)\mapsto V(T_i')$ denote the corresponding isomorphism. Note that, since each of $X$ and $\overline{X}$ has at least four vertices, neither $G_1$ nor $G_2$ is isomorphic to $C_4$. By Proposition~\ref{prop : t cup t'}, 
    
    Since $u_1v_1$ is a $2$-edge of $G_1$, we may adjust notation so that $v_1,a_1\in T_1$ and $u_1,b_1\in T_1'$. See Figure~\ref{fig : main theorem}. Likewise, we adjust notation so that $u_2,b_2\in T_2$ and $v_2,a_2\in T_2'$. Note that $\phi_1(v_1)=u_1$. Consequently, $\phi_1(a_1)=b_1$. Likewise, $\phi_2(b_2)=a_2$. We let $T:=(T_1-v_1)+(T_2-u_2)+a_1b_2$ and $T':=(T'_1-u_1)+(T'_2-v_2)+a_2b_1$. Note that $T$ and $T'$ are \htree{}s. Observe that $\phi:V(T)\mapsto V(T')$, as defined below, is an isomorphism between $T$ and $T'$.
    
    \[\phi(v):=\begin{cases}
        \phi_1(v) & \t{if }v\in V(T_1)\\
        \phi_2(v) & \t{if }v\in V(T_2)\\
    \end{cases}\]
    
    Finally, note that $G$ is obtained from $T$ and $T'$ by \isojoin{} as per the above defined isomorphism. This completes the proof of the Main Theorem.
\end{proof}

\section{Other notions of extremality}
\label{sec : other classes}
We now shift our focus to the remaining notions of extremality --- that is, the extremal classes $\c{H}_3$, $\c{H}_4$, $\c{H}_0$ and $\c{H}_1$ --- in that order.  

\subsection{Characterization of $\c{H}_3$}
\label{subsec : h3}

We begin by deducing the lower bound on the number of degree two vertices in a \mbmcg{} solely in terms of $n$, that was stated in Subsection~\ref{subsec : bounds}, from the lower bound of Lov\'asz and Plummer (Corollary~\ref{cor : bound evm}). %first prove the bound $|V_2|\geqslant \frac{n}{2}+2$ using the bound $|V_2|\geqslant 2(m-n+2)$. 

\begin{cor}
    In a \mbmcg{}, $|V_2|\geqslant\frac{n}{2}+2$.
    \label{cor : bound evn}
\end{cor}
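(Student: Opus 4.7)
My plan is to derive this bound directly by combining the edge-based bound of Corollary~\ref{cor : bound evm} with the handshaking lemma, eliminating the variable $m$ between them.

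First, I would write $V = V_2 \cupdot V_3$ where $V_3$ denotes the set of vertices of degree at least three. Counting degrees on each side gives
\[
2m = \sum_{v \in V_2} d(v) + \sum_{v \in V_3} d(v) \geqslant 2|V_2| + 3|V_3| = 2|V_2| + 3(n - |V_2|) = 3n - |V_2|,
\]
so $2m \geqslant 3n - |V_2|$.

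Next, I invoke the Lov\'asz--Plummer bound (Corollary~\ref{cor : bound evm}), which gives $|V_2| \geqslant 2(m - n + 2)$, equivalently $2m \leqslant |V_2| + 2n - 4$. Chaining these two inequalities on $2m$ yields
\[
3n - |V_2| \leqslant 2m \leqslant |V_2| + 2n - 4,
\]
whence $n + 4 \leqslant 2|V_2|$, i.e., $|V_2| \geqslant \tfrac{n}{2} + 2$, as required.

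There is no real obstacle here: the bound is a purely arithmetic consequence of the already established edge bound together with the trivial degree inequality on $V_3$. The only mild point worth noting is that the derivation is tight exactly when both of the input inequalities are tight, i.e., when $|V_2| = 2(m-n+2)$ and every vertex of $V_3$ has degree exactly three --- which foreshadows the characterization of $\c{H}_3$ in terms of cubic \htree{}s via \isojoin{} (Theorem~\ref{thm : evn}).
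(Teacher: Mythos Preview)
Your proof is correct and is essentially the same as the paper's: both combine the handshaking inequality $2m \geqslant 3n - |V_2|$ with Corollary~\ref{cor : bound evm} to eliminate $m$, differing only in minor algebraic rearrangement. Your closing remark on the equality case is also accurate and aligns with how the paper later proves Theorem~\ref{thm : evn}.
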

\begin{proof}
    Using the fact that $n=|V_2|+|V_3|$, and the handshaking lemma: 
    \[m-n=\Bigg(\sum_{v\in V_3}\frac{d(v)}{2}+|V_2|\Bigg)-n\geqslant \Bigg(\frac{3}{2}|V_3|+|V_2|\Bigg)-n=\frac{1}{2}|V_3|\]
    We invoke Corollary~\ref{cor : bound evm} to infer that $|V_2|\geqslant 2(m-n+2)\geqslant |V_3|+4=n-|V_2|+4$. Consequently, $|V_2|\geqslant \frac{n}{2}+2$.
\end{proof}

We are now ready to prove our characterization for $\c{H}_3$. To this end, we shall enforce equality in the proof of the above corollary.

\begin{restate}{\ref{thm : evn}}{\sc[Characterization of $\c{H}_3$]}\\
    A graph $G$ belongs to $\c{H}_3$ if and only if it is obtained from a cubic \htree{} by \isojoin{}.
\end{restate}
\begin{proof}
    The reverse implication is simply statement \i{(i)} of Theorem~\ref{thm : embmcg TcupT'} that we have already proved. It remains to prove the forward implication. 
    
    Let $G$ be a member of $\c{H}_3$. That is, $G$ is a \mbmcg{} such that $|V_2|=\frac{n}{2}+2$. Thus, $G$ satisfies each inequality in the proof of Corollary~\ref{cor : bound evn} with equality. Firstly, $|V_2|=2(m-n+2)$; thus $G\in \c{H}_2$. By the Main Theorem (\ref{thm : evm}), $G$ is obtained from a \htree{} $T$ by \isojoin{}. Secondly, each vertex in $V_3$ has degree precisely three in $G$. This implies that each non-leaf of $T$ is cubic. Thus, $T$ is a cubic \htree{}. 
\end{proof}

We end this section by noting the following proper containment that is established within the above proof.

\begin{cor}
    $\c{H}_3\subset \c{H}_2$.\qed
    \label{cor : h3 subset h2}
\end{cor}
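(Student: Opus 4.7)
The plan is straightforward because almost everything needed has been established in the preceding proof. I would split the argument into two parts: the weak containment $\mathcal{H}_3 \subseteq \mathcal{H}_2$, followed by strictness.

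For weak containment, I would simply point back to the opening lines of the proof of Theorem~\ref{thm : evn}. There it is observed that if $G \in \mathcal{H}_3$, i.e.\ if $|V_2| = \tfrac{n}{2}+2$, then every inequality in the proof of Corollary~\ref{cor : bound evn} must hold with equality; in particular $|V_2| = 2(m-n+2)$, which is precisely the defining condition for membership in $\mathcal{H}_2$. In fact, the proof of Theorem~\ref{thm : evn} explicitly states ``Firstly, $|V_2|=2(m-n+2)$; thus $G\in \c{H}_2$'', so this direction is literally a quotation from the previous argument.

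For strictness, I need just one witness lying in $\mathcal{H}_2 \setminus \mathcal{H}_3$. I would invoke Theorem~\ref{thm : embmcg TcupT'}, which guarantees that any graph obtained from a \htree{} $T$ by \isojoin{} belongs to $\mathcal{H}_2$. Picking $T$ to be a \htree{} that is not cubic --- the cleanest choice being the star $K_{1,4}$ --- Theorem~\ref{thm : evn} (contrapositively, since the tree recovered from a member of $\mathcal{H}_3$ must be cubic) guarantees that the resulting graph $G$ is not in $\mathcal{H}_3$. For concreteness one could verify directly that here $n=10$, $m=12$ and $|V_2|=8$, so indeed $|V_2| = 2(m-n+2) = 8$ while $\tfrac{n}{2}+2 = 7$, confirming $G \in \mathcal{H}_2 \setminus \mathcal{H}_3$.

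There is no genuine obstacle; the corollary is merely a clean packaging of facts already proved, with strictness witnessed by any graph built via \isojoin{} from a non-cubic \htree{}.
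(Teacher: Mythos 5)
Your proposal is correct and follows essentially the same route as the paper, which simply records that the containment is ``established within the above proof'' of Theorem~\ref{thm : evn} (where $|V_2|=\frac{n}{2}+2$ forces $|V_2|=2(m-n+2)$). Your only addition is making the strictness witness explicit via a non-cubic \htree{} such as $K_{1,4}$, which the paper leaves implicit but which follows immediately from Theorems~\ref{thm : embmcg TcupT'} and~\ref{thm : evn}; your numerical check ($n=10$, $m=12$, $|V_2|=8$) is accurate.
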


\subsection{Characterization of $\c{H}_4$}
\label{subsec : h4}

We will follow the same approach as in the previous subsection. We begin by deducing the upper bound $|E|\leqslant \frac{3n-6}{2}$ (see Lov\'asz and Plummer \cite[Theorem 4.2.3]{lopl86}), that was stated in Subsection~\ref{subsec : bounds}, from the lower bound of Lov\'asz and Plummer (Corollary~\ref{cor : bound evm}).% using the bound $|V_2|\geqslant 2(m-n+2)$.

\begin{cor}
    A \mbmcg{}, distinct from $C_4$, has at most $\frac{3n-6}{2}$ edges.
    \label{cor : bound ee}
\end{cor}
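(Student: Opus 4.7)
The plan is to derive this upper bound by combining the lower bound from Corollary~\ref{cor : bound evm}, namely $|V_2|\geqslant 2(m-n+2)$, with a suitable upper bound on $|V_2|$, in the same spirit as the proof of Corollary~\ref{cor : bound evn}. The naive estimate $|V_2|\leqslant n$ alone would yield only $m\leqslant\frac{3n-4}{2}$, which is off by one. To sharpen it, I would argue $|V_2|\leqslant n-2$, i.e.\ that $|V_3|\geqslant 2$; this fails precisely when $G$ is a cycle, so I would split the argument into two cases.

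\textbf{Cycle case.} If $G$ is a cycle then, being bipartite and distinct from $C_4$, it is $C_{2k}$ for some $k\geqslant 3$, so $m=n\geqslant 6$ and the inequality $m\leqslant\frac{3n-6}{2}$ follows directly from $2n\leqslant 3n-6$.

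\textbf{Non-cycle case.} Since $G$ is matching covered it is $2$-connected, so every vertex has degree at least two, and $|V_3|\geqslant 1$. I would rule out $|V_3|=1$ using the bipartite structure: suppose the unique vertex $v$ of degree $\geqslant 3$ lies in $A$, so every other vertex has degree exactly two. Counting edges from each side of the bipartition $G[A,B]$ gives $m=d(v)+2(|A|-1)$ on the $A$-side and $m=2|B|=2|A|$ on the $B$-side, forcing $d(v)=2$, a contradiction; the symmetric argument handles $v\in B$. Hence $|V_3|\geqslant 2$, so $|V_2|\leqslant n-2$, and combining with Corollary~\ref{cor : bound evm} yields $2(m-n+2)\leqslant n-2$, which rearranges to $m\leqslant\frac{3n-6}{2}$.

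The only nontrivial step is ruling out $|V_3|=1$; this is precisely where bipartiteness (together with $|A|=|B|$, which follows from matchability) enters the argument, and it is also the reason the bound is stronger than what one would get for a general minimally $2$-connected graph. Once this is in hand, the rest is a one-line substitution.
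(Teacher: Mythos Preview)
Your proof is correct and follows essentially the same approach as the paper: split off the cycle case, then in the non-cycle case argue $|V_3|\geqslant 2$ and combine with Corollary~\ref{cor : bound evm}. The paper's justification for $|V_3|\geqslant 2$ is the terse ``Since $|A|=|B|$, we infer $|V_3|\geqslant 2$'', which is exactly the degree-sum parity argument you spell out in detail.
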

\begin{proof}
    Let $G[A,B]$ denote a \mbmcg{} distinct from $C_4$. Note that $n\geqslant6$ if and only if $n\leqslant \frac{3n-6}{2}$. Consequently, if $G$ is a cycle, $m=n\leqslant \frac{3n-6}{2}$. 
    
    Now suppose that $G$ is not a cycle; in other words, $|V_3|\geqslant 1$. Since $|A|=|B|$, we infer $|V_3|\geqslant 2$. Using Corollary~\ref{cor : bound evm} and the fact that $n=|V_2|+|V_3|$, we have $2(m-n+2)\leqslant |V_2|=n-|V_3|$. Since $|V_3|\geqslant 2$, we get $2(m-n+2)\leqslant n-2$. On simplifying, we conclude $m\leqslant \frac{3n-6}{2}$.
\end{proof}

We shall now enforce equality in the proof of the above corollary to prove our characterization of $\c{H}_4$.

\begin{restate}{\ref{thm : ee}}{\sc[Characterization of $\c{H}_4$]}\\
    A graph $G$ belongs to $\c{H}_4$ if and only if it is obtained from a star by \isojoin{}.
\end{restate}
\begin{proof}
    The reverse implication is settled by statement \i{(ii)} of Theorem~\ref{thm : embmcg TcupT'} for stars $K_{1,p}$ where $p\geqslant3$, and the paragraph above it for $K_{1,2}$. It remains to prove the forward implication.
    
    Let $G$ be a member of $\c{H}_4$. That is, $G$ is a \mbmcg{} such that $|E|=\frac{3n-6}{2}$. If $G$ is a cycle, then $n=6$ and $G$ is $C_6$ which is obtained from the star $K_{1,2}$ by \isojoin{}. 
    
    Now, suppose that $G$ is not a cycle. Consequently, $G$ satisfies all of the inequalities in the last paragraph of the proof of Corollary~\ref{cor : bound ee} with equality. In particular, $|V_2|=2(m-n+2)$ and $|V_3|=2$. In other words, $G$ is a member of $\c{H}_2$ that has precisely two vertices of degree at least three. Thus, by Lemma~\ref{lem : base case - main theorem}, we conclude that $G$ is obtained from a star by \isojoin{}. 
\end{proof}

We end this subsection with a couple of easy consequences. The first of them is the following proper containment that is already established within the above proof.% of the characterizations established so far.% by noting the following proper containment and intersection of extremal classes.

\begin{cor}
    $\c{H}_4-C_6\subset \c{H}_2$.\qed
    \label{cor : h4 subset h2}
\end{cor}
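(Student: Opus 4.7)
The plan is to extract the containment directly from the characterization theorem just proved, since the proof of Theorem~\ref{thm : ee} already establishes more than the characterization itself. Specifically, in the last paragraph of that proof, one shows that any $G \in \c{H}_4$ which is not a cycle satisfies $|V_2| = 2(m-n+2)$ (in fact also $|V_3|=2$, but this is not needed here). By the definition of $\c{H}_2$ in Table~\ref{tab: extremality notions}, this equality is exactly the membership condition, so $G \in \c{H}_2$. Alternatively, one can argue from the characterization itself: by Theorem~\ref{thm : ee}, a graph $G \in \c{H}_4 - C_6$ is obtained from a star $K_{1,p}$ with $p \geqslant 3$ by isomorphic leaf matching, and such a star is a \htree{} (since its unique non-leaf has degree $p \geqslant 3$); then the Main Theorem (\ref{thm : evm}) places $G$ in $\c{H}_2$.

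For properness of the containment, I would exhibit a witness in $\c{H}_2 \setminus \c{H}_4$. Any \htree{} that is not a star, applied via \isojoin{}, produces a member of $\c{H}_2$ by Theorem~\ref{thm : embmcg TcupT'}; but since such a \htree{} has more than two non-leaves, the resulting graph has $|V_3| > 2$, violating the equality chain in the proof of Corollary~\ref{cor : bound ee}, and therefore does not lie in $\c{H}_4$. The example in Figure~\ref{fig: ex evm} serves.

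The main obstacle is essentially nonexistent: all the work has already been done inside Theorem~\ref{thm : ee}, and the corollary is just a matter of reading off the right inequality from that proof. The only mild point of care is remembering to exclude $C_6$ explicitly, which is why the statement is phrased as $\c{H}_4 - C_6 \subset \c{H}_2$ rather than $\c{H}_4 \subset \c{H}_2$: the cycle $C_6$ lies in $\c{H}_4$ but not in $\c{H}_2$ (which contains $C_4$ as its only cycle).
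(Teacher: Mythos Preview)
Your proposal is correct and matches the paper's approach exactly: the paper likewise records this corollary as ``already established within the above proof'' of Theorem~\ref{thm : ee}, where the equality $|V_2|=2(m-n+2)$ (hence membership in $\c{H}_2$) is deduced for every non-cycle member of $\c{H}_4$. One tiny slip: a \htree{} that is not a star may have exactly two non-leaves (not ``more than two''), but since $|V_3(G)|$ equals twice the number of non-leaves of $T$, your conclusion $|V_3|>2$ still holds and properness follows.
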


Note that $\Theta$, shown in Figure~\ref{fig: theta}, belongs to $\c{H}_3$ as well as $\c{H}_4$. Conversely, let $G$ be any graph in $\c{H}_3\cap \c{H}_4$. Since $G\in \c{H}_4$, by Theorem~\ref{thm : ee}, $G$ is obtained from a star $T:=K_{1,p}$ by \isojoin{}. On the other hand, since $G\in \c{H}_3$, by Theorem~\ref{thm : evn}, $T$ is a cubic \htree{}. Thus $p=3$; in other words, $G$ is $\Theta$. This proves the following.

\begin{cor}
    $\c{H}_4\cap \c{H}_3=\{\Theta\}$. \qed
    \label{cor : h4 cap h3 = theta}
\end{cor}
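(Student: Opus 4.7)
The plan is to prove both inclusions using the two characterization theorems just established.

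For $\{\Theta\} \subseteq \c{H}_3 \cap \c{H}_4$, I would observe that $K_{1,3}$ is simultaneously a star (with $p = 3$) and a cubic \htree{} (its unique non-leaf has degree exactly three), and that $\Theta$ is precisely the graph obtained from $K_{1,3}$ by \isojoin{}. Hence the ``if'' directions of Theorems~\ref{thm : evn} and~\ref{thm : ee} place $\Theta$ in both extremal classes simultaneously.

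For the reverse inclusion $\c{H}_3 \cap \c{H}_4 \subseteq \{\Theta\}$, I would take an arbitrary $G \in \c{H}_3 \cap \c{H}_4$ and chase the two forward implications. From $G \in \c{H}_4$, Theorem~\ref{thm : ee} produces a star $T = K_{1,p}$ such that $G$ is obtained from $T$ by \isojoin{}. From $G \in \c{H}_3$, Theorem~\ref{thm : evn} produces a cubic \htree{} $T'$ such that $G$ is obtained from $T'$ by \isojoin{}. Since $C_4 \notin \c{H}_4$ (indeed $|E(C_4)| = 4 \neq 3 = \frac{3 \cdot 4 - 6}{2}$), Proposition~\ref{prop : t cup t'} applies to $G$ and recovers the underlying tree uniquely (up to isomorphism) from $G - E_2$; consequently $T \cong T'$. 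Thus the star $K_{1,p}$ is itself a cubic \htree{}, which forces its unique non-leaf to have degree exactly three, giving $p = 3$. Therefore $G = \Theta$.

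There is essentially no obstacle here: the entire argument is a formal corollary of the earlier characterizations. The only minor care needed is to verify that the tree witnessing the \isojoin{} structure is the same in both applications, which is precisely what Proposition~\ref{prop : t cup t'} guarantees for graphs distinct from $C_4$.
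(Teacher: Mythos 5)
Your proposal is correct and follows essentially the same route as the paper: use Theorem~\ref{thm : ee} to obtain a star $K_{1,p}$, use Theorem~\ref{thm : evn} to force that tree to be a cubic \htree{}, and conclude $p=3$. Your explicit appeal to Proposition~\ref{prop : t cup t'} to identify the two witnessing trees is a small extra justification that the paper leaves implicit, but the argument is the same.
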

% \begin{proof}
%     Let $G$ be a graph in $\c{H}_3\cap \c{H}_4$. By Theorem~\ref{thm : ee}, $G$ is obtained from a star $T:=K_{1,p}$ by \isojoin{}. Furthermore, by Theorem~\ref{thm : evn}, $T$ is a cubic \htree{}. Thus, $p=3$ which implies that $G$ is $\Theta$.
% \end{proof}

\subsection{Characterization of $\c{H}_0$}
\label{subsec : h0}

Recall the bicontraction and the \bicontract{} operations as defined in Subsection~\ref{subsec : h0 and h1}. In this subsection, we will first show that \bicontract{} preserves \eem{}ity. Then, we will use this to deduce our characterization of $\c{H}_0$. 

In order to achieve the above, we first define the inverse of \bicontract{} as well as of bicontraction. Observe that the bicontraction operation (when applicable) results in a vertex of degree at least two, whereas the \bicontract{} operation (when applicable) results in a vertex of degree at least four.  

Let $G'$ be a graph with a vertex $v'$ of degree two or more. We partition the edges of~$\partial(v')$ into two sets $F_1$ and $F_2$ each of size at least one. Let $G$ be obtained from $G'-v'$ by (i)~introducing three new vertices $v,v_1$ and $v_2$, (ii)~adding two edges $vv_1$ and $vv_2$, and (iii)~for each $i\in \{1,2\}$, adding an edge joining $u$ and $v_i$ for each $uv\in F_i$. See Figure~\ref{fig: bicontraction and bisplitting} for an illustration. We say that $G$ is obtained from $G'$ by \i{bisplitting} the vertex $v'$. Observe that, in~$G$, the vertex $v$ has degree two and each of its neighbours $v_1$ and $v_2$ has degree at least two.

Furthermore, if $v'$ has degree at least four and each of the sets $F_1$ and $F_2$ has size at least two, we say that $G$ is obtained from $G'$ by \i{\bisplit{}} of the vertex $v'$. In this case, note that the vertex $v$ has degree two and each of its neighbours $v_1$ and $v_2$ has degree at least three. See Figure~\ref{fig: restricted bicontraction and restricted bisplitting} for an illustration.

\begin{figure}[htb]
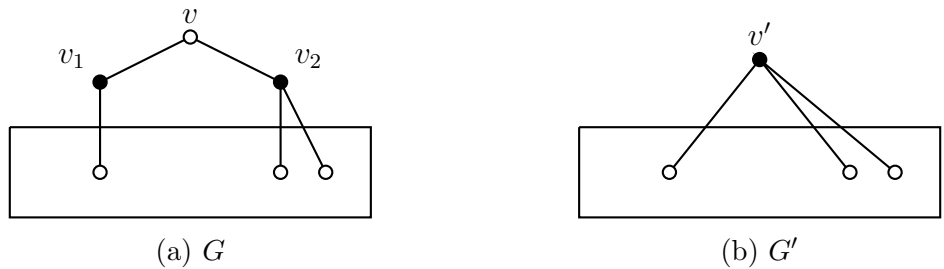

    \centering  
    \begin{subfigure}{0.45\linewidth}
        \centering
        \begin{drawing}{1.2}
    
        \draw (-2,1.5) -- (2,1.5) -- (2,0.5) -- (-2,0.5) -- (-2,1.5);
        \draw (-1,1) -- (-1,2);% -- (-1.5,1);
        \draw (-1,2) -- (0,2.5);
        \draw (-1,1)node[wnode]{};
        % \draw (-1.5,1)node[wnode]{};
        \draw (-1,2)node[bnode]{}node[above left,nodelabel]{$v_1$};
    
        \draw (1,1) -- (1,2) -- (1.5,1);
        \draw (1,2) -- (0,2.5);
        \draw (1,1)node[wnode]{};
        \draw (1.5,1)node[wnode]{};
        \draw (1,2)node[bnode]{}node[above right,nodelabel]{$v_2$};
    
        \draw (0,2.5)node[wnode]{}node[above,nodelabel]{$v$};

        \end{drawing}
        \caption{$G$}
        \label{fig:my_label}
    \end{subfigure}
    \begin{subfigure}{0.45\linewidth}
        \centering
        \begin{drawing}{1.2}

        \draw (-2,1.5) -- (2,1.5) -- (2,0.5) -- (-2,0.5) -- (-2,1.5);
        \draw (-1,1) -- (0,2.25);% -- (-1.5,1);
        % \draw (-1,2) -- (0,2.5);
        \draw (-1,1)node[wnode]{};
        % \draw (-1.5,1)node[wnode]{};
        % \draw (-1,2)node[wnode]{}node[above left,nodelabel]{$b_1$};
    
        \draw (1,1) -- (0,2.25) -- (1.5,1);
        % \draw (1,2) -- (0,2.5);
        \draw (1,1)node[wnode]{};
        \draw (1.5,1)node[wnode]{};
        % \draw (1,2)node[wnode]{}node[above right,nodelabel]{$b_2$};
    
        % \draw (0,2.5) -- (0,0);
        \draw (0,2.25)node[bnode]{}node[above,nodelabel]{$v'$};
        % \draw (0,0)node[bnode]{}node[left,nodelabel]{$w$};

        \end{drawing}
        \caption{$G'$}
        \label{fig}
    \end{subfigure}
    \centering
    \caption{an illustration of bicontracton and bisplitting}
    \label{fig: bicontraction and bisplitting}
\end{figure}

Observe that $G'$ is obtained from $G$ by bicontraction if and only if $G$ may be obtained from $G'$ by bisplitting. Likewise, $G'$ is obtained from $G$ by \bicontract{} if and only if $G$ may be obtained from $G'$ by \bisplit{}. The following lemma pertaining to bicontraction/bisplitting is easily proved, and may also be deduced from \cite[Propositions 2.1 and 2.3]{clm05}.  %As in Section~\ref{sec : 2-cut induction tool}, we now show that this operation preserves the matching covered, minimality and extremality properties.

\begin{lem} {\sc [Bicontraction preserves Matching Covered property]}\\
    Let $G$ be a graph that has a vertex of degree two, say $v$, that has two distinct neighbours each of which has degree two or more, and let $G':=G/v$. Then $G$ is \mc{} if and only if $G'$ is \mc{}.\qed
    \label{lem : bicontraction preserves mc}
\end{lem}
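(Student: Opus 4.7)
The plan is to set up a natural bijection between the perfect matchings of $G$ and those of $G'$, and then use it to transfer the \mc{} property in both directions. Let $v_1, v_2$ denote the two distinct neighbours of $v$ in $G$, and let $v'$ denote the vertex of $G'$ resulting from the contraction of $vv_1$ and $vv_2$. First I would record the natural edge correspondence between $E(G)\setminus\{vv_1,vv_2\}$ and $E(G')$: each $G$-edge incident to $v_1$ (resp. $v_2$) other than $vv_1$ (resp. $vv_2$) becomes a $G'$-edge incident to $v'$, while every other edge of $G$ survives unchanged in $G'$.

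Next I would set up the \pema{} bijection. Given a \pema{} $M$ of $G$, since $v$ has degree two, exactly one of $vv_1, vv_2$ lies in $M$; say $vv_1\in M$. Then $v_2$ is matched in $M$ by some edge $e=v_2u$ with $u\neq v$, so $M' := M\setminus\{vv_1\}$ is a \pema{} of $G'$ under the edge correspondence (the vertex $v'$ is matched by the $G'$-edge corresponding to $e$, and every other vertex retains its $M$-partner). Conversely, given a \pema{} $M'$ of $G'$, the $M'$-edge at $v'$ corresponds in $G$ to an edge at exactly one of $v_1, v_2$; if it is at $v_1$, adjoining $vv_2$ yields a \pema{} of $G$, and symmetrically otherwise. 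These two constructions are mutual inverses.

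With the bijection in hand, the \mc{} transfer is a routine case analysis. For the forward direction, any edge $e$ of $G'$ corresponds to a unique edge $f$ of $G$ (incident to $v_1$, to $v_2$, or to neither); a \pema{} of $G$ containing $f$, which exists because $G$ is \mc{}, maps to a \pema{} of $G'$ containing $e$. For the converse, any edge $f$ of $G$ distinct from $vv_1, vv_2$ corresponds to an edge of $G'$, which lies in some \pema{} of $G'$; lifting yields a \pema{} of $G$ containing $f$. Finally, to cover $vv_1$, invoke the hypothesis $d_G(v_2)\geqslant 2$ to pick a neighbour $u$ of $v_2$ with $u\neq v$; the edge $v'u$ of $G'$ lies in some \pema{} $M'$ of $G'$, which lifts to a \pema{} of $G$ containing both $v_2u$ and $vv_1$, and symmetrically for $vv_2$.

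The only mildly delicate point is making sure the edge correspondence is well-defined in the presence of possible parallel edges, and checking that every edge of $G$ (respectively $G'$) fits into exactly one case of the case analysis; in particular, the hypothesis $d_G(v_1), d_G(v_2)\geqslant 2$ is used precisely at the last step above, to ensure that $vv_1$ and $vv_2$ each lie in some \pema{} of $G$.
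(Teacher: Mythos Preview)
Your argument is correct and is exactly the direct perfect-matching bijection the paper has in mind when it says the lemma ``is easily proved''; note that the paper does not supply its own proof of this lemma, instead marking it with a \qed{} and citing Propositions~2.1 and~2.3 of Carvalho, Lucchesi and Murty. The only small omissions are the routine checks that connectivity and the order-at-least-four requirement transfer between $G$ and $G'$, which are automatic in every application the paper makes of the lemma.
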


\begin{figure}[htb]
    \centering  
    \begin{subfigure}{0.45\linewidth}
        \centering
        \begin{drawing}{1.2}
    
        \draw (-2,1.5) -- (2,1.5) -- (2,0.5) -- (-2,0.5) -- (-2,1.5);
        \draw (-1,1) -- (-1,2) -- (-1.5,1);
        \draw (-1,2) -- (0,2.5);
        \draw (-1,1)node[wnode]{};
        \draw (-1.5,1)node[wnode]{};
        \draw (-1,2)node[bnode]{}node[above left,nodelabel]{$v_1$};
    
        \draw (1,1) -- (1,2) -- (1.5,1);
        \draw (0.5,1)--(1,2) -- (0,2.5);
        
        \draw (0.5,1)node[wnode]{};
        \draw (1,1)node[wnode]{};
        \draw (1.5,1)node[wnode]{};
        \draw (1,2)node[bnode]{}node[above right,nodelabel]{$v_2$};
    
        \draw (0,2.5)node[wnode]{}node[above,nodelabel]{$v$};

        \end{drawing}
        \caption{$G$}
        \label{fig:my_label}
    \end{subfigure}
    \begin{subfigure}{0.45\linewidth}
        \centering
        \begin{drawing}{1.2}

        \draw (-2,1.5) -- (2,1.5) -- (2,0.5) -- (-2,0.5) -- (-2,1.5);
        \draw (-1,1) -- (0,2.25) -- (-1.5,1);
        % \draw (-1,2) -- (0,2.5);
        \draw (-1,1)node[wnode]{};
        \draw (-1.5,1)node[wnode]{};
        % \draw (-1,2)node[wnode]{}node[above left,nodelabel]{$b_1$};
    
        \draw (1,1) -- (0,2.25) -- (1.5,1);
        \draw (0.5,1) -- (0,2.25);
        
        \draw (0.5,1)node[wnode]{};
        \draw (1,1)node[wnode]{};
        \draw (1.5,1)node[wnode]{};
        % \draw (1,2)node[wnode]{}node[above right,nodelabel]{$b_2$};
    
        % \draw (0,2.5) -- (0,0);
        \draw (0,2.25)node[bnode]{}node[above,nodelabel]{$v'$};
        % \draw (0,0)node[bnode]{}node[left,nodelabel]{$w$};

        \end{drawing}
        \caption{$G'$}
        \label{fig}
    \end{subfigure}
    \centering
    \caption{an illustration of \bicontract{} and \bisplit{}}
    \label{fig: restricted bicontraction and restricted bisplitting}
\end{figure}

% Observe that the \bicontract{} operation is the ``inverse" of the \bisplit{} operation --- that is, $G'$ is obtained from $G$ by \bicontract{} of some vertex $v$ if and only if $G$ is obtained from $G'$ by \bisplit{} a vertex $v'$. As in Section~\ref{sec : 2-cut induction tool}, we now show that this operation preserves the matching covered, minimality and extremality properties.

% \b{review}
However, bicontraction may not preserve minimality. For instance, the graph shown in Figure~\ref{fig: ex evm} belongs to $\c{H}$, but any application of the bicontraction operation results in a graph that has a removable edge. On the other hand, the \bicontract{} operation preserves minimality as well as \eem{}ity, as proved below.

\begin{thm} {\sc [Restricted Bicontraction preserves $2$-edge Extremality]}\\
    Let $G$ be a bipartite graph that has a vertex of degree two, say $v$, each of whose neighbours has degree three or more, and let $G':=G/v$. Then the following statements hold:
    \begin{enumerate}
        \item $G$ is \mc{} if and only if $G'$ is \mc{};
        \item furthermore, $G$ is minimal if and only if $G'$ is minimal; and
        \item finally, $G$ belongs to $\c{H}_0$ if and only if $G'$ belongs to $\c{H}_0$.
    \end{enumerate}
    \label{thm : bicontraction preserves extremality}
\end{thm}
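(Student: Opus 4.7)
The plan is to prove the three equivalences in order, each reducing to a short combination of the preceding Lemma \ref{lem : bicontraction preserves mc} on bicontraction and a careful bookkeeping of edges, degrees, and the quantities $m$, $n$, $|E_2|$ under the operation $G' := G/v$.

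Part (i) is immediate: since each neighbor of $v$ has degree at least three, in particular at least two, and $v$ has two distinct neighbors, Lemma \ref{lem : bicontraction preserves mc} applies directly and gives that $G$ is matching covered if and only if $G'$ is.

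For part (ii), I will first set up the edge correspondence. The bicontraction identifies $v, v_1, v_2$ into a single vertex $v'$ of $G'$ having degree $d_{G'}(v') = d_G(v_1) + d_G(v_2) - 2 \geqslant 4$; the two edges $vv_1$ and $vv_2$ are consumed by the contraction, while every other edge of $G$ persists uniquely in $G'$ (parallel edges may arise when $v_1, v_2$ share a neighbor, which is harmless since the paper permits parallel edges). Now observe that $\{vv_1, vv_2\} = \partial(v)$ is a $2$-cut, so by Lemma \ref{lem : nonremovable edge} neither of these edges is ever removable in $G$. For any other edge $e$ of $G$, with counterpart $e'$ in $G'$, the vertex $v$ still has degree two in $G - e$ with both neighbors of degree at least two (each drops by at most one from $\geqslant 3$), so Lemma \ref{lem : bicontraction preserves mc} applied to $G - e$ yields that $G - e$ is matching covered if and only if $(G - e)/v = G' - e'$ is. Hence $e$ is removable in $G$ iff $e'$ is removable in $G'$, and minimality transfers between $G$ and $G'$.

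For part (iii), I will track parameters: $n_{G'} = n_G - 2$ (three vertices replaced by one) and $m_{G'} = m_G - 2$ (only the edges $vv_1, vv_2$ are lost), so the quantity $m - n + 2$ is preserved. Next I will check $|E_2(G)| = |E_2(G')|$: the deleted edges $vv_1, vv_2$ are not $2$-edges of $G$ (because $d_G(v_1), d_G(v_2) \geqslant 3$); any other edge of $G$ incident to $v_1$ or $v_2$ fails to be a $2$-edge in both graphs (not in $G$ since $v_1, v_2$ have degree $\geqslant 3$, and not in $G'$ since the corresponding edge is incident to $v'$, which has degree $\geqslant 4$); and every remaining edge has both endpoints outside $\{v, v_1, v_2\}$ and therefore identical endpoint degrees in $G$ and $G'$. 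Combining this with parts (i) and (ii) delivers (iii). The argument is essentially mechanical; the only step requiring genuine care is the edge-correspondence in (ii), where one must verify that possible parallel edges in $G'$ do not invalidate the degree-two hypothesis needed to apply the bicontraction lemma to $G - e$.
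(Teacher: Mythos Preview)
Your proof is correct and follows essentially the same approach as the paper's: part~(i) is a direct appeal to Lemma~\ref{lem : bicontraction preserves mc}; part~(ii) uses the edge-by-edge correspondence together with Lemma~\ref{lem : bicontraction preserves mc} applied to $G-e$; and part~(iii) is the same parameter bookkeeping showing $m-n$ and $|E_2|$ are both unchanged. The only cosmetic difference is that you invoke Lemma~\ref{lem : nonremovable edge} to rule out removability of $vv_1,vv_2$, whereas the paper simply observes that a removable edge cannot be incident to a degree-two vertex; these are equivalent one-line justifications.
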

\begin{proof}
    We adopt notation from the definition of \bisplit{}, as shown in Figure~\ref{fig: restricted bicontraction and restricted bisplitting}. Observe that \i{(i)} follows immediately from Lemma~\ref{lem : bicontraction preserves mc}. Henceforth, we assume that $G$ is \mc{}, or equivalently, that $G'$ is \mc{}. We will prove the other two statements one-by-one.
    % \begin{stat}
    %     $G$ is \mc{} if and only if $G'$ is \mc{}
    % \end{stat}
    % \begin{proof}
    %     This follows immediately from Lemma~\ref{lem : bicontraction preserves mc}.
    % \end{proof}
    \begin{stat}
        $G$ is minimal if and only if $G'$ is minimal.
    \end{stat}
    \begin{proof}
        First suppose that $G'$ is not minimal, and let $e$ be a removable edge. Note that $d_{G-e}(v_i)\geqslant d_{G}(v_i)-1\geqslant2$ since the operation is a \bicontract{}. Consequently, $G'-e$ is obtained from $G-e$ by bicontracting $v$. Thus, by Lemma~\ref{lem : bicontraction preserves mc}, $G-e$ is \mc{}. In other words, $G$ is not minimal.

        Conversely, suppose that $G$ is not minimal, and let $e$ be a removable edge. Note that $e\notin \partial(v)$ since $d(v)=2$. Now, observe that $G'-e$ is obtained from $G-e$ by bicontracting~$v$. Thus, by Lemma~\ref{lem : bicontraction preserves mc}, $G'-e$ is \mc{}. Consequently, $G'$ is not minimal.
    \end{proof}

    Henceforth, we assume that $G$ is a \mbmcg{}, or equivalently, that $G'$ is a \mbmcg{}. Recall that $G$ belongs to $\c{H}_0$ if $|E_2(G)|=m-n+2$. 
    \begin{stat}
        $G$ belongs to $\c{H}_0$ if and only if $G'$ belongs to $\c{H}_0$. 
    \end{stat}
    \begin{proof}
        Let $n':=|V(G')|$ and $m':=|E(G')|$. Note that $m'=m-2$ and $n'=n-2$. Thus, $m'-n'=m-n$. By definition of \bicontract{} operation, $d_G(v_1)\geqslant 3$ and $d_G(v_2)\geqslant 3$; subsequently $d_{G'}(v')\geqslant3$. These observations imply that $|E_2(G')|=|E_2(G)|$. 
    \end{proof}

    This completes the proof of Theorem~\ref{thm : bicontraction preserves extremality}
\end{proof}

Recall the definition of \retract{} from Subsection~\ref{subsec : h0 and h1}. The following is an immediate consequence of the above theorem.

\begin{cor}
    A graph $G$ belongs to $\c{H}_0$ if and only if its \retract{} $\widehat{G}$ belongs to $\c{H}_0$.\qed
    \label{cor : retract in h0}
\end{cor}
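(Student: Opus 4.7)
The plan is to derive this corollary as a direct iterative consequence of Theorem~\ref{thm : bicontraction preserves extremality}, which already captures the heart of the matter: a single \bicontract{} preserves the three properties (matching covered, minimal, \eem{}) in both directions. The \retract{} $\widehat{G}$ is, by definition, obtained from $G$ by a finite sequence of \bicontract{} operations, so the argument should be a straightforward induction on the length of this sequence.

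More precisely, I would write $G = G^{(0)}, G^{(1)}, \dots, G^{(k)} = \widehat{G}$ for a sequence where each $G^{(i+1)}$ is obtained from $G^{(i)}$ by a single \bicontract{} (such a sequence exists by the definition of \retract{}, and since \retract{} is unique the choice of sequence does not matter). The base case $k=0$ is trivial: $G = \widehat{G}$ and there is nothing to prove. For the inductive step, assume $G^{(i)} \in \c{H}_0 \Leftrightarrow G^{(k)} \in \c{H}_0$. Applying Theorem~\ref{thm : bicontraction preserves extremality}~\i{(iii)} to the pair $(G^{(i-1)}, G^{(i)})$ gives $G^{(i-1)} \in \c{H}_0 \Leftrightarrow G^{(i)} \in \c{H}_0$, and composing the two equivalences yields $G^{(i-1)} \in \c{H}_0 \Leftrightarrow G^{(k)} \in \c{H}_0$, completing the induction.

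There is essentially no obstacle here: the only subtlety is ensuring that the hypotheses of Theorem~\ref{thm : bicontraction preserves extremality} hold at every step of the sequence, i.e., that the vertex being bicontracted at each step is a degree-two vertex both of whose neighbors have degree at least three. But this is precisely the defining condition of a \bicontract{} operation, which is what the \retract{} procedure applies, so the hypothesis is satisfied automatically. Hence the corollary follows immediately by induction, and the proof need be only a couple of lines long.
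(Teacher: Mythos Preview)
Your proposal is correct and matches the paper's approach exactly: the paper states this corollary as an ``immediate consequence'' of Theorem~\ref{thm : bicontraction preserves extremality} and marks it with \qed, and your induction on the length of the \bicontract{} sequence is precisely the intended one-line justification. The only remark is that uniqueness of the \retract{} is not needed---any sequence of \bicontract{}s terminating at $\widehat{G}$ will do.
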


We are now ready to prove our characterization of the extremal class $\c{H}_0$ in terms of the extremal class $\c{H}_2$, as restated below.

\begin{restate}{\ref{thm : eem}}{\sc[Characterization of $\c{H}_0$]}\\
    A graph $G$, distinct from $C_4$, belongs to $\c{H}_0$ if and only if its \retract{} $\widehat{G}$ belongs to $\c{H}_2$. 
\end{restate}
\begin{proof}
    First suppose that $\widehat{G}$ belongs to $\c{H}_2$. Observe that either $\widehat{G}=G$ or $\widehat{G}$ has a vertex of degree at least four. In either case, $\widehat{G}\neq C_4$. By the containment established in Corollary~\ref{cor : h2 subset h0}, we infer that $\widehat{G}\in \c{H}_0$. Now, by Corollary~\ref{cor : retract in h0}, we conclude that $G\in \c{H}_0$.

    Now suppose that $G$ belongs to $\c{H}_0$. We invoke Corollary~\ref{cor : retract in h0} to infer that $\widehat{G}\in \c{H}_0$. Thus, by definition, $|E_2(\widehat{G})|=\widehat{m}-\widehat{n}+2$, where $\widehat{n}:=|V(\widehat{G})|$ and $\widehat{m}:=|E(\widehat{G})|$. By definition of \retract{}, the subgraph of $\widehat{G}$ induced by its degree two vertices has no isolated vertex. Applying the handshaking lemma to this subgraph gives us: $|V_2(\widehat{G})|\leqslant 2|E_2(\widehat{G})|=2(\widehat{m}-\widehat{n}+2)$. However, by the lower bound established in Corollary~\ref{cor : bound evm}, $|V_2(\widehat{G})|\geqslant 2(\widehat{m}-\widehat{n}+2)$. Thus, $|V_2(\widehat{G})|= 2(\widehat{m}-\widehat{n}+2)$; by definition, $\widehat{G}\in \c{H}_2$.
\end{proof}

% \begin{figure}[htb]
%     \centering
%     \begin{drawing}{1}
%         \foreach \x in {0,2,...,8}{
%             % \draw[->] (\x,0) .. controls (\x+1,0.5) .. (\x+2,0);
%             \draw[->] (\x,0) to[out=50,in=180] (\x+1,0.5);
%             \draw (\x+1,0.5) to[out=0,in=130] (\x+2,0);
%             \draw (\x,0)node[bnode]{} to[out=-50,in=180] (\x+1,-0.5);
%             \draw[<-] (\x+1,-0.5) to[out=0,in=-130] (\x+2,0)node[bnode]{};
%         }

%     \end{drawing}
%     \caption{Caption}
%     \label{fig:enter-label}
% \end{figure}
We now switch our attention to the only remaining extremal class. 

\subsection{Characterization of $\c{H}_1$}
\label{subsec : h1}

We begin by deducing a lower bound on the number of $2$-edges in a \mbmcg{} solely in terms of $n$ (see Lov\'asz and Plummer \cite[Lemma 4.2.4 (b)]{lopl86}), that was stated in Subsection~\ref{subsec : bounds}, from the lower bound of Lov\'asz and Plummer (Theorem~\ref{thm : lower bound on 2-edges}).

%We will follow the same approach as in the Characterization of $\c{H}_3$ and $\c{H}_4$. We begin by deducing the lower bound on the number of $2$-edges in a \mbmcg{} solely in terms of $n$, that was stated in Subsection~\ref{subsec : bounds}, from the lower bound of Lov\'asz and Plummer (Theorem~\ref{thm : lower bound on 2-edges}).
\begin{cor}
    In a \mbmcg{}, $|E_2|\geqslant \frac{n+10}{6}$.
    \label{cor : bound een}
\end{cor}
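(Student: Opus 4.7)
The plan is to mirror the derivation of Corollary~\ref{cor : bound evn} from Corollary~\ref{cor : bound evm}, but using Theorem~\ref{thm : lower bound on 2-edges} (which yields $|E_2|\geqslant m-n+2$) in place of Corollary~\ref{cor : bound evm}, and combining it with one extra edge-counting identity so as to produce a second inequality that will allow me to eliminate $m$. First I would dispose of $C_4$ by direct verification, since Theorem~\ref{thm : lower bound on 2-edges} excludes that case: there $|E_2|=4\geqslant 14/6=(n+10)/6$, so the bound holds.

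For any other \mbmcg{} $G$, two ingredients will be at hand. Ingredient one is $|E_2|\geqslant m-n+2$ from Theorem~\ref{thm : lower bound on 2-edges}. Ingredient two comes from two elementary counts: the edge-partition $m=|E_2|+|E_{3,2}|+|E_3|$ and the degree sum $2|V_2|=\sum_{v\in V_2}d(v)=2|E_2|+|E_{3,2}|$ (using that every $v\in V_2$ has degree exactly two). Eliminating $|E_{3,2}|$ between these gives the identity $2|V_2|=m+|E_2|-|E_3|$. On the other hand, the handshaking lemma together with the fact that every vertex of $V_3$ has degree at least three yields $2m\geqslant 2|V_2|+3|V_3|=3n-|V_2|$, equivalently $|V_2|\geqslant 3n-2m$. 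Substituting this lower bound into the identity produces the complementary inequality $|E_2|\geqslant 6n-5m+|E_3|$, which points in the opposite direction in $m$ from ingredient one.

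Multiplying ingredient one by $5$ and adding the complementary inequality eliminates $m$ cleanly:
\[
6|E_2|\;\geqslant\;5(m-n+2)+(6n-5m+|E_3|)\;=\;n+10+|E_3|\;\geqslant\;n+10,
\]
which gives $|E_2|\geqslant (n+10)/6$, as desired. I do not foresee any serious obstacle: once the identity $2|V_2|=m+|E_2|-|E_3|$ is in place, everything reduces to elementary counting. The only mildly creative step is choosing the weight $5$ so as to eliminate $m$, and it is reassuring that this produces exactly the coefficient $6$ that matches the denominator in the target bound $(n+10)/6$.
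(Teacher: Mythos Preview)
Your proof is correct and takes essentially the same approach as the paper: the paper derives the same three ingredients --- the bound $|E_2|\geqslant m-n+2$, the handshaking inequality $2m\geqslant 3n-|V_2|$, and the counting identity $2|V_2|=|E_2|+m-|E_3|$ (which it immediately weakens via $|E_3|\geqslant 0$) --- and combines them with weights $5$, $2$, $1$ to obtain $6|E_2|\geqslant n+10$. Your only differences are that you handle $C_4$ explicitly and retain the $|E_3|$ slack term until the final step, yielding the marginally sharper $6|E_2|\geqslant n+10+|E_3|$ along the way.
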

\begin{proof}
    Let $G$ be a \mbmcg{}. By Theorem~\ref{thm : lower bound on 2-edges}: 
    \begin{equation}
        |E_2|\geqslant m-n+2
        \label{eq : 1}
    \end{equation}
    
    Now, using the handshaking lemma, and the fact that $d(v)\geqslant 3$ for each $v\in V_3$, we have: $2m\geqslant 3|V_3|+2|V_2|=3n-|V_2|$. In particular:
    \begin{equation}
        2m\geqslant 3n-|V_2|
        \label{eq : 2}
    \end{equation}
    
    By counting the edges incident with vertices of degree two in two different ways, we have: $2|V_2|=2|E_2|+|E_{23}|=|E_2|+m-|E_{3}|$. Now, using the fact that $|E_3|\geqslant 0$, we have:
    \begin{equation}
        |E_2|+m\geqslant 2|V_2|
        \label{eq : 3}
    \end{equation}
    Now, by scaling and adding the above inequalities as $(\ref{eq : 1}) \times 5 + (\ref{eq : 2})\times 2 + (\ref{eq : 3})\times 1$, we arrive at the desired inequality: $6|E_2|\geqslant n+10$.
    %\[6|E_2|\geqslant n+10 \implies |E_2|\geqslant \frac{n+10}{6}\]
\end{proof}

Next, we invite the reader to make a simple observation pertaining to the \bicontract{} operation. 

%Let $G$ be a graph and $G'$ be obtained from $G$ by \bicontract{}. We adopt notation from Figure~\ref{fig: restricted bicontraction and restricted bisplitting}. Note that, the vertices $v_1,v_2$ and $v'$ have degree at least $3$ while the vertex $v$ has degree $2$. Thus, $|E_3(G)|=|E_3(G')|$. Applying this repeatedly, we get the following.

\begin{lem}
    If a graph $G$ has a vertex of degree two, say $v$, each of whose neighbours has degree at least three, then $|E_3(G)|=|E_3(G/v)|$. Consequently, $|E_3(G)|=|E_3(\widehat{G})|$. \qed
    \label{lem : E_3 is preserved}
    % Let $G$ be a bipartite graph that has a vertex of degree two, say $v$, each of whose neighbours has degree three or more, and let $G':=G/v$. Then, $E_3(G)=E_3(G')$. 
\end{lem}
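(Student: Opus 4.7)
The plan is to track how the edge set and vertex degrees transform under a single \bicontract{} step, and then iterate. Adopt the notation from Figure~\ref{fig: restricted bicontraction and restricted bisplitting}: $v$ has degree two with distinct neighbours $v_1, v_2$ (both of degree at least three in $G$), and $G' := G/v$ is obtained by contracting $vv_1$ and $vv_2$ to produce a single new vertex $v'$. Since the underlying graph is bipartite (these are the graphs to which \retract{} is applied), $v_1$ and $v_2$ lie in the same colour class and hence are non-adjacent, so no loops or parallel edges arise from the contraction.

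First, observe that $E(G) \setminus E(G')$ consists precisely of $vv_1$ and $vv_2$, and neither lies in $E_3(G)$ since $d_G(v) = 2$. Likewise, the remaining edges of $G$ are in natural bijection with $E(G')$: an edge $uw$ not incident with $v$ becomes an edge of $G'$ with each endpoint in $\{v_1, v_2\}$ replaced by $v'$. It therefore suffices to verify that this bijection preserves membership in $E_3$.

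For this, note that $d_G(w) = d_{G'}(w)$ for every $w \notin \{v, v_1, v_2\}$, so edges avoiding $\{v_1, v_2\}$ clearly retain their $E_3$-status. For an edge with (exactly) one end in $\{v_1, v_2\}$, the other end keeps the same degree, while $d_G(v_i) \geq 3$ by hypothesis and $d_{G'}(v') = d_G(v_1) + d_G(v_2) - 2 \geq 4$; in both $G$ and $G'$ the endpoint in $\{v_1, v_2, v'\}$ has degree at least three, so again $E_3$-status is preserved. This yields $|E_3(G)| = |E_3(G')|$.

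The consequence $|E_3(G)| = |E_3(\widehat{G})|$ follows by induction on the number of \bicontract{} operations in any sequence witnessing $G \rightsquigarrow \widehat{G}$, applying the single-step equality at each stage. There is no real obstacle here; the only subtlety worth flagging is the justification that the contraction does not create a loop (handled by bipartiteness) and the explicit degree computation $d_{G'}(v') = d_G(v_1) + d_G(v_2) - 2$, which is what forces $v'$ to remain in $V_3(G')$ and thereby keeps $3$-edges incident to $v_1$ or $v_2$ accounted for on the $G'$ side.
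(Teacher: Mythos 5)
Your proof is correct and is precisely the ``simple observation'' the paper leaves to the reader (the lemma is stated with the proof omitted): the two contracted edges are not \tedge{}s since $d_G(v)=2$, the remaining edges correspond bijectively to $E(G/v)$, and the degree computation $d_{G/v}(v')=d_G(v_1)+d_G(v_2)-2\geqslant 4$ shows the bijection preserves \tedge{} status, after which iterating over the \bicontract{} steps gives the statement for $\widehat{G}$. The only cosmetic remark is that the contraction may create parallel edges (when $v_1$ and $v_2$ share a neighbour other than $v$), but since the paper works with multigraphs and degrees count multiplicity, this is harmless and your edge bijection still goes through.
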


We are now ready to prove our characterization of the extremal class $\c{H}_1$ in terms of the extremal class $\c{H}_4$, as restated below. 

\begin{restate}{\ref{thm : een}}{\sc[Characterization of $\c{H}_1$]}\\
    A graph $G$ belongs to $\c{H}_1$ if and only if its \retract{} $\widehat{G}$ belongs to $\c{H}_4$ and $\Delta(G)=3$. 
\end{restate}
\begin{proof}
    For the reverse direction, let $G$ be a graph such that $\widehat{G}\in \c{H}_4$ and $\Delta(G)=3$. In what follows, we shall demonstrate that all of the inequalities that appear in the proof of Corollary~\ref{cor : bound ee} hold with equality. Firstly, by the characterization of $\c{H}_4$ obtained in Theorem~\ref{thm : ee}, we infer that $|E_3(\widehat{G})|=0$. Thus, by Lemma~\ref{lem : E_3 is preserved}, $|E_3(G)|=|E_3(\widehat{G})|=0$. Secondly, by the containment established in Corollary~\ref{cor : h4 subset h2}, $\widehat{G}\in \c{H}_2$. By Theorem~\ref{thm : eem}, $G\in \c{H}_0$. In particular, $|E_2(G)|=m-n+2$.  Lastly, as $\Delta(G)=3$, clearly $d_G(v)=3$ for each $v\in V_3$. The reader may thus verify that equality holds in all of the inequalities discussed in the proof of Corollary~\ref{cor : bound een}. Hence, $|E_2(G)|=\frac{n+10}{6}$ and $G\in \c{H}_1$.

    For the forward direction, let $G$ be a graph in $\c{H}_1$. Consequently, $G$ satisfies all of the inequalities that appear in the proof of Corollary~\ref{cor : bound een} with equality. Firstly, $|E_2(G)|=m-n+2$; thus $G\in \c{H}_0$. Consequently, by Theorem~\ref{thm : eem}, $\widehat{G}\in \c{H}_2$. Secondly, $|E_3(G)|=0$. Thus, by Lemma~\ref{lem : E_3 is preserved}, $|E_3(\widehat{G})|=0$. By Corollary~\ref{cor : counting prop of embmcg} \i{(iii)}, we deduce that $|V_3(\widehat{G})|=2$. Hence, by Lemma~\ref{lem : base case - main theorem} and Theorem~\ref{thm : embmcg TcupT'} \i{(ii)}, we conclude that $\widehat{G}\in \c{H}_4$. Finally, $d_G(v)=3$ for each $v\in V_3(G)$. Thus, $\Delta(G)\leqslant 3$. Note that if $G$ is a cycle then $n=m=E_2=\frac{n+10}{6}$ which implies that $n=2$; a contradiction. Thus, $G$ is not a cycle; whence $\Delta(G)=3$.
\end{proof}

We end this subsection, with a couple of easy consequences. The first of them is the following proper containment that is already established within the above proof.

\begin{cor}
    $\c{H}_1\subset \c{H}_0$. \qed
    \label{cor : h1 subset h0}
\end{cor}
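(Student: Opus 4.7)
My plan is that this corollary requires essentially no new work; both the containment and its strictness follow by cross-referencing the characterizations already established for $\c{H}_0$ and $\c{H}_1$.

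For the inclusion $\c{H}_1 \subseteq \c{H}_0$, I would simply quote the opening step of the forward direction in the proof of Theorem~\ref{thm : een}. There, starting from $G \in \c{H}_1$, enforcing equality in the inequalities of Corollary~\ref{cor : bound een} immediately yields $|E_2(G)| = m - n + 2$, which is exactly the defining condition of $\c{H}_0$. Alternatively, one can run through the characterizations: Theorem~\ref{thm : een} gives $\widehat{G} \in \c{H}_4$; since $C_6 \notin \c{H}_1$ (a direct numerical check against $|E_2| = \frac{n+10}{6}$) we have $\widehat{G} \neq C_6$, so Corollary~\ref{cor : h4 subset h2} places $\widehat{G}$ in $\c{H}_2$, and then Theorem~\ref{thm : eem} delivers $G \in \c{H}_0$. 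Either route takes a single line.

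For strictness, I need one witness in $\c{H}_0 \setminus \c{H}_1$. The cleanest source is $\c{H}_4$ itself with a star of high degree. Let $G$ be obtained from $K_{1,p}$ with $p \geq 4$ by \isojoin{}. By Theorem~\ref{thm : ee}, $G \in \c{H}_4$. Since $G \neq C_6$ and $G \neq C_4$, the chain of containments $\c{H}_4 - C_6 \subset \c{H}_2 - C_4 \subset \c{H}_0$ (Corollaries~\ref{cor : h4 subset h2} and~\ref{cor : h2 subset h0}) puts $G$ in $\c{H}_0$. However, the two centers of the two copies of $K_{1,p}$ retain their degree $p$ in $G$, so $\Delta(G) = p \geq 4$. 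By Theorem~\ref{thm : een}, this forces $G \notin \c{H}_1$.

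There is no real obstacle here: the containment is just a re-reading of one step inside Theorem~\ref{thm : een}, and properness is witnessed by the very first family of \ee{} graphs one writes down outside the $\Theta$ case. If anything, the only care required is to verify that the cycle exceptions ($C_4$, $C_6$) do not interfere with the two containment steps used in the witness argument, which is a direct numerical check.
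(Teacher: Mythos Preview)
Your proposal is correct and matches the paper's approach: the paper simply notes that the containment $\c{H}_1\subseteq\c{H}_0$ is already established within the forward direction of the proof of Theorem~\ref{thm : een} (via $|E_2(G)|=m-n+2$), which is exactly your primary route. Your explicit witness for strictness (a star $K_{1,p}$ with $p\geq 4$, giving $\Delta(G)\geq 4$) is a fine elaboration of what the paper leaves implicit.
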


Note that $\Theta$, shown in Figure~\ref{fig: theta}, belongs to $\c{H}_1$ as well as $\c{H}_2$. Conversely, let $G\in \c{H}_1\cap \c{H}_2$. Since $G\in \c{H}_2$, by Lemma~\ref{lem : basic prop of embmcg} \i{(i)}, $E_2$ is a perfect matching of $G[V_2]$ which implies that there is no vertex of degree two each of whose neighbours has degree three or more. Thus, $\widehat{G}=G$. On the other hand, since $G\in \c{H}_1$, by Theorem~\ref{thm : een}, $G\in \c{H}_4$. Furthermore, $\Delta(G)=3$. Observe that $\Theta$ is the only graph in $\c{H}_4$ with maximum degree three. Thus, $G=\Theta$; this proves the following.

\begin{cor}
    $\c{H}_1\cap \c{H}_2=\{\Theta\}$. \qed
    \label{cor : h1 cap h2 = theta}
\end{cor}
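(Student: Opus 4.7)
The plan is to prove both inclusions separately. For $\Theta \in \c{H}_1 \cap \c{H}_2$, I would observe that $\Theta$ is obtained from the star $K_{1,3}$ by \isojoin{}; a direct count on Figure~\ref{fig: theta} gives $n = 8$, $m = 9$, $|E_2| = 3$, and $|V_2| = 6$, so the defining equalities $|V_2| = 2(m - n + 2) = 6$ and $|E_2| = (n + 10)/6 = 3$ both hold, placing $\Theta$ in $\c{H}_2$ and $\c{H}_1$ respectively. Alternatively, Theorem~\ref{thm : evn} applied to the cubic \htree{} $K_{1,3}$ shows $\Theta \in \c{H}_3 \subseteq \c{H}_2$, and a direct count gives $\Theta \in \c{H}_1$.

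For the reverse inclusion, I would fix an arbitrary $G \in \c{H}_1 \cap \c{H}_2$ and extract what each membership gives. From $G \in \c{H}_2$, Lemma~\ref{lem : basic prop of embmcg}~\i{(i)} says that $E_2$ is a perfect matching of $G[V_2]$, so every degree-two vertex is matched along a $2$-edge to another degree-two vertex; in particular, no degree-two vertex has both of its neighbours of degree at least three. Hence no \bicontract{} is applicable to $G$, and the retract satisfies $\widehat{G} = G$. From $G \in \c{H}_1$, Theorem~\ref{thm : een} delivers $\widehat{G} \in \c{H}_4$ together with $\Delta(G) = 3$; combining these, $G = \widehat{G} \in \c{H}_4$ with $\Delta(G) = 3$.

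Finally, I would invoke the characterization of $\c{H}_4$ (Theorem~\ref{thm : ee}) to write $G$ as the result of applying \isojoin{} to some star $K_{1,p}$. The two copies of the centre of the star survive as the only vertices of $G$ of degree $p$ (all leaves acquire one matching edge and thus become degree-two vertices of $G$), so $\Delta(G) = p$; the constraint $\Delta(G) = 3$ then forces $p = 3$, whence $G = \Theta$. Since each step is an immediate appeal to previously established structural results—chiefly the fact that $\c{H}_2$-membership rigidifies the retract via Lemma~\ref{lem : basic prop of embmcg}, and Theorem~\ref{thm : een}'s reduction of $\c{H}_1$ to $\c{H}_4$—there is no real obstacle here; the argument is essentially a short bookkeeping exercise that singles out the unique common member.
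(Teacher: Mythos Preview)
Your proof is correct and follows essentially the same route as the paper's: both use Lemma~\ref{lem : basic prop of embmcg}\,\i{(i)} to conclude $\widehat{G}=G$ from $G\in\c{H}_2$, then Theorem~\ref{thm : een} to place $G$ in $\c{H}_4$ with $\Delta(G)=3$, and finally identify $\Theta$ as the unique such graph. Your final step spells out via Theorem~\ref{thm : ee} why $\Delta(G)=p$ forces $p=3$, whereas the paper simply asserts that $\Theta$ is the only member of $\c{H}_4$ with maximum degree three; this is a harmless elaboration rather than a different argument.
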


In the final section, we present conjectures that are natural generalizations of Theorems \ref{thm : evm}, \ref{thm : evn} and \ref{thm : ee} to \bkex{k}s --- that we proceed to define.

\section{Generalization to \bkex{k}s}
\label{sec : kex}

For a positive integer~$k$, a connected graph, of order least $2k+2$, is \i{$k$-extendable} if it has a matching of size $k$ and each such matching extends to a perfect matching. This notion was first introduced by Plummer \cite{p80}, and has been studied extensively by various authors since then; we impose the additional technical condition of order at least $2k+2$ since otherwise most of our results admit small counterexamples. Observe that a graph is \ex{k} if and only if its underlying simple graph is \ex{k}.  

Note that \mcg{}s are precisely the \kex{1}s. For the sake of completeness, we also use the term \i{\ex{0}} graphs to refer to (not necessarily connected) matchable graphs. Plummer~\cite{p86} proved the following regarding the connectedness of \kex{k}s.

\begin{thm}
    For $k\geqslant 1$, every \kex{k} is \kC{(k+1)}.
    \label{thm : kex is (k+1) connected}
\end{thm}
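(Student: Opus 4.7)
The plan is to prove the contrapositive: if $G$ admits a vertex cut $S$ with $|S| \leq k$, then $G$ fails to be $k$-extendable. The strategy is to exhibit a matching $M$ of size exactly $k$ such that $G - V(M)$ has an odd component; this obstructs the existence of a perfect matching of $G - V(M)$ and so contradicts $k$-extendability.

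First, I would perform two reductions. Using the (well-known) fact that $k$-extendability implies $s$-extendability for every $1 \leq s \leq k$, I may assume $|S| = k$ after replacing $k$ by $s$ if necessary. Next, I would choose $S$ to be a \emph{minimum} vertex cut; a standard argument shows that in that case every $v \in S$ has a neighbour in every component of $G - S$ (else removing some $v \in S$ would leave a smaller cut). Label the components of $G - S$ as $C_1, \ldots, C_t$ with $t \geq 2$. Since $G$ has a perfect matching, $n$ is even, so $|V(G) - S| = n - k$ has the parity of $k$.

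Now I would construct $M$ by case analysis on parities. Case A: some $C_j$ has odd order. Here I would build $M$ of size $k$ saturating $S$ by matching each vertex of $S$ to a distinct neighbour in $\bigcup_{\ell \neq j} C_\ell$ (using the minimum cut property). Since $V(M) \cap C_j = \emptyset$ and $C_j$ has no neighbours outside $S \subseteq V(M)$, the component $C_j$ sits entirely inside $G - V(M)$ as an odd component, giving the contradiction. Case B: every $C_j$ has even order (which forces $k$ to be even). Now I would pick any $v \in S$ and any neighbour $u$ of $v$ in $C_1$; match $vu$, and match the remaining $k-1$ vertices of $S$ to distinct vertices in $\bigcup_{\ell \neq 1} C_\ell$. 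Then $V(M) \cap C_1 = \{u\}$, and $C_1 - u$ is a union of components of $G - V(M)$ of total odd order; hence some component is odd, again the required contradiction.

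The main obstacle is the \emph{feasibility} of the matching constructions in each case: one must verify that $S$ can actually be matched into the prescribed union of components. This reduces to checking that the target union has at least $k$ (or $k-1$) vertices and applying a Hall-style / greedy argument grounded in the minimum cut property. The delicate sub-case is when one component, say $C_j$, is so large that $\bigcup_{\ell \neq j} C_\ell$ has fewer than $k$ vertices; a counting argument (summing sizes and invoking $n \geq 2k+2$) shows that in this regime one must instead send most of $S$ into $C_j$ itself, leaving $|C_j| - (k-1)$ vertices — a quantity of the opposite parity to $|C_j|$ — which again produces an odd component of $G - V(M)$. The parity bookkeeping across these sub-cases is the only genuinely fiddly part; the rest is routine once the strategy is in place.
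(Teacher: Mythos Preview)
The paper does not prove this theorem; it is quoted from Plummer~\cite{p86} as a known result, so there is no in-paper proof to compare against. I will therefore evaluate your plan on its own.

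Your overall strategy (find a $k$-matching $M$ saturating a small cut $S$ so that $G-V(M)$ inherits an odd component) is the right instinct, and the reduction to a minimum cut of size exactly $k$ is fine: the fact that $k$-extendable $\Rightarrow$ $(k{-}1)$-extendable follows from a Berge augmenting-path argument and does not presuppose connectivity. The Hall-type claim you need is also essentially provable: if $T\subseteq S$ satisfies $|N(T)\cap C|<|T|$ for a union $C$ of components, then $(S\setminus T)\cup (N(T)\cap C)$ is a smaller cut unless $C\subseteq N(T)$; so Hall can fail only when $|C|<k$. That much of your sketch can be made rigorous.

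Where the plan breaks is the ``delicate sub-case''. First, the parity assertion is wrong as stated: $|C_j|-(k-1)$ has opposite parity to $|C_j|$ only when $k$ is even, so for odd $k$ your fallback produces an even remainder and no contradiction. Second, even the existence of the fallback matching (``send $k-1$ of $S$ into $C_j$'') is unjustified: the Hall argument above guarantees a matching of $S$ into $C_j$ only when $|C_j|\geq k$, and in the regime $2k+2\leq n<3k$ one can have every component of size $<k$, so you cannot saturate $S$ into any prescribed side. At that point you need a genuinely new idea to control which component receives how many matched vertices \emph{and} with the correct parity --- this is not ``routine bookkeeping''.

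For comparison, Plummer's actual proof avoids all of this by induction on $k$: for any edge $uv$ (chosen with $u$ in a component of $G-S$ and $v\in S$), the graph $G-u-v$ is $(k-1)$-extendable (since $\{uv\}\cup M'$ extends in $G$ for any $(k-1)$-matching $M'$ of $G-u-v$), hence $k$-connected by induction; but $S-v$ is a cut of $G-u-v$ of size $k-1$, a contradiction. This sidesteps the matching-feasibility and parity issues entirely and is the approach I would recommend.
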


This immediately yields the following.%consequently, each vertex has degree at least $k+1$. 

\begin{cor}
    In a \kex{k}, each vertex has degree at least $k+1$. \qed
    \label{cor : deg>=k+1}
\end{cor}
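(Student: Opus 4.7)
The plan is to derive this directly from Theorem~\ref{thm : kex is (k+1) connected}, which asserts that every \kex{k} graph is \kC{(k+1)} for $k \geqslant 1$. The standard fact I would invoke is that in a \kC{(k+1)} graph on sufficiently many vertices, the minimum degree is at least $k+1$, because the open neighborhood of any vertex of smaller degree would form a vertex cut of insufficient size.

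Concretely, fix $k \geqslant 1$ and let $G$ be a \kex{k} graph; by definition $|V(G)| \geqslant 2k+2$. Suppose for contradiction that some vertex $v$ satisfies $d(v) \leqslant k$. Then $|V(G) \setminus (N(v) \cup \{v\})| \geqslant 2k+2 - k - 1 = k+1 \geqslant 2$, so $N(v)$ is a vertex cut separating $v$ from at least one other vertex, and this cut has size at most $k < k+1$. This contradicts Theorem~\ref{thm : kex is (k+1) connected}. Hence $d(v) \geqslant k+1$ for every vertex $v$.

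For completeness, the case $k = 0$ is immediate from the convention that a \mbox{\ex{0}} graph is matchable: every vertex is saturated by some perfect matching and therefore has degree at least $1 = k+1$. I do not anticipate any genuine obstacle here, since all the content is contained in Theorem~\ref{thm : kex is (k+1) connected}; the corollary is essentially the routine observation that $(k+1)$-connectivity forces minimum degree at least $k+1$ once the graph has more than $k+2$ vertices, a condition guaranteed by the order constraint in the definition of \ex{k}.
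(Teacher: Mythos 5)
Your proof is correct and follows exactly the route the paper intends: Corollary~\ref{cor : deg>=k+1} is stated as an immediate consequence of Theorem~\ref{thm : kex is (k+1) connected}, and your derivation (the neighborhood of a vertex of degree at most $k$ would be a cut of size at most $k$, which the order condition $n\geqslant 2k+2$ makes nontrivial) is precisely that routine step. The added remark on the \ex{0} case is harmless and consistent with the paper's conventions.
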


A \kex{k} $G$ is \i{minimal} if $G-e$ is not \ex{k} for each edge $e$. We emphasize that the notion of minimality is tied with the value of $k$. For instance, it is easily verified (using Theorem \ref{thm : k-extendability characterization}) that every \ex{2} bipartite cubic graph is a minimal \ex{2} graph, but is not minimal \ex{1}. 

% Plummer~\cite{p86} showed that \kex{k}s are \kC{(k+1)} for $k\geqslant1$. 

In light of Corollary \ref{cor : deg>=k+1}, it is natural to ask whether there is a lower bound on the number of degree $k+1$ vertices in a minimal \kex{k}. Henceforth, we restrict our attention to bipartite graphs.

We begin by recalling the well-known Hall's Theorem which states that a bipartite graph $G[A,B]$, where $|A|=|B|$, is \ex{0} if and only if $|N(S)|\geqslant |S|$ for each $S\subseteq A$, where $N(S)$ denotes the neighbourhood of $S$. Using this, Plummer \cite{p86} established the following characterization of \bkex{k}s.% that is a generalization of the well known Hall's Theorem.

% \b{is connected required?}
\begin{thm}{\sc[Characterization of \ex{k} Bipartite Graphs]}\\
For a bipartite graph $G[A,B]$ of order at least $2k+2$, where $|A|=|B|$, the following are equivalent:
\begin{enumerate}
    \item $G$ is $k$-extendable, 
    \item for every nonempty subset $S\subseteq A$, either $N(S)=B$ or $|N(S)|\geqslant |S|+k$, and
    \item $G-X-Y$ is matchable for all $X\subseteq A$ and $Y\subseteq B$ such that $|X|=|Y|=k$.
\end{enumerate}
\label{thm : k-extendability characterization}
\end{thm}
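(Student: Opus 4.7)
The plan is to prove the equivalence by the cycle (ii)$\Rightarrow$(iii)$\Rightarrow$(i)$\Rightarrow$(ii): the first two implications follow cleanly from Hall's theorem, while the last (the main technical step) requires the $(k+1)$-connectivity of $k$-extendable graphs supplied by Theorem~$\ref{thm : kex is (k+1) connected}$.

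For (ii)$\Rightarrow$(iii), I would verify Hall's condition inside $G-X-Y$: given non-empty $T \subseteq A-X$, clause (ii) yields either $N_G(T)=B$, whence $|N_G(T)-Y| \geqslant |B|-k = |A|-k \geqslant |T|$, or $|N_G(T)| \geqslant |T|+k$, whence $|N_G(T)-Y| \geqslant |T|$; either way $G-X-Y$ satisfies Hall and admits a perfect matching. For (iii)$\Rightarrow$(i), note first that (iii) implies the ordinary Hall condition on $G$ itself: if some $T \subseteq A$ had $|N_G(T)| < |T|$, a suitable choice of $X$ (disjoint from $T$ when $|A-T| \geqslant k$, otherwise including $A-T$ and some of $T$) together with $Y$ containing or contained in $N_G(T)$ would give a Hall-failure inside $G-X-Y$, contradicting (iii). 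Hence $G$ has a perfect matching and, using $|A| \geqslant k+1$, a matching of size $k$; any such matching $M$ has $V(M) = X \cup Y$ with $|X|=|Y|=k$ by bipartiteness, and the perfect matching of $G-X-Y$ supplied by (iii) extends $M$.

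The core step is (i)$\Rightarrow$(ii), proved by contradiction: suppose non-empty $S \subseteq A$ violates (ii), so $N(S) \neq B$ and $|N(S)| \leqslant |S|+k-1$. By Theorem~$\ref{thm : kex is (k+1) connected}$, $G$ is $(k+1)$-connected; applying this to any $w \in B-N(S)$ (whose neighbours all lie in $A-S$) gives $|A-S| \geqslant k+1$, and by symmetry $|N(S)| \geqslant k+1$. The objective is to construct a matching $M$ of size $k$ in $G$ whose removal leaves $G-V(M)$ failing Hall at some set containing $S-V(M)$, thereby witnessing non-extendability. A König-type argument on the bipartite subgraph $G[A-S, N(S)]$, combined with the connectivity bound, should produce such an $M$: any hypothetically small vertex cover $C_1 \cup C_2$ of $G[A-S, N(S)]$ (with $C_1 \subseteq A-S$, $C_2 \subseteq N(S)$) forces every uncovered vertex $u \in N(S)-C_2$ to have $N_G(u) \subseteq S \cup C_1$, making $S \cup C_1$ a vertex separator and forcing $|S|+|C_1| \geqslant k+1$. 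The main obstacle is that this crude inequality is by itself compatible with $|C_1|+|C_2| \leqslant k-1$; I expect to close the gap either by tightening the separator argument via Menger's theorem applied to $N(S)-C_2$ and $B-N(S)$ through $S \cup C_1$, or by splitting into the cases $|S| > k$ (where the Hall-failure at $S-V(M)$ arises almost automatically) and $|S| \leqslant k$ (where one must enlarge the failure-witnessing set and invoke connectivity a second time along a perfect matching of $G$).
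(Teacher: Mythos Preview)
The paper does not prove Theorem~\ref{thm : k-extendability characterization}; it is quoted from Plummer~\cite{p86} without proof, so there is no in-paper argument to compare against. I therefore comment only on the correctness of your proposal.

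Your implications (ii)$\Rightarrow$(iii) and (iii)$\Rightarrow$(i) are essentially fine (for the latter you should also check that (iii) forces $G$ to be connected, but this is routine).

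The real issue is the gap you yourself flag in (i)$\Rightarrow$(ii), and it closes more easily than either of your two proposed fixes. You correctly seek a matching of size $k$ in $H:=G[A-S,\,N(S)]$ and, assuming none exists, take a vertex cover $C_1\cup C_2$ of $H$ with $C_1\subseteq A-S$, $C_2\subseteq N(S)$ and $|C_1|+|C_2|\leqslant k-1$. Your misstep is to use $S\cup C_1$ as the separator, which yields only $|S|+|C_1|\geqslant k+1$ --- indeed too weak. Use $C_1\cup C_2$ instead: in $G-(C_1\cup C_2)$ there is no edge between $S\cup\bigl(N(S)-C_2\bigr)$ and $\bigl((A-S)-C_1\bigr)\cup\bigl(B-N(S)\bigr)$, because edges out of $S$ go only to $N(S)$, edges from $(A-S)-C_1$ to $N(S)-C_2$ would be uncovered edges of $H$, and edges out of $B-N(S)$ go only to $A-S$. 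Both sides are nonempty (since $S\neq\emptyset$ and $N(S)\neq B$), so $C_1\cup C_2$ is a vertex cut of size at most $k-1$, contradicting the $(k+1)$-connectivity supplied by Theorem~\ref{thm : kex is (k+1) connected}. Hence $H$ does admit a $k$-matching $M$, and in $G-V(M)$ the set $S$ witnesses the Hall failure you intended. No case split on $|S|$ and no appeal to Menger is needed.
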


Note that statement \i{(ii)} in the above theorem is equivalent to enforcing $|N(S)|\geqslant |S|+k$ for every nonempty subset $S\subset A$ such that $|S|\leqslant |A|-k$; the latter is the more commonly used condition in Plummer's work. We also state an immediate corollary of statement \i{(iii)}. 

\begin{cor}
    If $G[A,B]$ is a \bkex{k}, where $k\geqslant 1$, then $G-a-b$ is \ex{(k-1)} for each pair $a\in A$ and $b\in B$. \qed
    \label{cor : G-u-v is k-1 ex}
\end{cor}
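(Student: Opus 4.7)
The plan is to derive the claim as a direct consequence of Theorem~\ref{thm : k-extendability characterization}, specifically by using the equivalence between \ex{k}ity and the ``deletion-matchability'' condition (iii). The whole argument amounts to packaging $\{a\}$ and $\{b\}$ into the deleted sets $X$ and $Y$ and observing that what remains of $G - a - b$ after a further symmetric deletion is exactly $G - X - Y$ for some $X, Y$ of size $k$.

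More concretely, first I would check that the hypotheses of Theorem~\ref{thm : k-extendability characterization} are satisfied by $G - a - b$ with parameter $k - 1$. Since $G$ is \ex{k}, it has a perfect matching (so $|A| = |B|$) and, by definition, its order is at least $2k + 2$; thus $G - a - b$ is a bipartite graph with color classes $A - a$ and $B - b$ of equal size and of total order at least $2k = 2(k-1) + 2$, which is the requisite lower bound on the order.

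Next I would verify condition \emph{(iii)} of Theorem~\ref{thm : k-extendability characterization} for $G - a - b$ with parameter $k - 1$. Fix arbitrary $X' \subseteq A - a$ and $Y' \subseteq B - b$ with $|X'| = |Y'| = k - 1$, and set $X := X' \cup \{a\}$ and $Y := Y' \cup \{b\}$, so that $|X| = |Y| = k$. Then $(G - a - b) - X' - Y' = G - X - Y$, which is matchable by condition \emph{(iii)} of Theorem~\ref{thm : k-extendability characterization} applied to $G$. Invoking the same theorem in the reverse direction, now with parameter $k - 1$, yields that $G - a - b$ is \ex{(k-1)}.

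The one point that requires a brief separate remark is connectedness, which is part of the definition of \ex{j} for $j \geq 1$ but not for $j = 0$. When $k = 1$, the conclusion is just that $G - a - b$ is \ex{0}, i.e.\ matchable, and this is handled by the previous paragraph with $X' = Y' = \emptyset$. When $k \geq 2$, Theorem~\ref{thm : kex is (k+1) connected} tells us $G$ is \kC{(k+1)}, so $G - a - b$ is \kC{(k-1)} and in particular connected. I do not anticipate any real obstacle here; the ``hard part'', if any, is simply bookkeeping the order hypothesis and the connectedness condition so that the equivalence of Theorem~\ref{thm : k-extendability characterization} can be legitimately invoked in both directions.
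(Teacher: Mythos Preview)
Your proposal is correct and follows exactly the approach the paper intends: the corollary is stated with a \qed\ and introduced as ``an immediate corollary of statement \emph{(iii)}'' of Theorem~\ref{thm : k-extendability characterization}, and your argument is precisely the unpacking of that remark. Your extra care with the order hypothesis and connectedness (via Theorem~\ref{thm : kex is (k+1) connected}) is fine, if perhaps slightly more than the authors themselves spelled out.
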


We now proceed to discuss a few bounds established by Lou \cite{l99}, and our conjectures pertinent to these.

\subsection{Lou's bounds and our conjectures}
\label{subsec : conjectures}
In the same spirit as in Section~\ref{sec : embmcg}, with respect to a fixed value of $k$, for a \bkex{k} $G$, we let $V_{k+1}(G)$ denote the set of vertices that have degree precisely $k+1$, and we let $V_{k+2}(G)$ denote the set of vertices that have degree at least $k+2$. We drop $G$ from the notation when the graph is clear from the context. For the sake of brevity, we use $E_{k+1}:=E(G[V_{k+1}])$ and $E_{k+2}:=E(G[V_{k+2}])$. 

Note that a \mbkex{0} is precisely a perfect matching. Thus, $|V_{1}|=n$ and $|E|=\frac{n}{2}$. Henceforth, we shall be interested in bounds for $k\geqslant 1$. Lou \cite{l99} proved the following.

\begin{thm}
    In a \mbkex{k} $G$, the induced subgraph $G[V_{k+2}]$ is a forest. 
    \label{thm : forest}
\end{thm}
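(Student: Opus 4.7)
The plan is to prove the contrapositive: if $G[V_{k+2}]$ contains a cycle $C$, then some edge of $C$ is removable, contradicting the minimality of $G$. Since $G$ is bipartite, $C$ is even; I would fix any edge $e = uv \in E(C)$ with $u \in A \cap V_{k+2}$ and $v \in B \cap V_{k+2}$, and attempt to verify $k$-extendability of $G - e$ via Theorem~\ref{thm : k-extendability characterization}(ii).

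First I would isolate the single problematic configuration. For a nonempty $S \subseteq A$, the sets $N_G(S)$ and $N_{G-e}(S)$ differ only when $u \in S$ \emph{and} $v$'s unique neighbour in $S$ is $u$; in that case $|N_{G-e}(S)| = |N_G(S)| - 1$. If $N_G(S) = B$, then the $\deg_G(v) - 1 \geqslant k+1$ remaining neighbours of $v$ are forced into $A \setminus S$, giving $|S| \leqslant \frac{n}{2} - k - 1$ and hence $|N_{G-e}(S)| = \frac{n}{2} - 1 \geqslant |S| + k$. If $N_G(S) \neq B$ and $|N_G(S)| \geqslant |S| + k + 1$, the drop by one is absorbed. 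The only remaining obstruction to removability of $e$ is a \emph{tight} set: $N_G(S) \neq B$, $|N_G(S)| = |S|+k$, $u \in S$, with $v$'s unique neighbour in $S$ being $u$.

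Next I would argue that if \emph{every} edge of $C$ is blocked by such a tight set, then combining the resulting witnesses yields a contradiction. Label the vertices of $C$ cyclically as $u_1 v_1 u_2 v_2 \dots u_\ell v_\ell$. Each edge $u_i v_i$ would supply a tight set $S_i \subseteq A$ with $u_i \in S_i$ and $v_i \notin N_G(S_i - u_i)$, while each edge $v_i u_{i+1}$, by the $B$-side version of the same argument, would supply a tight set $T_i \subseteq B$ with $v_i \in T_i$ and $u_{i+1} \notin N_G(T_i - v_i)$. The tightness forces the exclusion relations $u_{i+1} \notin S_i$ and $v_{i-1} \notin T_i$, which are propagated cyclically along $C$. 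The formal tool would be the submodularity of $|N(\cdot)| - |\cdot|$: coupled with the lower bound $|N(X)| \geqslant |X| + k$ (whenever $N(X) \neq B$), it makes the tight subsets of $A$ (respectively $B$) a lattice closed under unions and intersections, and the cyclic pattern of inclusions/exclusions imposed on the vertices of $C$ cannot be consistent in that lattice.

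The main obstacle is precisely this second step. Verifying Plummer's condition (ii) case-by-case is routine, but extracting the contradiction from the cyclic family $\{S_i, T_i\}$ of tight sets is delicate: it requires repeated use of submodularity together with the hypothesis that each vertex of $C$ lies in $V_{k+2}$, which guarantees that each $v_i$ (respectively $u_i$) has at least $k+1$ neighbours \emph{outside} the relevant tight set, providing the slack needed to absorb the necessary membership swaps. Making this cyclic argument airtight—in particular, handling the boundary cases where some $S_i$ or $T_i$ satisfies $N(\cdot) = B$ or $N(\cdot) = A$, and ensuring that intersections are nonempty and proper—is the technical heart of the proof.
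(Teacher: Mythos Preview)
The paper does not prove this theorem; it is quoted as a result of Lou \cite{l99} and used as a black box in deriving Corollaries~\ref{cor : lou evn} and~\ref{cor : lou evm}. So there is no proof in the paper against which to compare your attempt.

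On its own merits: your first step is correct and cleanly argued. For an edge $e=uv$ of a cycle $C\subseteq G[V_{k+2}]$, the only obstruction to $k$-extendability of $G-e$ via Theorem~\ref{thm : k-extendability characterization}(ii) is a tight set $S$ with $u\in S$, $N_G(S)\neq B$, $|N_G(S)|=|S|+k$, and $u$ the unique neighbour of $v$ in $S$; the hypothesis $d(v)\geqslant k+2$ disposes of the $N_G(S)=B$ case exactly as you say. The gap is that your second step is only a sketch. You correctly point to submodularity of $S\mapsto |N(S)|-|S|$ and the resulting lattice of tight sets as the relevant tool, but you do not actually carry out the cyclic argument, and you yourself flag the delicate points (nonemptiness of intersections, the $N(\cdot)=B$ boundary) as unresolved. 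That is the entire content of the proof; without it you have only reformulated the problem as ``every edge of $C$ has a tight witness''. To finish along these lines you would need, for instance, to show that a suitable union of the $A$-side witnesses $S_i$ is again tight and contains all of $C\cap A$, and then use the $B$-side witnesses $T_i$ together with the degree hypothesis on $C\cap B$ to force either $N(\bigcup S_i)=B$ or surplus strictly greater than $k$ --- but making this precise, in particular controlling which $S_i$ intersect and which vertices of $C\cap B$ land in the union's neighbourhood, is exactly the work you have left undone.
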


Using the above, Lou \cite{l99} deduced the following lower bound on the number of vertices of degree $k+1$ in terms of $n$.

\begin{cor}
    In a \mbkex{k}, $|V_{k+1}|\geqslant \frac{kn+2}{2k+1}$.
    \label{cor : lou evn}
\end{cor}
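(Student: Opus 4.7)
The plan is to derive the bound by combining Theorem~\ref{thm : forest} with a double counting of edges incident to the two sets $V_{k+1}$ and $V_{k+2}$. Set $n_1:=|V_{k+1}|$ and $n_2:=|V_{k+2}|$, so that $n=n_1+n_2$, and let $F:=\partial(V_{k+2})$ denote the set of edges joining $V_{k+1}$ and $V_{k+2}$.

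First I would dispose of the trivial case $n_2=0$: here $n_1=n\geqslant 2k+2$, and a routine manipulation shows $(2k+1)n_1\geqslant kn+2$ (since $(k+1)n\geqslant 2$). So assume $n_2\geqslant 1$; then by Theorem~\ref{thm : forest}, the forest $G[V_{k+2}]$ has at most $n_2-1$ edges, that is, $|E_{k+2}|\leqslant n_2-1$. Counting the contribution to $\sum_{v\in V_{k+2}}d(v)$ from edges inside $V_{k+2}$ versus those crossing to $V_{k+1}$, together with the fact that each vertex of $V_{k+2}$ has degree at least $k+2$, yields
\[
(k+2)n_2 \;\leqslant\; \sum_{v\in V_{k+2}}d(v) \;=\; 2|E_{k+2}| + |F| \;\leqslant\; 2(n_2-1) + |F|,
\]
so that $|F|\geqslant kn_2+2$.

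Then I would count degrees from the $V_{k+1}$ side: since every vertex of $V_{k+1}$ has degree exactly $k+1$,
\[
(k+1)n_1 \;=\; \sum_{v\in V_{k+1}}d(v) \;=\; 2|E_{k+1}| + |F| \;\geqslant\; |F| \;\geqslant\; kn_2+2.
\]
Substituting $n_2=n-n_1$ and rearranging gives $(2k+1)n_1\geqslant kn+2$, which is the desired inequality.

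There is no real obstacle here; the only subtle point is the edge case $n_2=0$, where Theorem~\ref{thm : forest} gives no useful bound on $|E_{k+2}|$ and one must verify the inequality directly using the order condition $n\geqslant 2k+2$ built into the definition of \ex{k}.
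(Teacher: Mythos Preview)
Your argument is correct and is the natural deduction from Theorem~\ref{thm : forest}. The paper itself does not supply a proof of this corollary --- it simply attributes the bound to Lou and notes that it follows from Theorem~\ref{thm : forest} --- so there is no proof to compare against; your double-counting derivation (bounding $|F|$ from the $V_{k+2}$ side via the forest property, then from the $V_{k+1}$ side via the exact degree) is exactly the kind of argument the paper is signaling.
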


We now use Lou's Theorem (\ref{thm : forest}) to infer another lower bound in terms of $m$ and $n$. 

\begin{cor}
    For $k\geqslant 1$, in a \mbkex{k}, $|V_{k+1}|\geqslant \frac{1}{k}(m-n+1)$.\label{cor : lou evm}
\end{cor}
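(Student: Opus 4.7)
The plan is to combine Lou's forest theorem (Theorem~\ref{thm : forest}) with a two-way edge-count, mirroring the spirit of Corollary~\ref{cor : bound evn}. First I would partition the edge set of $G$ into three parts: $E_{k+1}$ (both ends in $V_{k+1}$), $E_{k+2}$ (both ends in $V_{k+2}$), and a set $E_{k+1,k+2}:=\partial(V_{k+1})\cap \partial(V_{k+2})$ of edges with one end of each type. This gives $m=|E_{k+1}|+|E_{k+1,k+2}|+|E_{k+2}|$.

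Next I would apply the handshaking lemma restricted to $V_{k+1}$. Since every vertex in $V_{k+1}$ has degree exactly $k+1$,
\[(k+1)|V_{k+1}|=2|E_{k+1}|+|E_{k+1,k+2}|.\]
Subtracting the edge-partition identity from this and using $|V_{k+1}|=n-|V_{k+2}|$, a short rearrangement produces
\[k|V_{k+1}|-(m-n+1)=\bigl(|V_{k+2}|-|E_{k+2}|\bigr)+|E_{k+1}|-1.\]
Thus the corollary reduces to establishing $|V_{k+2}|-|E_{k+2}|+|E_{k+1}|\geqslant 1$.

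For this, I would split into two cases. If $V_{k+2}\neq\emptyset$, then by Theorem~\ref{thm : forest} the induced subgraph $G[V_{k+2}]$ is a (nonempty) forest, hence $|E_{k+2}|\leqslant |V_{k+2}|-1$, giving the inequality immediately. If instead $V_{k+2}=\emptyset$, then $G$ is $(k+1)$-regular, $E=E_{k+1}$, and the presence of any matching of size $k\geqslant 1$ forces $|E_{k+1}|=m\geqslant 1$. Either way the bound holds.

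The substantive input is Lou's forest theorem (already cited); everything else is bookkeeping. The only mild subtlety is the boundary case $V_{k+2}=\emptyset$, where the forest bound is vacuous and one has to instead use that $G$ has at least one edge; I expect this to be the only thing requiring care in the write-up.
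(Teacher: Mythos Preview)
Your proposal is correct and follows essentially the same route as the paper: partition the edges by the types of their ends, apply handshaking at $V_{k+1}$, and invoke Lou's forest theorem to bound $|E_{k+2}|$. Your explicit treatment of the boundary case $V_{k+2}=\emptyset$ is in fact more careful than the paper's own argument, which simply writes $|E_{k+2}|\leqslant |V_{k+2}|-1$ without singling out that case.
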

\begin{proof}
    Note the $(k+1)|V_{k+1}|=|\partial(V_{k+1})|+2|E_{k+1}|\geqslant |\partial(V_{k+1})|+|E_{k+1}|$. Furthermore, by Theorem~\ref{thm : forest}, $|E_{k+2}|\leqslant |V_{k+2}|-1$. Combining these, $m=|E_{k+2}|+|E_{k+1}|+|\partial(V_{k+1})|\leqslant |V_{k+2}|-1 + (k+1)|V_{k+1}|= n-1+k|V_{k+1}|$. By rearranging, $|V_{k+1}|\geqslant \frac{1}{k}(m-n+1)$.
\end{proof}

Using Theorem~\ref{thm : forest}, Lou also deduced the following upper bound on the size.

\begin{cor}
    In a \mbkex{k}, $|E|\leqslant (k+1)n-1$.
\end{cor}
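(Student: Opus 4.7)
The plan is to derive this bound as a short consequence of the results already established in this subsection, without invoking any new machinery. The key observation is that Corollary~\ref{cor : lou evm} gives a lower bound on $|V_{k+1}|$ in terms of $m$ and $n$, while the trivial inclusion $V_{k+1}\subseteq V(G)$ gives the upper bound $|V_{k+1}|\leqslant n$; squeezing $m$ between these two constraints will deliver the desired inequality.

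Concretely, let $G$ be a minimal \ex{k} bipartite graph, and recall from Corollary~\ref{cor : lou evm} that $|V_{k+1}|\geqslant \tfrac{1}{k}(m-n+1)$. Since $V_{k+1}\subseteq V(G)$, we also have $|V_{k+1}|\leqslant n$. Combining these two bounds yields $\tfrac{1}{k}(m-n+1)\leqslant n$, and rearranging gives $m\leqslant (k+1)n-1$, which is exactly the asserted bound.

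If a more self-contained proof is preferred, one can bypass Corollary~\ref{cor : lou evm} and argue directly from Theorem~\ref{thm : forest}, mimicking its proof. Writing $m=|E_{k+2}|+|E_{k+1}|+|\partial(V_{k+1})|$ and using the handshake identity $(k+1)|V_{k+1}|=|\partial(V_{k+1})|+2|E_{k+1}|$ gives $m=|E_{k+2}|-|E_{k+1}|+(k+1)|V_{k+1}|$. By Theorem~\ref{thm : forest}, $G[V_{k+2}]$ is a forest, so (when $V_{k+2}\neq\emptyset$) $|E_{k+2}|\leqslant |V_{k+2}|-1=n-|V_{k+1}|-1$; dropping the nonnegative term $|E_{k+1}|$ yields $m\leqslant n-1+k|V_{k+1}|\leqslant n-1+kn=(k+1)n-1$. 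The corner case $V_{k+2}=\emptyset$ is immediate since then $G$ is $(k+1)$-regular and $m=(k+1)n/2\leqslant (k+1)n-1$, using $n\geqslant 2k+2\geqslant 4$.

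There is essentially no obstacle here; the main (minor) subtlety is simply keeping track of the case $V_{k+2}=\emptyset$ if one writes the direct proof via Theorem~\ref{thm : forest}, since the forest bound $|E_{k+2}|\leqslant |V_{k+2}|-1$ is vacuous in that case. The first route, which simply combines Corollary~\ref{cor : lou evm} with $|V_{k+1}|\leqslant n$, avoids this issue entirely and is the one I would write up.
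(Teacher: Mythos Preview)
Your proposal is correct and matches the intended approach: the paper does not give an explicit proof but attributes the bound to Lou as a direct consequence of Theorem~\ref{thm : forest}, and your argument via Corollary~\ref{cor : lou evm} (itself derived from Theorem~\ref{thm : forest}) together with the trivial bound $|V_{k+1}|\leqslant n$ is exactly this. Your alternative self-contained version is also fine and handles the corner case $V_{k+2}=\emptyset$ that the paper's proof of Corollary~\ref{cor : lou evm} glosses over.
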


It is worth noting that Lou did not provide examples that satisfy any of the aforementioned bounds tightly. We are also unable to construct such examples. Thus, we conjecture stronger bounds that are generalizations of the corresponding bounds that appear in Table~\ref{tab: extremality notions}. 

\begin{conj}{\sc[Main Conjecture]}\\
    In a \mbkex{k}, $|V_{k+1}|\geqslant \frac{2}{2k-1}(m-n+2k)$.%, and equality holds if and only if it is obtained from a \ktree{(k+2)} by \kisojoin{k}.   
    \label{conj : evm}
\end{conj}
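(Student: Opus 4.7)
The plan is to adapt the proof strategy of Corollary~\ref{cor : bound evm} (the $k=1$ case) to general $k$. That proof rested on three pillars: the Ear Decomposition Theorem~(\ref{thm : odd ear decomp}); Lemma~\ref{lem : 2-edge in ear} together with Corollary~\ref{cor : 2-edge in cycle}, which show every ear contributes a $2$-edge and the starting conformal cycle contributes a pair of $2$-edges (so $|E_2| \geqslant m-n+2$ by Theorem~\ref{thm : lower bound on 2-edges}); and the elementary fact $|V_2| \geqslant 2|E_2|$. I would seek a ``$k$-version'' of each.

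A concrete anchor is Lou's Forest Theorem~(\ref{thm : forest}). Writing $c$ for the number of components of the forest $G[V_{k+2}]$, so that $|E_{k+2}| = |V_{k+2}| - c$, handshaking combined with $|\partial(V_{k+1})| = (k+1)|V_{k+1}| - 2|E_{k+1}|$ yields the exact identity
\begin{equation*}
k|V_{k+1}| \;=\; (m-n) + c + |E_{k+1}|
\end{equation*}
in any \mbkex{k}. Substituting this into the desired inequality and simplifying, Conjecture~\ref{conj : evm} is equivalent to
\begin{equation*}
c + |E_{k+1}| \;\geqslant\; \frac{(m-n) + 4k^2}{2k-1}.
\end{equation*}
Reassuringly, the $k=1$ instance reads $c + |E_2| \geqslant m - n + 4$, and the Main Theorem~(\ref{thm : evm}) attains equality with $c = 2$ (two copies of a Halin tree) and $|E_2| = m - n + 2$. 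So the plan reduces the conjecture to a structural lower bound on $c + |E_{k+1}|$.

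To obtain that lower bound in general, my first attempt would be to develop a ``$k$-ear decomposition'' of \bkex{k}s, together with a per-ear counting lemma charging $\frac{2}{2k-1}$ units of $c + |E_{k+1}|$ to each ear and an additive $\frac{4k^2}{2k-1}$ to the base graph. A second, potentially cleaner route is induction on $k$ via Corollary~\ref{cor : G-u-v is k-1 ex}: deleting a carefully chosen pair $a \in A$, $b \in B$ produces an \ex{(k-1)} graph, and after passing to a minimal spanning subgraph one could invoke the induction hypothesis; the key is tracking how $|V_{k+1}|$, $m$, and $n$ move under this reduction and accounting for the vertices demoted from $V_{k+2}(G)$ into the lower-degree strata of $G-a-b$.

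The \textbf{main obstacle} is passing from the $k=1$ ear decomposition to a workable $k$-ear decomposition when $k \geqslant 2$. Hetyei's theorem and its strengthening (Theorem~\ref{thm : strong ear decomp}) are specific to matching covered bipartite graphs, and the balanced $2$-cut machinery that drove the Main Theorem~(\ref{thm : evm}) does not obviously generalize; even isolating the right notion of a ``$k$-ear'' that preserves \ex{k}ability while admitting a sharp per-ear counting lemma is nontrivial. The inductive route faces its own obstacle: deleting a pair $\{a,b\}$ of high-degree vertices simultaneously removes $d(a) + d(b)$ edges (or one less if $ab \in E$) and can demote many $V_{k+2}$-vertices into lower-degree strata, so controlling the arithmetic tightly enough to recover the coefficient $\frac{2}{2k-1}$ is delicate.
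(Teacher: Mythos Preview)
This statement is a \emph{conjecture} in the paper, not a theorem: the authors do not prove it. What the paper does is (a) verify that Conjecture~\ref{conj : evm} specializes to Corollary~\ref{cor : bound evm} when $k=1$, (b) show that it implies Conjectures~\ref{conj : evn} and~\ref{conj : ee}, and (c) construct, in Theorem~\ref{thm : proof for tight examples}, an infinite family of \mbkex{k}s attaining the bound with equality. There is therefore no ``paper's own proof'' to compare your proposal against.

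Your proposal is, correspondingly, not a proof but a plan, and you say so yourself. The identity $k|V_{k+1}| = (m-n) + c + |E_{k+1}|$ and the equivalent reformulation $c + |E_{k+1}| \geqslant \frac{(m-n)+4k^2}{2k-1}$ are both correct and are a reasonable way to organise an attack. But neither of your two suggested routes is carried out: you concede that a ``$k$-ear decomposition'' with the right per-ear charging lemma is not available, and that the induction-on-$k$ route via Corollary~\ref{cor : G-u-v is k-1 ex} loses control of the arithmetic when passing to a minimal \ex{(k-1)} spanning subgraph. These are genuine obstacles, and the paper does not resolve them either --- which is precisely why the statement is recorded as a conjecture.
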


As we shall see soon, one may prove the following two conjectures assuming the above conjecture.
\begin{conj}
    In a \mbkex{k}, where $k\geqslant 1$, $|V_{k+1}|\geqslant \frac{n}{2}+2$.%, and equality holds if and only if it is obtained from a regular \ktree{(k+2)} by \kisojoin{k}.
    \label{conj : evn}
\end{conj}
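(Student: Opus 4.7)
The plan is to mirror the passage from Corollary \ref{cor : bound evm} to Corollary \ref{cor : bound evn} in the $k=1$ case: combine the assertion of the Main Conjecture (\ref{conj : evm}) with a handshaking argument that exploits the partition $V = V_{k+1}\cupdot V_{k+2}$. Thus my goal is to show that Conjecture \ref{conj : evn} follows from Conjecture \ref{conj : evm} by a short counting argument; a direct proof, independent of the Main Conjecture, would of course be preferable but I do not see how to obtain one without first settling Conjecture \ref{conj : evm} itself.

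First I would apply the handshaking lemma. Since every vertex in $V_{k+1}$ has degree exactly $k+1$, every vertex in $V_{k+2}$ has degree at least $k+2$, and $|V_{k+2}| = n - |V_{k+1}|$, we get
\[ 2m \;\geqslant\; (k+1)|V_{k+1}| + (k+2)|V_{k+2}| \;=\; (k+2)n - |V_{k+1}|. \]
Subtracting $2n-4k$ from both sides and halving yields the key inequality
\[ m - n + 2k \;\geqslant\; \frac{kn + 4k - |V_{k+1}|}{2}. \]

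Next I would feed this lower bound into the Main Conjecture (\ref{conj : evm}), which asserts $|V_{k+1}| \geqslant \frac{2}{2k-1}(m-n+2k)$. The two inequalities combine to give
\[ |V_{k+1}| \;\geqslant\; \frac{kn + 4k - |V_{k+1}|}{2k-1}, \]
whence $(2k-1)|V_{k+1}| + |V_{k+1}| \geqslant kn + 4k$, i.e., $2k|V_{k+1}| \geqslant k(n+4)$. Dividing by $2k$ (valid since $k \geqslant 1$) produces exactly $|V_{k+1}| \geqslant \frac{n}{2} + 2$, as desired.

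The main obstacle thus lies entirely in establishing Conjecture \ref{conj : evm}; once that is in hand, the counting above reduces Conjecture \ref{conj : evn} to a brief algebraic manipulation. As a sanity check, at $k = 1$ the coefficient $\frac{2}{2k-1}$ collapses to $2$, and the derivation specializes exactly to the proof of Corollary \ref{cor : bound evn}, which gives me confidence that this is the intended route.
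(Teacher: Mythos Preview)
Your proposal is correct and matches the paper's own argument essentially line for line: the paper also proves only the conditional implication (Conjecture~\ref{conj : evm} $\Rightarrow$ Conjecture~\ref{conj : evn}) via the handshaking inequality $2m\geqslant (k+2)n-|V_{k+1}|$ substituted into $(2k-1)|V_{k+1}|\geqslant 2(m-n+2k)$, then rearranged to $|V_{k+1}|\geqslant \frac{n}{2}+2$. Your explicit acknowledgement that this is conditional on the Main Conjecture, and your $k=1$ sanity check, are both appropriate.
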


For our final conjecture, we are able to construct small counterexamples, and thus impose a lower bound on the order. This is reminiscent of Corollary~\ref{cor : bound ee} wherein $C_4$ appears as the only counterexample.

\begin{conj}
    There exists an integer $N_k\leqslant 4k^2+2k$ such that every \mbkex{k}, on $N_k$ or more vertices, satisfies $|E|\leqslant\frac{(2k+1)(n-2k)}{2}$.%, and equality holds if and only if it can be obtained from a star by \kisojoin{k}.
    \label{conj : ee}
\end{conj}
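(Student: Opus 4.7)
The plan is to reduce Conjecture \ref{conj : ee} to the Main Conjecture \ref{conj : evm}, paralleling the derivation of Corollary \ref{cor : bound ee} from Corollary \ref{cor : bound evm}. Granting Conjecture \ref{conj : evm}, I rearrange $|V_{k+1}| \geqslant \frac{2(m-n+2k)}{2k-1}$ using $|V_{k+1}| = n - |V_{k+2}|$ to obtain
\begin{equation*}
    m \;\leqslant\; \frac{(2k+1)n - (2k-1)|V_{k+2}| - 4k}{2}.
\end{equation*}
The right-hand side is at most $\frac{(2k+1)(n-2k)}{2}$ precisely when $|V_{k+2}| \geqslant 2k$. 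So the entire conjecture reduces to the structural claim: \emph{every \mbkex{k} with $n \geqslant N_k$ satisfies $|V_{k+2}| \geqslant 2k$.}

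I would split on $t := |V_{k+2}|$. A bipartite degree-balance identity equates the total ``excess'' degree $\sum_{v \in V_{k+2} \cap A}(d(v) - (k+1))$ on one side of the bipartition with the corresponding sum on the other side, both equal to $m - \frac{(k+1)n}{2}$; this immediately rules out $t = 1$. The case $t = 0$ means $G$ is $(k+1)$-regular, so $m = (k+1)n/2$ and the conjectured bound reduces to $n \geqslant 4k+2$, well within $4k^2 + 2k$. For $k = 1$ the remaining range $2 \leqslant t \leqslant 2k-1$ is empty, which is exactly why the classical proof of Corollary \ref{cor : bound ee} works without any further structural input. For $k \geqslant 2$ and intermediate $t$, I would bound $n$ using that the bipartite maximum degree is at most $n/2$, that $|E_{k+2}| \leqslant t-1$ by Theorem \ref{thm : forest}, and the identity $m = \frac{(k+1)(n-t) + \sigma}{2}$ with $\sigma := \sum_{v \in V_{k+2}} d(v) \leqslant tn/2$; combining these gives $n \geqslant \frac{4k(2k+1) - 2(k+1)t}{2k - t}$, whose maximum over $2 \leqslant t \leqslant 2k-1$ is $4k^2 + 2k + 2$, attained at $t = 2k-1$.

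The main obstacle is two-fold. First, Conjecture \ref{conj : evm} is itself open, so the entire derivation is conditional. Second, even granting it, the rough bookkeeping above overshoots the claimed $N_k \leqslant 4k^2 + 2k$ by an additive $2$, indicating that the crude inequality $d(v) \leqslant n/2$ for $v \in V_{k+2}$ must be tightened --- minimality should prevent this bound from being attained, plausibly via a generalization of the balanced $2$-cut property (Theorem \ref{thm : 2-cut property}) to balanced $(k+1)$-cuts through every vertex of $V_{k+2}$. A concrete guide is the tight extremal family: two disjoint copies of $K_{k,p}$ joined by a perfect matching between their degree-$k$ sides give $n = 2k + 2p$, $|V_{k+2}| = 2k$, and $m = \frac{(2k+1)(n-2k)}{2}$, with the high-degree vertices split evenly across the bipartition. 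Reverse-engineering the invariants this family saturates should suggest the refinement needed to close the final arithmetic gap and establish the conjectured $N_k$.
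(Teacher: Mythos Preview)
Your overall framework --- reduce to Conjecture~\ref{conj : evm}, observe that $|V_{k+2}|\geqslant 2k$ finishes the argument, and treat the residual range $t:=|V_{k+2}|<2k$ separately --- is exactly what the paper does in Theorem~\ref{thm : conj evm implies conj ee}; the conditional nature is likewise acknowledged there, since only the implication is claimed.

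The divergence, and the genuine gap, lies in your handling of $t<2k$ for $k\geqslant 2$. The overshoot by~$2$ you diagnose is real, but the repair is far more elementary than the balanced-cut generalisation you propose. The paper does not tighten the upper bound on degrees in $V_{k+2}$ at all; instead it \emph{lower-bounds} $|E_{k+1}|$ by a one-line pigeonhole. Since $t<2k$, some colour class, say $A$, carries at most $k-1$ vertices of $V_{k+2}$; consequently each vertex of $V_{k+1}\cap B$ has at least $(k+1)-(k-1)=2$ neighbours inside $V_{k+1}\cap A$, whence $|E_{k+1}|\geqslant 2|V_{k+1}\cap B|\geqslant n-2t$. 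Feeding this and Lou's forest bound $|E_{k+2}|\leqslant t-1$ into the identity $m=(k+1)|V_{k+1}|-|E_{k+1}|+|E_{k+2}|$ gives
\[
m\;\leqslant\; kn-(k-2)t-1\;<\;kn,
\]
and $kn\leqslant\frac{(2k+1)(n-2k)}{2}$ is equivalent to $n\geqslant 4k^2+2k$ on the nose. Your separate case analysis for $t\in\{0,1\}$ and the crude estimate $d(v)\leqslant n/2$ are simply not needed.
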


As evidence for the above conjecture, we mention the following result due to Fabres, Kothari and Carvalho \cite{fkc21} whose bound coincides exactly with our conjecture.

\begin{thm}
    Every \mbkex{2}, on twelve or more vertices, satisfies $|E|\leqslant\frac{5n-20}{2}$.%, and equality holds if and only if it can be obtained from a star by \kisojoin{2}. 
\end{thm}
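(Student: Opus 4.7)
Let $G[A, B]$ be a minimal $2$-extendable bipartite graph on $n \geq 12$ vertices, and set $a := |V_3|$, $b := |V_4|$, so $a + b = n$. By Theorem~\ref{thm : forest}, $G[V_4]$ is a forest; writing $c$ for its number of components (with $c := 0$ when $V_4 = \emptyset$), $|E(G[V_4])| = b - c$. Denoting by $e_{34}$ the number of edges joining $V_3$ and $V_4$, and using that each vertex of $V_3$ has degree exactly $3$, a double-count yields
\[
m \;=\; \frac{3a + e_{34}}{2} + (b - c).
\]
Substituting $a = n - b$, the target $m \leq \frac{5(n-4)}{2}$ is equivalent to the key inequality
\[
e_{34} \;\leq\; 2a + 3b + 2c - 20,
\]
which is tight on the conjectured extremal family (two copies of $K_{2,p}$ joined by a matching between their sides of size $p$, with $a = 2p$, $b = 4$, $c = 4$, $e_{34} = 4p$). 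When $V_4 = \emptyset$, the graph is $3$-regular and $m = \tfrac{3n}{2} \leq \tfrac{5n - 20}{2}$ for $n \geq 10$, so henceforth assume $V_4 \neq \emptyset$.

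My plan is to exploit Corollary~\ref{cor : G-u-v is k-1 ex}: for every pair $(u, v) \in A \times B$, $G' := G - u - v$ is matching covered on $n - 2$ vertices with $|E(G')| = m - d(u) - d(v) + \mathbf{1}[uv \in E]$. If $G'$ were \emph{minimal} matching covered, Corollary~\ref{cor : bound ee} would give $|E(G')| \leq \frac{3(n-2)-6}{2}$, sufficient whenever one can find $(u, v)$ with $d(u) + d(v) \leq n - 3$ (for an edge) or $\leq n - 4$ (for a non-edge). When $G$ contains an edge with both ends of degree $3$, this holds automatically for $n \geq 12$. In the complementary regime, no edge has both ends in $V_3$, forcing $e_{34} = 3a$; combined with Corollary~\ref{cor : lou evn} and the forest structure on $G[V_4]$, this should pin down $a$, $b$, and $c$ enough to verify the key inequality directly.

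The main obstacle is that $G - u - v$ is not generally minimal matching covered, so Corollary~\ref{cor : bound ee} cannot be invoked verbatim; one must work with a minimal matching covered subgraph $G' - R$, where $R$ is a set of edges removable for matching coverage in $G'$, and control $|R|$ via the local structure of $G$ at $u$ and $v$. I expect the technical core of the proof to lie precisely here, since bounds on $|R|$ must reflect how minimality in the $2$-extendable sense controls removability in the $1$-extendable sense. The threshold $n \geq 12$, sharper than the general conjectured threshold $4k^2 + 2k = 20$, suggests a base-case analysis for small orders together with an inductive step based on a structural reduction analogous to the balanced $2$-cut induction tool (Theorem~\ref{thm : 2-cut induction tool}), adapted to the $2$-extendable setting via Plummer's characterization (Theorem~\ref{thm : k-extendability characterization}).
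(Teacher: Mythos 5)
There is a genuine gap: what you have written is a plan rather than a proof, and the central difficulty that you yourself flag is never resolved. (For context, the paper does not prove this statement either --- it is quoted from Fabres, Kothari and de Carvalho \cite{fkc21} as external evidence for Conjecture~\ref{conj : ee} --- so there is no in-paper argument to measure you against; but judged on its own terms the proposal stops exactly where the work begins.)

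Concretely, three things are missing. First, the strategy of applying Corollary~\ref{cor : bound ee} to $G':=G-u-v$ requires $G'$ to be a \emph{minimal} matching covered graph, which it typically is not; passing to a minimal matching covered spanning subgraph $G'-R$ only yields $|E(G')|\leqslant \frac{3(n-2)-6}{2}+|R|$, and you give no mechanism whatsoever for bounding $|R|$. Minimality of $G$ in the $2$-extendable sense does not in any obvious way control removability in the $1$-extendable sense after deleting two vertices, and this is precisely the ``technical core'' you defer. Second, in the complementary regime where no edge has both ends of degree three, your key inequality $e_{34}\leqslant 2a+3b+2c-20$ becomes $a\leqslant 3b+2c-20$, i.e.\ a \emph{lower} bound on the number of high-degree vertices in terms of $a=|V_3|$; but Corollary~\ref{cor : lou evn} gives a lower bound on $|V_3|$ itself, which points in the wrong direction, and the degree-sum and forest computations available here give $3a\geqslant 2b+2c$, again an inequality of the wrong orientation. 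Nothing you cite ``pins down'' $a$, $b$ and $c$ as claimed. Third, no base-case analysis for small orders is carried out, although you acknowledge one is needed to reach the threshold $n\geqslant 12$. The bookkeeping you do perform --- the identity $m=\frac{3a+e_{34}}{2}+(b-c)$, the equivalence of the target with $e_{34}\leqslant 2a+3b+2c-20$, the verification of tightness on the family obtained from a star by isomorphic $2$-leaf matching, and the $3$-regular case via Corollary~\ref{cor : G-u-v is k-1 ex} --- is correct, but these are the routine parts; the theorem is not proved.
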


For each of the above three conjectures, we shall provide tight examples that attain the corresponding bounds, in Subsection \ref{subsec : tight examples}. Our examples may be viewed as straightforward generalizations of the characterizations of the corresponding extremal classes that appear in Theorems~\ref{thm : evm},~\ref{thm : evn} and~\ref{thm : ee}. Below, we prove that our Main Conjecture (\ref{conj : evm}) implies the other two.

%We now proceed to prove Conjecture~\ref{conj : evn} assuming the Main Conjecture (\ref{conj : evm}). 

\begin{thm}
    Conjecture~\ref{conj : evm} implies Conjecture~\ref{conj : evn}.
\end{thm}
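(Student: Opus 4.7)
The plan is to mirror the derivation of Corollary~\ref{cor : bound evn} from Corollary~\ref{cor : bound evm}, replacing the role of degree-two vertices by that of degree-$(k+1)$ vertices. By Corollary~\ref{cor : deg>=k+1}, every vertex of a \mbkex{k} has degree at least $k+1$, so $V=V_{k+1}\cupdot V_{k+2}$ and $n=|V_{k+1}|+|V_{k+2}|$. First I would apply the handshaking lemma, using $d(v)=k+1$ on $V_{k+1}$ and $d(v)\geqslant k+2$ on $V_{k+2}$, to obtain $2m\geqslant (k+1)|V_{k+1}|+(k+2)|V_{k+2}|$; substituting $|V_{k+2}|=n-|V_{k+1}|$ and simplifying then yields
\[
2(m-n) \;\geqslant\; (k-1)|V_{k+1}| + k|V_{k+2}|.
\]

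Next, I would invoke Conjecture~\ref{conj : evm}, which for $k\geqslant 1$ can be rewritten (multiplying by the positive quantity $2k-1$) as $(2k-1)|V_{k+1}|\geqslant 2(m-n)+4k$. Combining with the handshaking estimate from the previous step gives
\[
(2k-1)|V_{k+1}| \;\geqslant\; (k-1)|V_{k+1}| + k|V_{k+2}| + 4k.
\]
Transposing the $(k-1)|V_{k+1}|$ term to the left and dividing by $k$ reduces this to $|V_{k+1}|\geqslant |V_{k+2}|+4 = n-|V_{k+1}|+4$, which rearranges to the desired bound $|V_{k+1}|\geqslant \tfrac{n}{2}+2$.

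There is no real obstacle here: the argument is a direct arithmetic chain, entirely analogous to the $k=1$ case already handled in Corollary~\ref{cor : bound evn}, and requires no structural matching-theoretic input beyond Corollary~\ref{cor : deg>=k+1} and the conjectured lower bound itself. The only points to be mindful of are (i) the hypothesis $k\geqslant 1$ in Conjecture~\ref{conj : evn}, which is used both to keep the factor $(2k-1)$ strictly positive and to legitimise the partition $V=V_{k+1}\cupdot V_{k+2}$, and (ii) the fact that the handshaking inequality is tight precisely when every vertex of $V_{k+2}$ has degree exactly $k+2$, which is the same extremal profile that the tight examples constructed in Subsection~\ref{subsec : tight examples} will exhibit.
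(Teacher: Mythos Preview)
Your proposal is correct and follows essentially the same approach as the paper's proof: both use the handshaking lemma to bound $2m$ below by $(k+1)|V_{k+1}|+(k+2)|V_{k+2}|$, then combine with the rewritten Conjecture~\ref{conj : evm} inequality $(2k-1)|V_{k+1}|\geqslant 2(m-n+2k)$ and simplify to $|V_{k+1}|\geqslant \frac{n}{2}+2$. The only cosmetic difference is that the paper substitutes $|V_{k+2}|=n-|V_{k+1}|$ immediately to write the handshaking bound as $2m\geqslant (k+2)n-|V_{k+1}|$, whereas you keep $|V_{k+2}|$ in play one step longer; the arithmetic is otherwise identical.
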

\begin{proof}
    %Furthermore, let $G$ be a \mbkex{k} such that $|V_{k+1}|=\frac{n}{2}+2$. Thus, $G$ satisfies each inequality in the above proof with equality. Firstly, $|V_{K+1}|=\frac{2}{2k-1}(m-n+2k)$; thus $G$ satisfies the bound in Conjecture~\ref{conj : evm} with equality. Consequently, by assuming the Main Conjecture (\ref{conj : evm}), $G$ is obtained from a \ktree{k} $T$ by \isojoin{}. Secondly, each vertex in $V_{k+2}$ has degree precisely $k+2$ in $G$. This implies that each non-leaf of $T$ has degree $k+2$. Thus, $T$ is a regular \ktree{k}.
    Using the fact that $n=|V_{k+1}|+|V_{k+2}|$, and the handshaking lemma: 
    \[2m=\sum_{v\in V_{k+2}}d(v)+(k+1)|V_{k+1}|\geqslant (k+2)|V_{k+2}|+(k+1)|V_{k+1}|=(k+2)n-|V_{k+1}|\]
    By substituting the above in Conjecture~\ref{conj : evm}, we infer that $(2k-1)|V_{k+1}|\geqslant 2(m-n+2k)\geqslant kn-|V_{k+1}|+4k$. By rearranging, $|V_{k+1}|\geqslant \frac{n}{2}+2$.    
\end{proof}

Note that Conjecture~\ref{conj : ee} already holds when $k=1$ due to Corollary~\ref{cor : bound ee}. Now, we prove Conjecture~\ref{conj : ee} assuming the Main Conjecture (\ref{conj : evm}) for $k\geqslant 2$.

\begin{thm}
    Conjecture~\ref{conj : evm} implies Conjecture~\ref{conj : ee}.
    \label{thm : conj evm implies conj ee}
\end{thm}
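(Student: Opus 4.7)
The plan is to start from Conjecture~\ref{conj : evm}, rearrange it into a linear upper bound on $2m$ that already resembles the target of Conjecture~\ref{conj : ee}, and then split into cases on the size of $V_{k+2}$. Substituting $|V_{k+1}| = n - |V_{k+2}|$ into the Main Conjecture and rearranging yields
\[
2m \;\leq\; (2k+1)n - (2k-1)|V_{k+2}| - 4k.
\]
Comparing this with the target $2m \leq (2k+1)(n-2k) = (2k+1)n - 2k(2k+1)$, I observe that the target follows immediately whenever $(2k-1)|V_{k+2}| \geq 2k(2k-1)$, that is, whenever $|V_{k+2}| \geq 2k$. So the only remaining case is $|V_{k+2}| \leq 2k-1$, which is exactly where the lower bound $N_k$ on $n$ must enter.

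For the remaining case I would exploit the balanced bipartition $G[A,B]$ (with $|A|=|B|=n/2$, since a \mbkex{k} admits a perfect matching). Writing $a := |V_{k+2}\cap A|$ and $b := |V_{k+2}\cap B|$, we have $a+b \leq 2k-1$; by symmetry I may assume $a \leq b$, whence $a \leq k-1$. Counting edges from the $A$-side gives the identity $m = (k+1)|A| + D_A$ where $D_A := \sum_{v \in V_{k+2}\cap A}(d(v)-k-1)$; the analogous identity on $B$ forces $D_A = D_B$. Since every vertex in $G$ has degree at most $n/2$, one obtains
\[
D_A \;\leq\; a\Big(\tfrac{n}{2}-k-1\Big) \;\leq\; (k-1)\Big(\tfrac{n}{2}-k-1\Big),
\]
and consequently $2m = (k+1)n + 2 D_A \leq (k+1)n + (k-1)(n-2k-2)$.

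A short algebraic check reduces the inequality $(k+1)n + (k-1)(n-2k-2) \leq (2k+1)(n-2k)$ to the equivalent condition $n \geq 2k^2 + 2k + 2$. Since $2k^2+2k+2 \leq 4k^2+2k$ for every $k \geq 1$, setting $N_k := 2k^2+2k+2$ completes the proof. The only conceptually nontrivial step is the bipartite pigeonhole: without using the balanced bipartition to force one side of $V_{k+2}$ to have at most $k-1$ vertices, the trivial max-degree bound $d(v)\leq n/2$ is too weak to absorb the slack when $|V_{k+2}|$ is close to $2k-1$. Everything else is a routine rearrangement, and it is worth noting that the constant $4k^2+2k$ in the statement of Conjecture~\ref{conj : ee} turns out to be quite generous, as the above argument actually yields $N_k \leq 2k^2+2k+2$.
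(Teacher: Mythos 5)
Your proof is correct, and in the crucial case it takes a genuinely different route from the paper's. Both arguments begin identically: substituting $|V_{k+1}|=n-|V_{k+2}|$ into Conjecture~\ref{conj : evm} disposes of the case $|V_{k+2}|\geqslant 2k$ at once. For the remaining case $|V_{k+2}|\leqslant 2k-1$, the paper invokes Lou's Theorem~\ref{thm : forest} to bound $|E_{k+2}|\leqslant |V_{k+2}|-1$ and then lower-bounds $|E_{k+1}|$ by a pigeonhole argument (each degree-$(k+1)$ vertex on the larger side of $V_{k+2}$'s complement has at least two neighbours of degree $k+1$), arriving at $m<kn$ and hence the threshold $n\geqslant 4k^2+2k$; it also treats $k=1$ separately via Corollary~\ref{cor : bound ee}. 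You instead count $m=\sum_{v\in A}d(v)=(k+1)|A|+D_A$ and bound the excess $D_A$ using only the minimum degree $k+1$, the maximum degree $n/2$, and the observation that the smaller side of $V_{k+2}$ has at most $k-1$ vertices. This avoids Theorem~\ref{thm : forest} entirely (indeed, your second case uses no consequence of minimality beyond simplicity), handles $k=1$ uniformly, and yields the sharper threshold $N_k\leqslant 2k^2+2k+2$, which is strictly below the paper's $4k^2+2k$ for $k\geqslant 2$. All the algebra checks out: $(k+1)n+(k-1)(n-2k-2)\leqslant (2k+1)(n-2k)$ does reduce to $n\geqslant 2k^2+2k+2$. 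The only step worth making explicit is that $d(v)\leqslant n/2$ requires $G$ to be simple with $|A|=|B|$; both hold here, since a minimal \kex{k} cannot contain parallel edges (one copy would be \removable{}) and a \kex{k} has a perfect matching, and the paper makes the same simplicity assumption in the proof of Theorem~\ref{thm : essential edge connectivity}. The remark that $D_A=D_B$ is true but unused.
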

\begin{proof}
    Let $G[A,B]$ denote a \mbkex{k}, where $k\geqslant 2$, that satisfies Conjecture~\ref{conj : evm}. Using the fact that $n=|V_{k+1}|+|V_{k+2}|$, we deduce that $2(m-n+2k)\leqslant (2k-1)|V_{k+1}|=(2k-1)(n-|V_{k+2}|)$. If $|V_{k+2}|\geqslant2k$ then $2(m-n+2k)\leqslant (2k-1)(n-2k)$, and by rearranging, we arrive at the desired conclusion: $m\leqslant \frac{2k+1}{2}(n-2k)$. It remains to deal with the case: $|V_{k+2}|<2k$. Before that, we make a few easy observations.
     
    Note that $m=|E_{k+1}|+|\partial(V_{k+1})|+|E_{k+2}|$ and also $|\partial(V_{k+1})|=(k+1)|V_{k+1}|-2|E_{k+1}|$. We deduce that $m=(k+1)|V_{k+1}|-|E_{k+1}|+|E_{k+2}|$. Now, by Theorem~\ref{thm : forest}, $|E_{k+2}|\leqslant|V_{k+2}|-1$. Combining these: 
    
    \begin{equation}
        m\leqslant (k+1)|V_{k+1}|-|E_{k+1}|+|V_{k+2}|-1
        \label{eq : upper bound on m}
    \end{equation}
    
    In what follows, we shall lower bound $|E_{k+1}|$ in order to obtain an upper bound on $m$. For convenience, we use $V^A_{k+2}$ to denote the set $V_{k+2}\cap A$, and likewise for $V_{k+1}^A$ and $V_{k+1}^B$. 
    
    Henceforth, we assume that $|V_{k+2}|<2k$ and adjust notation so that $|V^A_{k+2}|\leqslant k-1$. Consequently, each vertex in $V^B_{k+1}$ has at least two neighbours in $V^A_{k+1}$. As a result, $|E_{k+1}|\geqslant 2|V^B_{k+1}|=2(\frac{n}{2}-|V_{k+2}^B|)\geqslant 2(\frac{n}{2}-|V_{k+2}|)=n-2|V_{k+2}|$. Combining this with Equation~\ref{eq : upper bound on m}, and replacing $|V_{k+1}|$ by $n-|V_{k+2}|$: 

    \[m\leqslant (k+1)|V_{k+1}|-n+2|V_{k+2}|+|V_{k+2}|-1=kn-(k-2)|V_{k+2}|-1\]

    %Now, let $G$ be a \mbkex{k} satisfying $|E|=\frac{2k+1}{2}(n-2k)$. Consequently, $G$ satisfies all of the inequalities in the above paragraph with equality. Firstly, $|V_{k+1}|=\frac{2}{2k-1}(m-n+2k)$. Consequently, by assuming the Main Conjecture (\ref{conj : evm}), $G$ is obtained from a \ktree{k} $T$ by \isojoin{}. Secondly, $|V_{k+2}|=2k$ and so $T$ has precisely one non-leaf. Thus, we conclude that $T$ is a star.
    
    Since $k\geqslant 2$, we conclude that $m\leqslant kn-1 <kn$. Finally, observe that $kn\leqslant \frac{(2k+1)(n-2k)}{2}$ if and only if $n\geqslant 2k(2k+1)=4k^2+2k$; this completes the proof of Theorem \ref{thm : conj evm implies conj ee}.
\end{proof}

%In Subsection~\ref{subsec : tight examples}, we provide constructions of examples that attain our conjectured bounds, and further conjecture that these are essentially the only such examples. In fact, our constructions are straightforward generalizations of those that appear in the \ex{1} case --- in particular, in Theorems~\ref{thm : evm},~\ref{thm : evn} and~\ref{thm : ee}.

We now proceed to establish a result on the connectedness of \bkex{k}s that will help us in deducing that the graphs yielded by our constructions, described in Subsection \ref{subsec : tight examples}, are indeed minimal.

\subsection{A result on connectedness of \bkex{k}s}
\label{subsec : essential edge connectivity}
%The following is an immediate consequence. 

% \subsection{Construction}

As discussed earlier, Plummer \cite{p86} proved that \kex{k}s are \kC{(k+1)} for $k\geqslant1$; see Theorem~\ref{thm : kex is (k+1) connected}. In this subsection, we prove another interesting property pertaining to the connectedness of \bkex{k}s. We first prove the following technical inequality that surprisingly shows up in our proof of Theorem \ref{thm : essential edge connectivity}. 

\begin{lem}
    Let $p$ and $q$ be nonnegative real numbers where $p<q$, let $D:=[p,q]$, and let $f : D^2\mapsto \m{R}$ be the function $f(x,y):=y(p+q-x)+x(p+q-y)$. Then $f(x,y)\geqslant 2pq$ for each $(x,y)\in D^2$.%$f$ is $2pq$ which is obtained at $x=y=p$.
    \label{lem : algebra}
\end{lem}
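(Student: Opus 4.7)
The plan is to observe that $f$ admits the compact form $f(x,y) = (p+q)(x+y) - 2xy$. From this rewriting it is immediate that, for any fixed $y \in D$, the map $x \mapsto f(x,y) = (p+q-2y)x + (p+q)y$ is affine in $x$; by symmetry, $y \mapsto f(x,y)$ is affine in $y$ for fixed $x$. Since an affine function on a compact interval attains its minimum at an endpoint, the minimum of $f$ over the closed square $D^2$ must be attained at one of the four corners $(p,p)$, $(p,q)$, $(q,p)$, $(q,q)$. It then suffices to evaluate $f$ at these four points.

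Direct substitution gives $f(p,p) = 2p(p+q) - 2p^2 = 2pq$ and $f(q,q) = 2q(p+q) - 2q^2 = 2pq$, while $f(p,q) = f(q,p) = (p+q)^2 - 2pq = p^2 + q^2 \geqslant 2pq$, the last inequality being AM--GM (valid since $p, q \geqslant 0$). Hence the minimum value of $f$ on $D^2$ is exactly $2pq$, which is the required bound. Note that equality is achieved precisely on the diagonal corners $(p,p)$ and $(q,q)$, a fact that will presumably be used downstream when characterizing the extremal configurations in the essential edge-connectivity theorem.

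There is no real obstacle here; the key small observation is the algebraic simplification $f(x,y) = (p+q)(x+y) - 2xy$, which converts what initially looks like a constrained two-variable optimization into a routine corner check. An essentially equivalent alternative worth noting (in case it helps elsewhere) is the substitution $u := x-p$, $v := y-p$, under which one computes $f(x,y) - 2pq = (q-p)(u+v) - 2uv$, manifestly nonnegative on $[0, q-p]^2$ (trivially when $uv = 0$; otherwise divide by $uv$ and use $u, v \leqslant q-p$ to get $(q-p)(\tfrac{1}{u}+\tfrac{1}{v}) \geqslant 2$).
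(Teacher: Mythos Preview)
Your proof is correct and follows essentially the same idea as the paper's: both exploit that $f$ is affine in each variable separately to reduce the minimization to boundary points. The paper first invokes the symmetries $f(x,y)=f(y,x)=f(p+q-x,p+q-y)$ to assume $y\leqslant\frac{p+q}{2}$ and then uses monotonicity in each variable to push down to $(p,p)$, whereas you skip the symmetry reduction and simply check all four corners; the difference is purely cosmetic.
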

\begin{proof}
    Let $(x,y)\in D^2$. We begin by observing a couple of symmetries: $f(y,x)=f(x,y)=f(p+q-x,p+q-y)$. Consequently, we may adjust notation so that $y\leqslant \frac{p+q}{2}$. Note that $f(x_1,y)-f(x_2,y)=(x_1-x_2)(p+q-2y)$. Thus, $f(x,y)\geqslant f(p,y)\geqslant f(p,p)=2pq$.
\end{proof}
% We will show that $f(x,y)\geqslant f(p,p)$.
    %Now, if $y<p+q-y$ then $f(x,y)\geqslant f(p,y)$. Otherwise, $y=\frac{p+q}{2}$, then $f(x,y)=f(p,y)\geqslant f(p,p)$. Now, let $y$ be fixed. Observe that $f$ is minimized when $x=p$. Thus, $x<p+q-x$. Now, the same argument applies to $y$ and thus the minimum value of $f$ occurs at $x=y=p$ which is $f(p,p)=2pq$.

A graph of order four or more is \i{essentially $r$-edge-connected} if each nontrivial cut has size at least $r$.

\begin{thm}
    Every \bkex{k} is essentially $2k$-edge-connected.% that is, there is no nontrivial cut of size less than $2k$.
    % \begin{enumerate}[(i)]
    %     \item $|\partial(W)|=2k$
    %     \item $|W\cap A|=|W\cap B|$
    % \end{enumerate} 
\label{thm : essential edge connectivity}
\end{thm}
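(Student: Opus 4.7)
The plan is to argue by contradiction. Suppose $G[A,B]$ is a $k$-extendable bipartite graph admitting a nontrivial cut $\partial(W)$ with $|\partial(W)| < 2k$. I would first pass to the underlying simple graph (parallel edges only enlarge cuts) and, interchanging $W$ and $\overline{W}$ if necessary, assume $|W| \leq |\overline{W}|$. Writing $A_i := W_i \cap A$, $B_i := W_i \cap B$ (with $W_1 = W$, $W_2 = \overline{W}$), $a_i := |A_i|$, $b_i := |B_i|$, and letting $\alpha$ and $\beta$ denote the numbers of cut edges joining $A_1$ to $B_2$ and $A_2$ to $B_1$ respectively, the equalities $|A| = |B| = n/2$ and $|W| \leq |\overline{W}|$ imply $a_1 \leq b_2$ and $b_1 \leq a_2$.

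I would dispose of the degenerate cases (one of $A_1, B_1, A_2, B_2$ empty) at once: in those cases every edge incident to $W$ crosses the cut, and by Corollary~\ref{cor : deg>=k+1} one obtains $|\partial(W)| \geq (k+1)|W| \geq 2(k+1) > 2k$, a contradiction. In the remaining main case all four sets have cardinality at least $1$. Summing $d(v) \geq k+1$ over $v \in W$ and using the simple-graph inequality $e(A_1, B_1) \leq a_1 b_1$, I derive
\[
    \alpha + \beta \;\geq\; (k+1)(a_1 + b_1) - 2 a_1 b_1,
\]
which is precisely the function $f(a_1, b_1)$ of Lemma~\ref{lem : algebra} with parameters satisfying $p + q = k+1$. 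Applying the lemma with $p = 1$ and $q = k$ (valid for $k \geq 2$; the $k = 1$ case is immediate since matching covered graphs are $2$-connected) gives $2pq = 2k$, and therefore $f(a_1, b_1) \geq 2k$ whenever both coordinates lie in $[1,k]$. By symmetry the same holds for $f(a_2, b_2)$, so only the case in which both $(a_1, b_1)$ and $(a_2, b_2)$ escape $[1,k]^2$ remains.

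Combined with the constraints $a_1 \leq b_2$ and $b_1 \leq a_2$, this residual case reduces (after a possible swap of $A$ and $B$) to the configuration $a_1, b_2 \geq k+1$ and $a_2, b_1 \leq k$. I would then invoke Plummer's characterization (Theorem~\ref{thm : k-extendability characterization}(ii)): applied to $A_1$ it yields $\alpha \geq \min(a_1 + k - b_1,\, b_2)$, and applied to $B_1$ it yields $\beta \geq \min(b_1 + k - a_1,\, a_2)$. If both minima are achieved by the first argument (Plummer Case A), the two bounds sum exactly to $2k$, a contradiction. Otherwise I would use the per-vertex min-degree estimate $\alpha \geq b_2(k+1 - a_2)$, valid because each $b \in B_2$ has at most $a_2$ neighbours in $A_2$ and thus at least $k+1 - a_2$ neighbours in $A_1$: since $b_2 \geq k+1$, this gives $\alpha \geq (k+1)(k+1-a_2) \geq 2(k+1) > 2k$ whenever $a_2 \leq k-1$. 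The symmetric estimate $\alpha \geq a_1(k+1 - b_1) \geq 2(k+1)$ handles $b_1 \leq k-1$, leaving only the corner case $a_2 = b_1 = k$; here $a_1 + k = b_2 + k = n/2$, so Plummer's condition on $A_1$ and on $B_1$ are both forced into Case B, giving $\alpha \geq b_2 \geq k+1$ and $\beta \geq a_2 = k$, and hence $\alpha + \beta \geq 2k+1$.

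The main obstacle is verifying that, in the residual configuration, the three types of bounds above --- the Plummer Case~A combination, the per-vertex min-degree estimate, and the forced Plummer Case~B fallback --- collectively cover every sub-case. The subtlety is that the purely degree-based bound $(k+1)(a_1+b_1) - 2 a_1 b_1$ can strictly fall below $2k$ outside $[1,k]^2$ (which is exactly why Lemma~\ref{lem : algebra} is not directly applicable in the residual case), so one must really squeeze $\alpha$ upwards by exploiting either Plummer's neighbourhood inequality or the per-vertex degree constraint, and it is the interplay between these that makes the case analysis delicate.
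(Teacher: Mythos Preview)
Your approach uses the same ingredients as the paper --- minimum-degree counting, Plummer's characterization, and the algebraic Lemma~\ref{lem : algebra} --- but your case organisation contains a genuine error. The claim that the residual case ``reduces (after a possible swap of $A$ and $B$) to the configuration $a_1, b_2 \geq k+1$ and $a_2, b_1 \leq k$'' is false: nothing you have assumed rules out $a_1=b_1=a_2=b_2=k+1$, and no swap of the colour classes helps. Your subsequent per-vertex estimates explicitly use $a_2\le k-1$ or $b_1\le k-1$, and your corner case uses $a_2=b_1=k$, so as written the situation $a_2,b_1\ge k+1$ is not addressed. (It happens that your Plummer ``Case~A'' summation $\alpha+\beta\ge(a_1+k-b_1)+(b_1+k-a_1)=2k$ would dispose of it, since both minima are indeed achieved at the first argument whenever $a_2\ge k$ and $b_2\ge k$; but you frame that argument as taking place \emph{inside} the already-reduced configuration, so the logical flow has a gap.) A smaller slip: in the degenerate case ``one of $A_1,B_1,A_2,B_2$ empty'', the assertion that every edge incident with $W$ crosses the cut is only immediately true when $A_1$ or $B_1$ is empty; you need (and have, via $|W|\le|\overline W|$) the extra observation that $A_2=\emptyset$ forces $B_1=\emptyset$ and $B_2=\emptyset$ forces $A_1=\emptyset$.

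The paper sidesteps all of this by a sharper normalisation: rather than assuming $|W|\le|\overline W|$, it relabels so that $W_A$ is a \emph{smallest} of the four blocks, and then splits on $x:=|W_A|$ alone. With $x=0$ the degree argument works; with $x\ge k$ Plummer applied to $W_A$ and to $W_B$ gives the two bounds summing to $2k$ directly; with $1\le x\le k-1$ the per-vertex estimate yields $|\partial(W)|\ge y(k+1-x)+x(k+1-y)$, and then either $y\ge k$ (first term alone $\ge 2k$) or $1\le y\le k-1$, where Lemma~\ref{lem : algebra} applies. This three-way split on the minimum block size is what keeps the paper's argument short and avoids the residual-case morass.
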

\begin{proof}
    Let $\partial(W)$ be a nontrivial cut in a \bkex{k} $G[A,B]$; clearly, we may assume $G$ to be simple. We let $W_A:=W\cap A$, and define $W_B,\overline{W}_A$ and $\overline{W}_B$ analogously, and adjust notation so that $W_A$ is a smallest set among the four. We now consider three cases based on $|W_A|$, and argue that $|\partial(W)|\geqslant 2k$ in each case.

    First, suppose that $W_A=\emptyset$. By Corollary \ref{cor : deg>=k+1}, $|\partial(W)|= \sum_{b\in W_B}d(b)\geqslant (k+1)|W_B|$. Now, $|W_B|=|W|\geqslant2$ as $\partial(W)$ is nontrivial. Thus, $|\partial(W)|\geqslant2k+2\geqslant 2k$.

    Next, suppose that $|W_A|\geqslant k$; thus, $|\overline{W}_A|\geqslant k$. Consequently, $|W_A|=|A|-|\overline{W}_A|\leqslant |A|-k$. Thus, by Theorem~\ref{thm : k-extendability characterization}, $|N(W_A)|\geqslant |W_A|+k$. An analogous argument proves that $|N(W_B)|\geqslant |W_B|+k$. Combining  all of these, 
    \[|\partial(W)|\geqslant |N(W_A)- W_B|+|N(W_B)- W_A|\geqslant|N(W_A)|-|W_A|+|N(W_B)|-|W_B|\geqslant 2k\]

    In order to deal with the remaining case, we let $x:=|W_A|$ and $y:=|W_B|$, and make some observations. Each vertex in $W_B$ has at most $x$ edges going to $W_A$. Consequently, by Corollary~\ref{cor : deg>=k+1}, each vertex in $W_B$ has at least $(k+1-x)$ edges going to $\overline{W}_A$. Thus, $|\partial(W)|\geqslant y(k+1-x)$. Using analogous arguments, $|\partial(W)|\geqslant x(k+1-y)$. Since these inequalities are referring to disjoint sets of edges, $|\partial(W)|\geqslant y(k+1-x)+x(k+1-y)$. 
    
    Finally, suppose that $1\leqslant x\leqslant k-1$. If $y\geqslant k$, then $|\partial(W)|\geqslant y(k+1-x)\geqslant 2k$. Otherwise $1\leqslant y\leqslant k-1$. Since $x,y\in [1,k]$, we invoke Lemma~\ref{lem : algebra} to conclude: \[|\partial (W)|\geqslant y(k+1-x)+x(k+1-y) \geqslant 2k\]
    This completes the proof of Theorem \ref{thm : essential edge connectivity}.
\end{proof}

Following the terminology of \cite{fkc21}, an edge $e$ in a $k$-extendable graph $G$ is \i{\removable{}} if $G-e$ is also $k$-extendable. Note that, only for $k=1$, superfluous edges are precisely the removable edges. However, for general $k$, we prefer to use the term \removable{} since the notion of removability appears in the literature extensively; see Lucchesi and Murty \cite{lumu24}. 

Thus, to rephrase, a \kex{k} is minimal if and only if it is free of \removable{} edges. Finally, we record the following easy consequence of Corollary~\ref{cor : deg>=k+1} and Theorem~\ref{thm : essential edge connectivity}. 

\begin{cor}
An edge $e$ of a \bkex{k} is not \removable{} if either of the following holds:
\begin{enumerate}
    \item either an end of $e$ has degree $k+1$,
    \item or $e$ belongs to a nontrivial $2k$-cut. \qed
\end{enumerate}
\label{cor : superfluous edge}
\end{cor}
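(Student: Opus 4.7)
The plan is to treat each of the two sufficient conditions separately, invoking the two results that appear immediately before the corollary statement. Both arguments proceed by contradiction: assume $e$ is \removable{} (that is, $G-e$ is still \ex{k}) and derive a violation of a property that every \ex{k} bipartite graph must satisfy.

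For part (i), suppose $e = uv$ has an end, say $u$, with $d_G(u) = k+1$. If $G-e$ were \ex{k}, then applying Corollary~\ref{cor : deg>=k+1} to $G-e$ would force $d_{G-e}(u) \geqslant k+1$. However $d_{G-e}(u) = d_G(u) - 1 = k$, which is a contradiction. Hence $e$ is not \removable{}.

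For part (ii), suppose $e$ lies in a nontrivial $2k$-cut $\partial(W)$ of $G$. If $G-e$ were \ex{k}, then $\partial(W)$ remains a nontrivial cut in $G-e$ (its size drops by exactly one to $2k-1$, and the bipartition $\{W,\overline{W}\}$ is unchanged and still nontrivial). But Theorem~\ref{thm : essential edge connectivity} applied to $G-e$ asserts that every nontrivial cut has size at least $2k$, contradicting $|\partial_{G-e}(W)| = 2k-1$. Hence $e$ is not \removable{}.

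Since both conditions individually suffice to preclude removability, the corollary follows. There is no genuine obstacle in this proof: both parts are one-line deductions from the immediately preceding results, and the statement is literally framed as a consequence of Corollary~\ref{cor : deg>=k+1} and Theorem~\ref{thm : essential edge connectivity}, so the proposal is simply to write down these two short contradictions.
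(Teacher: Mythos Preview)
Your proposal is correct and matches the paper's intent exactly: the paper presents this corollary with a \qed{} and no written proof, explicitly calling it an ``easy consequence of Corollary~\ref{cor : deg>=k+1} and Theorem~\ref{thm : essential edge connectivity}'', and your two short contradiction arguments are precisely the intended deductions from those two results.
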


We shall find the above useful in establishing minimality of the \bkex{k}s obtained by our constructions. 

\subsection{Constructing tight examples for our conjectures}
\label{subsec : tight examples}

%In this subsection, we construct families of \mbkex{k}s whose members attain the bounds stated in Conjectures~\ref{conj : evm},~\ref{conj : evn} and~\ref{conj : ee}, respectively, tightly. These families are straight forward generalizations of the families that appear in Theorems~\ref{thm : evm},~\ref{thm : evn} and~\ref{thm : ee}. 
We begin by generalizing \htree{}s. A tree is an \i{\ktree{r}} if all of its non-leaves have degree at least $r$, and it is a \i{regular \ktree{r}} if all of its non-leaves have degree exactly $r$. As per this, \htree{}s are precisely the $3$-trees, whereas cubic \htree{}s are precisely the regular $3$-trees. Next, we generalize the \isojoin{} operation.

\begin{defn}{\sc[Isomorphic $k$-leaf Matching]}\\
Let $T$ denote a nontrivial tree, and $L$ its set of leaves. For a positive integer $k$, let $H$ denote the (bipartite) graph obtained from the disjoint union of $k$ copies of $T$, say $T_1,T_2,\dots T_k$, by identifying, for each member of $L$,  all of its $k$ copies into a single vertex; we let $L(H):=L$. Now, let $H'$ denote a copy of $H$. The (bipartite) graph $G$ obtained from $H\cup H'$ by adding a matching, each of whose edges joins a member of $L(H)$ with the corresponding member of $L(H')$ as per some fixed isomorphism between $H$ and $H'$, is said to be obtained from $T$ by \i{\kisojoin{k}}; furthermore, we let $L(G):=L(H)\cup L(H')$.
\label{defn : kisojoin}
\end{defn} 

\begin{figure}[htb]
    \centering
    \begin{subfigure}{0.44\linewidth}
        \centering
        \begin{drawing}{1.2}
            \foreach \x in {-1,1}{
                \draw (3*\x,0)--(1.5*\x,2)--(1.5*\x,0);
                \draw (1.5*\x,2)--(0,2)--(0,0);
            }
            \foreach \x in {-1,1}{
                \draw (3*\x,0)node[bnode]{}(1.5*\x,2)node[wnode]{}(1.5*\x,0)node[bnode]{};
                \draw (1.5*\x,2)node[wnode]{}(0,2)node[bnode]{}(0,0)node[wnode]{};
            }
            \foreach \x in {-3,-1.5,...,3}{
                \draw[color=green] (\x,0) circle (3.5pt);
            }
            \foreach \x in {-1.5,0,1.5}{
                \draw[color=red] (\x,2) circle (3.5pt);
            }

        \end{drawing}
        \caption{tree $T$}
        \label{subfig : k-tree}
    \end{subfigure}
    \begin{subfigure}{0.55\linewidth}
        \centering
        \begin{drawing}{1.2}
            % \draw (-3.5,0.5) rectangle ++(1,3);
            % \draw (3.5,0.5) rectangle ++(-1,3);
            % \draw (-1.5,0) rectangle ++(1,4);
            % \draw (1.5,0) rectangle ++(-1,4);
            \let\edgecolor\cyanedge
            \foreach \r in {1,2}{
                \foreach \x in {-1,1}{
                    \foreach \k in {-1,1}{
                        \draw (\x*3.5,2*\k)--(\r*\x,0.5*\k);
                        \draw (\x*1.5,2*\k)--(-\r*\x,0.5*\k);
                    }
                    \draw (\x*\r,0.5)edge(\x*\r,-0.5);
                }
                % \draw (-1,\r)node[bnode]{}--(1,\r)node[wnode]{};
            }
            \foreach \x in {1,-1}{
                \foreach \k in {1,-1}{
                    \draw (\x*3.5,2*\k)--(\x*1.5,2*\k);
                    \draw (\x*2.5,2*\k)--(0,0.5*\k);
                }            
            }

            \foreach \x in {1,-1}{
                \draw (\x*3.5,2)node[wnode]{};
                \draw (\x*2.5,2)node[bnode]{};
                \draw (\x*1.5,2)node[wnode]{}; 
                \draw (\x*2,0.5)node[bnode]{};
                \draw (\x*1,0.5)node[bnode]{};
            }
            \foreach \x in {1,-1}{
                \draw (\x*3.5,-2)node[bnode]{};
                \draw (\x*2.5,-2)node[wnode]{};
                \draw (\x*1.5,-2)node[bnode]{};
                \draw (\x*2,-0.5)node[wnode]{};
                \draw (\x*1,-0.5)node[wnode]{};
            }
            \draw (0,0.5)node[wnode]{}edge(0,-0.5)node[bnode]{};
            \foreach \x in {1,-1}{
                \foreach \k in {-1,1}{ 
                    \draw[color=red] (\x*3.5,2*\k) circle (3.5pt);
                    \draw[color=red] (\x*2.5,2*\k) circle (3.5pt);
                    \draw[color=red] (\x*1.5,2*\k) circle (3.5pt); 
                    \draw[color=green] (\x*2,0.5*\k) circle (3.5pt);
                    \draw[color=green] (\x*1,0.5*\k) circle (3.5pt);
                    \draw[color=green] (\x*0,0.5*\k) circle (3.5pt);
                }
            }
    
            % \foreach \k in {1,2,3}{
            %     \draw (-3,\k)node[wnode]{};
            %     \draw (3,\k)node[bnode]{};
            % }
            % \draw (-3,0.1)node[nodelabel]{$A_u$};
            % \draw (-1,-0.4)node[nodelabel]{$B_r$};
            % \draw (1,-0.4)node[nodelabel]{$A_r$};
            % \draw (3,0.1)node[nodelabel]{$B_v$};
        \end{drawing}
        \caption{$G$ obtained from $T$ by \kisojoin{2}}
        \label{subfig : k-isojoin}
    \end{subfigure}
    \caption{an example of \kisojoin{k} operation}
    \label{fig : k-isojoin example}
\end{figure}

For instance, the graph $G$ shown in Figure~\ref{subfig : k-isojoin} is obtained from the tree $T$ shown in Figure \ref{subfig : k-tree} by \kisojoin{2}; the green coloured vertices represent $L(G)$ and $L(T)$, respectively. Figures \ref{fig : J_{k,r}} and \ref{fig : double star} also depict examples of the \kisojoin{k} operation. The reader may verify that the \isojoin{} operation defined in Subsection \ref{subsec : h2 characterization} is precisely the \kisojoin{1} operation. We now invite the reader to observe the following that will be used later, together with Corollary~\ref{cor : superfluous edge}, to prove minimality of our examples.

\begin{prop}
Let $G$ be a graph obtained from a tree $T$ by \kisojoin{k} where $k\geqslant 1$; adopt notation from Definition \ref{defn : kisojoin}. For an edge $e$ of $H\cup H'$, let $e_i$ and $e_i'$ denote the copies of $e$ in $T_i$ and $T_i'$, respectively, for each $i\in \{1,2,\dots,k\}$. Then, the set $\{e_1,e_2,\dots ,e_k,e_1',e_2',\dots, e_k'\}$ is a $2k$-cut of $G$ containing the edge $e$. \qed
    \label{prop : 2k-cut}
\end{prop}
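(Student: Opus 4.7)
The plan is to exploit the tree structure of $T$: deleting $e$ from $T$ splits it into two subtrees, and this split lifts, through the \kisojoin{k} construction, to a partition of $V(G)$ whose boundary consists precisely of the $2k$ listed edges. Without loss of generality I may assume $e\in H$ (the case $e\in H'$ is symmetric), so that, viewing $e$ as an edge of $T$, deleting it yields two components $T^1$ and $T^2$, with vertex sets $V^1$ and $V^2$ and leaf sets (in $T$) $L^1\subseteq L$ and $L^2\subseteq L$, respectively.

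Next I would define the intended side of the cut. For each internal vertex of $T^1$, the graph $H$ contains $k$ copies (one per $T_i$), and $H'$ contains $k$ primed copies; for each leaf $v\in L^1$, the identification in Definition \ref{defn : kisojoin} collapses all $k$ copies of $v$ into a single vertex of $H$ and a single vertex of $H'$. Let $W$ be the union, over the vertices of $T^1$, of all such copies in $H\cup H'$; define $\overline{W}$ analogously from $T^2$. Since $V^1\cupdot V^2=V(T)$, the construction ensures $W\cupdot\overline{W}=V(G)$.

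I would then verify, by case analysis on edge type, that $\partial(W)=\{e_1,\dots,e_k,e_1',\dots,e_k'\}$. Any copy (in some $T_i$ or $T_i'$) of an edge $f\in T$ with $f\ne e$ has both ends in $V^1$ or both in $V^2$, so it lies entirely inside $W$ or entirely inside $\overline{W}$. Each $e_i$ joins the $T_i$-copy of an endpoint in $V^1$ to the $T_i$-copy of an endpoint in $V^2$, and thus crosses $\partial(W)$; the same holds for each $e_i'$. Finally, each matching edge of the \kisojoin{k} joins a leaf in $L(H)$ to the corresponding leaf in $L(H')$: both endpoints are copies of the same leaf of $T$, so they lie together in $W$ (if that leaf is in $L^1$) or together in $\overline{W}$ (if it is in $L^2$), contributing nothing to $\partial(W)$. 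The containment $e\in\{e_1,\dots,e_k,e_1',\dots,e_k'\}$ is immediate from the notation, since $e$ is itself one of its own copies.

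The proof is essentially bookkeeping; the only subtlety is to confirm that the leaf-identification in the construction of $H$ does not introduce a ``hidden'' crossing, and that the matching between $L(H)$ and $L(H')$ respects the partition $L^1\cupdot L^2$ induced by deleting $e$. Once these two observations are in place, the claim follows at once.
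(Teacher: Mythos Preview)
Your argument is correct and is precisely the verification the paper intends: note that the proposition is stated with a bare \qed{} and no proof, having been introduced as an observation the reader is invited to check. Your partition of $V(G)$ via the two components of $T-e$, together with the three-way case split on edge type (tree-edge copies with $f\ne e$, copies of $e$ itself, and leaf-matching edges), is exactly the routine bookkeeping that justifies it.
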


% \b{2k-cut}
% TODO \b{nontrivial example}

%For $k\geqslant 1$ disjoint copies of a \ktree{k} $T$, say $T_1,T_2,\dots T_k$, let $J$ denote the graph obtained from their union by identifying, for each leaf of $T$, all of its $k$ copies into a single vertex (of degree $k$); . Now, let $J'$ denote a copy of $J$. The graph $H$ obtained from $J\cup J'$ by adding a matching, each of whose edges joins a vertex of degree $k$ in $J$ with the corresponding vertex of $J'$ as per some fixed isomorphism between $J$ and $J'$, is said to be obtained from $T$ by \kisojoin{k}. For example, the graph shown in Figure~\ref{fig : J_{k,r}} is obtained from the star $K_{1,4}$ by \kisojoin{3}.

%We are now ready to state the conjectures, alluded to above,  for an arbitrary positive integer $k$.% in terms of the \kisojoin{k} operation. 

% \b{cubic to regular}

We will prove a generalization (namely, Proposition~\ref{prop : mbkex k-isojoin}) of Proposition~\ref{prop : mbmcg TcupT'} in terms of \ktree{(k+1)}s and the \kisojoin{k} operation. In its proof, we induct on the number of non-leaves of the \ktree{(k+1)}, say $T$, to prove that the constructed graphs are indeed \ex{k}. The base case is when the number of non-leaves is at most two. If $T$ has precisely one non-leaf then $T$ is a star. On the other hand, if $T$ has precisely two non-leaves, we call it a \i{double star} and denote it as $D_{p,q}$ where $p$ and $q$ are the degrees of the non-leaves. The induction step of the proof of Proposition \ref{prop : mbkex k-isojoin} will be handled by Lemma \ref{lem : induction step (k,r)-extension}. However, before that, we prove a couple of lemmas to address the base case. 

We define $J_{p,r}$, where $r\geqslant p\geqslant 1$, as the graph obtained from $K_{1,r}$ by \kisojoin{p}; see Figure~\ref{fig : J_{k,r}}. The graph $J_{0,r}$, where $r\geqslant 0$, is defined to be the disjoint union of $r$ copies of $K_2$. The reader may easily observe that $J_{p,r}$ is matchable for all $r\geqslant p\geqslant 0$. For the graph $J_{p,r}[A,B]$ where $r\geqslant p\geqslant 1$, we adopt notation from Definition~\ref{defn : kisojoin}, and we let $A_r:=L(J_{p,r})\cap A$ and $B_r:=L(J_{p,r})\cap B$; on the other hand, for $J_{0,r}[A,B]$, we let $A_r:=A$ and $B_r:=B$. Note that $|A_r|=|B_r|=r$. We now prove the following stronger property.

\begin{lem}
    % A graph $G$ obtained from a star $K_{1,p}$ where $p
    % \geqslant 2k$ by \kisojoin{k} is \ex{k}.
    The bipartite graph $J_{k,r}[A,B]$, where $r\geqslant k\geqslant 0$, is $\min\{k,r-k\}$-extendable.
    \label{lem : base case star}
\end{lem}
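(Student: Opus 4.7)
The plan is to invoke Plummer's Hall-type characterization of $s$-extendable bipartite graphs (Theorem~\ref{thm : k-extendability characterization}(ii)), where $s := \min\{k, r-k\}$. The degenerate case $k = 0$ is handled separately by observing that $J_{0,r}$ is a perfect matching and therefore matchable, i.e., \ex{0}. So I may assume $k \geq 1$ (and $r \geq k$).

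First I would write down the bipartition of $J_{k,r}$ explicitly. Let $c_1, \dots, c_k$ denote the $k$ copies of the center of $K_{1,r}$ used to form $H$, and let $\ell_1, \dots, \ell_r$ denote its identified leaves; decorate with primes for the corresponding vertices of $H'$. Since the leaf matching between $L(H)$ and $L(H')$ must respect bipartiteness, the copy $H'$ is oriented so that
\[ A = \{c_1, \dots, c_k\} \cup \{\ell_1', \dots, \ell_r'\}, \qquad B = \{c_1', \dots, c_k'\} \cup \{\ell_1, \dots, \ell_r\}, \]
and the edges of $J_{k,r}$ are exactly $c_i \ell_j$, $c_i' \ell_j'$ (for all $i,j$), together with the leaf-matching edges $\ell_j \ell_j'$. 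Getting this orientation right is the only nontrivial set-up step. Note that $|A| = |B| = k + r \geq 2s + 1$ since $r \geq 2s$, so the order condition $n \geq 2s + 2$ from the definition of \ex{s} is satisfied.

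The heart of the argument is a short case analysis on a nonempty $S \subseteq A$. I would partition $S = S_c \cupdot S_\ell$ with $S_c := S \cap \{c_1, \dots, c_k\}$ and $S_\ell := S \cap \{\ell_1', \dots, \ell_r'\}$, and then compute $N(S)$ in three cases. If both $S_c$ and $S_\ell$ are nonempty, then $N(S_c) \supseteq \{\ell_1, \dots, \ell_r\}$ and $N(S_\ell) \supseteq \{c_1', \dots, c_k'\}$ together exhaust $B$, so $N(S) = B$. If $S_\ell = \emptyset$, then $N(S) = \{\ell_1, \dots, \ell_r\}$ has size $r$, and the inequality $r \geq k + s \geq |S_c| + s$ follows from $s \leq r - k$ and $|S_c| \leq k$. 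If $S_c = \emptyset$, then $N(S) = \{c_1', \dots, c_k'\} \cup \{\ell_j : \ell_j' \in S_\ell\}$ has size $k + |S_\ell|$, and the bound $k \geq s$ gives the required inequality $|N(S)| \geq |S_\ell| + s$. In every case, either $N(S) = B$ or $|N(S)| \geq |S| + s$, and Theorem~\ref{thm : k-extendability characterization}(ii) delivers \ex{s}.

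The main obstacle is essentially bookkeeping: ensuring that the primed vertices of $H'$ land on the correct side so the leaf-matching edges are legitimate bipartite edges, and isolating the degenerate $k = 0$ case (and the case $r = k$, where $s = 0$ and the claim reduces to matchability). Beyond that, the proof is a direct verification of Plummer's Hall-type inequality and requires no induction or structural lemma; the symmetry of $J_{k,r}$ under swapping the roles of $H$ and $H'$ also means essentially only one of the two nontrivial sub-cases needs substantive attention.
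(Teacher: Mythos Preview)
Your proof is correct and follows essentially the same approach as the paper: both invoke Theorem~\ref{thm : k-extendability characterization}\,(ii) and split a nonempty $S\subseteq A$ into the center part and the leaf part, handling the three cases (both nonempty gives $N(S)=B$; centers only gives $|N(S)|=r$; leaves only gives $|N(S)|=|S|+k$) identically. The only difference is cosmetic --- you spell out the bipartition and the order check explicitly, whereas the paper uses its predefined sets $A_r$ and $A-A_r$.
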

\begin{proof}
    Let $S$ be a nonempty subset of $A$. Note that $|A-A_r|=k$. Observe that if $S$ contains a vertex from each of $A-A_r$ and $A_r$, then $N(S)=B$. Otherwise, either $S\subseteq A-A_r$ or $S\subseteq A_r$. In the former case, $|N(S)|=r\geqslant |S|+r-k$ and in the latter case, $|N(S)|=|S|+k$. Thus, by Theorem \ref{thm : k-extendability characterization}, $J_{k,r}$ is \ex{\min\{k,r-k\}}.
\end{proof}
% \begin{lem}
%     In $J_{k,r}[A,B]$, $\{\emptyset, A_u, A\}$ are the only subsets $S\subseteq A$ such that $|N(S)|=|S|$. 
%     \label{lem : critical sets in J}
% \end{lem}
% \begin{proof}
    
% \end{proof}

\begin{figure}[H]
    \centering
    \begin{drawing}{1}
        \draw (-3.5,0.5) rectangle ++(1,3);
        \draw (3.5,0.5) rectangle ++(-1,3);
        \draw (-1.5,0) rectangle ++(1,4);
        \draw (1.5,0) rectangle ++(-1,4);
        \foreach \r in {0.5,1.5,2.5,3.5}{
            \foreach \k in {1,2,3}{
                \foreach \x in {1,-1}{
                    \draw (\x*1,\r)--(\x*3,\k);
                }
            }
            \draw (-1,\r)node[bnode]{}--(1,\r)node[wnode]{};
        }

        \foreach \k in {1,2,3}{
            \draw (-3,\k)node[wnode]{};
            \draw (3,\k)node[bnode]{};
        }
        \draw (-3,0.1)node[nodelabel]{$A_u$};
        \draw (-1,-0.4)node[nodelabel]{$B_r$};
        \draw (1,-0.4)node[nodelabel]{$A_r$};
        \draw (3,0.1)node[nodelabel]{$B_v$};
    \end{drawing}
    \caption{$J_{3,4}$ obtained from $K_{1,4}$ by \kisojoin{3}}
    \label{fig : J_{k,r}}
\end{figure}

Now, let $G$ be the graph obtained from a double star $D_{p,q}$, where $p,q\geqslant 2$, by \kisojoin{k}. We provide an alternative viewpoint for constructing $G$. Let $M_1, M_2, M_3$ and $M_4$ be the graphs obtained from the disjoint union of $p-1,k,q-1$ and $k$ copies of $K_2$, respectively, and let $A_i$ and $B_i$ denote fixed color classes of $M_i$ for each $i\in \{1,2,3,4\}$. Now, $G$ may be obtained from $M_1\cup M_2\cup M_3\cup M_4$ by joining, for each $i\in \{1,2,3,4\}$, each vertex of $A_i$ with every vertex of $B_{i+1}$, where arithmetic is modulo four; see Figure~\ref{fig : double star}. The following lemma addresses the double star base case. 

\begin{lem}
    For any positive integer $k$, the bipartite graph $G[A,B]$ obtained from a double star $D_{p,q}$, where $p,q\geqslant k+1$, by \kisojoin{k} is \ex{k}.
    \label{lem : base case double star}
\end{lem}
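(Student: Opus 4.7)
The plan is to verify the Hall-type characterization of \ex{k} bipartite graphs given by Theorem~\ref{thm : k-extendability characterization}~\textit{(ii)}: for every nonempty $S \subseteq A$, either $N(S) = B$ or $|N(S)| \geqslant |S| + k$. I will work in the alternative ``four-block'' description of $G$ using $M_1, M_2, M_3, M_4$ together with the complete bipartite joins $A_i \to B_{i+1}$ (indices modulo four), where $|A_1| = |B_1| = p-1$, $|A_2| = |B_2| = k$, $|A_3| = |B_3| = q-1$, $|A_4| = |B_4| = k$. Note that $|A| = |B| = p + q + 2k - 2 \geqslant 4k$, so the order is at least $8k \geqslant 2k + 2$ for $k \geqslant 1$; moreover $M_2$ already furnishes a matching of size $k$ in $G$, so only the extendability condition requires proof.

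For a nonempty $S \subseteq A$, write $s_i := |S \cap A_i|$. The key structural observation is that whenever $S \cap A_i \neq \emptyset$, the complete bipartite join from $A_i$ to $B_{i+1}$ forces $B_{i+1} \subseteq N(S)$; while if $S \cap A_i = \emptyset$, the only edges from $S$ into $B_{i+1}$ are the $M_{i+1}$-matching edges from $S \cap A_{i+1}$, contributing exactly $s_{i+1}$ vertices to $N(S) \cap B_{i+1}$. This yields, in every configuration, a clean closed-form expression for $|N(S)|$ as a sum of terms drawn from $\{p-1,\, q-1,\, k,\, s_1,\, s_2,\, s_3,\, s_4\}$.

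If all four $s_i$ are positive, then every $B_j$ is saturated and $N(S) = B$, and we are done. Otherwise, I proceed by case analysis on the nonempty set $T := \{i : s_i = 0\} \subseteq \{1,2,3,4\}$. In each case, the difference $|N(S)| - |S|$ collapses to an expression of the form ``$(p-1)$ or $(q-1)$ or $2k$, minus a quantity bounded above by $|A_i|$ for some $i \notin T$'', and the hypothesis $p, q \geqslant k+1$ supplies exactly the slack required to obtain $|N(S)| - |S| \geqslant k$. For instance, when only $s_4 = 0$, one computes $|N(S)| = s_1 + (q-1) + k + k$ and uses $|S| = s_1 + s_2 + s_3 \leqslant s_1 + k + (q-1)$ to conclude $|N(S)| - |S| \geqslant k$ (with equality attained); the other subcases are analogous.

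The main, though modest, obstacle is simply the bookkeeping across the fifteen nonempty subsets $T$. However, the four-fold rotational symmetry of the construction (with the pair $\{|A_1|,|A_2|\} = \{p-1,k\}$ opposite the pair $\{|A_3|,|A_4|\} = \{q-1,k\}$) collapses these into only a handful of genuinely distinct cases, and in each one the bound $p - 1 \geqslant k$ or $q - 1 \geqslant k$ given by the hypothesis $p, q \geqslant k+1$ is precisely what is needed to close the inequality.
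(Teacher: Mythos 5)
Your proposal is correct and follows essentially the same route as the paper: both verify condition \emph{(ii)} of Theorem~\ref{thm : k-extendability characterization} on the four-block description $M_1,\dots,M_4$, with the key point being that $S\cap A_i\neq\emptyset$ forces $B_{i+1}\subseteq N(S)$ while $p,q\geqslant k+1$ guarantees $|B_{i+1}|\geqslant k$. The only difference is bookkeeping: where you run a case analysis over the set of empty blocks, the paper picks an $i$ with $S\cap A_i\neq\emptyset$ and $S\cap A_{i+1}=\emptyset$ and disposes of all remaining blocks at once by observing that $G-V(M_{i+1})$ is matchable, hence contributes at least $|S|$ further neighbours.
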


\begin{figure}[H]
    \centering
    \begin{drawing}{1}
        \draw (-0.8,1.2) rectangle ++(1.6,1.35);
        \draw (-0.8,-1.2) rectangle ++(1.6,-1.35);
        \draw (-1.2,-0.8) rectangle ++(-2.1,1.6);
        \draw (1.2,-0.8) rectangle ++(2.85,1.6);
        % \draw (1.5,0) rectangle ++(-1,4);
        \foreach \k in {1.5,2.25}{
            \foreach \p in {-1.5,-2.25,-3}{
                \draw (-0.5,\k)--(\p,0.5);
                \draw (\p,-0.5)--(-0.5,-\k);
            }
        }
        \foreach \k in {1.5,2.25}{
            \foreach \q in {1.5,2.25,...,3.75}{
                \draw (0.5,\k)--(\q,0.5);
                \draw (\q,-0.5)--(0.5,-\k);
            }
        }
        \foreach \k in {1.5,2.25}{
            \draw (-0.5,\k)node[wnode]{}--(0.5,\k)node[bnode]{};
            \draw (-0.5,-\k)node[bnode]{}--(0.5,-\k)node[wnode]{};
        }

        \foreach \p in {-1.5,-2.25,-3}{
            \draw (\p,-0.5)node[wnode]{}--(\p,0.5)node[bnode]{};
        }

        \foreach \q in {1.5,2.25,...,3.75}{
            \draw (\q,-0.5)node[bnode]{}--(\q,0.5)node[wnode]{};
        }
        \draw (-3,-1.2)node[nodelabel]{$M_1$};
        \draw (-1.3,2.25)node[nodelabel]{$M_4$};
        \draw (1.3,-2.25)node[nodelabel]{$M_2$};
        \draw (4,1.2)node[nodelabel]{$M_3$};
    \end{drawing}
    \caption{The graph obtained from $D_{4,5}$ by \kisojoin{2}}
    \label{fig : double star}
\end{figure}

\begin{proof}
    We use the alternative viewpoint that was described in the paragraph preceding the lemma statement, and the notation defined therein. Let $A:=A_1\cup A_2\cup A_3\cup A_4$, and likewise for $B$. Let $S$ be any nonempty subset of $A$. If $S$ meets each $A_i$, where $i\in\{1,2,3,4\}$, then $N(S)=B$. Otherwise, there exists an $i\in\{1,2,3,4\}$ such that $S\cap A_i\neq \emptyset$ but $S\cap A_{i+1}= \emptyset$. Now, note that the graph $H:=G-V(M_{i+1})$ is matchable. As a result, $|N_H(S)|\geqslant |S|$. Furthermore, since $S\cap A_i$ is nonempty, $B_{i+1}\subseteq N(S)$. Thus, $|N_G(S)|= |N_H(S)|+|B_{i+1}|\geqslant |S|+k$. Consequently, by Theorem \ref{thm : k-extendability characterization}, $G$ is \ex{k}.
\end{proof}

Before proving Lemma \ref{lem : induction step (k,r)-extension}, we state and prove an easy consequence of Theorem~\ref{thm : k-extendability characterization}.% that will be used in the induction step of the proof of Proposition~\ref{prop : mbkex k-isojoin} --- that is, Lemma~\ref{lem : induction step (k,r)-extension}.  

\begin{cor}
    If $G[A,B]$ is a \bkex{k}, where $k\geqslant 1$, then $G-e-e'$ is \ex{(k-1)} for any two nonadjacent edges $e$ and $e'$ such that an end of $e$ is adjacent with an end of $e'$.  
    \label{cor : G-uy-vx is k-1 ex}
\end{cor}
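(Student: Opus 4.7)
The plan is to apply Theorem~\ref{thm : k-extendability characterization}(ii) to the graph $H := G - e - e'$. Write $e = uv$ and $e' = xy$ with $u, x \in A$ and $v, y \in B$; since $e$ and $e'$ are nonadjacent, the vertices $u, v, x, y$ are pairwise distinct. The bipartite hypothesis forces the edge joining an end of $e$ to an end of $e'$ to be either $uy$ or $vx$, so by symmetry I may assume $uy \in E(G)$. The order hypothesis $|V(H)| = n \geq 2k+2 \geq 2(k-1)+2$ is immediate, so it suffices to show that for every nonempty $S \subseteq A$, either $N_H(S) = B$ or $|N_H(S)| \geq |S| + (k-1)$.

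The crucial step is the estimate $|N_G(S)| - |N_H(S)| \leq 1$. Observe that $N_G(S) \setminus N_H(S) \subseteq \{v, y\}$, since $v$ and $y$ are the only $B$-vertices touched by $e$ or $e'$. If both $v$ and $y$ belonged to this difference, then the unique neighbour of $v$ in $S$ would have to be $u$ (forcing $u \in S$) and the unique neighbour of $y$ in $S$ would have to be $x$; but then the edge $uy$ together with $u \in S$ exhibits a second element of $N_G(y) \cap S$, contradicting the latter. Hence the difference has size at most one.

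With this estimate the result follows by a short case analysis on Theorem~\ref{thm : k-extendability characterization}(ii) applied to $G$. If $|N_G(S)| \geq |S| + k$, then $|N_H(S)| \geq |S| + k - 1$, as required. Otherwise $N_G(S) = B$; if moreover $N_H(S) = B$ we are done, so suppose $N_H(S) \neq B$ and let $w$ be the unique member of $N_G(S) \setminus N_H(S)$. Then $w$ has exactly one neighbour in $S$ (namely $u$ when $w = v$, or $x$ when $w = y$), so Corollary~\ref{cor : deg>=k+1} supplies at least $k$ further neighbours of $w$ lying in $A \setminus S$; hence $|S| \leq |A| - k$, giving $|N_H(S)| = |B| - 1 \geq |S| + k - 1$. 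The only genuine subtlety is the estimate $|N_G(S)| - |N_H(S)| \leq 1$, which crucially uses the adjacency hypothesis between ends of $e$ and $e'$; without it the loss could be $2$ and the resulting slack would be insufficient.
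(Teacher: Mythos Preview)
Your proof is correct and follows essentially the same approach as the paper: both establish the key estimate $|N_G(S)\setminus N_H(S)|\leq 1$ using the adjacency hypothesis (the paper phrases this as ``equality in $|N_H(S)|\geq |N_G(S)|-2$ would force $b,b'\notin N_H(S)$, impossible since $ab'\in E(H)$''), and then perform the same case split via Theorem~\ref{thm : k-extendability characterization}. The only cosmetic difference is in the $N_G(S)=B$ case, where the paper argues that large $|S|$ forces $b,b'\in N_H(S)$ (hence $N_H(S)=B$), while you argue the contrapositive that a missing vertex $w$ forces $|S|\leq |A|-k$; these are the same count read in opposite directions.
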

\begin{proof}
    Let $e:=ab$ and $e':=a'b'$, where $a,a'\in A$ and $b,b'\in B$, so that $ab'\in E(G)$. We let $H:=G-e-e'$. For any $S\subseteq A$, observe that $|N_H(S)|\geqslant |N_G(S)|-2$ and equality holds only if $a,a'\in S$ and $b,b'\notin N_H(S)$. However, since $ab'\in E(H)$, equality does not hold and $|N_H(S)|\geqslant |N_G(S)|-1$. We intend to show that $H$ satisfies statement \i{(ii)} of Theorem \ref{thm : k-extendability characterization} with $k-1$ playing the role of $k$.  
    
    Now, by applying Theorem \ref{thm : k-extendability characterization} to $G$, either $|N_G(S)|\geqslant |S|+k$ or $N_G(S)=B$. If $|N_G(S)|\geqslant |S|+k$ then $|N_H(S)|\geqslant |S|+k-1$, and we are done. Now, suppose that $N_G(S)=B$ and $|N_G(S)|\leqslant |S|+k-1$; these imply that $|S|\geqslant |A|-k+1$. Since $d_H(b)=d_G(b)-1\geqslant k$, we infer that $b\in N_H(S)$; likewise, $b'\in N_H(S)$. Consequently, $N_H(S)=N_G(S)=B$. Thus, by Theorem \ref{thm : k-extendability characterization}, $H$ is \ex{(k-1)}. %Otherwise $|S|\leqslant |A|-k$ and $|N_H(S)|\geqslant |N_G(S)|-1 =|B|-1$ which implies $|N_H(S)|\geqslant |S|+k-1$. 
\end{proof}
Next, we describe an operation that appears in the induction step of the proof of Proposition~\ref{prop : mbkex k-isojoin}. Let $G$ be a simple bipartite graph and $uv$ be an edge such that $d(u)=d(v)=p+1$. Recall the definition of $J_{p,r}$ that appears in the paragraph preceding Lemma \ref{lem : base case star}, and the notation therein; let $A_u:=A-A_r$ and $B_v:=B-B_r$. Now, the (bipartite) graph $G'$ --- constructed from the disjoint union of $H:=G-u-v$ and $J_{p,r}$ by adding two matchings, each of size~$p$, one between $N_G(u)-v$ and $A_u$, and another between $N_G(v)-u$ and $B_v$ --- is said to be obtained \i{from $G$ by replacing the edge $uv$ with $J_{p,r}$}. See Figure~\ref{fig : replacing edge by J_k,r} for an illustration. We now show that this operation preserves $k$-extendability.% We say that $G'$  

\begin{lem}
    The bipartite graph $G'$ obtained from a \bkex{k} $G$ by replacing an edge $uv$, where $d(u)=d(v)=p+1$, with $J_{p,r}$ is also \ex{k}.
    \label{lem : induction step (k,r)-extension}
\end{lem}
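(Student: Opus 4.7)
The plan is to verify condition (ii) of Theorem \ref{thm : k-extendability characterization} for $G'[A',B']$, where $A' = (A-u) \cup A_u \cup A_r$ and $B' = (B-v) \cup B_v \cup B_r$. Noting that $|V(G')| = n - 2 + 2(p+r) \geq 2k+2$ and that $G'$ is connected (since $J_{p,r}$ is connected and, by $2$-connectedness of $G$, every vertex of $V(G)-u-v$ reaches $N_G(v) - u$ in $G - u$, and hence reaches $B_v \subseteq V(J_{p,r})$ via the $\psi$-matching), we may apply the theorem. Let $\phi \colon N_G(u) - v \to A_u$ and $\psi \colon N_G(v) - u \to B_v$ denote the bijections furnished by the two added matchings.

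Given a nonempty $S \subseteq A'$, I decompose $S = S_1 \sqcup S_2 \sqcup S_3$ with $S_1 := S \cap (A-u)$, $S_2 := S \cap A_u$, and $S_3 := S \cap A_r$. A direct inspection of $G'$ yields
\[
N_{G'}(S) \cap (B - v) = (N_G(S_1) - v) \cup \phi^{-1}(S_2), \quad N_{G'}(S) \cap B_v \supseteq \psi(S_1 \cap (N_G(v) - u)),
\]
with $B_v \subseteq N_{G'}(S)$ whenever $S_3 \neq \emptyset$, and $B_r \subseteq N_{G'}(S)$ whenever $S_2 \neq \emptyset$ (otherwise $N_{G'}(S) \cap B_r = \{\ell_j : \ell_j' \in S_3\}$).

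The argument now proceeds by case analysis on which of $S_1, S_2, S_3$ are empty. When $S_1 = \emptyset$, every subcase is dispatched by direct counting using $p \geq k$ (from Corollary \ref{cor : deg>=k+1}) together with $r \geq p$; for instance $|N_{G'}(S_2)| = r + |S_2| \geq |S_2| + k$ and $|N_{G'}(S_3)| = p + |S_3| \geq |S_3| + k$, while the case $S_2, S_3$ both nonempty gives $|N_{G'}(S)| = |S_2| + p + r \geq |S| + k$ since $|S_3| \leq r$. When $S_1 \neq \emptyset$, I apply condition (ii) of Theorem \ref{thm : k-extendability characterization} to $S_1$ in $G$: in the easy subcase $|N_G(S_1)| \geq |S_1| + k$, the loss of $v$ from $N_G(S_1)$ upon passage to $G'$ is compensated either because $v \notin N_G(S_1)$ (nothing is lost) or because $v \in N_G(S_1)$ forces $|S_1 \cap (N_G(v) - u)| \geq 1$, and the contributions of $B_r$ (size $r \geq k$) when $S_2 \neq \emptyset$ and of $B_v$ (size $p \geq k$) when $S_3 \neq \emptyset$ easily absorb the extra $|S_2| + |S_3|$ in $|S|$.

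The delicate subcase is $N_G(S_1) = B$, where $|N_G(S_1) - v| = |B| - 1$ seems to fall short of $|S_1| + k$ when $|S_1|$ is close to $|A|-1$. The crucial counting observation is that
\[
|S_1 \cap (N_G(v) - u)| \;\geq\; |S_1| - |A - u - N_G(v)| \;=\; |S_1| + p - |A| + 1,
\]
so the contribution via $\psi$ to $N_{G'}(S) \cap B_v$ restores the bound: combining the two gives $|N_{G'}(S_1)| \geq |S_1| + p \geq |S_1| + k$, and nonempty $S_2, S_3$ only strengthen this. The single remaining extremal subcase, namely $|S_2|=p$, $|S_3|=r$, and $N_G(S_1 \cup \{u\}) = B$, is handled by observing that then $\phi^{-1}(S_2) = N_G(u) - v$, so $(N_G(S_1) - v) \cup \phi^{-1}(S_2) = (N_G(S_1) \cup N_G(u)) - v = B - v$, and together with $B_v, B_r \subseteq N_{G'}(S)$ this yields $N_{G'}(S) = B'$. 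The bookkeeping in this $N_G(S_1) = B$ subcase, where the vertex $v$ lost from $N_G(S_1)$ must be compensated via a detour through $B_v$, is the main technical obstacle; everything else reduces to routine counting with $p \geq k$ and $r \geq p$.
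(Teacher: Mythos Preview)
Your approach via the Hall-type condition of Theorem~\ref{thm : k-extendability characterization} is sound and genuinely different from the paper's proof, which proceeds by induction on~$k$: the paper fixes a $k$-matching~$M$ of~$G'$, locates an edge of~$M$ in each of three strategic matchings~$M_u, M_v, M_r$ (or in $\partial(A_u,B_r)\cup\partial(B_v,A_r)$), deletes the corresponding six vertices, and observes that the residual graph is obtained from a $(k-1)$-extendable graph (via Corollaries~\ref{cor : G-u-v is k-1 ex} and~\ref{cor : G-uy-vx is k-1 ex}) by replacing an edge with~$J_{p-1,r-1}$, so the induction hypothesis applies. Your direct verification of the neighbourhood condition is more elementary in that it avoids these auxiliary corollaries, at the cost of a longer case analysis.

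There is, however, one genuine gap in your case analysis. In the ``easy subcase'' $|N_G(S_1)|\geq|S_1|+k$ with $v\in N_G(S_1)$, your compensation for the lost vertex~$v$ is the single vertex $\psi(x)\in B_v$ coming from some $x\in S_1\cap N_G(v)$. But once $S_3\neq\emptyset$, all of~$B_v$ is already absorbed into~$N_{G'}(S)$, so this compensation is no longer available; the surplus $p+r-|S_2|-|S_3|$ from $B_v\cup B_r$ must then also cover the deficit of~$1$, and this fails precisely when $|S_2|=p$ and $|S_3|=r$. You handle this configuration only under the extra hypothesis $N_G(S_1\cup\{u\})=B$, but it can occur in the easy subcase with $N_G(S_1\cup\{u\})\neq B$ as well. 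The fix is immediate and uses exactly the idea you already introduced: since $|S_2|=p$ gives $\phi^{-1}(S_2)=N_G(u)-v$, one has $N_{G'}(S)\cap(B-v)=N_G(S_1\cup\{u\})-v$; now apply the Hall condition in~$G$ to $S_1\cup\{u\}$ rather than to~$S_1$, obtaining either $N_G(S_1\cup\{u\})=B$ (your extremal subcase) or $|N_G(S_1\cup\{u\})|\geq|S_1|+1+k$, which restores the count.
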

\begin{proof}
    % Suppose $G'$ is not \ex{k}. Then, there is a set $\emptyset \neq S\subseteq A$ such that $|N(S)|\leqslant |S|+k-1$ and $N(S)\neq B$. Let $H:=G-u-v$. We denote $S_H:=S\cap V(H)$. Similarly, $S_J$ By Hall's theorem $|N_J(S_J)|\geqslant |S_J|$. 
    
    % Observe that $H$ is \kex{k-1}. First, suppose $|N_H(S_H)|\geqslant |S_H|+k-1$. Now, $|N(S)|=|N_H(S_H)|+|N_J(S_J)|+|N_H(S_J)-N_H(S_H)|+|N_J(S_H)-N_J(S_J)|\geqslant |S_H|+k-1+|S_J|=|S|+k-1$. But, $|N(S)|\leqslant |S|+k-1$. Thus, equality holds at all inequalities. Firstly, $|N_J(S_J)|=|S_J|$ and thus by Lemma~\ref{lem : critical sets in J}, we get $S_J\in \{\emptyset, A_u, A_J\}$. Secondly, $|N_H(S_H)|= |S_H|+k-1$. But, $G$ is \ex{k} and so $|N_G(S_H+u)|\geqslant |S_H+u|+k=|S_H|+k+1$. Now, $|N_G(S_H+u)|=|N_H(S_H)|+1+|N(u)\setminus N(S_H)|=|S_H|+k+|N(u)\setminus N(S_H)|$. Thus, there is a vertex $nu$ in $N(u)\setminus N(S_H)$. Hence, $S_J=\emptyset$ as otherwise, $A_u\subset S_J$ implying that $|N_H(S_J)-N_H(S_H)|\neq 0$ which is a contradiction. $|N_J(S_J)|=|S_J|$ and $|N_J(S_H)|=|N_H(S_J)|=0$.
    
    % Now, suppose $|N_H(S_H)|< |S_H|+k-1$. Then we have $N_H(S_H)=B_H$. Note that if $S_J$ contains a vertex from both $A_u$ and $A_r$ then $N(S_J)=B_J$ implying $N(S)=B$ which is a contradiction. So, either $S_J\subseteq A_p$ or $S_J\subseteq A_r$. 
    % \b{define notation}
     We adopt notation from the paragraph preceding the lemma statement, and we let $y_i,u_i,v_i$ and $x_i$, for each $i\in \{1,2,\dots , p\}$, denote the vertices in the sets $N_G(u)-v,A_u,B_v$ and $N_G(v)-u$, respectively. We let $a_i$ and $b_i$, for each $i\in\{1,2,\dots, r\}$, denote the vertices in $A_r$~and~$B_r$, respectively; see Figure \ref{fig : replacing edge by J_k,r}. We proceed by induction on $k$; the following observation proves the statement for $k=0$, and will also come in handy later. 

    \begin{stat}
        For each \pema{} $M$ of $G$, the restriction of $M$ to $G-u-v$ extends to a \pema{} of $G'$.
        \label{stat : matching extension}
    \end{stat}
    \begin{proof}
        First, suppose that $uv\in M$. Then, let $M'$ be any \pema{} of $J_{p,r}$; see Lemma~\ref{lem : base case star}. Then, $M-uv+M'$ is the desired \pema{} of $G'$.

        Otherwise, $p\geqslant 1$. Adjust notation so that $uy_1,vx_1\in M$. Now, let $M'$ be any \pema{} of $J_{p,r}-u_1-v_1\cong J_{p-1,r}$; see Lemma~\ref{lem : base case star}. Then, $M-uy_1-vx_1+M'+u_1y_1+v_1x_1$ is the desired \pema{} of $G'$. 
    \end{proof}

    We use $J$ to denote the subgraph of $G'$ induced by the vertices of $J_{p,r}$. For disjoint sets of vertices of $G'$, say $X$ and $Y$, we let $\partial(X,Y)$ denote the set of those edges whose one end is in $X$ and the other end is in $Y$. We let $M_u, M_r$ and $M_v$ denote the matchings $\partial(N_G(u)-v,A_u), \partial(A_r,B_r)$ and $\partial(B_v,N_G(v)-u)$, respectively, and adjust notation so that both ends of each edge in $M_u\cup M_r\cup M_v$ have the same subscript. 
    
    Now, let $k\geqslant 1$ and $M$ be a matching of size $k$ in $G'$. We consider two cases depending on whether $M$ is a subset of $E(G'-J)$ or not. In each case, we argue that $M$ extends to a \pema{} of $G'$. 
    
    Firstly, suppose that $M\subseteq E(G'-J)$. As $G$ is \ex{k}, there is a \pema{} $M'$ of $G$ containing $M$. Note that the restriction of $M'$ to $G-u-v$ also contains $M$; by~\ref{stat : matching extension}, this extends to a \pema{} of $G'$, and we are done.

    Secondly, suppose that $M\not\subseteq E(G'-J)$. In other words, $M$ contains at least one edge, say $e$, from $E(J)\cup \partial(J)$. Observe that $(M_u,M_v,M_r,\partial(A_u,B_r),\partial(B_v,A_r))$ is a partition of $E(J)\cup \partial(J)$. We now consider two subcases depending on whether $e\in M_u\cup M_r\cup M_v$ or not.
    
    First, suppose that $e\in M_u\cup M_r\cup M_v$. Note that $r\geqslant p\geqslant k$; consequently, $M_u,M_r$ and $M_v$ are disjoint induced matchings of size at least $k$. Using this fact, we choose one edge from each of these sets as follows. For each $F\in\{M_u,M_r,M_v\}$, we pick $e$ if $e\in F$; otherwise, we pick any edge~$e'$ whose both ends are $M$-exposed; let $u_iy_i,v_jx_j$ and $a_\ell b_\ell$ denote these three edges. By Corollary~\ref{cor : G-u-v is k-1 ex}, the graph $H:=G-y_i-x_j$ is \ex{(k-1)}. Also, observe that
    the graph $H':=G'-u_i-y_i-v_j-x_j-a_\ell-b_\ell$ is obtained from $H$ by replacing $uv$ by $J_{p-1,r-1}$. Thus, by the induction hypothesis, $H'$ is \ex{(k-1)}. Consequently, there is a \pema{} $M'$ of $H'$ containing $M-e$. Thus, $M'+u_iy_i+v_jx_j+a_\ell b_\ell$ is a \pema{} of $G'$ containing $M$. 

    \begin{figure}[H]
    \centering  
    \begin{subfigure}{0.24\linewidth}
        \centering
        \begin{drawing}{1}
            \foreach \k in {1,2,3}{
                \foreach \x in {1,-1}{
                    \draw ($(\x*1.5,-\k)+(0,4)$)--(\x*0.5,2);
                }
                \draw ($(-1.5,-\k)+(0,4)$)node[bnode]{}node[below,nodelabel]{$y_\k$};
                \draw ($(1.5,-\k)+(0,4)$)node[wnode]{}node[below,nodelabel]{$x_\k$};
            }
            \draw (-0.5,2)node[wnode]{}node[below,nodelabel]{$u$}--(0.5,2)node[bnode]{}node[below,nodelabel]{$v$};

        \end{drawing}
        \caption{$G$}
        \label{fig:my_label}
    \end{subfigure}
    \begin{subfigure}{0.75\linewidth}
        \centering
        \begin{drawing}{1}
            \draw (-3.5,0.2) rectangle ++(1,3.3);
            \draw (3.5,0.2) rectangle ++(-1,3.3);
            \draw (-5.5,0.2) rectangle ++(1,3.3);
            \draw (5.5,0.2) rectangle ++(-1,3.3);
            \draw (-1.5,-0.3) rectangle ++(1,4.3);
            \draw (1.5,-0.3) rectangle ++(-1,4.3);
            \foreach \r in {1,2,3,4}{
                \foreach \k in {1,2,3}{
                    \foreach \x in {1,-1}{
                        \draw ($(\x*1,-\r)+(0,4.5)$)--(\x*3,\k);
                    }
                }
                \draw ($(-1,-\r)+(0,4.5)$)node[bnode]{}node[below,nodelabel]{$b_\r$}--($(1,-\r)+(0,4.5)$)node[wnode]{}node[below,nodelabel]{$a_\r$};
            }
            
            \foreach \k in {1,2,3}{
                \draw ($(-3,-\k)+(0,4)$)node[wnode]{}node[below,nodelabel]{$u_\k$}--($(-5,-\k)+(0,4)$)node[bnode]{}node[below,nodelabel]{$y_\k$};
                \draw ($(3,-\k)+(0,4)$)node[bnode]{}node[below,nodelabel]{$v_\k$}--($(5,-\k)+(0,4)$)node[wnode]{}node[below,nodelabel]{$x_\k$};
            }
            \draw (-5,-0.2)node[nodelabel]{$N_G(u)-v$};
            \draw (-3,-0.2)node[nodelabel]{$A_u$};
            \draw (-1,-0.7)node[nodelabel]{$B_r$};
            \draw (1,-0.7)node[nodelabel]{$A_r$};
            \draw (3,-0.2)node[nodelabel]{$B_v$};
            \draw (5,-0.2)node[nodelabel]{$N_G(v)-u$};
            \draw (-4,3)node[nodelabel,above]{$M_u$};
            \draw (4,3)node[nodelabel,above]{$M_v$};
            \draw (0,3.5)node[nodelabel,above]{$M_r$};
            % \draw (-2.3,3.5)node[nodelabel,above]{$\partial(A_u,B_r)$};

        \end{drawing}
        \caption{$G'$}
        \label{fig}
    \end{subfigure}
    \centering
    \caption{an illustration of replacing $uv$ by $J_{3,4}$}
    \label{fig : replacing edge by J_k,r}
\end{figure}
    Now, suppose that $e\notin M_u\cup M_r\cup M_v$; consequently,  $e\in \partial(A_u,B_r)\cup \partial(B_v,A_r)$. Adjust notation so that $e:=u_ib_j$. If $a_j$ is matched in $M$ then let $v_\ell\in B_v$ denote its matched neighbour; otherwise, since $p\geqslant k$, let $v_\ell$ denote any unmatched vertex in $B_v$; we shall define a \pema{} $M''$ of $G'$ that contains $M$ in both cases. By Corollary~\ref{cor : G-uy-vx is k-1 ex}, the graph $H:=G-uy_i-vx_\ell$ is \ex{(k-1)}. Also, observe that $H':=G'-u_i-b_j-a_j-v_\ell$ is obtained from $H$ by replacing $uv$ by $J_{p-1,r-1}$. Thus, by the induction hypothesis, $H'$ is \ex{(k-1)}. Consequently, $M-e-a_jv_\ell$ extends to a \pema{} $M'$ of $H'$. Observe that $M'':=M'+u_ib_j+a_jv_\ell$ is the desired \pema{} of $G'$.

    This completes the proof of Lemma~\ref{lem : induction step (k,r)-extension}.
\end{proof}

We are now ready to prove the following generalization of Proposition \ref{prop : mbmcg TcupT'}.

\begin{prop}
    For a positive integer $k$, any graph $G$ --- obtained from a \ktree{(k+1)}~$T$, that is neither $K_2$ nor a star $K_{1,p}$ where $p< 2k$, by \kisojoin{k} --- is a \mbkex{k}.
    \label{prop : mbkex k-isojoin}
\end{prop}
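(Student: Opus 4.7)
I plan to prove both claims by induction on the number of non-leaves of $T$, treating $k$-extendability and minimality separately.

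For $k$-extendability, the base cases are when $T$ has one or two non-leaves. If $T = K_{1,r}$, the hypothesis forces $r \geq 2k$, so $G = J_{k,r}$ is $\min\{k, r - k\} = k$-extendable by Lemma \ref{lem : base case star}. If $T = D_{p,q}$, then $p, q \geq k+1$ because $T$ is a $(k+1)$-tree, and Lemma \ref{lem : base case double star} applies. For the inductive step, assume $T$ has at least three non-leaves. The subgraph of $T$ induced by its non-leaves is a nontrivial tree, hence has a leaf: a non-leaf $v$ of $T$ adjacent to exactly one other non-leaf $u$; write $\ell_1,\dots,\ell_r$ for the remaining (leaf) neighbours of $v$, where $r = \deg_T(v) - 1 \geq k$. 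Let $T^*$ be obtained from $T$ by deleting $v,\ell_1,\dots,\ell_r$ and attaching a single new leaf $\ell^*$ to $u$. Then $T^*$ is a $(k+1)$-tree with one fewer non-leaf than $T$, so the associated graph $G^*$ is $k$-extendable by induction. The key point is that $G$ is precisely the graph obtained from $G^*$ by replacing the matching edge $\ell^*(\ell^*)'$, both of whose ends have degree $k+1$ in $G^*$, with $J_{k,r}$: the $k$ centers on each side of $J_{k,r}$ identify with the copies $v_1,\dots,v_k$ and $v_1',\dots,v_k'$ of $v$, the shared leaves identify with $\ell_1,\dots,\ell_r$ and $\ell_1',\dots,\ell_r'$, and the two matchings in the replacement link $v_i$ to $u_i$ and $v_i'$ to $u_i'$. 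Lemma \ref{lem : induction step (k,r)-extension} with $p = k$ then yields that $G$ is $k$-extendable.

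For minimality, I would classify the edges of $G$ into three types and apply Corollary \ref{cor : superfluous edge} to each. Every edge of $G$ is a matching edge $\ell\ell'$, a \emph{mixed} edge $x_i\ell$ (or $x_i'\ell'$) between a non-leaf copy and a leaf, or a \emph{non-leaf} edge $x_iy_i$ (or $x_i'y_i'$) joining two non-leaf copies on one side. In the first two types, a leaf of $G$ is an endpoint and has degree exactly $k+1$ (it is joined to the $k$ copies of its $T$-neighbour and to its primed partner), so clause (i) of Corollary \ref{cor : superfluous edge} applies. For the third type, Proposition \ref{prop : 2k-cut} supplies a $2k$-cut containing the edge, and I would verify nontriviality by noting that deleting the $T$-edge $xy$ splits $T$ into two subtrees each containing a non-leaf, which yields at least $2k \geq 2$ vertex-copies on each side of the $G$-cut; clause (ii) then finishes the case.

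The main obstacle is the bookkeeping in the inductive step: one must check that the replacement of $\ell^*(\ell^*)'$ in $G^*$ with $J_{k,r}$, under the two auxiliary matchings chosen appropriately, yields literally $G$. Once the $k$-fold duplication structure of $G$ and the leaf-matching isomorphism are aligned with the data of $J_{k,r}$, the replacement operation of Lemma \ref{lem : induction step (k,r)-extension} encodes exactly the addition of $v$'s branch to $T^*$, so the identification is clean.
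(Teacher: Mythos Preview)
Your proposal is correct and follows essentially the same approach as the paper: induction on the number of non-leaves with the star and double-star base cases handled by Lemmas~\ref{lem : base case star} and~\ref{lem : base case double star}, the inductive step realised via the edge-replacement operation of Lemma~\ref{lem : induction step (k,r)-extension}, and minimality via Corollary~\ref{cor : superfluous edge} together with Proposition~\ref{prop : 2k-cut}. Your smaller tree $T^*$ (delete $v$ and its leaf-neighbours, attach a fresh leaf $\ell^*$ to $u$) is isomorphic to the paper's $T'$ (simply delete the leaf-neighbours of the corresponding vertex, which then becomes a leaf), so the reductions coincide; you are also slightly more careful than the paper in explicitly checking that the $2k$-cut from Proposition~\ref{prop : 2k-cut} is nontrivial.
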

\begin{proof}
    We will induct on the number of non-leaves of $T$. Firstly, if $T$ has exactly one non-leaf, then it is a star and we are done by Lemma~\ref{lem : base case star} as $p-k\geqslant k$. Secondly, if $T$ has exactly two non-leaves, then it is a double star with $p,q\geqslant k+1$ and we are done by Lemma~\ref{lem : base case double star}.  

    Now, suppose that $T$ has at least three non-leaves. Let $u$ be a leaf of the tree obtained from $T$ by deleting all of its leaves. Observe that, in $T$, each neighbour of $u$, except one, is a leaf. Let $T'$ be the tree obtained from $T$ by deleting those leaves that are neighbours of $u$. Let us relate the leaves and non-leaves of $T$ and $T'$ with each other. 
    
    Note that $u$ is a leaf in $T'$. Furthermore, for each vertex $w\in V(T')-u$, we have $N_{T'}(w)=N_{T}(w)$. Thus, the non-leaves of $T'$ are precisely the non-leaves of $T$ minus $u$. Also, each leaf of $T'$, except $u$, is a leaf of $T$. Ergo, $T'$ is also a \ktree{(k+1)} with precisely one non-leaf fewer than $T$; in particular, $T'$ has at least two non-leaves. Hence, by the induction hypothesis, the graph $G'$ obtained from $T'$ by \kisojoin{k} is \ex{k}. 
    
    As $u$ is a leaf in $T'$, let $u$ and $v$ be the two vertices corresponding to $u$ in $G'$. Now, observe that $G$ may be obtained from $G'$ by replacing the edge $uv$ by $J_{k,r}$, where $r:=d_T(u)-1\geqslant k$. Since $G'$ is \ex{k}, by Lemma~\ref{lem : induction step (k,r)-extension}, $G$ is also \ex{k}.   
    
    Next, we prove minimality of $G$. By Corollary~\ref{cor : superfluous edge} \i{(i)}, we only need to inspect those edges of $G$ each of whose ends has degree at least $k+2$; let $e$ denote such an edge. We adopt notation from Definition~\ref{defn : kisojoin}, and observe that $e\in H\cup H'$. By Proposition~\ref{prop : 2k-cut}, the set of edges $\{e_1,e_2,\dots ,e_k,e'_1,e'_2,\dots ,e'_k\}$ is a $2k$-cut (of $G$) that contains $e$. Thus, by Corollary~\ref{cor : superfluous edge}~\i{(ii)}, $e$ is not superfluous. We thus infer that $G$ is minimal.
\end{proof}

Finally, we provide our constructions of the promised tight examples, and use the above proposition to validate them. The reader may compare the statement to Theorem \ref{thm : embmcg TcupT'}.

\begin{thm}
    For a positive integer $k$, any graph $G$ obtained from a \ktree{(k+2)} $T$, that is neither $K_2$ nor a star $K_{1,p}$ where $p< 2k$, by \kisojoin{k} is a \mbkex{k} that satisfies the bound in Conjecture~\ref{conj : evm} with equality. Furthermore:
    \begin{enumerate}
        % \item if $T$ is a \htree{} then $G$ is \evm{}.
        \item if $T$ is a regular \ktree{(k+2)} then $G$ satisfies the bound in Conjecture~\ref{conj : evn} with equality, whereas
        \item if $T$ is a star then $G$ satisfies the bound in Conjecture~\ref{conj : ee} with equality. 
    \end{enumerate}
    \label{thm : proof for tight examples}
\end{thm}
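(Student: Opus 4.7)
The approach is to apply Proposition~\ref{prop : mbkex k-isojoin} for the minimal \ex{k} property, and then reduce all three extremality statements to straightforward counting of vertices and edges in the construction. First I would observe that every \ktree{(k+2)} is a \ktree{(k+1)}, and the exclusions in the hypothesis ($K_2$, and stars $K_{1,p}$ with $p<2k$) are precisely those in Proposition~\ref{prop : mbkex k-isojoin}, so $G$ is indeed a \mbkex{k}.

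Next I would set up the basic parameters. Let $n_T$ be the order of $T$ and $\ell$ its number of leaves. Unpacking Definition~\ref{defn : kisojoin}, the graph $H$ is formed by $k$ copies of $T$ with their leaves identified, so a leaf of $T$ corresponds to a single vertex of $H$ of degree $k$, while each non-leaf of $T$ of degree $d$ gives rise to $k$ distinct vertices of $H$, each of degree $d$. In $G = H \cup H' + $ (matching), every leaf vertex picks up exactly one matching edge, so:
\begin{itemize}
    \item each leaf of $T$ (and $T'$) has degree $k+1$ in $G$,
    \item each non-leaf of $T$ (and $T'$) retains its degree $d \geq k+2$ in $G$.
\end{itemize}
In particular, since $T$ is a \ktree{(k+2)}, we have $|V_{k+1}(G)| = 2\ell$ and $|V_{k+2}(G)| = 2k(n_T - \ell)$. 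This also yields $n = 2\ell + 2k(n_T - \ell)$ and $m = 2k(n_T - 1) + \ell$. A direct substitution then gives $m - n + 2k = (2k-1)\ell$, so $|V_{k+1}(G)| = 2\ell = \frac{2}{2k-1}(m - n + 2k)$, establishing the equality in Conjecture~\ref{conj : evm}.

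For statement \emph{(i)}, when every non-leaf of $T$ has degree exactly $k+2$, I would apply the handshaking lemma on $T$ to deduce that $(k+1)\ell = k n_T + 2$; combined with the formula for $n$ above, this yields $2\ell = \frac{n}{2} + 2$, which is the equality in Conjecture~\ref{conj : evn}. For statement \emph{(ii)}, with $T = K_{1,p}$ (so $n_T = p+1$ and $\ell = p$), substituting into the formulas above gives $n = 2p + 2k$ and $m = p(2k+1)$, from which $m = \tfrac{(2k+1)(n-2k)}{2}$ follows immediately, settling the equality in Conjecture~\ref{conj : ee}. None of the steps presents a genuine obstacle; the only point that requires care is correctly tracking that \kisojoin{k} identifies leaves across all $k$ copies but leaves non-leaves unmerged, so that the leaf/non-leaf partition of $T$ matches precisely the partition $V_{k+1}(G) \cupdot V_{k+2}(G)$.
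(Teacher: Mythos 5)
Your proposal is correct and follows essentially the same route as the paper: minimality comes from Proposition~\ref{prop : mbkex k-isojoin} (noting a \ktree{(k+2)} is a \ktree{(k+1)} with the same exclusions), and all three equalities reduce to direct counting in the construction. The only cosmetic difference is that you parametrize by the tree's order $n_T$ and leaf count $\ell$, whereas the paper counts via the graph invariants $|E_{k+1}|$, $|\partial(V_{k+1})|$, $|E_{k+2}|$ and the fact that $G[V_{k+2}]$ is a forest with $2k$ components; the computations are equivalent.
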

\begin{proof}
	%Let $G$ be a graph obtained from a \ktree{(k+2)}, say $T$, by \kisojoin{k}. 
    By Proposition~\ref{prop : mbkex k-isojoin}, $G$ is a \mbkex{k}{}. We adopt all of the notation present in Definition \ref{defn : kisojoin}. We first argue that $G$ satisfies the bound in Conjecture \ref{conj : evm} with equality.  
    
    Since $T$ is a \ktree{(k+2)}, $V_{k+1}(G)=L(G)$. Note that $|E_{k+1}|=\frac{1}{2}|L(G)|=\frac{1}{2}|V_{k+1}|$ and $|\partial(V_{k+1})|=k|V_{k+1}|$. Consequently, $|E_{k+2}|=m-|E_{k+1}|-|\partial(V_{k+1})|=m-\frac{2k+1}{2}|V_{k+1}|$. 
    
    On the other hand, observe that $G[V_{k+2}]=G-V_{k+1}$ is a forest with $2k$ components. So, $|E_{k+2}|=|V_{k+2}|-2k=n-|V_{k+1}|-2k$. 
    
    It follows from the preceding two paragraphs that $m-\frac{2k+1}{2}|V_{k+1}|=n-|V_{k+1}|-2k$. By rearranging, $\frac{2k-1}{2}|V_{k+1}|=m-n+2k$ which is precisely the bound in Conjecture~\ref{conj : evm}. It remains to prove statements \i{(i)} and \i{(ii)}.
    
    % \begin{enumerate}
    First, suppose that $T$ is a regular \ktree{(k+2)}. By double counting, $\sum_{v\in V_{k+2}}d(v)=2|E_{k+2}|+|\partial(V_{k+2})|$. As noted earlier, $|E_{k+2}|=|V_{k+2}|-2k$ and $|\partial(V_{k+2})|=k|V_{k+1}|$. By substituting, $(k+2)|V_{k+2}|=2(|V_{k+2}|-2k)+k|V_{k+1}|$. By rearranging and dividing by $k$, $|V_{k+2}|=|V_{k+1}|-4$. Now, by plugging $|V_{k+2}|=n-|V_{k+1}|$ and rearranging, $|V_{k+1}|=\frac{n}{2}+2$ which is precisely the bound in Conjecture~\ref{conj : evn}.
    
    Finally, suppose that $T$ is a star $K_{1,p}$, where $p\geqslant2k$. Note that $n=2p+2k$. Observe that $m=(2k+1)p=\frac{(2k+1)(n-2k)}{2}$ which is precisely the bound in Conjecture~\ref{conj : ee}. This completes the proof of Theorem \ref{thm : proof for tight examples}.
\end{proof}

We are unable to construct any examples, apart from the ones described in the above theorem statement, that satisfy the bounds in Conjectures~\ref{conj : evm},~\ref{conj : evn} or~\ref{conj : ee} with equality, and are thus tempted to further conjecture that these are the only such examples.

\bibliographystyle{plainurl}
\bibliography{clm}

\end{document}